\def\theequation{\@arabic\c@equation}
\newcommand{\bbC}{{\mathbb{C}}}
\newcommand{\bbR}{{\mathbb{R}}}
\newcommand{\cB}{{\mathcal B}}
\newcommand{\cE}{{\mathcal E}}
\newcommand{\cH}{{\mathcal H}}
\newcommand{\cI}{{\mathcal I}}
\newcommand{\cL}{{\mathcal L}}
\newcommand{\cP}{{\mathcal P}}
\newcommand{\cQ}{{\mathcal Q}}
\newcommand{\cX}{{\mathcal X}}
\renewcommand{\ker}{\operatorname{ker}}
\renewcommand{\a}{\alpha}
\newcommand{\g}{\gamma}
\renewcommand{\d}{\delta}
\newcommand{\m}{\mu}
\DeclareMathOperator{\ran}{ran}
\DeclareMathOperator{\dom}{dom}
\DeclareMathOperator{\sgma}{Sp}
\renewcommand{\Re}{\text{\rm Re}}
\newcommand{\beq}{\begin{equation}}
\newcommand{\enq}{\end{equation}}
\newcommand{\no}{\notag}
\newcommand{\lb}{\label}
\renewcommand{\ge}{\geqslant}
\renewcommand{\le}{\leqslant}
\let\geq\geqslant
\let\leq\leqslant
\numberwithin{equation}{section}
\renewcommand{\det}{\operatorname{det}}
\newcommand{\ess}{\operatorname{ess}}
\newcommand{\Ran}{\operatorname{ran}}
\newcommand{\Spe}{\operatorname{Sp_{ess}}}
\renewcommand{\Re}{\operatorname{Re }}
\renewcommand{\ker}{\operatorname{ker}}
\newtheorem{theorem}{Theorem}[section]
\newtheorem{proposition}[theorem]{Proposition}
\newtheorem{lemma}[theorem]{Lemma}
\newtheorem{corollary}[theorem]{Corollary}
\newtheorem{definition}[theorem]{Definition}
\newtheorem{hypothesis}[theorem]{Hypothesis}
\theoremstyle{remark}
\newtheorem{remark}[theorem]{Remark}
\begin{document}

\allowdisplaybreaks

\title[Stability]{Stability of a planar front in a multidimensional reaction-diffusion system}
\author{ A. Ghazaryan}
       \address{Miami University, Oxford, OH, USA}
       \email{ghazarar@miamioh.edu}
       \author{ Y. Latushkin}
       \address{Department of Mathematics, University of Missouri, Columbia, MO, USA }
       \email{latushkiny@missouri.edu}
    \author{ X. Yang}
       \address{Department of Mathematical Sciences,  Xi'an Jiaotong-Liverpool University,  Suzhou, Jiangsu, 
P. R. China}
       \email{xinyao.yang@xjtlu.edu.cn}


\begin{abstract}
	We study the planar front solution for a class of reaction diffusion equations in multidimensional space  in the case when the essential spectrum of the linearization in the direction of the front touches the imaginary axis.  
	At the linear level, the spectrum is stabilized by using an exponential weight.  A-priori  estimates for the nonlinear terms of the equation governing the evolution of the perturbations of the front  are obtained  when perturbations belong to  the intersection of the exponentially weighted  space with the original space without a weight.
These estimates are then used to show that  in the original norm, initially small perturbations to the front  remain bounded,  while in the exponentially weighted norm, they  algebraically decay in time. 
\end{abstract}

\let\thefootnote\relax\footnote{{\it 2000 Mathematics Subject Classification:} 35C07,  
 35B35, 
  35K57,
  35K55, 
 35K58, 
  35B45.
   }
  
\footnote{{\it Key words and phrases:} Traveling waves, fronts, planar fronts, stability, nonlinear stability, essential spectrum, convective instability, exponential weights.}
\footnote{{\it Acknowledgements:}  YL was supported by the NSF grant  DMS-1710989, by the Research Board and Research Council of the University of Missouri,
and by the Simons Foundation.
AG was supported by the NSF grant  DMS-1311313. 
}

\maketitle

\section{Introduction}

Planar traveling fronts are solutions to  partial differential equations posed on multidimensional  infinite domains that move in a preferred direction with constant speed without changing their shape and that are asymptotic to spatially constant steady-state solutions. 

Stability theory of the traveling fronts   in reaction-diffusion equations
is a vast subject that has a  long  history and is very active today, see, e.g.\ \cite{henry,KP,Sa02,VVV} and the literature cited in these books, as well as \cite{BGHL,BKSS,Kapitula2,KV,LX,liwu,rott2a,rott3,rott4,rmthesis,TZKS, Xin} 
and the bibliography therein. 

The cornerstone of the stability analysis of the fronts (or pulses), in general, is  to determine the location of the spectrum of the linearization of the underlying  system about the wave.   The spectrum may contain isolated eigenvalues of finite algebraic  multiplicity and the essential spectrum; the latter may  
consist of curves and domains filled with spectrum, which is due to the dynamics near the asymptotic rest states of the  wave.   Presence of unstable discrete eigenvalues points to the absolute instability of the  wave when the perturbations to the wave  grow  exponentially  and eventually lead to unrecoverable distortion of the wave. Absence of unstable spectrum indicates the resilience of the wave to small perturbations, if the nonlinear effects are negligibly small compared to the linear dynamics.   In the case when   the only unstable spectrum   is a subset  of the  essential spectrum on the imaginary axis, the balance between linear growth and nonlinear effects becomes crucial.  We call  the essential spectrum marginally unstable if it extends up to the imaginary axis. Marginally unstable essential spectrum in reaction-diffusion systems as  well as discrete eigenvalues located on the imaginary axis are  indicative of an instability. In this paper we are interested in identifying the character of  instability of a planar front with marginally unstable spectrum.

Although a great deal of the literature is devoted to the multidimensional reaction-diffusion equations \cite{BKSS,LMNT, LX, LW, PSS, T, Xin, Kapitula2},  the theory in this case is still not as well developed as in the one-dimensional situation. 
One of the most high impact works is  the 1997 paper \cite{Kapitula2} by T.~Kapitula who  demonstrated,  under very general conditions, that  the stability  of a multidimensional planar front  is related to the stability of the associated one-dimensional front profile. More precisely,  T.~Kapitula  in  \cite{Kapitula2}
proved the algebraic decay of  perturbations to a planar front in a general reaction-diffusion system  in case when  the spectrum of the linearization along the associated one-dimensional front  is located in the stable half plane. The case of marginally unstable essential spectrum  has been open since 1990. In the current work  it is settled  blue for a special class of reaction-diffusion systems.

For the problems posed on one-dimensional space that exhibit  traveling waves  with  marginally unstable essential spectrum, there exists an important   technique for  stability analysis    based on applying exponential weights. 
It  goes back to the celebrated work \cite{pw94,sattinger} and   amounts to recalculating the spectrum of the operator obtained by linearizing the equation about the
wave in a  function space equipped with an exponential weight.    Since 
the exponential weights in some situations may stabilize  the system at the linear level by shifting the 
essential spectrum of the linearization  into the stable half-plane,  one can then exploit the decay of the related linear
semigroup to investigate whether the nonlinear effects in the underlying system are negligible in the introduced exponentially weighted norm. Instability of the essential spectrum in the original norm and stability of the wave in an exponentially weighted norm   point to the convective nature of  instability \cite{SS} which is the instability characterized by point-wise   decay of the perturbations.  
To the best of our knowledge,  the current paper is the first where this technique  is used   to analyze   the stability of multidimensional traveling waves with marginal or unstable essential spectrum.   
We mention, however, an important  paper \cite{BKSS},  where T. Brand, M. Kunze, G. Schneider, and  T. Seelbach successfully used combination of weights in a reaction-diffusion-convection system to investigate the nonlinear stability of the zero solution. The reaction terms in \cite{BKSS} are assumed to be exponentially localized unlike the reaction terms  considered  here.

In a recent series of papers  \cite{G,GLS1,GLS2,GLS3}  the method  of exponential weights was used for  a traveling front with marginally unstable essential spectrum in a class of reaction-diffusion systems posed in  one space dimension.  These equations  originate in combustion theory where the  combustion front   captures the propagation of   the highest temperature zone. We refer to  \cite{GLSR}, \cite{BGHL},\cite{BM}, \cite{SKMS1}, \cite{VaV} and references therein, for the results on  existence and spectral stability of the combustion fronts.
   A recent paper \cite{GLSR}  contains an
overview of the exponential weights technique and further references. 
The results were formulated as the orbital stability of the traveling front in an exponentially weighted norm, against perturbations that belong to the space obtained by intersecting the original space (a Sobolev space or the space of bounded uniformly continuous functions) with the same space but equipped with the exponentially weighted norm.  The orbital stability of the wave in the  exponentially  weighted norm  may be interpreted as  convective instability.

In the current  paper, we investigate  nonlinear  stability of the planar front of a certain special class of systems of reaction-diffusion equations.   The front is assumed to have a special feature: for the associated one-dimensional front,  the instability is  related not to the presence of unstable eigenvalues, but to the  essential spectrum touching imaginary axis. More precisely, our objective  is to relate the type of the stability  of the  planar front  to the convective nature of the instability of the associated one-dimensional fronts.  
The  system considered in the current paper has  a certain ``product-triangular" structure in the reaction term similar to that of the equations studied in \cite{GLS1,GLS2,GLS3} for the one-dimensional case. Indeed,  our motivation comes from the following system,
\begin{equation}\label{MPr}
\begin{cases}u_{1t}(t,x)=\Delta_{x} u_1(t,x)+u_2(t,x) g(u_1(t,x)),\\
u_{2t}(t,x)=\epsilon\Delta_{x} u_2(t,x)-\kappa u_2(t,x) g(u_1(t,x)),\end{cases}
\end{equation}
with
\begin{equation}\label{deng}
g(u_1)=\begin{cases} e^{-\frac{1}{u_1}} &\mbox{ if } u_1>0 ;\\
0 &\mbox{ if } u_1\leq 0,\end{cases}
\end{equation}
where  $u_1$, $u_2\in\mathbb{R}$,  $t\in \mathbb{R}^+$, $x\in\mathbb{R}^d$ ($d\geq 2$),  and 
the parameters $\epsilon$ and $\kappa$ satisfy $0\leq\epsilon<1$ and $\kappa>0$.  

Ultimately, we would like to develop a technique to study  nonlinear stability in weighted spaces  for a marginally  unstable 
front in the general system of the type
 \begin{equation}\label{GEQ_D}
u_t(t,x)=D \Delta_{x}u(t,x)+ f(u(t,x)),\, u\in\mathbb{R}^n,\,\,x\in\mathbb{R}^d,\, \,d\geq 2,\,\, t\in \mathbb{R}^+,
\end{equation}
but here  we focus on the case when the diffusion matrix  $D$ is the identity matrix and  the reaction terms satisfy some additional assumptions that are described in Section~\ref{subs3.2.1}.    
The restriction imposed by choosing   the identity matrix to describe diffusion is not necessarily technical. Distinct  diffusion rates are known to be responsible for a variety of dynamical phenomena such as Turing instability or sideband instability.   Removing this condition  or identifying  situations when this condition can be removed is generally speaking  an  open problem.   A stronger   assumption  sufficient to guarantee asymptotic stability of a planar traveling wave against  localized in transverse direction perturbations  in the reaction-diffusion  systems with non-equal diffusion, in addition to   the spectral stability of  the associated one-dimensional front  includes  quadratic tangency of the dispersion relation near critical Fourier mode. We refer readers to \cite{HS} for the proof and the discussion.

To summarize,  for a class of reaction terms  in   the system \eqref{GEQ_D}  with an identity diffusion matrix, we  developed  a
   technique  that effectively combines  the approach introduced in \cite{Kapitula2}  with techniques from  \cite{GLS1,GLS2,GLS3} to   prove   nonlinear stability  in a weighted multidimensional  space for a  planar front  that has  unstable or marginally stable essential spectrum.

\section{The plan of the paper and notations}\lb{subs3.2.0}

The plan of the paper is as follows. In Section \ref{subs3.2.1} we list the assumptions  imposed on the  system. We study the spectrum of the operator obtained by linearizing the system about the planar front in Section \ref{subs3.2.2} and obtain the estimates of the semigroup generated by the linear operator in Section \ref{sec4}. In Section \ref{subs3.2.3} we derive a system of partial differential equations for the perturbations of the planar front to be studied, and in Section \ref{sec6} we estimate the nonlinear terms in the system. We complete the proof of  the stability of the front in Section \ref{subs3.2.4}. 

We consistently  use the same symbol to denote the space of scalar valued functions and the space of respective vector valued functions, whenever it is clear from the context, e.g.,  we use the same notation $H^k(\mathbb{R}^d)$  for the Sobolev space of scalar  functions and for the Sobolev space of vector   functions   $(H^k(\mathbb{R}^d))^n$ when $n>1$.

   In the physical space $\mathbb{R}^d$ we associate $z\in \mathbb{R}$ with the direction of the propagation of the planar front. The complement of $z$ we denote $y \in\mathbb{R}^{d-1}$. 
  
    For a fixed weight function $\gamma_{\alpha}(z)$ and $(z,y)\in\bbR^d$, we denote $H^k_{\a}(\bbR)=\{v:\gamma_{\a}v\in H^k(\bbR)\}$, and  $  
 H^k_{\a}(\bbR^{d}) 
 =\{ u: (z,y)\mapsto 
 \gamma_{\alpha}(z)u(z,y)
 \in H^k(\mathbb{R}^d) \}$. The spaces are equipped 
 with the norm $\|u\|_{H^k_{\alpha}(\bbR^d)}=\|\gamma_{\alpha} u\|_{H^k(\mathbb{R}^{d})}$.
Also,  we use $\mathcal{H}$ to denote the intersection space
\begin{equation} \label{defcH}
\mathcal{H}:= H^k(\mathbb{R}^d)\cap {H^k_{\alpha}(\mathbb{R}^d)},\, \text{with}\,\|u\|_{\mathcal{H}}=\max\{ \|u\|_{H^k(\bbR^d)},\|u\|_{H^k_{\alpha}(\bbR^d)} \}.\end{equation}

Throughout, $\mathcal{B}(X,Y)$ denotes  the space of bounded operators from $X$ to $Y$, and  we abbreviate $\mathcal{B}(X)=\mathcal{B}(X, X)$. 
 We denote by $Sp(\mathcal{T}) $   and  $Sp_{ess}(\mathcal{T}) $ the spectrum and the essential spectrum  of the operator  $\mathcal{T}$,    and  by $ \ran \mathcal{T}$ and $ \ker \mathcal{T}$  its range and nullspace. 
 Throughout the paper, we   denote by $C$  a generic positive constant. 

\section{Assumptions}\lb{subs3.2.1}

In this section we  introduce the   reaction-diffusion system to be studied. In the  one-dimensional situation similar  
 assumptions on the system  were  originally developed in  \cite{GLS2,GLS3}. 
We consider the system of reaction-diffusion equations
\begin{equation}\label{eq3.1.1}
u_t(t,x)=\Delta_{x}u(t,x)+f(u(t,x)), 
\end{equation}  where $u(t,x)\in\mathbb{R}^n$, $x\in\mathbb{R}^{d}$, $t\in\mathbb{R}^{+}$, 
and the function $f(\cdot):\bbR^n\rightarrow\bbR^n$ is smooth.

We assume that  this system has a planar  wave
that moves in the direction of the vector $e=(1,0,...,0)\in\mathbb{S}^d$ with a certain speed $c>0$. 
In the co-moving frame  $z=x_1-ct $,  
 \eqref{eq3.1.1}  reads
\begin{equation}
\label{sysu}
u_t=\Delta u+cu_z+f(u),
\end{equation}
where $\Delta=\partial_{z}^2+\partial_{x_2}^2+\cdots+\partial_{x_n}^2$. 

  A traveling wave  $\phi(z)$ for system \eqref{eq3.1.1} is a time-independent function of $z\in\bbR$, such that  
\begin{equation}\label{sys1dim}
0=\frac{ d^2\, \phi}{d \, z^2}+c\frac{d\, \phi}{ d \, z}+f(\phi).
\end{equation}
 We further assume that the wave converges to its rest states  $\phi_{\pm}\in\bbR^n$  exponentially. 
The  wave is called  a front if $\phi_-\neq \phi_+$, or, otherwise, it is called a pulse. Without loss of generality, we  assume that $\phi_-=0$.

To study the stability of $\phi$, we first linearize \eqref{sysu} about $\phi$. We define the linear variable-coefficient differential expression  $L$ by
\begin{equation}\label{eq3.1.2}
L=\Delta+c\partial_z+df(\phi),
\end{equation}
where $df(\phi)$ is the differential of the function $f$ evaluated at $\phi(\cdot)$.
 The linear stability of the front is determined by the  spectral information of the  operator $\mathcal{L}$ associated with 
    $L$  and acting on the Sobolev space $H^k(\mathbb{R}^d)^n $ for $k>1$.  
 For $k>\left[\frac{d}{2}\right ]$  the spaces $H^k(\mathbb{R}^d)$  are, in fact, Banach algebras and thus
are  convenient for  the nonlinear stability analysis. 

Using the tensor product notation, we write $H^k(\mathbb{R}^d)=H^k(\mathbb{R})\otimes H^k(\mathbb{R}^{d-1})$, and note that for any $u\in H^k(\mathbb{R})$ and $v\in H^k(\mathbb{R}^{d-1})$ the function $(z,x_2,...,x_d)\mapsto u(z)v(x_2,...,x_d)$ belongs to $H^k(\mathbb{R}^d)$. From now on, we decompose $x\in\mathbb{R}^d$ as $x=(z,y)\in\mathbb{R}\otimes\mathbb{R}^{d-1}$, where $z=x_1-ct$ and $y=(x_2,...,x_d)$.
Thus we can use the decomposition of $\cL$ on $ H^k(\mathbb{R}^d)$ as follows, 
\begin{equation*}
\mathcal{L}=\mathcal{L}_{1}\otimes I_{H^k(\mathbb{R}^{d-1})^n}+I_{H^k(\mathbb{R})^n}\otimes
\Delta_y,\end{equation*}
 where $\mathcal{L}_1$ is associated with the one-dimensional  differential expression
  \begin{equation}\label{eq3.2.1}
L_1=\partial_z^2+c\partial_z+df(\phi),
\end{equation}
that depends only on $z$, and
\begin{equation}\label{eq3.2.2}
\Delta_y=\partial_{x_2}^2+\cdots+\partial_{x_d}^2.
\end{equation}

We next introduce an exponential weight to counteract the marginally unstable essential spectrum.
 We call  $\gamma_{\alpha}\in C^{k+3}(\mathbb{R})$ the  weight function of class $\alpha=(\alpha_-,\alpha_+)\in\mathbb{R}^2$  if $\gamma_{\alpha}(z)>0$ for all $z\in\bbR$, and
\begin{equation}\label{eq3.1.7}
\gamma_{\alpha}(z)=\begin{cases}
e^{\alpha_-z}\, ,\text{ for  $z$ negative, $|z|$ large},\\
e^{\alpha_+z}\, ,\text{ for $z$ positive and  large}.
\end{cases}
\end{equation}

For a fixed weight function $\gamma_{\alpha}$, let $H^k_{\a}(\bbR):=\{v:\gamma_{\a}v\in H^k(\bbR)\}$.  We then denote 
 $$
 H^k_{\a}(\bbR^{d}) =H_{\alpha}^k(\mathbb{R})\otimes H^k(\mathbb{R}^{d-1})=
 \{ u: (\gamma_{\alpha}\otimes I_{H^k(\mathbb{R}^{d-1})}) u\in H^k(\mathbb{R}^d) \},$$ 
 with the norm $\|u\|_{H^k_{\alpha}(\bbR^d)}=\|\gamma_{\alpha} u\|_{H^k(\mathbb{R}^{d})}$. Here, 
$
(\gamma_{\alpha}\otimes I_{H^k(\bbR^{d-1})}) u(z,y):=\gamma_{\alpha}(z)u(z,y)$, $(z,y)\in\bbR^d$.


\begin{definition}\label{op} $\,$ 
	\begin{enumerate}
	\item   $\mathcal{L}: H^k(\mathbb{R}^d)^n\rightarrow  H^k(\mathbb{R}^d)^n$ is the linear operator given by the formula  $u\mapsto L u$, with $L$ as in \eqref{eq3.1.2} where $\dom{\mathcal{L}}=H^{k+2}(\mathbb{R}^d)^n\subset  H^k(\mathbb{R}^d)^n$, for $k=1,2,\dots$; 
	\item  $\mathcal{L}_1:H^k(\mathbb{R})^n\rightarrow H^k(\mathbb{R})^n$ is  the linear operator given by the formula  $u\mapsto L_1 u$ with $L_1$ as in \eqref{eq3.2.1}, where $\dom{\mathcal{L}_1}=H^{k+2}(\mathbb{R})^n\subset H^k(\mathbb{R})^n$; 
	\item  $\Delta_y:H^k(\mathbb{R}^{d-1})^n\rightarrow H^k(\mathbb{R}^{d-1})^n$ is the linear operator given by the formula  \eqref{eq3.2.2}, with the domain  $H^{k+2}(\mathbb{R}^{d-1})^n$; 
	\item  $\mathcal{L}_{\alpha}:{H^k_{\alpha}(\mathbb{R}^d)}^n\rightarrow {H^k_{\alpha}(\mathbb{R}^d)}^n$  is the operator given by the formula $u\mapsto L u$,  with ${L}$ as in \eqref{eq3.1.2}  and $\dom{\mathcal{L}_{\alpha}}=H^{k+2}_{\alpha}(\mathbb{R})\otimes H^{k+2}(\mathbb{R}^{d-1})$;
	\item   $\mathcal{L}_{1,\alpha}:H^k_{\alpha}(\mathbb{R})^n\rightarrow H^k_{\alpha}(\mathbb{R})^n$ is the operator given by $u\mapsto L_1 u$,
	 with $L_1$ as in \eqref{eq3.2.1}
	 and $\dom{\mathcal{L}_{1,\alpha}}=H^{k+2}_{\alpha}(\mathbb{R})^n\subset H^k_{\alpha}(\mathbb{R})^n$;
\item $\mathcal{L}_{\mathcal{H}}:\mathcal{H}^n\rightarrow\mathcal{H}^n$ is the linear operator generated by $u\rightarrow Lu$ with $L$ as in \eqref{eq3.1.2}, with  the domain $ H^{k+2}(\mathbb{R}^d)\cap {H^{k+2}_{\alpha}(\mathbb{R}^d)}$.
\end{enumerate}	
\end{definition}

We summarize the assumptions on the system \eqref{eq3.1.1} considered on $\mathcal{H}$  as follows.
\begin{hypothesis}\label{hypo3.2.1}
	The function $f:\bbR^n\to \bbR^n$ is in $C^{k+3}(\bbR^n)^n$.
\end{hypothesis}
\begin{hypothesis}\label{hypo3.2.2}
	The system \eqref{eq3.1.1} has a $C^{k+5}$-smooth  planar front $\phi(z)$, $z=x_1-ct$,  $\lim_{z\to \pm \infty} \phi(z) =\phi_{\pm}$,  for which there exist numbers $K>0$ and $\omega_-<0<\omega_+$ such that
	\begin{equation*}
\|\phi(z)-\phi_{-}\|_{\bbR^n}\leq Ke^{-\omega_- z}\ \text{for}\ z\leq 0,\quad \text{and}\  \|\phi(z)-\phi_{+}\|_{\mathbb{R}^n}\leq Ke^{-\omega_+ z}\ \text{for }\ z\geq 0.
\end{equation*}
\end{hypothesis}



Without loss of generality, in the rest of the paper we assume that  $\phi_-= 0$.

\begin{hypothesis}\label{hypo3.2.3}
	There exists $\alpha=(\alpha_-,\alpha_+)\in\mathbb{R}^2$ such that the following assertions hold:
	\begin{itemize}
		\item[(1)] $0<\alpha_-<-\omega_-$.
		\item[(2)] $0\leq \alpha_+<\omega_+$.
		\item[(3)] For the linear operator $\mathcal{L}_{1,\alpha}:H^k_{\alpha}(\mathbb{R})^n\rightarrow H^k_{\alpha}(\mathbb{R})^n$, there exists $\nu>0$ such that $$\sup\{ \Re \lambda:\lambda\in \Spe(\mathcal{L}_{1,\alpha}) \}<-\nu,$$ and the only element of $\sgma(\mathcal{L}_{1,\alpha})$ in $\{ \lambda: \Re\lambda\geq 0 \}$ is a simple eigenvalue $0$.
	\end{itemize}
\end{hypothesis}

Hypotheses \ref{hypo3.2.2} and  \ref{hypo3.2.3} imply the following lemma.
\begin{lemma}\label{lem3.2.6} 
	If  Hypotheses \ref{hypo3.2.2} and \ref{hypo3.2.3} hold, then
		\begin{itemize}
	\item[(1)] $\gamma_{\alpha}^{-1}\phi$ is a $ C^{k+5}(\mathbb{R})^n$ function that  approaches zero  exponentially  as $z\rightarrow \pm \infty$. 
		\item[(2)] $\gamma_{\alpha}\phi$ is a $ C^{k+5}(\mathbb{R})^n$ function that  exponentially  approaches  infinity as $z\rightarrow \infty$ and zero as $z\rightarrow-\infty$,  while $\gamma_{\alpha}\phi^{(m)}$   approaches zero exponentially as $z\rightarrow \pm \infty$, for  any $m=1$, $2$, ..., $k+5$.	
	\end{itemize}
\end{lemma}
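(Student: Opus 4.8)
The plan is to reduce the lemma to two ingredients: the exponential decay of all derivatives of the front, which I would read off from the travelling-wave equation~\eqref{sys1dim}, and careful accounting of exponents against the inequalities in Hypothesis~\ref{hypo3.2.3}. The regularity assertions need no real argument: $\gamma_\alpha$ is strictly positive, so $\gamma_\alpha^{\pm 1}$ is as smooth as $\gamma_\alpha$, and there is no obstruction to choosing $\gamma_\alpha$ of class $C^{k+5}$ on its bounded transition region (off that region it equals $e^{\alpha_\pm z}$); hence, by the Leibniz rule together with $\phi\in C^{k+5}(\mathbb{R})^n$ from Hypothesis~\ref{hypo3.2.2}, the function $\gamma_\alpha^{\pm 1}\phi$ is $C^{k+5}$ and $\gamma_\alpha\phi^{(m)}$ is $C^{k+5-m}$.

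The first substantive step is to upgrade the decay of $\phi$ supplied by Hypothesis~\ref{hypo3.2.2} to decay of its derivatives: I would show that for each $m$ with $1\le m\le k+5$ there is $C>0$ with $\|\phi^{(m)}(z)\|_{\mathbb{R}^n}\le Ce^{-\omega_- z}$ for $z\le 0$ and $\|\phi^{(m)}(z)\|_{\mathbb{R}^n}\le Ce^{-\omega_+ z}$ for $z\ge 0$, with the \emph{same} rates $\omega_\pm$ as for $\phi$. This follows by writing~\eqref{sys1dim} as a first-order autonomous system $U'=F(U)$ with $U=(\phi,\phi')^\top$: the orbit converges to the equilibria $(\phi_\pm,0)$, hence approaches them along the corresponding invariant manifolds, on which the whole orbit — in particular $\phi'$ — decays exponentially at the rate forced by $\phi$; differentiating~\eqref{sys1dim} repeatedly and using $f\in C^{k+3}$ then propagates the bound to $\phi'',\dots,\phi^{(k+5)}$. (Alternatively one may simply invoke the standard exponential-decay result for travelling-wave profiles, e.g.\ \cite{henry}.)

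The second step is to multiply by the weight and read off the exponents, using that $\gamma_\alpha(z)=e^{\alpha_- z}$ for $z\ll 0$ and $\gamma_\alpha(z)=e^{\alpha_+ z}$ for $z\gg 0$ and recalling $\phi_-=0$. As $z\to-\infty$ I would combine these with $\|\phi(z)\|_{\mathbb{R}^n}\le Ke^{-\omega_- z}$ and the bounds of the previous step to obtain $\|\gamma_\alpha(z)^{-1}\phi(z)\|\le Ke^{-(\alpha_-+\omega_-)z}$, $\|\gamma_\alpha(z)\phi(z)\|\le Ke^{(\alpha_--\omega_-)z}$, and $\|\gamma_\alpha(z)\phi^{(m)}(z)\|\le Ce^{(\alpha_--\omega_-)z}$, all of which tend to $0$ because Hypothesis~\ref{hypo3.2.3}(1) forces $\alpha_-+\omega_-<0$ and $\alpha_--\omega_->0$. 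As $z\to+\infty$ I would write $\phi(z)=\phi_++O(e^{-\omega_+ z})$, so that $\gamma_\alpha(z)^{-1}\phi(z)=\phi_+e^{-\alpha_+ z}+O(e^{-(\alpha_++\omega_+)z})$, $\gamma_\alpha(z)\phi(z)=\phi_+e^{\alpha_+ z}+O(e^{(\alpha_+-\omega_+)z})$, and $\|\gamma_\alpha(z)\phi^{(m)}(z)\|\le Ce^{-(\omega_+-\alpha_+)z}$; since Hypothesis~\ref{hypo3.2.3}(2) forces $\alpha_++\omega_+>0$ and $\omega_+-\alpha_+>0$, the $O$-terms and $\gamma_\alpha\phi^{(m)}$ decay exponentially, while the leading terms $\phi_+e^{\mp\alpha_+ z}$ give the claimed decay of $\gamma_\alpha^{-1}\phi$ and growth of $\gamma_\alpha\phi$. (This last pair is the only place the strict inequality $\alpha_+>0$ enters; in the degenerate case $\alpha_+=0$ the weight is trivial for $z\gg 0$ and $\gamma_\alpha^{\pm1}\phi$ instead converges to $\phi_+$ as $z\to+\infty$.)

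I do not anticipate a genuine obstacle — the lemma is elementary. The only step that is not pure arithmetic is the decay of $\phi^{(1)},\dots,\phi^{(k+5)}$, since Hypothesis~\ref{hypo3.2.2} controls only $\phi$ itself; but that is the routine invariant-manifold/bootstrap argument for a heteroclinic orbit of a smooth autonomous ODE. Everything else amounts to tracking the signs of $\alpha_\pm\pm\omega_\pm$ through Hypothesis~\ref{hypo3.2.3}.
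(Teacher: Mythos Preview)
Your argument is correct. The paper states this lemma without proof, merely asserting it as a consequence of Hypotheses~\ref{hypo3.2.2} and~\ref{hypo3.2.3}, so there is no approach to compare against; yours supplies exactly the details one would expect---derivative decay for $\phi',\dots,\phi^{(k+5)}$ via the invariant-manifold/bootstrap argument for the autonomous ODE~\eqref{sys1dim}, followed by direct exponent bookkeeping against the sign conditions in Hypothesis~\ref{hypo3.2.3}.

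Your two caveats are on point and in fact flag minor imprecisions in the paper itself. First, the weight $\gamma_\alpha$ is only declared $C^{k+3}$ in~\eqref{eq3.1.7}, so the $C^{k+5}$ regularity claimed for $\gamma_\alpha^{\pm1}\phi$ tacitly requires a smoother choice of cutoff; as you observe, this is harmless. Second, the borderline $\alpha_+=0$ allowed by Hypothesis~\ref{hypo3.2.3}(2) does give $\gamma_\alpha^{\pm1}\phi\to\phi_+$ rather than $0$ or $\infty$ as $z\to+\infty$, so the lemma as stated really needs $\alpha_+>0$; this matters downstream too (e.g.\ Lemma~\ref{lemgls} uses $\gamma_\alpha^{-1}\phi\in H^k(\mathbb{R})$, which fails when $\alpha_+=0$), and indeed Lemma~\ref{lem3.2.5} silently assumes $\alpha\in\mathbb{R}^2_+$.
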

An addition, we assume that the nonlinearity in system \eqref{eq3.1.1}  satisfies the following hypothesis: 
\begin{hypothesis}\label{hypo3.2.5}
	There exists a splitting $u=(u_1,u_2) \in \mathbb{R}^n$, where  $u_1\in \mathbb{R}^{n_1}$  and  $u_2\in\mathbb{R}^{n_2}$, and $n_2+n_1=n$, such that  $f(u_1,0)=(A_1u_1,0)$ for some 
	  $n_1\times n_1$ constant matrix $A_1$.  
\end{hypothesis}

From Hypotheses \ref{hypo3.2.1} and  \ref{hypo3.2.5}  we have the following representation of function $f$, 
\begin{equation}\label{eq3.13}
f(u)=\begin{pmatrix}
f_1(u_1,u_2)\\ f_2(u_1,u_2)
\end{pmatrix}= \begin{pmatrix}
A_1u_1+\tilde{f}_1(u_1,u_2)u_2\\\tilde{f}_2(u_1,u_2)u_2
\end{pmatrix},\,\,f_i:\mathbb{R}^{n_1}\times\mathbb{R}^{n_2}\rightarrow \mathbb{R}^{n_i}\,\, ,i=1,2,
\end{equation}
where $\tilde{f}_1$ and $\tilde{f}_2$ are matrix-valued functions of size $n_1\times n_2$ and $n_2\times n_2$, respectively. 

The system  \eqref{eq3.1.1} in terms of $u_1$ and $u_2$ reads
\begin{align*}
&\partial_t u_1=
\Delta_x u_1+ f_1(u_1,u_2),\no\\
&\partial_t u_2=
\Delta_x u_2+ f_2(u_1,u_2),
\end{align*}
and the system \eqref{sysu} reads
\begin{align*}
&\partial_t u_1=
 (\partial_{zz}+\Delta_y) u_1+ c\partial_{z}u_1+f_1(u_1,u_2),\\
&\partial_t u_2=
 (\partial_{zz}+\Delta_y) u_2+ c\partial_{z}u_2+f_2(u_1,u_2).
\end{align*}
Similarly, we   write $\phi(z)=(\phi_1(z),\phi_2(z))$ and $ \phi_+=(\phi_{1,+},\phi_{2,+})$ and  the differential  expressions obtained by linearizing  \eqref{sys1dim} at $0$ and $\phi_+$, respectively, are given by the formulas
\begin{equation}\label{eq3.2.5}
L^-_1=
\partial_{zz}+c\partial_z+df(0),\quad L^+_1=
\partial_{zz}+c\partial_z+df(\phi_+).
\end{equation}
In relation to the linearization about  $0$, we denote
\begin{equation}\label{eq3.2.6}
L^{(1)}_1=
\partial_{zz}+c\partial_z+d_{u_1}f_1(0,0)=
\partial_{zz}+c\partial_{z}+A_1,
\end{equation}
\begin{equation}\label{eq3.2.7}
L^{(2)}_1=
\partial_{zz}+c\partial_z+d_{u_2}f_2(0,0).
\end{equation}
where $d_{u_i}f_i$ is the Jacobian of $f_i$ with respect to $u_i$, $i=1$, $2$.
From  \eqref{eq3.13} it follows then  that
\begin{equation}\label{eq3.2.9}
L^-_1=\begin{pmatrix}
L^{(1)}_1 & d_{u_2}f_1(0,0)\\
0 & L^{(2)}_1
\end{pmatrix}.
\end{equation}
 We  write $L_1$  defined in  \eqref{eq3.2.1} as follows,
\begin{equation}\label{eq3.2.13}
L_1\begin{pmatrix}
u_1\\u_2
\end{pmatrix}=\begin{pmatrix}
L_1^{(1)} & d_{u_2}f_1(0,0)\\
0 & L^{(2)}_1
\end{pmatrix}\begin{pmatrix}
u_1 \\ u_2
\end{pmatrix}+\left(df(\phi)-df(0)\right)\begin{pmatrix}
u_1 \\ u_2
\end{pmatrix}.
\end{equation}

We  now define  the operators $\mathcal{L}_1^{(1)}$ and $\mathcal{L}_1^{(2)}$ as prescribed in item (2) of Definition~\ref{op}. The 
next hypothesis implies, in part, the stability of the end state $(0,0)$ located behind the front.

\begin{hypothesis}\label{hypo3.2.7} In addition to Hypotheses \ref{hypo3.2.3} and \ref{hypo3.2.5}, we assume that  the following is true. 
	\begin{itemize}
		\item[(1)] The analytic semigroup generated by the operator $\mathcal{L}_1^{(1)}$ on $H^k(\mathbb{R})^{n_1}$ induced by \eqref{eq3.2.6} in $H^k(\mathbb{R})$ is  bounded, that is,  there exists $K>0$ such that  $\|e^{t\mathcal{L}_1^{(1)}}\|_{\mathcal{B}(H^k(\mathbb{R}))}\leq K$ for  all $t\geq 0$;
		\item[(2)] The spectrum  $\sgma(\mathcal{L}^{(2)}_1)$ of the operator $\cL^{(2)}_1$ on $H^k(\mathbb{R})^{n_2}$ introduced by \eqref{eq3.2.7} is  located strictly to the left of the imaginary axis, that is,  $\sup\{ \Re\lambda:\lambda\in \sgma(\mathcal{L}^{(2)}_1) \}<0$. Therefore, 
		 there exist constants $\rho>0$ and $K>0$ such that   $\|e^{t\mathcal{L}_1^{(2)}}\|_{\mathcal{B}(H^k(\mathbb{R}))}\leq Ke^{-\rho t}$ for all $t\geq 0$.

	\end{itemize}
\end{hypothesis}

\begin{remark}\label{hypo3}

	 Hypothesis \ref{hypo3.2.7} implies that 
		 (a) $\sup\{ \Re \lambda:\lambda\in \sgma(\mathcal{L}_1^{(1)}) \}\leq 0$;
		(b) $\sup\{ \Re \lambda:\lambda\in \sgma(\mathcal{L}_1^{-}) \}\leq 0$.

\end{remark}

\section{Spectrum and projection operators}\lb{subs3.2.2}

In this section we  discuss the projection  operator on the central direction that corresponds to the isolated zero eigenvalue of the linear operator $\cL_{1,\alpha}$ associated with \eqref{eq3.2.1}, on the weighted space $H^k_{\alpha}(\bbR)^n$,  and describe the central projection  for  the operator $\cL_{\alpha}$ in $H^k_{\alpha}(\bbR^d)^n$. 

We recall that for a closed densely defined operator $\mathcal{T}$, the resolvent set $\rho(\mathcal{T})$ is the set of $\lambda\in\bbC$ such that $\mathcal{T}-\lambda I$ has a bounded inverse. The complement of $\rho(\mathcal{T})$ is the spectrum $\sgma(\mathcal{T})$. It includes the discrete spectrum, $\sgma_d(\mathcal{T})$, which is the set of isolated 
 eigenvalues of $\mathcal{T}$ of finite algebraic multiplicity. The rest of the spectrum is called  the essential spectrum and denoted by $\sgma_{\ess}(\mathcal{T})$.

The spectrum of the linearization touching the imaginary axis complicates  the stability analysis of system \eqref{eq3.1.1} in multidimensional  space. In the one-dimensional case \cite{GLS2}, the authors have imposed the hypotheses under which the  front  is spectrally stable in $H_{\alpha}^1(\mathbb{R})^n$, i.e., the  linear operator associated with the one-dimensional differential expression $L_1=D\partial_z^2+c\partial_z+df(\phi)$
has only one simple, isolated  eigenvalue at $0$ while the rest of the spectrum is located to the left  of the imaginary axis. 
 More precisely, let $L^-_1$ and $L^+_1$ be defined as in \eqref{eq3.2.5}.  By \cite[Lemma 3.5]{GLS2}, the rightmost boundary of the corresponding $\Spe(\mathcal{L}_{1,\alpha})$ is  the rightmost boundary of the set $\sgma(\mathcal{L}^-_{1,\alpha})\cup \sgma(\mathcal{L}^+_{1,\alpha})$,  where
\begin{eqnarray*} \sgma(\mathcal{L}^-_{1,\alpha})&=& \{ \lambda\in\mathbb{C}\,\,\big| \,\, \exists\,\,\, \theta\in\mathbb{R}: \det\left( -\theta^2+i\theta (c-2\alpha_-)I -\lambda I+(\alpha_-^2-c\alpha_-)I+df(0)\right)=0\},\\ \sgma(\mathcal{L}^+_{1,\alpha}) &=& \{ \lambda\in\mathbb{C}\,\,\big|\,\, \exists\, \,\, \theta\in\mathbb{R}:
\det\left( -\theta^2+i\theta (c-2\alpha_+)I -\lambda I+(\alpha_+^2-c\alpha_+)I+df(\phi_+)\right)=0\} .\end{eqnarray*}
It is assumed that the right most boundary is located  strictly to the left of the imaginary axis.
Thus, in the one-dimensional case the essential spectrum of the linearization in the exponentially weighted  space is located in the open left plane. 


For the multidimensional case 
 the situation is  by far more complicated.  
Here, one concern regarding the spectrum of $\mathcal{L}_{\alpha}$
 is that  the zero eigenvalue,  which is an isolated eigenvalue for a one-dimensional operator considered on $H^k_{\alpha}(\mathbb{R})$, 
  is not  anymore an isolated point of the spectrum  of  $\mathcal{L}_{\alpha}$  in  ${H^k_{\alpha}(\mathbb{R}^d)}$.

In Figure~\ref{fig1}, we 
illustrate the influence of the exponential weight on the location of the essential spectrum, and the  issue arising in the multidimensional  system when the same method of passing to the exponentially weighted spaces is applied.
%

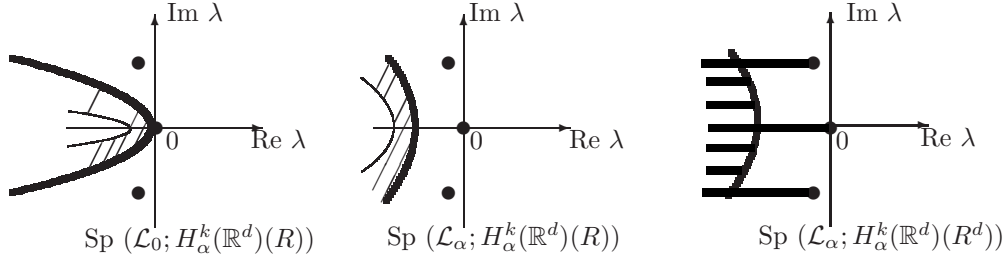
\begin{figure}[h]
	\setlength{\unitlength}{0.94 mm}
	\begin{picture}(160,50)(-225,5)\
	\put(-206.408,23.0092){\vector(1,0){27.723}}
	\put(-193.962,8.88374){\vector(0,1){30.231}}
	\put(-162.975,23.0092){\vector(1,0){27.723}}
	\put(-150.302,8.88374){\vector(0,1){30.231}}
	\put(-112.254,23.3631){\vector(1,0){27.723}}
	\put(-98.3926,9.10567){\vector(0,1){30.231}}
	\put(-196.264,32.1603){\circle*{2}}
	\put(-193.791,23.0092){\circle*{2}}
	\put(-196.264,13.8581){\circle*{2}}
	\put(-152.435,32.1603){\circle*{2}}
	\put(-150.302,23.0092){\circle*{2}}
	\put(-152.435,13.8581){\circle*{2}}
	\put(-98.3926,23.0092){\circle*{2}}
	\put(-100.841,32.1603){\circle*{2}}
	\put(-100.841,13.8581){\circle*{2}}
	\put(-108.561,6.59741){Sp ($\mathcal{L}_{\alpha};{H^k_{\alpha}(\mathbb{R}^d)}(R^d))$ }
	\put(-135.384,20.2369){Re $\lambda$}
	\put(-149.51,38.1907){Im $\lambda$}
	\put(-85.6714,20.517){Re $\lambda$}
	\put(-96.1213,38.1242){Im $\lambda$}
	\put(-203.887,6.21084){Sp ($\mathcal{L}_{0};H^k_{\alpha}(\mathbb{R}^d)(R)$)}
	\put(-192.192,38.1242){Im $\lambda$}
	\put(-180.387,20.2019){Re $\lambda$}
	\put(-161.112,6.50018){Sp ($\mathcal{L}_{\alpha};{H^k_{\alpha}(\mathbb{R}^d)}(R)$)}
	\put(-192.43,20.0){0}
	\put(-149.43,20.0){0}
	\put(-97.6714,20){0}
	
	\qbezier(-164.764,30.104) ( -155.561,23.0369) (-164.713,16.7833)
	\qbezier(-206.016,25.3619) (-188.676,22.8169 )( -206.2,20.4156)
	\linethickness{0.8mm}
	\qbezier(-160.863,32.9102)(-153.206,23.1412)(-161.259,12.7121)
	\qbezier(-214,32.8934) (-175,22.8934 )(-214,13.8934)
	\qbezier(-112.418,33.6442) ( -104.761,23.8752) ( -112.813,13.4462)
	\put(-160.493,32.6969){\line(-1,-2){2.3}}
	\put(-159.669,31){\line(-1,-2){2}}
	\put(-158.9,29.2649){\line(-1,-2){1.8}}
	\put(-158.1,27.6871){\line(-1,-2){2}}
	\put(-157.6,25.2871){\line(-1,-2){5.5}}
	\put(-157.4,21.5871){\line(-1,-2){4.5}}
	\put(-201.6,28.8){\line(-1,-2){2}}
	\put(-200.4,28.2){\line(-1,-2){1.8}}
	\put(-199,27.8){\line(-1,-2){1.8}}
	\put(-197.5,27){\line(-1,-2){1.77}}
	\put(-196,25.9){\line(-1,-2){4}}
	\put(-195,24){\line(-1,-2){2.3}}
	\put(-199.5,21.5){\line(-1,-2){2}}
	\put(-201.4,21){\line(-1,-2){2}}
	\color{black}
	\linethickness{1.mm}
	\put(-115.7,23.0){\line(1,0){17}}
	\put(-115.961,26.2393){\line(1,0){6.8}}
	\put(-115.961,29.5393){\line(1,0){6.3}}
	\put(-101.5,13.891){\line(-1,0){15}}
	\put(-101.6,32.1603){\line(-1,0){15}}
	\put(-116,17){\line(1,0){5}}
	\put(-116,20.2){\line(1,0){6.8}}
	\end{picture}
		\caption{The first panel: the rightmost boundary of the essential spectrum and the eigenvalue at the origin of  the linearization of \eqref{MPr}-\eqref{deng} about the front in the space with no exponential weight. The second panel: the rightmost boundary of the essential spectrum and the eigenvalue at the origin of  the linearization of \eqref{MPr}-\eqref{deng} about the front in the case of one-dimensional spacial variable in the exponentially weighted space. The essential spectrum in one-dimensional case is bounded away from the imaginary axis.  The third panel: the multidimensional case. The essential spectrum in the weighted space is not bounded away from the imaginary axis.
		} 	\label{fig1} 
\end{figure}

Indeed, the following  proposition holds that
shows that the essential spectrum of $\mathcal{L}_{\alpha}$ is no longer bounded away from the imaginary axis on the weighted space ${H^k_{\alpha}(\mathbb{R}^d)}^n$ as it was for $d=1$.


\begin{proposition}\label{spectrum}
	Let $d >1$,  the assumptions of Hypothesis \ref{hypo3.2.3} hold,   and the linear operators $\mathcal{L}_{\alpha}$  and $\mathcal{L}_{1,\alpha}$ be  the operators  defined according to Definition~\ref{op} associated with $L$ and $L_1$  introduced in \eqref{eq3.1.2} and \eqref{eq3.2.1}, respectively.   
	Each point $\eta\in Sp(\mathcal{L}_{1,\alpha})$  generates a horizontal half-line $\{\lambda\in \mathbb C: \mathrm {Re}\, \lambda \leq \mathrm {Re}\, \eta, \,\, \mathrm {Im}\, \lambda =\mathrm {Im}\, \eta \}$ that  belongs to the essential spectrum $Sp_{ess}(\mathcal{L}_{\alpha})$. In particular,  the half-line  $\{\lambda\in \mathbb R: \mathrm {Re}\, \lambda \leq 0\}$ belongs  to the essential spectrum of $\mathcal{L}_{\alpha}$. 
\end{proposition}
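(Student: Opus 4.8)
The plan is to exploit the tensor-sum structure of $\cL_\a$. On $H^k_\a(\bbR^d)^n=H^k_\a(\bbR)^n\otimes H^k(\bbR^{d-1})$ one has, exactly as for $d=1$ on the unweighted space,
\[
\cL_\a=\cL_{1,\a}\otimes I_{H^k(\bbR^{d-1})^n}+I_{H^k_\a(\bbR)^n}\otimes\D_y ,
\]
since $L=L_1+\D_y$ with $L_1$ acting only in $z$, $\D_y$ only in $y$, and the weight $\g_\a(z)$ commuting with $\D_y$. Because $d-1\ge1$, the Fourier transform makes $\D_y$ on $H^k(\bbR^{d-1})^n$ unitarily equivalent to multiplication by $-|\x|^2$, $\x\in\bbR^{d-1}$; hence $\sgma(\D_y)=\Spe(\D_y)=(-\infty,0]$ and $\D_y$ has no eigenvalues. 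For a fixed $\m\le0$ I would build a singular sequence for $\D_y$ at $\m$ from a wave packet: pick $\x_0$ with $|\x_0|^2=-\m$ and a fixed unit vector $e\in\bbR^{d-1}$, take $\widehat v$ smooth and supported in a tiny ball about $\x_0$, and set $v_n(y)=v(y-ne)$ after a diagonal shrinking of that ball; translating to spatial infinity gives $v_n\rightharpoonup0$, the frequency localization gives $\norm{(\D_y-\m)v_n}_{H^k(\bbR^{d-1})^n}\to0$, and we normalize so $\norm{v_n}_{H^k(\bbR^{d-1})^n}=1$. Keeping $\widehat v$ supported in a bounded region also keeps the $H^k(\bbR^{d-1})$ norm of $v_n$ comparable to its $L^2$ norm.

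Now fix $\eta\in\sgma(\cL_{1,\a})$. By \cite[Lemma 3.5]{GLS2} and Hypothesis~\ref{hypo3.2.3}, $\sgma(\cL_{1,\a})$ is the union of $\Spe(\cL_{1,\a})$ and isolated eigenvalues of finite multiplicity, so in either case $\eta$ is in the approximate point spectrum of $\cL_{1,\a}$: there exist $u_n\in\dom(\cL_{1,\a})=H^{k+2}_\a(\bbR)^n$ with $\norm{u_n}_{H^k_\a(\bbR)^n}=1$ and $\norm{(\cL_{1,\a}-\eta)u_n}_{H^k_\a(\bbR)^n}\to0$; since $L_1$ is asymptotically constant-coefficient these $u_n$ may be chosen as bounded-frequency wave packets, so that $\norm{\g_\a u_n}_{L^2(\bbR)}$ and $\norm{\g_\a u_n}_{H^k(\bbR)}$ are comparable, and if $\eta=0$ one simply takes $u_n$ to be the simple eigenfunction. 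Put $w_n:=u_n\otimes v_n\in\dom(\cL_\a)$. Using $H^k(\bbR^d)=H^k(\bbR)\otimes H^k(\bbR^{d-1})$, the above choices make $\norm{w_n}_{H^k_\a(\bbR^d)^n}$ uniformly bounded and bounded away from $0$, while
\[
\big(\cL_\a-(\eta+\m)\big)w_n=\big((\cL_{1,\a}-\eta)u_n\big)\otimes v_n+u_n\otimes\big((\D_y-\m)v_n\big)\longrightarrow0
\]
in $H^k_\a(\bbR^d)^n$. Since $v_n\rightharpoonup0$ we also have $w_n\rightharpoonup0$, so $(w_n)$ is a \emph{singular} Weyl sequence for $\cL_\a$ at $\eta+\m$. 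Hence $\eta+\m\in\sgma(\cL_\a)$, and being detected by a singular Weyl sequence it cannot be an isolated eigenvalue of finite algebraic multiplicity, so $\eta+\m\in\Spe(\cL_\a)$. As $\m$ runs over $(-\infty,0]$ this gives $\{\l:\mathrm{Re}\,\l\le\mathrm{Re}\,\eta,\ \mathrm{Im}\,\l=\mathrm{Im}\,\eta\}\subset\Spe(\cL_\a)$, and taking $\eta=0$ (which lies in $\sgma(\cL_{1,\a})$ by Hypothesis~\ref{hypo3.2.3}) yields the last assertion.

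The main obstacle is the bookkeeping that makes $(w_n)$ a genuine singular Weyl sequence in the $H^k$-based weighted tensor norm rather than a mere approximate-eigenvalue sequence: one must verify that $\norm{u_n\otimes v_n}_{H^k_\a(\bbR^d)^n}$ does not degenerate (comparability of the $H^k$ and $L^2$ norms of both factors, supplied by the bounded-frequency wave-packet constructions, is the convenient device) and that the cross term between the $z$- and $y$-parts does not spoil the estimate on $(\cL_\a-(\eta+\m))w_n$. A subordinate point, which should be recorded explicitly, is that every $\eta\in\sgma(\cL_{1,\a})$ is in the approximate point spectrum, i.e.\ $\cL_{1,\a}$ has no residual spectrum; this is standard for the asymptotically constant-coefficient operator $L_1$, whose essential spectrum is a union of dispersion curves and whose complement consists of Fredholm points of index zero, hence of resolvent points and eigenvalues only.
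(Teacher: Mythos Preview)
Your argument is correct and takes a genuinely different, more hands-on route than the paper. The paper's proof is a two-line citation: since $\cL_{1,\a}$ and $\D_y$ generate bounded analytic semigroups on the Hilbert spaces $H^k_\a(\bbR)^n$ and $H^k(\bbR^{d-1})^n$, \cite[Theorems~XIII.34--35 and Corollary~1]{ReedSimon4} give directly
\[
\sgma(\cL_\a)=\sgma(\cL_{1,\a})+\sgma(\D_y)=\sgma(\cL_{1,\a})+(-\infty,0],
\]
and the word ``essential'' then comes for free from topology: every point on a nondegenerate half-line lying inside $\sgma(\cL_\a)$ is accumulated from the left by other spectral points, hence cannot be an isolated eigenvalue of finite multiplicity. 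Your construction of explicit singular Weyl sequences $w_n=u_n\otimes v_n$ is the concrete mechanism behind the Reed--Simon theorem; it is self-contained and makes transparent why the transverse Laplacian fills in the half-line, at the cost of the norm bookkeeping you flag.

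Two simplifications are worth recording. First, you do not need $u_n$ to be band-limited. If only $\hat v_n$ is supported in $\{|\zeta|\le R\}$ (your construction gives this with $R=|\x_0|+1$), then for \emph{any} $u\in H^k_\a(\bbR)^n$ a Plancherel computation in the $y$-variable yields
\[
\norm{u}_{H^k_\a(\bbR)}\norm{v_n}_{L^2(\bbR^{d-1})}\le\norm{u\otimes v_n}_{H^k_\a(\bbR^d)}\le(1+R^2)^{k/2}\norm{u}_{H^k_\a(\bbR)}\norm{v_n}_{L^2(\bbR^{d-1})},
\]
and the same two-sided bound applies with $u$ replaced by $(\cL_{1,\a}-\eta)u_n$ or with $v_n$ replaced by $(\D_y-\m)v_n$ (which has the same Fourier support). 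This disposes of the most delicate step you identify. Second, you can avoid the discussion of residual spectrum altogether: it suffices to run your Weyl-sequence argument only for $\eta$ on the \emph{boundary} of $\sgma(\cL_{1,\a})$ (where the approximate-point-spectrum property is automatic for any closed operator) and for the eigenvalue $\eta=0$; the half-line through any interior spectral point with imaginary part $c$ is contained in the half-line through the rightmost boundary point on $\{\Im\l=c\}$.
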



\begin{proof}	
	The result   follows from \cite[Theorem XIII.34, Theorem XIII.35, and Corollary 1]{ReedSimon4}. Indeed, since $\mathcal{L}_{1,\alpha}$ and $I_n \Delta_y$ are the generators of bounded  analytic semigroups on Hilbert spaces $H^k_{\alpha}(\mathbb{R})$ and $H^k(\mathbb{R}^{d-1})$ respectively, we have
	\begin{equation*}
	\sgma(\mathcal{L}_{1,\alpha}\otimes I_{H^k(\mathbb{R}^{d-1})}+I_{H^k_{\alpha}(\mathbb{R})}\otimes
	\Delta_y)=\sgma(\mathcal{L}_{1,\alpha})+\sgma(\Delta_y),
	\end{equation*}
which implies the conclusions of  the proposition. 
\end{proof}


Since by Hypothesis \ref{hypo3.2.3},  $0$ is a simple,  isolated eigenvalue of $\mathcal{L}_{1,\alpha}$, we can define the Riesz spectral projection $P_{\alpha}$ of $\cL_{1,\alpha}$ on $H_{\alpha}^k(\mathbb{R})^n$ onto the 1-dimensional space $\ker(\mathcal{L}_{1,\alpha})$. The projection $P_{\alpha}$ commutes with $e^{t\mathcal{L}_{1,\alpha}}$ for all $t\geq0$. Since the operator $\mathcal{L}_{1,\alpha}$ is Fredholm of index zero, standard operator theory, see, e.g.,  
\cite[Lemma 2.13]{DL}, yields that 
$H_{\alpha}^k(\mathbb{R})^n=\ran \mathcal{L}_{1,\alpha} \oplus\ker \mathcal{L}_{1,\alpha}$ and $\ker P_{\alpha} =\ran \mathcal{L}_{1,\alpha}$.

Hypothesis \ref{hypo3.2.3} implies that $\ran P_{\alpha} =\ker \mathcal{L}_{1,\alpha} $ is spanned by $\phi'$. Reasoning as in \cite{Kapitula2} or as in the proof of 
Lemma~3.8 in \cite{GLS2}, that is, by invoking Palmer's Theorem \cite{Palm},  one can show that there exists a unique $H^k$-smooth function $\tilde{e}:\mathbb{R}\rightarrow\mathbb{R}^n$ such that the function $\gamma_{\alpha}^{-1}(\cdot)\tilde{e}(\cdot)$ is exponentially decaying, $\tilde{e}$ solves the adjoint equation $L_{1,\alpha}^*\tilde{e}=0$ and satisfies $\int_{\bbR}(\tilde{e}(s),\phi'(s))_{\bbR^n}ds=1$, where $(\ ,\ )_{\bbR^n}$ is the standard inner product in $\bbR^n$. 
Then for $V\in H^k_{\alpha}(\mathbb{R})^n$, the operator $P_{\alpha}$ can be written as follows,
\begin{equation*}
(P_{\alpha}V)(z)=\left(\int_{\mathbb{R}}\left( \tilde{e}(s),V(s) \right)_{\bbR^n}ds\right)\phi'(z),\, z\in\mathbb{R}.
\end{equation*}
Let $Q_{\alpha}=I-P_{\alpha}$ be the projection in $H^k_{\alpha}(\bbR)^n$ onto $\ran\mathcal{L}_{1,\alpha}$ with kernel $\ker(\mathcal{L}_{1,\alpha})$. The operator $Q_{\alpha}$ also commutes with $e^{t\mathcal{L}_{1,\alpha}}$ for all $t\geq 0$.
Next, for $U\in H^k_{\alpha}(\mathbb{R})^n\otimes H^k(\mathbb{R}^{d-1})^n$ we denote 
 \begin{equation}\label{DFNPP0}
 (\pi_{\alpha}U)(y)=\int_{\mathbb{R}}\left( \tilde{e}(s),U(s,y) \right)_{\bbR^n}ds, \end{equation}
and introduce  an operator on ${H^k_{\alpha}(\mathbb{R}^d)}^n=H^k_{\alpha}(\mathbb{R})^n\otimes H^k(\mathbb{R}^{d-1})$ defined by
\begin{equation*}
\mathcal{P}U=\left(P_{\alpha}\otimes I_{H^k(\mathbb{R}^{d-1})} \right)U,
\end{equation*}
so that
\begin{equation*}
(\mathcal{P}U)(z,y)=\left( \int_{\mathbb{R}}(\tilde{e}(s),U(s,y))_{\bbR^n}ds \right)\phi'(z)
=\left(\pi_{\alpha}U\right)(y)\phi'(z),\, (z,y)\in\mathbb{R}^d.
\end{equation*}

In what follows we frequently use the following lemma from \cite[page 299]{ReedSimon}:
\begin{lemma}\label{tensornorm}
	Let $A$ and $B$ be bounded operators on Hilbert spaces $H_1$ and $H_2$. Then $$\|A\otimes B\|_{\cB(H_1\otimes H_2)}=\|A\|_{\cB(H_1)}\|B\|_{\cB(H_2)}.$$
\end{lemma}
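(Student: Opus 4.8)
The plan is to prove the equality by establishing the two inequalities $\|A\otimes B\|_{\cB(H_1\otimes H_2)}\le\|A\|_{\cB(H_1)}\|B\|_{\cB(H_2)}$ and $\|A\otimes B\|_{\cB(H_1\otimes H_2)}\ge\|A\|_{\cB(H_1)}\|B\|_{\cB(H_2)}$ separately, and then combining them.

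For the upper bound I would first factor $A\otimes B=(A\otimes I_{H_2})(I_{H_1}\otimes B)$, so that it suffices to prove $\|A\otimes I_{H_2}\|\le\|A\|$, the companion bound $\|I_{H_1}\otimes B\|\le\|B\|$ being obtained by the symmetric argument. To prove $\|A\otimes I_{H_2}\|\le\|A\|$, fix an orthonormal basis $\{f_j\}_{j\in J}$ of $H_2$ and use the structural description of the Hilbert-space tensor product: every $\psi\in H_1\otimes H_2$ has a unique representation $\psi=\sum_{j\in J}e_j\otimes f_j$ with $e_j\in H_1$, and $\|\psi\|^2=\sum_{j}\|e_j\|^2$. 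Since $(A\otimes I_{H_2})\psi=\sum_j(Ae_j)\otimes f_j$ and the summands are mutually orthogonal, one gets
$$\|(A\otimes I_{H_2})\psi\|^2=\sum_j\|Ae_j\|^2\le\|A\|^2\sum_j\|e_j\|^2=\|A\|^2\|\psi\|^2,$$
hence $\|A\otimes I_{H_2}\|\le\|A\|$. Multiplying the two operator-norm bounds yields $\|A\otimes B\|\le\|A\|\|B\|$.

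For the lower bound I would test against rank-one vectors. Given $\varepsilon>0$, pick unit vectors $e\in H_1$ and $f\in H_2$ with $\|Ae\|\ge\|A\|-\varepsilon$ and $\|Bf\|\ge\|B\|-\varepsilon$. Then $e\otimes f$ has norm $1$ in $H_1\otimes H_2$, and $(A\otimes B)(e\otimes f)=(Ae)\otimes(Bf)$ has norm $\|Ae\|\,\|Bf\|\ge(\|A\|-\varepsilon)(\|B\|-\varepsilon)$. Letting $\varepsilon\downarrow0$ gives $\|A\otimes B\|\ge\|A\|\|B\|$, and together with the upper bound this proves the claimed equality.

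The only genuinely nontrivial ingredient is the structural fact that, after fixing an orthonormal basis of $H_2$, every element of the completed tensor product $H_1\otimes H_2$ decomposes as an $\ell^2$-sum $\sum_j e_j\otimes f_j$ with $\|\sum_j e_j\otimes f_j\|^2=\sum_j\|e_j\|^2$; this is essentially the definition of the Hilbert-space tensor product together with Parseval's identity, and it is exactly what legitimizes the termwise estimate above. Everything else is elementary, which is why the statement may simply be quoted from \cite[p.~299]{ReedSimon}.
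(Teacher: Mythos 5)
Your proof is correct. The paper itself gives no argument for this lemma, quoting it directly from \cite[p.~299]{ReedSimon}, and your two-sided estimate (upper bound via the factorization $A\otimes B=(A\otimes I)(I\otimes B)$ together with the columnwise $\ell^2$ decomposition $\|\sum_j e_j\otimes f_j\|^2=\sum_j\|e_j\|^2$, lower bound by testing on elementary tensors $e\otimes f$ with near-maximizing unit vectors) is precisely the standard proof of the cited fact; the only cosmetic refinement would be to note that $A\otimes I$ is first defined on the algebraic tensor product and extended by continuity, so the termwise formula for infinite sums is justified by density, while the finite-sum estimate already yields the norm bound.
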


We now show that $\pi_{\alpha}$ and $\mathcal{P}$ have the following properties.	

\begin{lemma}\label{lem3.2.5}
	Let $k\geq[\frac{d+1}{2}]$ and  $\alpha=(\alpha_-,\alpha_+)\in\mathbb{R}^2_+$  be as in Hypothesis \ref{hypo3.2.3}. Then 
	$$\mathcal{P}\in \cB\left({H^k_{\alpha}(\mathbb{R}^d)}\cap{H^k(\mathbb{R}^d)}\right)  \text{ and }    \pi_{\alpha}\in\mathcal{B}\left( H^k(\mathbb{R}^d)\cap{H^k_{\alpha}(\mathbb{R}^d)},H^k(\mathbb{R}^{d-1})\right).$$ 
	Moreover, $$\pi_{\alpha}\in\mathcal{B}\left(L^1_{\alpha}(\mathbb{R})\otimes L^1(\mathbb{R}^{d-1}),L^1(\mathbb{R}^{d-1})\right).$$
\end{lemma}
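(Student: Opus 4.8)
The plan is to exploit the tensor-product structure of the operators $\mathcal{P}$ and $\pi_\alpha$ and combine it with Lemma~\ref{tensornorm}. Recall that $\mathcal{P}=P_\alpha\otimes I_{H^k(\mathbb{R}^{d-1})}$ and that $\pi_\alpha$ is, roughly, $\langle\tilde e,\cdot\rangle\otimes I_{H^k(\mathbb{R}^{d-1})}$, where $P_\alpha$ and the functional $V\mapsto\int_{\mathbb R}(\tilde e(s),V(s))_{\mathbb R^n}\,ds$ act only in the $z$-variable. First I would establish the one-dimensional building blocks: (i) $P_\alpha$ is bounded on $H^k_\alpha(\mathbb{R})^n$ — already known, being a Riesz projection — and, crucially, since $\ran P_\alpha$ is spanned by $\phi'$ which lies in $H^k(\mathbb{R})^n$ as well (Hypothesis~\ref{hypo3.2.2}, $\phi$ is $C^{k+5}$ and converges exponentially, so $\phi'\in H^k$), and since $\gamma_\alpha^{-1}\tilde e$ is exponentially decaying so $\tilde e$ defines a bounded functional both on $H^k_\alpha(\mathbb{R})^n$ and on $H^k(\mathbb{R})^n$ (pair $\tilde e$ against $\gamma_\alpha^{-1}$ times a weighted function, or directly), the map $P_\alpha$ restricts to a bounded operator on $H^k(\mathbb{R})^n\cap H^k_\alpha(\mathbb{R})^n$; likewise the scalar functional $v\mapsto\int(\tilde e,v)$ is bounded from $H^k(\mathbb{R})^n\cap H^k_\alpha(\mathbb{R})^n$ to $\mathbb{R}^n$ (indeed from either space alone). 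For the $L^1$ statement, I would note $\tilde e\in L^1_\alpha(\mathbb{R})$ — more precisely $\gamma_\alpha^{-1}\tilde e$ and hence, after the change of perspective, $\tilde e\,\gamma_\alpha^{-1}\cdot\gamma_\alpha$ gives a bounded pairing — so that $v\mapsto\int(\tilde e,v)_{\mathbb{R}^n}$ is bounded on $L^1_\alpha(\mathbb{R})^n$.

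Second, I would tensor up. Writing $U=v\otimes w$ on the dense set of elementary tensors, $\mathcal{P}U=(P_\alpha v)\otimes w$ and $\pi_\alpha U=\big(\int(\tilde e,v)\big)\,w$, and Lemma~\ref{tensornorm} gives the operator-norm identities $\|\mathcal{P}\|=\|P_\alpha\|\cdot\|I\|$ and $\|\pi_\alpha\|=\|\langle\tilde e,\cdot\rangle\|\cdot\|I\|$ on the respective tensor-product Hilbert (or $L^1$) spaces; by density this yields boundedness of $\mathcal{P}$ on $H^k_\alpha(\mathbb{R}^d)$ and of $\pi_\alpha$ from $H^k_\alpha(\mathbb{R}^d)$ to $H^k(\mathbb{R}^{d-1})$, and, separately, on the unweighted spaces. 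Intersecting, $\mathcal{P}\in\mathcal{B}(\mathcal{H})$ and $\pi_\alpha\in\mathcal{B}(\mathcal{H},H^k(\mathbb{R}^{d-1}))$ with $\|U\|_{\mathcal{H}}=\max\{\|U\|_{H^k},\|U\|_{H^k_\alpha}\}$ simply by taking the max of the two bounds. The $L^1$ claim $\pi_\alpha\in\mathcal{B}(L^1_\alpha(\mathbb{R})\otimes L^1(\mathbb{R}^{d-1}),L^1(\mathbb{R}^{d-1}))$ follows the same way once we know $v\mapsto\int(\tilde e,v)_{\mathbb{R}^n}$ is bounded $L^1_\alpha(\mathbb{R})^n\to\mathbb{R}^n$, which holds because $\gamma_\alpha^{-1}\tilde e\in L^\infty(\mathbb{R})$ (it decays exponentially).

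Third, a technical point that needs care: Lemma~\ref{tensornorm} is stated for bounded operators on Hilbert spaces $H_1,H_2$, so to apply it to $\mathcal{P}=P_\alpha\otimes I$ we must regard $H^k_\alpha(\mathbb{R})$ as a Hilbert space (it is, with inner product transported from $H^k(\mathbb{R})$ via $\gamma_\alpha$) and $I_{H^k(\mathbb{R}^{d-1})}$ as the identity there; for $\pi_\alpha$, the factor $\langle\tilde e,\cdot\rangle$ maps into $\mathbb{R}^n\cong\mathbb{C}^n$, a (finite-dimensional) Hilbert space, so the lemma still applies with $H_1=H^k_\alpha(\mathbb{R})^n$, target $\mathbb{R}^n$. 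The intersection space $\mathcal{H}$ with the max-norm is not itself realized as a single tensor product of Hilbert spaces, so the cleanest route is: prove boundedness on $H^k(\mathbb{R}^d)$ and on $H^k_\alpha(\mathbb{R}^d)$ \emph{separately} by the tensor argument, then observe that a linear map bounded on two normed spaces with constants $C_1,C_2$ is bounded on their intersection (with the max-norm) with constant $\max\{C_1,C_2\}$ — this is immediate and avoids any subtlety about tensor products of intersections.

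The main obstacle is the first step — verifying that $\tilde e$ (equivalently $\gamma_\alpha^{-1}\tilde e$) has enough decay/integrability and regularity to define a bounded functional simultaneously on the unweighted space $H^k(\mathbb{R})^n$, the weighted space $H^k_\alpha(\mathbb{R})^n$, and $L^1_\alpha(\mathbb{R})^n$, and symmetrically that $\phi'$ lies in $H^k(\mathbb{R})^n\cap H^k_\alpha(\mathbb{R})^n$. This is exactly where Lemma~\ref{lem3.2.6} and Hypotheses~\ref{hypo3.2.2}--\ref{hypo3.2.3} enter: $\phi'$ decays exponentially at both ends (so $\phi'\in H^k$) and $\gamma_\alpha\phi'$ also decays exponentially at both ends (so $\phi'\in H^k_\alpha$), while the construction of $\tilde e$ via Palmer's theorem gives $\gamma_\alpha^{-1}\tilde e$ exponentially decaying, hence in $L^\infty\cap L^1\cap H^k$ after unweighting, which controls the pairing $\int(\tilde e,v)=\int(\gamma_\alpha^{-1}\tilde e)(\gamma_\alpha v)$ on weighted spaces and $\int(\tilde e,v)$ directly (via Cauchy--Schwarz / Hölder) on the unweighted ones. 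Once these decay facts are marshalled, the rest is a routine application of Lemma~\ref{tensornorm} and the intersection-norm remark.
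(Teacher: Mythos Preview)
Your approach for $\mathcal{P}$ is exactly what the paper does: cite the one-dimensional fact that $P_\alpha$ is bounded on $H^k_\alpha(\mathbb{R})\cap H^k(\mathbb{R})$ (from \cite{GLS2}) and then invoke Lemma~\ref{tensornorm}. For $\pi_\alpha$, however, you take a genuinely different route. The paper does not tensor up; it computes directly: writing $(\tilde e,U)=(\gamma_\alpha^{-1}\tilde e,\gamma_\alpha U)$, applying H\"older in the $z$-integral, and then differentiating in the $y$-variables using the equivalent Sobolev norm $\|f\|_{H^k(\mathbb{R}^{d-1})}\sim\|f\|_{L^2}+\sum_{|a|=k}\|\partial_y^a f\|_{L^2}$. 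The $L^1$ statement is likewise handled by a one-line estimate $\|\pi_\alpha U\|_{L^1(\mathbb{R}^{d-1})}\le\|\gamma_\alpha^{-1}\tilde e\|_{L^\infty}\|\gamma_\alpha U\|_{L^1(\mathbb{R}^d)}$. Your tensor argument is more structural and avoids the explicit derivative bookkeeping; the paper's approach is more elementary and sidesteps two technical points you would have to patch.

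Those two points: first, Lemma~\ref{tensornorm} as stated is for \emph{endomorphisms} of Hilbert spaces, so to apply it to the functional $\langle\tilde e,\cdot\rangle:H^k_\alpha(\mathbb{R})^n\to\mathbb{R}$ you need the (standard, but not literally quoted) version for bounded maps between different Hilbert spaces; and for the $L^1$ claim the spaces are not Hilbert, so you cannot invoke the lemma at all and must fall back on the direct Fubini/H\"older estimate anyway. Second, your parenthetical that $\langle\tilde e,\cdot\rangle$ is bounded on the \emph{unweighted} $H^k(\mathbb{R})^n$ is not justified by the hypotheses: we only know $\gamma_\alpha^{-1}\tilde e$ decays exponentially, which says nothing about $\tilde e$ itself when $\alpha_+>0$. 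Fortunately this is harmless --- boundedness of $\pi_\alpha$ on $H^k_\alpha(\mathbb{R}^d)$ alone already yields boundedness on $\mathcal{H}$ since $\|U\|_{H^k_\alpha}\le\|U\|_{\mathcal{H}}$, so you do not need both bounds and should simply drop the unweighted claim.
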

\begin{proof}
	Since $\|\gamma_{\alpha}^{-1}(z)\tilde{e}(z)\|_{\bbR^n}\rightarrow 0$ exponentially fast as $|z|\rightarrow\infty$, there exist $\zeta_-<0<\zeta_+$ and $K>0$ such that $\|\gamma_{\alpha}^{-1}(z)\tilde{e}(z)\|_{\bbR^n}\leq Ke^{-\zeta_- z}$ for $z\leq 0$, and $\|\gamma_{\alpha}^{-1}(z)\tilde{e}(z)\|_{\bbR^n}\leq Ke^{-\zeta_+ z}$ for $z\geq 0$.  We pick $U\in{H^k(\mathbb{R}^d)}^n\cap{H^k_{\alpha}(\mathbb{R}^d)}^n$, and first consider the $L^2$-norm, so that
	\begin{align*}
	\| \pi_{\alpha}U \|^2_{L^2(\mathbb{R}^{d-1})}&= 
	\int_{\bbR^{d-1}}\left| \int_{\bbR}\left( \gamma_{\alpha}^{-1}(s) \tilde{e}(s), \gamma_{\alpha}(s)U(s,y) \right)_{\bbR^n} \, ds\right|^2\,dy\\
	&\leq\int_{\mathbb{R}^{d-1}} \left( \int_{\mathbb{R}}\|\gamma_{\alpha}^{-1}(s)\tilde{e}(s)\|_{\bbR^n}^2 ds \right)\left( \int_{\mathbb{R}}\|\gamma_{\alpha}(s)U(s,y)\|_{\bbR^n}^2 ds \right) dy
	\end{align*}
	by H\"{o}lder's inequality. Since 
		\begin{equation*}
	\|\gamma_{\alpha}^{-1}(s)\tilde{e}(s)\|_{\bbR^n}\leq\begin{cases}
	Ke^{-\zeta_-s},\, \text{for }s\leq 0,\\
	Ke^{-\zeta_+s},\, \text{for }s\geq 0,
	\end{cases}
	\end{equation*}
	then 	
	\begin{equation}\label{gamma}
	\int_{\mathbb{R}}\|\gamma_{\alpha}^{-1}(s)\tilde{e}(s)\|_{\bbR^n}^2 ds \leq K\left(\int_{-\infty}^0e^{-2\zeta_- s}\, ds+\int_0^{\infty} e^{-2\zeta_+s}\,ds\right)\leq C
	\end{equation}
	for some constant $C>0$. Thus, 
	\begin{equation}\label{eq3.2.27}
	\| \pi_{\alpha}U \|^2_{L^2(\mathbb{R}^{d-1})}\leq 
	C\| \gamma_{\alpha}U \|^2_{L^2(\mathbb{R}^d)} 
	\leq C\max\{ \|U\|^2_{L^2(\bbR^d)}, \|U\|^2_{L^2_{\alpha}(\bbR^d)} \}\leq C\|U\|^2_{{\mathcal{H}}}.
	\end{equation}
	
	For $H^k$-norms, we use the equivalent Sobolev norm (see, e.g., \cite[p.316]{NS}) given as follows: Let $x=(z,y)\in\bbR^d$ and $y=(x_2,...,x_d)\in\bbR^{d-1}$, then
	\begin{equation*}
	\|f\|_{H^k(\mathbb{R}^{d-1})}\sim\|f\|_{L^2(\mathbb{R}^{d-1})}+\sum_{a_2+\cdots+a_d=k}\left\|\frac{\partial^k}{\partial{x_2^{a_2}}\cdots \partial{x_d^{a_d}}}f\right\|_{L^2(\bbR^{d-1})},
	\end{equation*}
	where the sum extends over all $(d-1)$-tuples $(a_2,...,a_d)$ of non-negative integers with $\sum_{i=2}^d a_i=k$, and $\displaystyle{\frac{\partial^{a_i}}{\partial{x_i^{a_i}}}}$ is the $a_i$-th differentiation of functions with respect to $x_i$, $i=2$, ..., $d$. 
	
	We already have the estimates for $\|\pi_{\alpha}U\|_{L^2(\mathbb{R}^{d-1})}$ for $U\in{H^k_{\alpha}(\mathbb{R}^d)}^n\cap {H^k(\mathbb{R}^d)}^n$. From H\"{o}lder's inequality and \eqref{gamma} it follows  that
	\begin{align}\label{eq3.4.30}
	&\left\| \frac{\partial^k}{\partial{x_2^{a_2}}\cdots \partial{x_d^{a_d}}}\pi_{\alpha}U \right\|^2_{L^2(\mathbb{R}^{d-1})}
	\leq\int_{\mathbb{R}^{d-1}}\left(\int_{\mathbb{R}}\|\gamma^{-1}_{\alpha}(s) \tilde{e}(s)\|_{\bbR^n} \left\|\gamma_{\alpha}(s)\frac{\partial^k}{\partial{x_2^{a_2}}\cdots \partial{x_d^{a_d}}}U(s,y) \right\|_{\bbR^n}ds\right)^2dy\no\\
	&\leq\int_{\mathbb{R}^{d-1}} \left( \int_{\mathbb{R}}\|\gamma_{\alpha}^{-1}(s)\tilde{e}(s)\|_{\bbR^n}^2 ds \right)\left( \int_{\mathbb{R}}\left\|\gamma_{\alpha}(s)\frac{\partial^k U(s,y)}{\partial{x_2^{a_2}}\cdots \partial{x_d^{a_d}}}\right\|_{\bbR^n}^2 ds \right) dy
	\leq C\| U \|^2_{\alpha}\leq C\| U \|^2_{{\mathcal{H}}} ,
	\end{align}
	thus implying  $\pi_{\alpha}\in\mathcal{B}\left({H^k_{\alpha}(\mathbb{R}^d)}^n\cap{H^k(\mathbb{R}^d)}^n,H^k(\mathbb{R}^{d-1})\right)$.
	
	For  the $L^1$-norm of $\pi_{\alpha}U$,  analogously,
	\begin{align}\label{eq3.2.32}
	\| \pi_{\alpha}U \|_{L^1(\mathbb{R}^{d-1})}
	\leq C\|\gamma_{\alpha}U\|_{L^1(\mathbb{R}^{d})}
	\leq C\|U\|_{L^1_{\alpha}(\bbR)\otimes L^1(\bbR^{d-1})}.
	\end{align}
	
	We now  consider $\mathcal{P}U$ for $U\in{H^k_{\alpha}(\mathbb{R}^d)}^n$ noting that $\mathcal{P}=P_{\alpha}\otimes I_{H^k(\mathbb{R}^{d-1})}$. As shown in \cite[Section 3.3]{GLS2}, the projection $P_{\alpha}$ is a bounded operator from $H^k_{\alpha}(\bbR)\cap H^k(\bbR)$ to $H^k_{\alpha}(\bbR)\cap H^k(\bbR)$. Therefore, by Lemma \ref{tensornorm} we have: 
$
	\|\mathcal{P}\|_{\cB({\mathcal{H}})}= \|P_{\alpha}\|_{\cB(H^k_{\alpha}(\bbR)\cap H^k(\bbR))}\|I\|_{\cB(H^k(\bbR^{d-1}))}\leq C,
	$
	which completes the proof.
\end{proof}
\begin{lemma}\label{cpcq}  The operator $\cP$ is a bounded operator  (i) from ${H^k_{\alpha}(\mathbb{R}^d)}$ to ${H^k_{\alpha}(\mathbb{R}^d)}$; (ii)  from  ${\mathcal{H}}$ to ${H^k_{\alpha}(\mathbb{R}^d)}$.  
(iii)  from   $H^k_{\a}(\bbR^d)$ to $H^k(\bbR^d)$; (iv)  from  ${\mathcal{H}}$ to $H^k(\bbR^d)$.  The complementary projection  $\cQ=\cI-\cP$ is a bounded operator (i) from $H^k_{\a}(\bbR^d)$ to ${\mathcal{H}}$ and (ii) from ${\mathcal{H}}$ to ${\mathcal{H}}$. 
\end{lemma}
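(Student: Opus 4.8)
The plan is to reduce every assertion to a one-dimensional statement about the Riesz projection $P_{\alpha}$ on $H^{k}_{\alpha}(\bbR)^{n}$ and its complement $Q_{\alpha}=I-P_{\alpha}$, and then to pass to $\bbR^{d}$ using the identities $\cP=P_{\alpha}\otimes I_{H^{k}(\bbR^{d-1})}$ and $\cQ=Q_{\alpha}\otimes I_{H^{k}(\bbR^{d-1})}$, the tensor decompositions $H^{k}(\bbR^{d})=H^{k}(\bbR)\otimes H^{k}(\bbR^{d-1})$ and $H^{k}_{\alpha}(\bbR^{d})=H^{k}_{\alpha}(\bbR)\otimes H^{k}(\bbR^{d-1})$, and Lemma~\ref{tensornorm} (in the routine version allowing the two operator factors to act between possibly different Hilbert spaces). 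Two elementary facts about the $z$-variable are needed: first, $\phi'\in H^{k}(\bbR)^{n}\cap H^{k}_{\alpha}(\bbR)^{n}$; second, the linear functional $v\mapsto\int_{\bbR}(\tilde e(s),v(s))_{\bbR^{n}}\,ds$ is bounded on $H^{k}_{\alpha}(\bbR)^{n}$.

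For the first fact, that $\phi'\in H^{k}(\bbR)^{n}$ is standard: Hypothesis~\ref{hypo3.2.2} gives exponential convergence $\phi(z)\to\phi_{\pm}$, and differentiating the profile equation \eqref{sys1dim} (the front being $C^{k+5}$) yields exponential decay of $\phi',\dots,\phi^{(k+1)}$ at $\pm\infty$. That $\phi'\in H^{k}_{\alpha}(\bbR)^{n}$, i.e.\ $\gamma_{\alpha}\phi'\in H^{k}(\bbR)^{n}$, is a consequence of Lemma~\ref{lem3.2.6}(2): applying Leibniz's rule and using $\gamma_{\alpha}^{(i)}=\alpha_{\pm}^{i}\gamma_{\alpha}$ for $|z|$ large (cf.\ \eqref{eq3.1.7}) writes each derivative of $\gamma_{\alpha}\phi'$ of order $\le k$, up to a bounded compactly supported term, as a constant-coefficient combination of the functions $\gamma_{\alpha}\phi^{(m)}$ with $1\le m\le k+1$, all of which decay exponentially at $\pm\infty$. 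For the second fact, pointwise $(\tilde e,v)_{\bbR^{n}}=(\gamma_{\alpha}^{-1}\tilde e,\gamma_{\alpha}v)_{\bbR^{n}}$, and since $\|\gamma_{\alpha}^{-1}\tilde e\|_{L^{2}(\bbR)}<\infty$ (this is \eqref{gamma}), Cauchy--Schwarz gives $\big|\int_{\bbR}(\tilde e,v)_{\bbR^{n}}\,ds\big|\le\|\gamma_{\alpha}^{-1}\tilde e\|_{L^{2}(\bbR)}\,\|\gamma_{\alpha}v\|_{L^{2}(\bbR)}\le C\|v\|_{H^{k}_{\alpha}(\bbR)}$.

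Now I assemble. Inserting the two facts into the explicit formula $(P_{\alpha}v)(z)=\big(\int_{\bbR}(\tilde e,v)_{\bbR^{n}}\,ds\big)\phi'(z)$ shows that $P_{\alpha}$ is bounded from $H^{k}_{\alpha}(\bbR)^{n}$ into $H^{k}_{\alpha}(\bbR)^{n}$ and from $H^{k}_{\alpha}(\bbR)^{n}$ into $H^{k}(\bbR)^{n}$. Tensoring with $I_{H^{k}(\bbR^{d-1})}$ and invoking Lemma~\ref{tensornorm} yields (i) $\cP\in\cB(H^{k}_{\alpha}(\bbR^{d}))$ and (iii) $\cP\in\cB\big(H^{k}_{\alpha}(\bbR^{d}),H^{k}(\bbR^{d})\big)$. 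Parts (ii) and (iv) are then immediate: by Lemma~\ref{lem3.2.5} one has $\cP\in\cB(\cH)$, and by \eqref{defcH} the identity maps $\cH$ continuously into both $H^{k}_{\alpha}(\bbR^{d})$ and $H^{k}(\bbR^{d})$, so $\cP$ maps $\cH$ boundedly into each. Finally, for $\cQ=\cI-\cP$: since $\cP\in\cB(\cH)$, also $\cQ\in\cB(\cH)$, which is (ii); and for (i) the decomposition $\cQ U=U-\cP U$ together with parts (i), (iii) for $\cP$ and the embeddings of $\cH$ shows, for $U\in\cH$, that $\cQ U\in H^{k}(\bbR^{d})\cap H^{k}_{\alpha}(\bbR^{d})=\cH$ with $\|\cQ U\|_{\cH}\le(1+\|\cP\|_{\cB(\cH)})\|U\|_{\cH}$.

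The only step carrying genuine content is the first fact, and within it the inclusion $\phi'\in H^{k}_{\alpha}(\bbR)^{n}$: the weight $\gamma_{\alpha}$ grows exponentially as $z\to+\infty$, so this membership is not automatic and rests squarely on the exponential decay of the derivatives $\phi^{(m)}$, $m\ge1$, supplied by Lemma~\ref{lem3.2.6}(2) (ultimately a consequence of the decay rates in Hypotheses~\ref{hypo3.2.2}, \ref{hypo3.2.3} and the choice of $\alpha$). Everything else --- the rank-one structure of $P_{\alpha}$, the tensor-norm identity of Lemma~\ref{tensornorm}, and the continuous embedding of $\cH$ into its two defining factors --- is routine bookkeeping.
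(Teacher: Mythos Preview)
Your argument is correct and follows the same route as the paper, which writes $\cP U(z,y)=(\pi_{\alpha}U)(y)\,\phi'(z)$ and combines the bound $\|\pi_{\alpha}U\|_{H^{k}(\bbR^{d-1})}\le C\|U\|_{H^{k}_{\alpha}(\bbR^{d})}$ (implicit in the proof of Lemma~\ref{lem3.2.5}) with $\phi'\in H^{k}(\bbR)\cap H^{k}_{\alpha}(\bbR)$; your one-dimensional reduction via $P_{\alpha}$ and Lemma~\ref{tensornorm} is an equivalent packaging of the same two ingredients.

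One remark on the assertion (i) for $\cQ$: you treated it with $U\in\cH$, not $U\in H^{k}_{\alpha}(\bbR^{d})$ as the lemma states, and in fact the stated version cannot hold. If $\cQ$ mapped $H^{k}_{\alpha}(\bbR^{d})$ boundedly into $\cH\subset H^{k}(\bbR^{d})$, then for any $U\in H^{k}_{\alpha}(\bbR^{d})$ the identity $U=\cQ U+\cP U$ together with $\cP U\in H^{k}(\bbR^{d})$ (your part (iii)) would force $U\in H^{k}(\bbR^{d})$, i.e.\ $H^{k}_{\alpha}(\bbR^{d})\subset H^{k}(\bbR^{d})$, which fails for $\alpha_{-}>0$. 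So (i) for $\cQ$ is a misprint; the domain should be $\cH$, exactly as you took it, and this is also the only way the paper uses the result (see the proof of Lemma~\ref{lbeta}, where $\cQ_{\cH}$ always acts on $\cH$).
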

	\begin{proof}Indeed, Lemma \ref{lem3.2.5} and the definition $\cP U(z,y)=(\pi_{\alpha} U)(y) \phi'(z)$, $z,y\in\bbR^d$, imply that 
	\begin{align*}
	\|\cP U\|_{H^k_{\alpha}(\bbR^d)}=\|\pi_{\alpha} U\|_{H^k(\bbR^{d-1})}\|\phi'\|_{H^k_{\alpha}(\bbR)}\leq C\|U\|_{H^k_{\alpha}(\bbR^d)}\|\phi'\|_{H^k_{\alpha}(\bbR)}\leq C\|U\|_{{\mathcal{H}}}\|\phi'\|_{H^k_{\alpha}(\bbR)},\\
	\|\cP U\|_{H^k(\bbR^d)}=\|\pi_{\alpha} U\|_{H^k(\bbR^{d-1})}\|\phi'\|_{H^k(\bbR)}\leq C\|U\|_{H^k_{\alpha}(\bbR^d)}\|\phi'\|_{H^k(\bbR)}\leq C\|U\|_{{\mathcal{H}}}\|\phi'\|_{H^k(\bbR)},
	\end{align*}
	and  the statement above follows. \end{proof}

The projection $P_{\alpha}$ is initially defined as the Riesz projection for the operator $\mathcal{L}_{1,\alpha}$. To verify  that $\mathcal{P}\mathcal{L}_{\alpha}=\mathcal{L}_{\alpha}\mathcal{P}$, we recall that    $P_{\alpha}\mathcal{L}_{1,\alpha}=\mathcal{L}_{1,\alpha}P_{\alpha}$   which implies that $\mathcal{L}_{\alpha}$ and $\mathcal{P}$ commute since   $\mathcal{P}\mathcal{L}_{\alpha}
=
P_{\alpha}\mathcal{L}_{1,\alpha}\otimes I_{H^k(\mathbb{R}^{d-1})^n} +P_{\alpha} \otimes\Delta_y$  and   $\mathcal{L}_{\alpha}\mathcal{P}
=
\mathcal{L}_{1,\alpha}P_{\alpha}\otimes I_{H^k(\mathbb{R}^{d-1})^n} +P_{\alpha} \otimes\Delta_y$.


\begin{remark}{ When the diffusion matrix $D$  in \eqref{GEQ_D} is not a multiple of an identity matrix, the relation $\cP\cL_{\alpha}=\cL_{\alpha}\cP$ does not hold in general.  Indeed, 
$
\mathcal{L}_{\alpha}=\mathcal{L}_{1,\alpha}\otimes I_{H^k(\mathbb{R}^{d-1})^n} +DI_{H^k_{\alpha}(\mathbb{R})^n}\otimes\Delta_y,
$
 and, in general,  $D$ doesn't commute with $P_{\alpha}$.}   This is the main obstacle that prevents us from dealing with non-scalar  diffusion matrices.

 \end{remark}


\section{The semigroup  estimates.}\label{sec4}

In this section we  provide estimates for the semigroups generated by the linear operators $\mathcal{L}_{\alpha}$, $\mathcal{L}_{\mathcal{H}}$, $\Delta_y$, and $\mathcal{L}^{(i)}$ for $i=1,2$, cf. \eqref{eq3.1.2}, \eqref{eq3.2.6} and \eqref{eq3.2.7}, see Lemmas~\ref{lalpha}, \ref{lem3.4.9}, \ref{lem3.7.19} and \ref{lem3.4.4} below.
 Hypothesis \ref{hypo3.2.3} implies the following standard fact about analytic semigroups.
 \begin{lemma}\label{lalpha}
 	If $\nu>0$ is such that $\sup\{ \Re \lambda:\lambda\in \Spe(\mathcal{L}_{1,\alpha}) \}<-\nu$,  then there exists $K>0$ such that  $\|e^{t\mathcal{L}_{1,\alpha}}Q_{\alpha}\|_{\mathcal{B}(H^k_{\alpha}(\mathbb{R}))}\leq Ke^{-\nu t}$, for $t\geq 0$.
 \end{lemma}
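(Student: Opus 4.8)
The plan is to read the estimate off from the standard spectral theory of analytic semigroups, applied to the part of $\mathcal{L}_{1,\alpha}$ on the invariant subspace $\ran Q_{\alpha}$. First I note that $\mathcal{L}_{1,\alpha}$ generates an analytic semigroup on $H^k_{\alpha}(\mathbb{R})^n$: conjugating by the weight, $\gamma_{\alpha}\mathcal{L}_{1,\alpha}\gamma_{\alpha}^{-1}=\partial_z^2+\tilde b(z)\partial_z+\tilde a(z)$ is a uniformly elliptic operator on $H^k(\mathbb{R})^n$ with bounded smooth coefficients (since $\gamma_{\alpha}'/\gamma_{\alpha}$ and $df(\phi)$ are bounded and smooth with limits at $\pm\infty$), hence sectorial; this was already invoked in the proof of Proposition~\ref{spectrum}, cf.\ also \cite{henry}. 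Since $Q_{\alpha}=I-P_{\alpha}$ commutes with $e^{t\mathcal{L}_{1,\alpha}}$, the closed subspace $X_Q:=\ran Q_{\alpha}=\ran\mathcal{L}_{1,\alpha}$ is invariant, $e^{t\mathcal{L}_{1,\alpha}}|_{X_Q}$ is an analytic semigroup on $X_Q$, and its generator is the part $A_Q:=\mathcal{L}_{1,\alpha}|_{X_Q}$ of $\mathcal{L}_{1,\alpha}$ in $X_Q$. Because $P_{\alpha}$ is the Riesz projection associated with the isolated simple eigenvalue $0$ and $X_Q=\ker P_{\alpha}$, the spectral decomposition gives $\sgma(A_Q)=\sgma(\mathcal{L}_{1,\alpha})\setminus\{0\}$.

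Next I localize this spectrum. By Hypothesis~\ref{hypo3.2.3}, the essential spectrum of $\mathcal{L}_{1,\alpha}$ lies in $\{\Re\lambda<-\nu\}$ and $0$ is the only point of $\sgma(\mathcal{L}_{1,\alpha})$ lying off the essential spectrum; hence $\sgma(A_Q)\subseteq\{\Re\lambda<-\nu\}$ and, as $\sup\{\Re\lambda:\lambda\in\Spe(\mathcal{L}_{1,\alpha})\}<-\nu$ by hypothesis, we have $\sup\{\Re\lambda:\lambda\in\sgma(A_Q)\}=:-\nu_0<-\nu$. For a $C_0$-analytic semigroup the growth bound equals the spectral bound of the generator (spectrum-determined growth for analytic semigroups, \cite{henry}); applied to $A_Q$ on $X_Q$ this gives, for every $\omega\in(\nu,\nu_0)$, a constant $K_\omega\geq1$ with $\|e^{tA_Q}\|_{\cB(X_Q)}\leq K_\omega e^{-\omega t}$ for $t\geq0$, in particular with $\omega=\nu$. (Alternatively, one avoids that theorem by estimating the Dunford integral $e^{tA_Q}=\tfrac{1}{2\pi i}\int_\Gamma e^{\lambda t}(\lambda-A_Q)^{-1}\,d\lambda$ over a contour $\Gamma$ contained in a sector $\{|\arg(\lambda+\nu_1)|\leq\theta\}$ with $\nu<\nu_1<\nu_0$ and $\theta\in(\tfrac{\pi}{2},\pi)$, on which $\|(\lambda-A_Q)^{-1}\|\leq M/|\lambda+\nu_1|$ by sectoriality; this gives $\|e^{tA_Q}\|_{\cB(X_Q)}\leq Ke^{-\nu_1 t}\leq Ke^{-\nu t}$ for $t>0$, and the bound at $t=0$ is trivial.)

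Finally, since $Q_{\alpha}$ commutes with the semigroup, $e^{t\mathcal{L}_{1,\alpha}}Q_{\alpha}=\br{e^{tA_Q}}Q_{\alpha}$ on $H^k_{\alpha}(\mathbb{R})^n$, so
$$\|e^{t\mathcal{L}_{1,\alpha}}Q_{\alpha}\|_{\cB(H^k_{\alpha}(\mathbb{R}))}\leq\|e^{tA_Q}\|_{\cB(X_Q)}\,\|Q_{\alpha}\|_{\cB(H^k_{\alpha}(\mathbb{R}))}\leq Ke^{-\nu t},\qquad t\geq 0,$$
which is the asserted estimate. The only step carrying genuine content is the spectral localization in the second paragraph: one must know that projecting off $\ker\mathcal{L}_{1,\alpha}$ via $Q_{\alpha}$ removes all spectrum of $\mathcal{L}_{1,\alpha}$ with $\Re\lambda\geq-\nu$, which is exactly the information packaged into Hypothesis~\ref{hypo3.2.3} (the essential spectrum sits strictly left of $-\nu$, and $0$ is the unique additional, simple spectral point). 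Sectoriality of $\mathcal{L}_{1,\alpha}$ and the passage from the spectral bound to exponential decay of the analytic semigroup are standard.
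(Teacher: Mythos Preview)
Your approach is exactly what the paper intends: it offers no proof, merely calling this ``a standard fact about analytic semigroups'' implied by Hypothesis~\ref{hypo3.2.3}. Your argument via the part $A_Q$ of $\mathcal{L}_{1,\alpha}$ on $\ran Q_{\alpha}$, the identification $\sgma(A_Q)=\sgma(\mathcal{L}_{1,\alpha})\setminus\{0\}$, and the equality of spectral and growth bounds for analytic semigroups is the standard argument the paper has in mind.

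One point deserves care. You write that Hypothesis~\ref{hypo3.2.3} says ``$0$ is the only point of $\sgma(\mathcal{L}_{1,\alpha})$ lying off the essential spectrum,'' but that is not what it says: it only asserts that $0$ is the unique element of $\sgma(\mathcal{L}_{1,\alpha})$ in $\{\Re\lambda\geq 0\}$. Taken literally, neither Hypothesis~\ref{hypo3.2.3} nor the lemma's stated hypothesis (which bounds only the \emph{essential} spectrum) rules out isolated eigenvalues in the strip $-\nu<\Re\lambda<0$, and with such eigenvalues the decay rate $e^{-\nu t}$ would fail. The paper is slightly imprecise here; the correct hypothesis appears in the very next lemma (Lemma~\ref{lem3.4.9}), where the condition is stated on the full spectrum minus $\{0\}$: $\sup\{\Re\lambda:\lambda\in\sgma(\mathcal{L}_{1,\alpha}),\ \lambda\neq 0\}<-\nu$. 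Under that reading your argument goes through verbatim; just adjust your sentence invoking Hypothesis~\ref{hypo3.2.3} to reflect what it actually asserts, and note that the needed spectral gap for $A_Q$ is the (implicit) stronger assumption used throughout Section~\ref{sec4}.
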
	

Moreover,   the following lemma is true.
\begin{lemma}\label{lem3.4.9}
	Assume  Hypothesis \ref{hypo3.2.3}.  
	If  $\sup\{ \Re \lambda:\lambda\in \sgma(\mathcal{L}_{1,\alpha})\, \text{ and }\,\lambda\neq 0 \}<-\nu,$ for some $\nu>0$,   then there exists $K>0$ such that 
	$\|e^{t\mathcal{L}_{\alpha}}\mathcal{Q}\|_{\mathcal{B}({H^k_{\alpha}(\mathbb{R}^d)})}\leq Ke^{-\nu t}$, for all $t\geq 0$.
\end{lemma}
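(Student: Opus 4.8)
The plan is to use the tensor-product decomposition $\cL_\alpha=\cL_{1,\alpha}\otimes I+I\otimes\Delta_y$ together with the fact (established just before this lemma) that $\cP$ commutes with $\cL_\alpha$, so that $\cQ=\cI-\cP$ also commutes with the semigroup $e^{t\cL_\alpha}$. First I would write $\cQ = Q_\alpha\otimes I_{H^k(\bbR^{d-1})}$, which follows from $\cP = P_\alpha\otimes I$ and $Q_\alpha = I-P_\alpha$. Since $\cL_{1,\alpha}$ and $\Delta_y$ generate commuting (bounded analytic) semigroups on the respective Hilbert spaces, the semigroup $e^{t\cL_\alpha}$ factors as $e^{t\cL_{1,\alpha}}\otimes e^{t\Delta_y}$ on $H^k_\alpha(\bbR)\otimes H^k(\bbR^{d-1}) = H^k_\alpha(\bbR^d)$. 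Composing with $\cQ$ and using that $Q_\alpha$ commutes with $e^{t\cL_{1,\alpha}}$, we get
\begin{equation*}
e^{t\cL_\alpha}\cQ = \left(e^{t\cL_{1,\alpha}}Q_\alpha\right)\otimes e^{t\Delta_y}.
\end{equation*}

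Next I would invoke Lemma~\ref{tensornorm} to split the operator norm as a product:
\begin{equation*}
\|e^{t\cL_\alpha}\cQ\|_{\cB(H^k_\alpha(\bbR^d))} = \|e^{t\cL_{1,\alpha}}Q_\alpha\|_{\cB(H^k_\alpha(\bbR))}\cdot\|e^{t\Delta_y}\|_{\cB(H^k(\bbR^{d-1}))}.
\end{equation*}
The first factor is bounded by $Ke^{-\nu t}$ by Lemma~\ref{lalpha} (applicable since the hypothesis here, that the spectrum of $\cL_{1,\alpha}$ away from $0$ lies to the left of $-\nu$, is exactly the spectral gap condition needed after removing the zero eigenvalue via $Q_\alpha$, whose range is $\ran\cL_{1,\alpha}$; on that range the essential spectrum bound of Hypothesis~\ref{hypo3.2.3}(3) gives $\sup\{\Re\lambda:\lambda\in\Spe(\cL_{1,\alpha})\}<-\nu$, and the only other spectral point $0$ has been projected out). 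The second factor is bounded by $1$ (or some fixed $K$) for all $t\geq 0$, since $\Delta_y$ is self-adjoint and negative semidefinite on $H^k(\bbR^{d-1})$, so $e^{t\Delta_y}$ is a contraction semigroup; concretely one can check this on the Fourier side where $e^{t\Delta_y}$ is multiplication by $e^{-t|\xi|^2}\le 1$. Multiplying the two bounds yields $\|e^{t\cL_\alpha}\cQ\|\leq Ke^{-\nu t}$.

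The only step requiring genuine care — and the main obstacle — is justifying the semigroup factorization $e^{t\cL_\alpha} = e^{t\cL_{1,\alpha}}\otimes e^{t\Delta_y}$ and the identity $\cQ = Q_\alpha\otimes I$ at the level of unbounded operators on the tensor-product Hilbert space, together with verifying that the algebraic splitting $\ran\cL_{1,\alpha}\oplus\ker\cL_{1,\alpha}$ from the Fredholm property really places the full spectrum of $\cL_{1,\alpha}|_{\ran Q_\alpha}$ strictly left of $-\nu$. The factorization follows from the standard theory of sums of commuting generators (e.g.\ via the Trotter product formula, or by noting that both summands commute strongly and each generates a bounded analytic semigroup, so their sum generates the product semigroup), which is precisely the framework of \cite[Theorem XIII.34, Theorem XIII.35]{ReedSimon4} already cited in the proof of Proposition~\ref{spectrum}; the spectral claim for $Q_\alpha\cL_{1,\alpha}$ is immediate once one observes that $Q_\alpha$ commutes with the resolvent of $\cL_{1,\alpha}$ and $\ran Q_\alpha$ is invariant, so the spectrum there is $\sgma(\cL_{1,\alpha})\setminus\{0\}$, which by Hypothesis~\ref{hypo3.2.3}(3) lies in $\{\Re\lambda<-\nu\}$ after possibly shrinking $\nu$ to accommodate the (finitely many, isolated, or absent) non-essential spectral points left of the axis. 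Everything else is a routine application of Lemma~\ref{tensornorm} and Lemma~\ref{lalpha}.
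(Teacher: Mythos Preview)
Your proof is correct and follows essentially the same approach as the paper: write $\cQ=Q_\alpha\otimes I$, factor the semigroup as $e^{t\cL_\alpha}\cQ=(e^{t\cL_{1,\alpha}}Q_\alpha)\otimes e^{t\Delta_y}$ via \cite[Theorem~XIII.35]{ReedSimon4}, apply Lemma~\ref{tensornorm} to split the norm, and then bound the two factors by Lemma~\ref{lalpha} and the boundedness of $e^{t\Delta_y}$ (the paper cites Lemma~\ref{lem3.4.4}(a) for the latter, where you argue directly via Fourier multipliers). Your additional discussion of the spectral picture on $\ran Q_\alpha$ is more than the paper spells out but entirely consistent with it.
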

\begin{proof}
	Since $\mathcal{Q}=Q_{\alpha}\otimes I_{H^k(\mathbb{R}^{d-1})}$ and $\mathcal{L}_{\alpha}=\mathcal{L}_{1,\alpha}\otimes I_{H^k(\mathbb{R}^{d-1})}+I_{H_{\alpha}^k(\mathbb{R})}\otimes \Delta_y$, by the proof of \cite[Theorem XIII.35]{ReedSimon4} we have $e^{t\mathcal{L}_{\alpha}}\mathcal{Q}=e^{t\mathcal{L}_{1,\alpha}}Q_{\alpha}\otimes e^{t\Delta_y}I_{H^k(\mathbb{R}^{d-1})}$. The operators $\mathcal{L}_{1,\alpha}$ and $\Delta_y$ both generate bounded semigroups on $\ran Q_{\alpha}=\ran\cL_{1,\alpha}$ and $H^k(\mathbb{R}^{d-1})$, cf. Lemma \ref{lalpha} and Lemma \ref{lem3.4.4}.a), thus by Lemma \ref{tensornorm} we infer 
	$$\|e^{t\mathcal{L}_{1,\alpha}}Q_{\alpha}\otimes e^{t\Delta_y}I_{H^k(\bbR^{d-1})}\|_{\cB({H^k_{\alpha}(\mathbb{R}^d)})}=\|e^{t\cL_{1,\alpha}}Q_{\alpha}\|_{\cB(H^k_{\alpha}(\bbR))}\|e^{t\Delta_y}\|_{\cB(H^k(\bbR^{d-1}))},$$
	which completes  the proof.
\end{proof}
We consider the operator $\mathcal{L}^-$ on $ H^k(\mathbb{R}^d)$ associated with the differential expression
\begin{equation}
\label{Lneg}
L^-=L_1^-\otimes I_{H^k(\mathbb{R} ^{d-1})}+I_{H^k(\mathbb{R})}\otimes \Delta_y,
\end{equation}
where $L^-_1$ is defined in \eqref{eq3.2.9}, and let
\begin{align}\label{l12}
L^{(1)}&=\Delta_x+c\partial_z+A_1=L_1^{(1)}\otimes I_{H^k(\mathbb{R} ^{d-1})}+I_{H^k(\mathbb{R})}\otimes \Delta_y,\\\no
L^{(2)}&=\Delta_x+c\partial_z+d_{u_2}f_2(0)=L_1^{(2)}\otimes I_{H^k(\mathbb{R} ^{d-1})}+I_{H^k(\mathbb{R})}\otimes \Delta_y.
\end{align}
where $A_1$ is introduced in Hypothesis \ref{hypo3.2.5}, and $L_1^{(i)}$, $i=1$, $2$ are as in \eqref{eq3.2.6} and \eqref{eq3.2.7}.
Thus \begin{equation}\label{lnegl12}
L^-=\begin{pmatrix}
L^{(1)} & d_{u_2}f_1(0)\\
0 & L^{(2)}
\end{pmatrix},
\end{equation}
and the linearization \eqref{eq3.1.2} about the front is given by the formula 
\begin{equation}\lb{Ldecomp}
L=L^-+(df(\phi)-df(0))\otimes I_{H^k(\mathbb{R} ^{d-1})}.
\end{equation}

As in \cite[Lemma 8.2(1)]{GLS2}, the operator $df(\phi)-df(0)$ is a bounded operator from $H^k_{\alpha}(\bbR)$ into $H^k(\bbR)$. We therefore have 
\begin{equation}\lb{bddfi}
(df(\phi)-df(0))\otimes I_{H^k(\bbR^{d-1})}\in\cB(H^k_{\a}(\bbR^d),H^k(\bbR^d))
\end{equation}
\begin{lemma}\label{lem3.7.19}
	Assume 	Hypotheses~\ref{hypo3.2.7}. Let $\mathcal{L}^{(i)}$, $i=1,2$ be the operators given by the differential expressions  \eqref{l12} on $ H^k(\mathbb{R}^d)$. If
	$\,\,\sup\{ \Re\lambda:\lambda\in \sgma(\mathcal{L}_{1}^{(2)}) \text{ and } \lambda\neq 0 \}<-\rho$, for some $\rho>0$, then
	 there exists  $K>0$ such that
	\begin{equation}\label{eq3.7.53}
	\|e^{t\mathcal{L}^{(1)}}\|_{\mathcal{B}( H^k(\mathbb{R}^d))}\leq K,\quad
	\|e^{t\mathcal{L}^{(2)}}\|_{\mathcal{B}( H^k(\mathbb{R}^d))}\leq Ke^{-\rho t},
	\end{equation}
	for all $t\geq 0$. Moreover,  the operator $\mathcal{L}^-$ given by the differential expression \eqref{Lneg} generates a bounded semigroup on $ H^k(\mathbb{R}^d)$, that is,
	\begin{equation}\label{lnegest}
	\|e^{t\mathcal{L}^-}\|_{\mathcal{B}( H^k(\mathbb{R}^d))}\leq K \text{ for all $t\ge0$}.
	\end{equation}
\end{lemma}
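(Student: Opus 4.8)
The plan is to deduce everything from the tensor-product factorization of the relevant semigroups together with Hypothesis~\ref{hypo3.2.7}. The starting observation is that each of $\mathcal{L}^{(1)}$, $\mathcal{L}^{(2)}$ arises, via \eqref{l12}, as $\mathcal{L}_1^{(i)}\otimes I_{H^k(\mathbb{R}^{d-1})}+I_{H^k(\mathbb{R})}\otimes\Delta_y$, i.e.\ as a sum of two commuting generators of bounded analytic semigroups on the Hilbert spaces $H^k(\mathbb{R})^{n_i}$ and $H^k(\mathbb{R}^{d-1})$. Hence, exactly as in the proof of Lemma~\ref{lem3.4.9} (invoking the proof of \cite[Theorem~XIII.35]{ReedSimon4}), the semigroup factorizes:
\begin{equation*}
e^{t\mathcal{L}^{(i)}}=e^{t\mathcal{L}_1^{(i)}}\otimes e^{t\Delta_y}I_{H^k(\mathbb{R}^{d-1})},\qquad t\ge 0 ,
\end{equation*}
and Lemma~\ref{tensornorm} gives $\|e^{t\mathcal{L}^{(i)}}\|_{\mathcal{B}(H^k(\mathbb{R}^d))}=\|e^{t\mathcal{L}_1^{(i)}}\|_{\mathcal{B}(H^k(\mathbb{R}))}\cdot\|e^{t\Delta_y}\|_{\mathcal{B}(H^k(\mathbb{R}^{d-1}))}$. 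By Hypothesis~\ref{hypo3.2.7}(1) the first factor for $i=1$ is $\le K$, and by Hypothesis~\ref{hypo3.2.7}(2) the first factor for $i=2$ is $\le Ke^{-\rho t}$; the heat semigroup $e^{t\Delta_y}$ on $H^k(\mathbb{R}^{d-1})$ is a contraction (Lemma~\ref{lem3.4.4}.a)), so $\|e^{t\Delta_y}\|\le 1$. Multiplying the two bounds yields \eqref{eq3.7.53}.

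Next I would handle $\mathcal{L}^-$. The point is that \eqref{lnegl12} exhibits $L^-$ as block upper-triangular, with diagonal blocks $L^{(1)}$ and $L^{(2)}$ and a \emph{constant} (hence bounded) off-diagonal block $d_{u_2}f_1(0)$. Writing the variation-of-constants (Duhamel) representation for the block-triangular semigroup: if $u=(u_1,u_2)$ solves $\dot u=\mathcal{L}^-u$, then $u_2(t)=e^{t\mathcal{L}^{(2)}}u_2(0)$ and
\begin{equation*}
u_1(t)=e^{t\mathcal{L}^{(1)}}u_1(0)+\int_0^t e^{(t-s)\mathcal{L}^{(1)}}\,d_{u_2}f_1(0)\,e^{s\mathcal{L}^{(2)}}u_2(0)\,ds .
\end{equation*}
Taking norms in $H^k(\mathbb{R}^d)$ and using \eqref{eq3.7.53}, the free term is bounded by $K\|u_1(0)\|$, and the integral term is bounded by
\begin{equation*}
\int_0^t K\cdot\|d_{u_2}f_1(0)\|\cdot Ke^{-\rho s}\,ds\,\|u_2(0)\|\le \frac{K^2\|d_{u_2}f_1(0)\|}{\rho}\,\|u_2(0)\| ,
\end{equation*}
uniformly in $t\ge 0$. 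Combining the two components gives a single constant $K$ (relabeled) with $\|e^{t\mathcal{L}^-}\|_{\mathcal{B}(H^k(\mathbb{R}^d))}\le K$ for all $t\ge 0$, which is \eqref{lnegest}. One should also note that $\mathcal{L}^-$ indeed generates a $C_0$-semigroup on $H^k(\mathbb{R}^d)^n$: it is a bounded perturbation of $\diag(\mathcal{L}^{(1)},\mathcal{L}^{(2)})$, which generates a (analytic) semigroup by the tensor argument above, so the Duhamel formula is legitimate.

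I do not expect a serious obstacle here; the argument is essentially bookkeeping built on Lemma~\ref{tensornorm}, the proof scheme of Lemma~\ref{lem3.4.9}, and the elementary Duhamel estimate for a triangular system. The one place that deserves a line of care is the claim $\|e^{t\Delta_y}\|_{\mathcal{B}(H^k(\mathbb{R}^{d-1}))}\le 1$ (or at least $\le K$): this is immediate from the Fourier-multiplier representation $\widehat{e^{t\Delta_y}f}(\eta)=e^{-t|\eta|^2}\widehat f(\eta)$, since multiplication by $e^{-t|\eta|^2}$ has operator norm $1$ on each Sobolev space $H^k(\mathbb{R}^{d-1})$; it is recorded as Lemma~\ref{lem3.4.4}.a) below, and I would simply cite it. The only mild subtlety worth flagging is that the semigroup factorization $e^{t(A\otimes I+I\otimes B)}=e^{tA}\otimes e^{tB}$ requires both $A$ and $B$ to generate semigroups on the respective factors and the tensor product of the domains to be a core — all of which hold here because $\mathcal{L}_1^{(i)}$ and $\Delta_y$ are (generators of) analytic semigroups, exactly as was used in Proposition~\ref{spectrum} and Lemma~\ref{lem3.4.9}.
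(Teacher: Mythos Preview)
Your proposal is correct and follows essentially the same route as the paper: the tensor factorization $e^{t\mathcal{L}^{(i)}}=e^{t\mathcal{L}_1^{(i)}}\otimes e^{t\Delta_y}$ via \cite[Theorem~XIII.35]{ReedSimon4} combined with Lemma~\ref{tensornorm} and Hypothesis~\ref{hypo3.2.7} for \eqref{eq3.7.53}, and then the block-triangular Duhamel representation of $e^{t\mathcal{L}^-}$ together with the integrability of $e^{-\rho s}$ for \eqref{lnegest}. The paper records the triangular semigroup as an explicit $2\times 2$ block formula rather than writing out the componentwise Duhamel solution, but this is the same computation; your version is in fact slightly more explicit about the constant in the off-diagonal estimate.
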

\begin{proof}
	We shall use the fact  \cite[Theorem XIII.35]{ReedSimon4} that 
	\begin{align*}
	e^{t\mathcal{L}^{(i)}}=e^{t(\mathcal{L}^{(i)}_1\otimes  I_{H^k(\mathbb{R}^{d-1})}+I_{H^k(\mathbb{R})}\otimes \Delta_y) }=e^{t\mathcal{L}^{(i)}_1}\otimes e^{t\Delta_y}, \text{ for $i=1$, $2$.}
	\end{align*}
	By Hypothesis \ref{hypo3.2.7}(1), the operator $\mathcal{L}_1^{(1)}$ generates a bounded semigroup on $H^k(\mathbb{R})$, thus,  by  Lemma \ref{tensornorm},
	$\|e^{t\mathcal{L}^{(1)}_1}\otimes e^{t\Delta_y}\|= \|e^{t\mathcal{L}^{(1)}_1}\|\|e^{t\Delta_y}\|<K$ for some $K>0$ and all $t\geq 0$.
	Similarly, from Hypothesis \ref{hypo3.2.7}(2) and Lemma \ref{tensornorm},
	$\|e^{t\mathcal{L}^{(2)}_1}\otimes e^{t\Delta_y}\|= \|e^{t\mathcal{L}^{(2)}_1}\|\|e^{t\Delta_y}\|<Ke^{-\rho t}$ for some $K>0$ and all $t\geq 0$.
	
	To prove  \eqref{lnegest}, 
	we notice that the triangular structure of the operator $\mathcal{L}^-$  yields the triangular structure of the semigroup $e^{t\mathcal{L}_-}$, that is
	\begin{equation}\label{lneg}
	e^{t\mathcal{L}_-}=\begin{pmatrix}
	e^{t\mathcal{L}^{(1)}}  & \int_0^t 
	e^{(t-s)\mathcal{L}^{(1)}}\partial_{u_2}f_1(0)e^{s\mathcal{L}^{(2)}}ds\\0 & e^{t\mathcal{L}^{(2)}}
	\end{pmatrix}.
	\end{equation}
Equation \eqref{lneg} and inequalities \eqref{eq3.7.53} imply \eqref{lnegest}.
\end{proof}
We next use Lemma \ref{lem3.4.9} and Lemma \ref{lem3.7.19}  to show  that the semigroup generated by the operator $\mathcal{L}$ on $\mathcal{H}$ is also bounded.
\begin{lemma}\label{lbeta}
Assume 	Hypotheses~\ref{hypo3.2.7}. Let $\mathcal{L}_{\mathcal{H}}$ be the operator given by the differential expressions  \eqref{eq3.1.2} on  ${\mathcal{H}}=H^k(\bbR^d)\cap H^k_{\a}(\bbR^d)$. There exists $K>0$ such that  
	$\,\,\|e^{t\mathcal{L}_{\mathcal{H}}}\|_{\mathcal{B}(\mathcal{H})}\leq K \ 
	\text{ for all $t\geq 0$}.$
\end{lemma}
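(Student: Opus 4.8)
The plan is to transfer the boundedness of the semigroup $e^{t\cL^-}$ on $H^k(\bbR^d)$ (from Lemma~\ref{lem3.7.19}) and of $e^{t\cL_\alpha}\cQ$ on $H^k_\alpha(\bbR^d)$ (from Lemma~\ref{lem3.4.9}) to the semigroup $e^{t\cL_\cH}$ on the intersection space $\cH$, using the spectral decomposition of $\cL_\alpha$ relative to the central projection $\cP$. Recall that on $H^k_\alpha(\bbR^d)$ we have $\cL_\alpha=\cL_\alpha\cP+\cL_\alpha\cQ$, and since $\cP$ projects onto the span of $\phi'$ in the $z$-variable tensored with $H^k(\bbR^{d-1})$, the reduced operator $\cL_\alpha\cP$ acts (via the identification $\ran\cP\cong H^k(\bbR^{d-1})$ through $\pi_\alpha$) as $\Delta_y$, which generates a bounded semigroup on $H^k(\bbR^{d-1})$ by Lemma~\ref{lem3.4.4}.a). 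Concretely, because $\cL_\alpha$ commutes with $\cP$, one has $e^{t\cL_\alpha}=e^{t\cL_\alpha}\cP+e^{t\cL_\alpha}\cQ$, and on the $\cP$-part $e^{t\cL_\alpha}\cP U=(e^{t\Delta_y}\pi_\alpha U)(y)\,\phi'(z)$.

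First I would observe that an element $u\in\cH$ solves $u_t=Lu$, and by linearity I split $u=\cP u+\cQ u$; the key point is that although $\cP$ and $\cQ$ were defined on the weighted space, Lemma~\ref{cpcq} guarantees $\cP\in\cB(\cH,\cH)$ and $\cQ\in\cB(\cH,\cH)$, so this splitting is legitimate in $\cH$. For the central part, $e^{t\cL_\cH}\cP u=(e^{t\Delta_y}\pi_\alpha u)(y)\phi'(z)$: its $H^k_\alpha(\bbR^d)$-norm is $\|e^{t\Delta_y}\pi_\alpha u\|_{H^k(\bbR^{d-1})}\|\phi'\|_{H^k_\alpha(\bbR)}\le C\|u\|_\cH$ by boundedness of $e^{t\Delta_y}$ and of $\pi_\alpha:\cH\to H^k(\bbR^{d-1})$ (Lemma~\ref{lem3.2.5}), and similarly its $H^k(\bbR^d)$-norm is controlled by $\|\phi'\|_{H^k(\bbR)}$, using that $\phi'\in H^k(\bbR)$. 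For the complementary part, I use the variation-of-constants formula with respect to the decomposition $L=L^-+(df(\phi)-df(0))\otimes I$ from \eqref{Ldecomp}, writing
\begin{equation*}
e^{t\cL_\cH}\cQ u=e^{t\cL^-}\cQ u+\int_0^t e^{(t-s)\cL^-}\big((df(\phi)-df(0))\otimes I\big)\,e^{s\cL_\alpha}\cQ u\,ds.
\end{equation*}
Here the first term is estimated in $H^k(\bbR^d)$ by $\|e^{t\cL^-}\|_{\cB(H^k(\bbR^d))}\|\cQ u\|_{H^k(\bbR^d)}\le CK\|u\|_\cH$ via \eqref{lnegest} and Lemma~\ref{cpcq}(ii). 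In the integral term, by \eqref{bddfi} the multiplication operator maps $H^k_\alpha(\bbR^d)$ into $H^k(\bbR^d)$, so $\|(df(\phi)-df(0))\otimes I\,e^{s\cL_\alpha}\cQ u\|_{H^k(\bbR^d)}\le C\|e^{s\cL_\alpha}\cQ u\|_{H^k_\alpha(\bbR^d)}\le CKe^{-\nu s}\|u\|_\cH$ by Lemma~\ref{lem3.4.9}; applying $\|e^{(t-s)\cL^-}\|_{\cB(H^k(\bbR^d))}\le K$ and integrating $\int_0^t e^{-\nu s}ds\le \nu^{-1}$ bounds the integral term in $H^k(\bbR^d)$ uniformly in $t$.

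It remains to bound $e^{t\cL_\cH}\cQ u$ in the weighted norm $H^k_\alpha(\bbR^d)$, and here I would simply use that $e^{t\cL_\alpha}\cQ$ is already bounded (indeed exponentially decaying) on $H^k_\alpha(\bbR^d)$ by Lemma~\ref{lem3.4.9}, together with the fact that on $\cQ\cH\subset\cH$ the semigroup $e^{t\cL_\cH}$ restricted to the weighted component coincides with $e^{t\cL_\alpha}\cQ$ — this is the compatibility of the two semigroups on the common dense domain, which follows because both are generated by the same differential expression $L$ and $\cQ$ commutes with each. Combining the central and complementary estimates in both norms and taking the maximum gives $\|e^{t\cL_\cH}u\|_\cH\le K\|u\|_\cH$ for all $t\ge 0$. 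The main obstacle I anticipate is the bookkeeping needed to justify rigorously that the Duhamel formula above is valid in $\cH$ — i.e., that $e^{t\cL_\cH}$ is well defined as a $C_0$-semigroup on $\cH$ and that its restrictions to $\cP\cH$ and $\cQ\cH$ genuinely agree with the explicit expressions above (rather than just formally); this requires checking that $\cL_\cH$ with the stated domain is the generator, that the integrand is strongly measurable with integrable norm, and that the mild solution so constructed is the semigroup orbit. Once that functional-analytic groundwork is in place, the norm estimates are exactly the routine ones sketched above.
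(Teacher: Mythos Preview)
Your proposal is correct and follows essentially the same route as the paper: split via $\cP_\cH\oplus\cQ_\cH$, control the $\cQ_\cH$-part in the weighted norm by Lemma~\ref{lem3.4.9} and in the unweighted norm by the Duhamel formula based on \eqref{Ldecomp} together with \eqref{lnegest} and \eqref{bddfi}, then handle the $\cP_\cH$-part separately. Your treatment of the central part is in fact more accurate than the paper's: the paper asserts that $\cP_\cH$ projects onto $\ker\cL_\alpha$ and concludes $e^{t\cL_\alpha}\cP_\cH=\cP_\cH$, but $\cL_\alpha|_{\ran\cP}$ acts as $\Delta_y$, not zero, so the correct identity is your $e^{t\cL_\cH}\cP u=(e^{t\Delta_y}\pi_\alpha u)\phi'$, and boundedness then comes from Lemma~\ref{lem3.4.4}(a) rather than triviality of the flow.
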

\begin{proof}
	Let the operator $\cQ_{{\mathcal{H}}}$ be given by restricting $\cQ$ to ${\mathcal{H}}$, then by Lemma \ref{lem3.4.9} \begin{equation}\label{betaalpha}
	\|e^{t\mathcal{L}_{\mathcal{H}}}\mathcal{Q}_{\mathcal{H}}\|_{\mathcal{B}({H^k_{\alpha}(\mathbb{R}^d)})}\leq Ke^{-\nu t},
	\end{equation} 
	therefore, it remains to estimate $\|e^{t\mathcal{L}_{\mathcal{H}}}\mathcal{Q}_{{\mathcal{H}}}\|_{\cB({\mathcal{H}},H^k(\mathbb{R}^d))}$ and $\|e^{t\cL_{{\mathcal{H}}}}\cP_{{\mathcal{H}}}\|_{\cB({\mathcal{H}})}$.
	
	Since  $\ran\cQ_{{\mathcal{H}}}=\ran\mathcal{L}_{\alpha}\cap \mathcal{H}^n$ and $\mathcal{Q}_{\mathcal{H}}$ commutes with $\mathcal{L}_{\mathcal{H}}$ and $e^{t\mathcal{L}_{\mathcal{H}}}$,  the variation of constant formula and \eqref{Ldecomp} yield 
	$$e^{t\cL_{{\mathcal{H}}}}=e^{t\cL^-}+\int_0^t e^{(t-s)\cL^-}\big((df(\phi)-df(0))\otimes I_{H^k(\bbR^{d-1})}\big)e^{s\cL_{{\mathcal{H}}}}\, ds,$$ 
	from where, by \eqref{bddfi} and Lemma~\ref{cpcq}, as well as \eqref{lnegest} and \eqref{betaalpha},
		\begin{align*}
	&\|e^{t\mathcal{L}_{\mathcal{H}}}\mathcal{Q}_{\mathcal{H}}\|_{\cB({\mathcal{H}},H^k(\bbR^d))}
	\leq\|e^{t\mathcal{L}^{-}}\|_{\mathcal{B}( H^k(\mathbb{R}^d))}\|\cQ_{{\mathcal{H}}}\|_{\cB({\mathcal{H}},H^k(\bbR^d))}\\
	&+\int _0^t \|e^{(t-s)\cL^-}\|_{\cB(H^k(\bbR^d))}\|df(\phi)-df(0)\|_{\cB({H^k_{\alpha}(\mathbb{R}^d)},H^k(\bbR^d))}\|e^{s\cL_{{\mathcal{H}}}}\cQ_{{\mathcal{H}}}\|_{\cB({H^k_{\alpha}(\mathbb{R}^d)})}\|\cQ_{{\mathcal{H}}}\|_{\cB({\mathcal{H}},H^k_{\alpha}(\bbR^d))}ds \\&\qquad<K.
	\end{align*}
	Combined with \eqref{betaalpha} this shows that the semigroup $\{ e^{t\cL_{{\mathcal{H}}}}\cQ_{{\mathcal{H}}} \}_{t\geq 0}$ is bounded in $\ran \cQ_{{\mathcal{H}}}$. 
	
	We note that $\mathcal{H}=\ran\mathcal{P}_{\mathcal{H}}\oplus \ran\mathcal{Q}_{\mathcal{H}}$ and $e^{t\mathcal{L}_{\mathcal{H}}}=e^{t\mathcal{L}_{\mathcal{H}}}\mathcal{P}_{\mathcal{H}}\oplus e^{t\mathcal{L}_{\mathcal{H}}}\mathcal{Q}_{\mathcal{H}}$. In order to finish the proof of Lemma \ref{lbeta}, we will need to show that the seimigroup $\{e^{t\cL_{{\mathcal{H}}}} \cP_{{\mathcal{H}}}\}$ is bounded in $\ran \cP_{{\mathcal{H}}}$. 
	Since $\mathcal{P}_{\mathcal{H}}$ projects onto the kernels of $\mathcal{L}$  defined on $ H^k(\mathbb{R}^d)$ and $\mathcal{L}_{\alpha}$ defined  on ${H^k_{\alpha}(\mathbb{R}^d)}$, 
	 then,  by Lemma~\ref{cpcq},
	$e^{t\cL_{\alpha}}\cP_{{\mathcal{H}}}=\cP_{{\mathcal{H}}}$ and $e^{t\cL}\cP_{{\mathcal{H}}}=\cP_{{\mathcal{H}}},$
	where $\cP_{{\mathcal{H}}}\in\cB({\mathcal{H}},H^k(\bbR^d))$ and $\cP_{{\mathcal{H}}}\in\cB({H^k_{\alpha}(\mathbb{R}^d)})$, and, therefore, for all $t\geq 0$,
	 \begin{equation*}
	\|e^{t\mathcal{L}}\mathcal{P}_{\mathcal{H}}\|_{\mathcal{B}({\mathcal{H}}, H^k(\mathbb{R}^d))}=\|\mathcal{P}_{\mathcal{H}}\|_{\mathcal{B}(\cE, H^k(\mathbb{R}^d))}\leq K \, \text{ and } \, \|e^{t\mathcal{L}_{\alpha}}\mathcal{P}_{\mathcal{H}}\|_{\mathcal{B}({H^k_{\alpha}(\mathbb{R}^d)})}=\|\mathcal{P}_{\mathcal{H}}\|_{\mathcal{B}({H^k_{\alpha}(\mathbb{R}^d)})}\leq K.
	\end{equation*}
	
\end{proof}

We also recall the following standard estimates, see, e.g., \cite[Lemma 3.2]{Kapitula2}.
\begin{lemma}\label{lem3.4.4}
	The semigroup $S_{\Delta_y}(t)$ generated by the linear operator $\Delta_y$ for all $t>0$ satisfies the following decay estimates with some $\beta>0$:
	\begin{itemize}
		\item[(a)] $\|S_{\Delta_y}(t)u\|_{H^k(\mathbb{R}^{d-1})}\leq C\|u\|_{H^k(\mathbb{R}^{d-1})}$,
		\item[(b)] $\|S_{\Delta_y}(t)u\|_{H^k(\mathbb{R}^{d-1})}\leq C(1+t)^{-\frac{d-1}{4}}\|u\|_{L^1(\mathbb{R}^{d-1})}+Ce^{-\beta t}\|u\|_{H^k(\mathbb{R}^{d-1})}$,
		\item[(c)] $\|\nabla_y S_{\Delta_y}(t)u\|_{H^k(\mathbb{R}^{d-1})}\leq C t^{-1/2}\|u\|_{H^k(\mathbb{R}^{d-1})}$,
		\item[(d)] $\|\nabla_y S_{\Delta_y}(t)u\|_{H^k(\mathbb{R}^{d-1})}\leq  C(1+t)^{-\frac{d+1}{4}}\|u\|_{L^1(\mathbb{R}^{d-1})}+Ct^{-\frac{1}{2}}e^{-\beta t}\|u\|_{H^k(\mathbb{R}^{d-1})}.$\newline
	\end{itemize}
\end{lemma}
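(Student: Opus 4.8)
The statement to prove is Lemma~\ref{lem3.4.4}, the collection of standard decay estimates for the heat semigroup $S_{\Delta_y}(t)=e^{t\Delta_y}$ on $H^k(\mathbb{R}^{d-1})$.

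\medskip

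The plan is to work on the Fourier transform side, where $\Delta_y$ becomes multiplication by $-|\eta|^2$ and the semigroup becomes multiplication by the Gaussian multiplier $e^{-t|\eta|^2}$. First I would recall that, via Plancherel, the $H^k(\mathbb{R}^{d-1})$ norm of $u$ is equivalent to the weighted $L^2$ norm $\|(1+|\eta|^2)^{k/2}\widehat u\|_{L^2}$; since $e^{-t|\eta|^2}$ commutes with the Fourier multiplier $(1+|\eta|^2)^{k/2}$ and $|e^{-t|\eta|^2}|\le 1$ for all $t\ge 0$ and all $\eta$, item (a) is immediate. For item (c), the same reasoning applies to $\nabla_y S_{\Delta_y}(t)$, whose symbol is $i\eta\, e^{-t|\eta|^2}$; using the elementary scalar bound $\sup_{\eta}|\eta|e^{-t|\eta|^2}=C t^{-1/2}$ (attained at $|\eta|=(2t)^{-1/2}$) gives the $t^{-1/2}$ factor, again after pulling the $(1+|\eta|^2)^{k/2}$ weight through. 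These two are pure symbol-boundedness arguments.

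\medskip

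For the $L^1\to H^k$ estimates (b) and (d) I would split the frequency domain into low frequencies $|\eta|\le 1$ and high frequencies $|\eta|\ge 1$. On the high-frequency part one has $e^{-t|\eta|^2}\le e^{-t/2}e^{-t|\eta|^2/2}\le e^{-t/2}$ (or, keeping a bit of decay, bounded by a constant times $e^{-\beta t}$ times the full symbol $(1+|\eta|^2)^{k/2}$ applied to $\widehat u$), which produces the $Ce^{-\beta t}\|u\|_{H^k}$ term after Plancherel. On the low-frequency part one uses $\|\widehat u\|_{L^\infty}\le \|u\|_{L^1}$ together with the explicit computation
\begin{equation*}
\int_{|\eta|\le 1}(1+|\eta|^2)^{k}e^{-2t|\eta|^2}\,d\eta \le C\int_{\mathbb{R}^{d-1}} e^{-2t|\eta|^2}\,d\eta = C(1+t)^{-\frac{d-1}{2}},
\end{equation*}
where for $t\ge 1$ the Gaussian integral scales like $t^{-(d-1)/2}$ by the substitution $\eta\mapsto t^{-1/2}\eta$, and for $0\le t\le 1$ the integral is simply bounded by a constant, so the two regimes combine into the stated $(1+t)^{-(d-1)/2}$; taking square roots gives the $(1+t)^{-(d-1)/4}$ factor in (b). For (d) the integrand carries an extra $|\eta|^2$ from the gradient symbol, i.e.\ $|\eta|^2 e^{-2t|\eta|^2}$ on the low-frequency set, which integrates to $C(1+t)^{-\frac{d+1}{2}}$ by the same scaling (one extra power of $t^{-1}$ from the extra $|\eta|^2$), and on the high-frequency set one absorbs one power of $|\eta|$ into the Sobolev weight and keeps a factor $t^{-1/2}e^{-\beta t}$ from $\sup_{|\eta|\ge1}|\eta|e^{-t|\eta|^2/2}\le Ct^{-1/2}e^{-\beta t}$; taking square roots yields the $(1+t)^{-(d+1)/4}$ and $t^{-1/2}e^{-\beta t}$ terms respectively.

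\medskip

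There is no real obstacle here --- these are classical heat-kernel estimates and the only mildly delicate point is bookkeeping the split into $t\le 1$ and $t\ge 1$ so that the decay rates are expressed uniformly as $(1+t)^{-\cdot}$ rather than $t^{-\cdot}$, and making sure the $\beta>0$ in the exponentially decaying high-frequency remainder is chosen once and for all (any $\beta<1/2$ works after the $e^{-t|\eta|^2}\le e^{-t/2}e^{-t|\eta|^2/2}$ split on $|\eta|\ge1$). Since the result is quoted from \cite[Lemma 3.2]{Kapitula2}, in the paper itself one may simply cite it; the Fourier-multiplier computation above is the self-contained justification.
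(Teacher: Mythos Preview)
Your proposal is correct; these are indeed the standard Fourier-multiplier computations for the heat semigroup. The paper itself does not prove this lemma but simply cites \cite[Lemma 3.2]{Kapitula2}, so your argument supplies a self-contained justification that the paper omits.
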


\section{The system of evolution equations}\lb{subs3.2.3}

In this section we derive the system of evolution equations \eqref{sys2}  governing the perturbation of the planar front, by following 
  \cite{Kapitula2} with modifications needed to accommodate the presence of the weight.

We denote $\ran\mathcal{P}=\{U\in {H^k_{\alpha}(\mathbb{R}^d)}^n: U=\mathcal{P}U\}$ and $\ran\mathcal{Q}=\{U\in {H^k_{\alpha}(\mathbb{R}^d)}^n: U=\mathcal{Q}U\}$. In fact, if $U\in\ran\mathcal{Q}$, then $\pi_{\alpha}U=0$ because $\mathcal{P}U=0$.
 Hypothesis  \ref{hypo3.2.3} and Lemma \ref{lem3.2.6} imply that $\phi'\in\mathcal{H}^{n}$, therefore if $v\in\mathcal{H}^{n}\hookrightarrow{H^k_{\alpha}(\mathbb{R}^d)}^n$, then $\mathcal{P}v\in\mathcal{H}^{n}$, and then $\mathcal{Q}v=(I-\mathcal{P})v\in\mathcal{H}^{n}$. Hence we may define $\mathcal{P}_{\mathcal{H}}$ and $\mathcal{Q}_{\mathcal{H}}$ to be the restrictions of  $\mathcal{P}$ and $\mathcal{Q}$ to  $\mathcal{H}^{n}$. 
 Since $\mathcal{H}^{n}\hookrightarrow {H^k_{\alpha}(\mathbb{R}^d)}^n$, the operators $\mathcal{P}_{\mathcal{H}}$ and $\mathcal{Q}_{\mathcal{H}}$ are also bounded. It follows from Lemmas~\ref{lem3.2.5} and \ref{cpcq}  that $\mathcal{H}^{n}=\ran\mathcal{P}_{\mathcal{H}}\oplus\ran\mathcal{Q}_{\mathcal{H}}$, where $\ran\mathcal{P}_{\mathcal{H}}=\ran\mathcal{L}_{\alpha}\cap \mathcal{H}^{n}$.


The following lemma  shows 
 that for any   sufficiently small $\tilde{v}\in \mathcal{H}^n$,  there exists a unique pair $(v,	q)\in\ran\mathcal{Q}_{\mathcal{H}}\times H^k(\mathbb{R}^{d-1})$ such that $\phi+\tilde{v}$ can be uniquely expressed by means of $(v,	q)$. 
\begin{lemma}\label{lem3.5.11}
	Assume  Hypothesis   \ref{hypo3.2.3} and   
  $k\geq[ \frac{d+1}{2}]$.  For any $\tilde{v}\in \mathcal{H}^n$ small enough,  there exists $(v,q)\in \ran\mathcal{Q}_{\mathcal{H}}\times H^k(\mathbb{R}^{d-1})$ such that 
	\begin{equation}\label{introtildev}
	\phi(z)+\tilde{v}(z,y)=\phi(z-q(y))+v(z,y),\, (z,y)\in\mathbb{R}^d.
	\end{equation}
\end{lemma}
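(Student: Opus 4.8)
The plan is to set up \eqref{introtildev} as an implicit-function problem for the pair $(v,q)$ near $(0,0)$. Introduce the map
\begin{equation*}
\mathcal{G}:\ran\mathcal{Q}_{\mathcal{H}}\times H^k(\mathbb{R}^{d-1})\times \mathcal{H}^n\to \ran\mathcal{P}_{\mathcal{H}},\quad \mathcal{G}(v,q,\tilde{v})=\mathcal{P}_{\mathcal{H}}\big(\phi(\cdot-q(\cdot))+v-\phi-\tilde{v}\big),
\end{equation*}
and observe that, because $\mathcal{Q}_{\mathcal{H}}v=v$ forces $\mathcal{P}_{\mathcal{H}}v=0$, the equation \eqref{introtildev} is equivalent to the pair of equations $\mathcal{G}(v,q,\tilde{v})=0$ together with $\mathcal{Q}_{\mathcal{H}}\big(\phi(\cdot-q(\cdot))+v-\phi-\tilde{v}\big)=0$; the latter simply \emph{defines} $v$ once $q$ is known, namely $v=\mathcal{Q}_{\mathcal{H}}\big(\phi+\tilde{v}-\phi(\cdot-q(\cdot))\big)$. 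Substituting this back, the whole problem reduces to solving the single equation $\Phi(q,\tilde{v}):=\mathcal{P}_{\mathcal{H}}\big(\phi(\cdot-q(\cdot))-\phi-\tilde{v}\big)=0$ for $q\in H^k(\mathbb{R}^{d-1})$ as a function of small $\tilde{v}\in\mathcal{H}^n$. Recalling that $\mathcal{P}_{\mathcal{H}}U=(\pi_\alpha U)(y)\,\phi'(z)$ and that $\phi'$ spans $\ran P_\alpha$, this is scalar in the $z$-variable: it is equivalent to $\pi_\alpha\big(\phi(\cdot-q(\cdot))-\phi-\tilde{v}\big)=0$ in $H^k(\mathbb{R}^{d-1})$.

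First I would verify that $\Phi$ is well-defined and smooth ($C^1$ suffices) on a neighborhood of $(0,0)$. The composition $q\mapsto \phi(\cdot-q(\cdot))-\phi$ maps a neighborhood of $0$ in $H^k(\mathbb{R}^{d-1})$ into $\mathcal{H}^n$: writing $\phi(z-q(y))-\phi(z)=-\int_0^1 \phi'(z-sq(y))\,ds\,q(y)$, one uses Lemma~\ref{lem3.2.6} — specifically that $\gamma_\alpha\phi'$ decays exponentially on both ends — together with the Banach-algebra property of $H^k(\mathbb{R}^d)$ for $k\geq[\frac{d+1}{2}]$ and standard composition estimates in Sobolev spaces, to control both the $H^k(\mathbb{R}^d)$ and the $H^k_\alpha(\mathbb{R}^d)$ norms of the difference. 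Here the weight is harmless precisely because we differentiated $\phi$: it is $\gamma_\alpha\phi'$, not $\gamma_\alpha\phi$, that appears, and that decays. Then $\pi_\alpha\in\mathcal{B}(\mathcal{H}^n,H^k(\mathbb{R}^{d-1}))$ by Lemma~\ref{lem3.2.5}.

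Next I would compute the partial derivative $\partial_q\Phi(0,0)$. Differentiating $q\mapsto\pi_\alpha\big(\phi(\cdot-q(\cdot))\big)$ at $q=0$ in the direction $h\in H^k(\mathbb{R}^{d-1})$ gives $-\pi_\alpha\big(\phi'(z)h(y)\big)$, and by the definition \eqref{DFNPP0} of $\pi_\alpha$ and the normalization $\int_{\mathbb{R}}(\tilde{e}(s),\phi'(s))_{\mathbb{R}^n}\,ds=1$ this equals $-h$. Thus $\partial_q\Phi(0,0)=-I_{H^k(\mathbb{R}^{d-1})}$, an isomorphism. By the implicit function theorem in Banach spaces there is a neighborhood of $0$ in $\mathcal{H}^n$ and a $C^1$ map $\tilde{v}\mapsto q(\tilde{v})$, with $q(0)=0$, solving $\Phi(q,\tilde{v})=0$; uniqueness of $q$ near $0$ is part of the conclusion, and then $v$ is determined by the formula above and lies in $\ran\mathcal{Q}_{\mathcal{H}}$ by construction, proving \eqref{introtildev}.

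The main obstacle I anticipate is not the soft implicit-function machinery but the regularity/boundedness bookkeeping in the \emph{weighted} norm: one must check that $z\mapsto\phi(z-q(y))-\phi(z)$, together with all its $z$- and $y$-derivatives up to order $k$, lies in $L^2(\gamma_\alpha^2\,dz\,dy)$ and depends smoothly on $q\in H^k(\mathbb{R}^{d-1})$. The delicate point is that $\gamma_\alpha(z)$ grows like $e^{\alpha_+ z}$ as $z\to+\infty$ while the shift $q(y)$ is only in $H^k(\mathbb{R}^{d-1})\hookrightarrow L^\infty$, so $\gamma_\alpha(z)/\gamma_\alpha(z-q(y))$ is bounded uniformly in $y$ — this ratio estimate, combined with the exponential decay of $\gamma_\alpha\phi^{(m)}$ from Lemma~\ref{lem3.2.6}(2), is what makes everything finite; I would isolate it as the technical core and otherwise lean on the Banach-algebra structure and Moser-type composition estimates, all of which are routine once that ratio bound is in place.
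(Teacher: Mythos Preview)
Your proposal is correct and follows essentially the same route as the paper: both reduce \eqref{introtildev} to the scalar equation $\pi_\alpha\big(\phi(\cdot-q)-\phi-\tilde v\big)=0$ via the integral representation $\phi(z-q)-\phi(z)=-q\int_0^1\phi'(z-sq)\,ds$, compute $\partial_q$ at the origin to be $\pm I$ using the normalization $\int(\tilde e,\phi')=1$, apply the implicit function theorem, and then recover $v$ by applying $\mathcal Q_{\mathcal H}$. Your discussion of the weighted-norm bookkeeping (the ratio $\gamma_\alpha(z)/\gamma_\alpha(z-q(y))$ and Lemma~\ref{lem3.2.6}) is in fact more explicit than the paper's, which simply computes the $L^2_\alpha$ bound directly.
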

\begin{proof}
	As in the proof of \cite[Lemma 2.2]{Kapitula2}, for any $q\in H^k(\mathbb{R}^{d-1})$ we write
	\begin{equation*}
	\phi(z-q(y))-\phi(z)=-q(y)\int_0^1\phi'(z-sq(y))\,ds.
	\end{equation*}
	Since $q\in H^k(\bbR^{d-1})\hookrightarrow L^{\infty}(\mathbb{R}^{d-1})$, we have 
		\begin{equation*}
	|\phi'(z-sq(y))|\leq Ke^{-\omega_{\pm}(z-sq(y))}
	\leq Ce^{-\omega_{\pm}z},
	\end{equation*}
	where $C$ is a constant that depends on $q$. 
	By Hypothesis \ref{hypo3.2.3} then 
	$$\int\limits_{\bbR}|\phi'(z-sq(y))|^2\gamma_{\alpha}^2(z)\,dz \leq C\left(\int_0^{\infty}e^{-2\omega_+ z}e^{2\alpha_+z}dz+\int_{-\infty}^{0}e^{-2\omega_-z}e^{2\alpha_-z} dz\right), $$
	and, thus, 
	$\|\phi'(\cdot-sq(\cdot))q(\cdot)\|^2_{L^2_{\alpha}(\bbR^d)}
	\leq C\|q\|_{L^2(\bbR^{d-1})}$, so 
	 $\phi(\cdot-q)-\phi(\cdot) \in \mathcal{H}^n$  if  $q\in H^k(\mathbb{R}^{d-1})$. 
	 
	We then write \eqref{introtildev} as
\begin{equation}\label{eqtildev}
	\tilde{v}(z,y)=v(z,y)-q(y)\int_0^1\phi'(z-sq(y))\,ds
\end{equation}
	and apply $\pi_{\alpha}$ (see \eqref{DFNPP0}). 
	 Since $v\in\ran\cQ_{{\mathcal{H}}}=\ker\cP_{{\mathcal{H}}}$, 
	\begin{equation*}
	\pi_{\alpha}(\tilde{v}(z,y))=-q(y)\left(\int_0^1\pi_{\alpha}(\phi'(z-sq(y)))\,ds\right).
	\end{equation*}
	 We consider the mapping $(q,\tilde{v})\mapsto \mathcal{G}(q,\tilde{v})     $ defined by 
	\begin{equation*}
	\mathcal{G}(q(y),\tilde{v}(z,y))=\pi_{\alpha}(\tilde{v}(z,y))+q(y)\left(\int_0^1\pi_{\alpha}(\phi'(z-sq))\,ds\right)
	\end{equation*}
	as  
	a mapping  from $ H^k(\mathbb{R}^{d-1})\times \mathcal{H}^n$ to $ H^k(\mathbb{R}^{d-1})$  such that  $\mathcal{G}(0,0)=0$ and $\frac{\partial\mathcal{G}}{\partial q}(0,0)=I$. For any $\tilde{v}$ near $\tilde{v}=0$, 
	 the  Implicit Function Theorem yields the existence of a unique $q$ as a function of $\tilde{v}$  so that  $\mathcal{G}(q,\tilde{v})=0$. 
	 
	So, given a  $\tilde v$, we first find   $q$   from the equation $\mathcal{G}(q,\tilde{v})=0$ and  then, to identify $v$  that corresponds to  that $q$  we  apply $\mathcal{Q}_{\mathcal{H}}$ to \eqref{eqtildev} and set   $v=\mathcal{Q}_{\mathcal{H}}v$, thus obtaining the following formula, 
	$$v
	=\mathcal{Q}_{\mathcal{H}}\tilde{v}+\mathcal{Q}_{\mathcal{H}}\left( q \int_0^1 \phi'(\cdot-sq)\,ds \right).$$ 
\end{proof}

Since   the coordinate system  $(v,q)\in\ran\mathcal{Q}_{\mathcal{H}}\otimes H^k(\bbR^{d-1})$ is well defined by Lemma \ref{lem3.5.11},
   we  can decompose solutions of  \eqref{sysu} that are close to the front $\phi$  as a sum of  a spatial translation component, i.e., the component in the direction of the front $\phi(z-q(y,t))$, and a normal component $v$, so that $v=v(\cdot,y,t)$ belongs to 
$\Ran Q_{\alpha}=\Ran\cL_{1,\alpha}$, for each $(y,t)\in\bbR^{d-1}\times\bbR^+$. In other words, we can  write  a solution $u$ of equation \eqref{sysu} in ${\mathcal{H}}^n$  as 
\begin{equation}\label{eq3.5.33}
u(z,y,t)=\phi(z-q(y,t))+v(z,y,t),\ (z,y)\in\bbR^d,
\end{equation}
where $(v,q)\in\ran\mathcal{Q}_{\mathcal{H}}\otimes H^k(\bbR^{d-1}).$ For convenience, in what  follows, we denote   $\phi_{q}(z)=\phi(z-q)$.

We  substitute \eqref{eq3.5.33} into the equation \eqref{sysu}.  Repeating computations   from   \cite[Section 2]{Kapitula2}, we see that $v$  solves  the equation
\begin{equation}\label{eq3.2.16}
\partial_t v= Lv +\left(df(\phi_q)-df(\phi)\right)v+N(\phi_q,v)v+ \big(\partial_tq-\Delta_yq\big)\phi'_q+(\nabla_yq\cdot\nabla_y q)\phi''_q,
\end{equation}
where 
$L$ is the differential expression defined in \eqref{eq3.1.2}, $\nabla_y q = (\partial_{x_2}q, \cdots, \partial_{x_d}q)$, and  
\begin{equation}\label{eq3.2.12}
N(u,v)=\int_{0}^{1}df(u+sv)-df(u)ds,
\end{equation}
is an $n\times n$ matrix-valued function of $(u,v)$.

We assume that $v(\cdot,\cdot,t)\in \ran\cQ_{{\mathcal{H}}}\cap\mathcal{H}^n$ for every $t\geq 0$, that it, $\mathcal{P}_{{\mathcal{H}}}v=0$, and  apply the projection $\mathcal{P}_{{\mathcal{H}}}$  to \eqref{eq3.2.16}, thus obtaining  an equation for  $q$,
\begin{equation}\label{prop}
(-\pi_{\alpha}\phi'_q)\partial_t q=(\pi_{\alpha}\phi''_q)(\nabla_yq\cdot\nabla_y q)-(\pi_{\alpha}\phi'_q)\Delta_yq
+\pi_{\alpha}(\left(df(\phi_q)-df(\phi)\right)v+N(\phi_q,v)v).
\end{equation}


The following result is proved in  \cite[Lemma 2.3]{Kapitula2}. It  
shows that $\pi_{\alpha}(\phi_q')(y)$ is not close to zero. 
\begin{lemma}\label{lem3.3.6}
	 There are constants $\delta_0$ and $C>0$ such that if $\|q\|_{L^{\infty}(\bbR^{d-1})}<\delta_0$, then  	for all $y\in\mathbb{R}^{d-1}$
	\begin{align*}
	&1-C\delta_0\leq 1-C\|q\|_{L^{\infty}(\bbR^{d-1})}\leq|\pi_{\alpha}(\phi_q')(y)|\leq 1+C\|q\|_{L^{\infty}(\bbR^{d-1})}\leq 1+C\delta_0,\\
	&C(1-\delta_0)\leq C(1-\|q\|_{L^{\infty}(\bbR^{d-1})})\leq|\pi_{\alpha}(\phi_q'')(y)|\leq C(1+\|q\|_{L^{\infty}(\bbR^{d-1})})\leq C(1+\delta_0).
	\end{align*}
\end{lemma}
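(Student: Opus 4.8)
The plan is to prove the estimates by reducing everything to the scalar quantity $\pi_{\alpha}(\phi_q')(y)$ and its second-derivative analogue, and controlling the difference from the ``unperturbed'' value using the normalization $\int_{\bbR}(\tilde e(s),\phi'(s))_{\bbR^n}\,ds = 1$ together with the exponential localization of $\gamma_\alpha^{-1}\tilde e$ and of $\gamma_\alpha\phi^{(m)}$ established in Lemma~\ref{lem3.2.6}. First I would write, using the definition \eqref{DFNPP0} of $\pi_\alpha$ and the substitution $s \mapsto s$ in the integral,
\[
\pi_{\alpha}(\phi_q')(y) = \int_{\bbR}\big(\tilde e(s),\phi'(s-q(y))\big)_{\bbR^n}\,ds = \int_{\bbR}\big(\tilde e(s+q(y)),\phi'(s)\big)_{\bbR^n}\,ds,
\]
and then subtract the normalization identity to get
\[
\pi_{\alpha}(\phi_q')(y) - 1 = \int_{\bbR}\big(\tilde e(s+q(y)) - \tilde e(s),\phi'(s)\big)_{\bbR^n}\,ds.
\]
By the fundamental theorem of calculus $\tilde e(s+q(y)) - \tilde e(s) = q(y)\int_0^1 \tilde e'(s+\tau q(y))\,d\tau$, so the difference is bounded by $\|q\|_{L^\infty}$ times a finite integral. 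This is where one must be slightly careful: $\tilde e$ itself need not be integrable, but $\gamma_\alpha^{-1}\tilde e$ decays exponentially and $\gamma_\alpha \phi'$ decays exponentially, so I would insert $\gamma_\alpha(s)\gamma_\alpha^{-1}(s)$ and also handle the shift by noting $\gamma_\alpha(s)\gamma_\alpha^{-1}(s+\tau q(y))$ is bounded uniformly for $\|q\|_{L^\infty} \le \delta_0$ and $\tau\in[0,1]$ (since $\gamma_\alpha$ has exponential behavior with fixed rates at $\pm\infty$). The same argument with $\tilde e'$ in place of $\tilde e$ requires that $\gamma_\alpha^{-1}\tilde e'$ also decay exponentially, which follows since $\tilde e$ solves the constant-coefficient-at-infinity adjoint ODE $L_{1,\alpha}^*\tilde e = 0$ and hence $\tilde e'$ has the same exponential localization as $\tilde e$.

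Carrying this out yields $|\pi_\alpha(\phi_q')(y) - 1| \le C\|q\|_{L^\infty(\bbR^{d-1})}$ for all $y$, which is exactly the first chain of inequalities once we restrict to $\|q\|_{L^\infty} < \delta_0$ with $C\delta_0 < 1$ (so that the lower bound $1 - C\|q\|_{L^\infty}$ is meaningful and positive). For the second chain, I would repeat the identical manipulation with $\phi_q''$ in place of $\phi_q'$: here there is no exact normalization of $\int (\tilde e, \phi'')$, but integration by parts gives $\int_{\bbR}(\tilde e(s),\phi''(s))\,ds = -\int_{\bbR}(\tilde e'(s),\phi'(s))\,ds =: c_0$, a fixed constant (absorb $|c_0|$ and a lower bound — nonzero generically, as stated — into the generic constant $C$). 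Then
\[
\pi_\alpha(\phi_q'')(y) - c_0 = \int_{\bbR}\big(\tilde e(s+q(y)) - \tilde e(s),\phi''(s)\big)_{\bbR^n}\,ds,
\]
and the same shift-plus-mean-value-theorem estimate, now using the exponential decay of $\gamma_\alpha\phi''$ from Lemma~\ref{lem3.2.6}(2), gives $|\pi_\alpha(\phi_q'')(y) - c_0| \le C\|q\|_{L^\infty}$, hence $C(1 - \|q\|_{L^\infty}) \le |\pi_\alpha(\phi_q'')(y)| \le C(1 + \|q\|_{L^\infty})$ after renaming constants.

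The main obstacle — really the only subtle point — is the interplay of the two weights under the translation by $q$: the integrand $(\tilde e(s+q(y)), \phi'(s))$ pairs a function localized by $\gamma_\alpha^{-1}(\cdot + q)$ against one localized by $\gamma_\alpha(\cdot)$, and one needs the ratio $\gamma_\alpha(s)/\gamma_\alpha(s+q(y))$ to stay bounded uniformly in $y$ and in $\|q\|_{L^\infty}\le\delta_0$. This holds because on each half-line $\gamma_\alpha$ is a fixed exponential, so the ratio is at most $e^{|\alpha_\pm|\delta_0}$, with only a compact transition region where $\gamma_\alpha\in C^{k+3}$ is bounded above and below — so all the relevant integrals converge and the bounds are uniform. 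Everything else is the mean value theorem and Cauchy–Schwarz exactly as in \cite[Lemma 2.3]{Kapitula2}, adapted by carrying the weight $\gamma_\alpha$ through; I would remark that this is the weighted analogue of Kapitula's argument and cite it accordingly.
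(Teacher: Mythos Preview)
Your approach is correct and is precisely the weighted adaptation of \cite[Lemma~2.3]{Kapitula2}, which is all the paper offers as proof (it gives no independent argument, only the citation); your handling of the weight ratio $\gamma_\alpha(s)/\gamma_\alpha(s+q(y))$ under translation and the exponential decay of $\gamma_\alpha^{-1}\tilde e'$ via the adjoint ODE are exactly the extra ingredients needed beyond Kapitula's unweighted setting. The one loose end you flag---that the lower bound on $|\pi_\alpha(\phi_q'')|$ requires $c_0 = \int_{\bbR}(\tilde e,\phi'')\,ds \neq 0$---is not resolved by any hypothesis in the paper either, but is immaterial since only the upper bound is ever used downstream (in Lemma~\ref{k1k2} to control $K_1(q)$).
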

For  $\delta_0$  as in Lemma \ref{lem3.3.6}, we assume that  $\|q\|_{L^{\infty}(\mathbb{R}^{d-1})}\leq \delta_0$ and, 
denote
\begin{equation}\label{eq3.3.18}
G(v,q)=\left(df(\phi_q)-df(\phi)\right)v+N(\phi_q,v)v,\qquad
K_1(q)=-\displaystyle{\frac{\pi_{\alpha}\phi''_q}{\pi_{\alpha}\phi'_q}}, \qquad K_2(q)=-\displaystyle{\frac{1}{\pi_{\alpha}\phi'_q}}.
\end{equation}
Lemma \ref{lem3.3.6}  allows us to   divide both sides of \eqref{prop} by $\pi_{\alpha}\phi'_q$  and  obtain 
\begin{align}\label{eqq}
\partial_t q=\Delta_yq+K_1(q)(\nabla_yq)\cdot(\nabla_yq)+K_2(q)\pi_{\alpha}(G(v,q)).
\end{align}

The following lemma is  proved by minor  modifications of the argument leading to \cite[eq(2.23)]{Kapitula2}. It will be used to derive various estimates for nonlinearities in evolution equations studied below.
\begin{lemma}\label{k1k2}
	Let the functions $K_1=K_1(q)(y)$ and $K_2=K_2(q)(y)$ for $q\in H^k(\bbR^{d-1})$ be defined as in \eqref{eq3.3.18}. There exist constants $\delta_0$ and $C>0$ such that for $\|q\|_{H^k(\bbR^{d-1})}\leq\delta_0$ we have
	\begin{equation}\label{eq3.2.21}
	\|K_i(q)\|_{L^{\infty}(\mathbb{R}^{d-1})}\leq C(1+\|q\|_{H^k(\mathbb{R}^{d-1})}),\, i=1,2.
	\end{equation}
	Moreover, the formulas for $K_i$, $i=1,2$, define locally Lipschitz mappings $q\mapsto K_i(q)$ from $H^k(\mathbb{R}^{d-1})$ to $L^{\infty}(\mathbb{R}^{d-1})$.
\end{lemma}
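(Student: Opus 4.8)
The plan is to reduce everything to pointwise bounds on the scalar functions $\pi_\alpha\phi'_q$ and $\pi_\alpha\phi''_q$ and their dependence on $q$, and then to combine these with the algebra property of $H^k(\bbR^{d-1})$ (valid since $k\ge[\frac{d+1}{2}]>\frac{d-1}{2}$). First I would record the elementary bound
\[
\|K_1(q)\|_{L^\infty}=\Bigl\|\frac{\pi_\alpha\phi''_q}{\pi_\alpha\phi'_q}\Bigr\|_{L^\infty}\le
\frac{\|\pi_\alpha\phi''_q\|_{L^\infty}}{\inf_y|\pi_\alpha(\phi'_q)(y)|},
\qquad
\|K_2(q)\|_{L^\infty}=\frac{1}{\inf_y|\pi_\alpha(\phi'_q)(y)|},
\]
and then invoke Lemma \ref{lem3.3.6}: for $\|q\|_{H^k(\bbR^{d-1})}\le\delta_0$ (which by the Sobolev embedding $H^k(\bbR^{d-1})\hookrightarrow L^\infty(\bbR^{d-1})$ controls $\|q\|_{L^\infty}$) the denominator is bounded below by $1-C\delta_0\ge\tfrac12$, and the numerator $\|\pi_\alpha\phi''_q\|_{L^\infty}\le C(1+\|q\|_{L^\infty})\le C(1+\|q\|_{H^k(\bbR^{d-1})})$. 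This gives \eqref{eq3.2.21} immediately with a possibly relabeled constant $C$.

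For the Lipschitz statement I would first establish that $q\mapsto\pi_\alpha(\phi'_q)$ and $q\mapsto\pi_\alpha(\phi''_q)$ are locally Lipschitz from $H^k(\bbR^{d-1})$ into $L^\infty(\bbR^{d-1})$. Writing $(\pi_\alpha\phi'_q)(y)=\int_\bbR(\tilde e(s),\phi'(s-q(y)))_{\bbR^n}\,ds$ and differencing in $q$, one has
\[
(\pi_\alpha\phi'_{q_1})(y)-(\pi_\alpha\phi'_{q_2})(y)=\int_\bbR\Bigl(\tilde e(s),\,\phi'(s-q_1(y))-\phi'(s-q_2(y))\Bigr)_{\bbR^n}ds
=-\bigl(q_1(y)-q_2(y)\bigr)\!\int_\bbR\!\int_0^1\!\bigl(\tilde e(s),\phi''(s-\tau)\bigr)_{\bbR^n}d\sigma\,ds,
\]
where $\tau=\tau(\sigma,y)$ interpolates between $q_1(y)$ and $q_2(y)$; using the exponential decay of $\gamma_\alpha^{-1}\tilde e$ together with Lemma \ref{lem3.2.6}, the $s$-integral is bounded uniformly in $y$ (for $\|q_i\|_{L^\infty}\le\delta_0$), so $\|\pi_\alpha\phi'_{q_1}-\pi_\alpha\phi'_{q_2}\|_{L^\infty}\le C\|q_1-q_2\|_{L^\infty}\le C\|q_1-q_2\|_{H^k(\bbR^{d-1})}$, and similarly for $\pi_\alpha\phi''_q$ (now differencing $\phi''$ and using $\phi'''$, again controlled via Lemma \ref{lem3.2.6} since $k+5\ge3$). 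Then $K_2(q)=-1/\pi_\alpha\phi'_q$ is locally Lipschitz because $t\mapsto 1/t$ is Lipschitz on the interval $[1-C\delta_0,1+C\delta_0]$ which stays away from $0$, and $K_1(q)=(\pi_\alpha\phi''_q)K_2(q)$ is a product of two locally bounded, locally Lipschitz maps into the Banach algebra $L^\infty(\bbR^{d-1})$, hence locally Lipschitz.

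The only mild subtlety — and the step I would treat most carefully — is that Lemma \ref{lem3.3.6} as stated controls things in terms of $\|q\|_{L^\infty(\bbR^{d-1})}$, whereas the present lemma is phrased with $\|q\|_{H^k(\bbR^{d-1})}$; one must use the embedding $H^k(\bbR^{d-1})\hookrightarrow L^\infty(\bbR^{d-1})$ (legitimate since $k\ge[\frac{d+1}{2}]$) to pass between the two, adjusting $\delta_0$ accordingly, and similarly one needs the same embedding to turn the $L^\infty$-Lipschitz bounds on the differences into $H^k$-Lipschitz bounds. There is no real obstacle here — just bookkeeping of constants and a check that the integrals defining $\pi_\alpha\phi'_q$, $\pi_\alpha\phi''_q$ converge uniformly in $y$, which is exactly what the exponential decay of $\gamma_\alpha^{-1}\tilde e$ combined with Hypothesis \ref{hypo3.2.2} and Lemma \ref{lem3.2.6} provides.
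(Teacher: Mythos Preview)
Your proposal is correct and follows essentially the same route the paper indicates: the paper does not spell out a proof but simply refers to minor modifications of the argument leading to \cite[eq.~(2.23)]{Kapitula2}, and what you have written is exactly that argument --- invoke Lemma~\ref{lem3.3.6} for the pointwise upper and lower bounds on $\pi_\alpha\phi'_q$ and $\pi_\alpha\phi''_q$, pass from $L^\infty$ to $H^k(\bbR^{d-1})$ control via the Sobolev embedding, and for the Lipschitz part difference the integrals defining $\pi_\alpha\phi'_q$, $\pi_\alpha\phi''_q$ and use that $t\mapsto 1/t$ is Lipschitz away from zero. Your identification of the one bookkeeping point (translating between $\|q\|_{L^\infty}$ and $\|q\|_{H^k}$ and checking the uniform convergence of the defining integrals via the decay of $\gamma_\alpha^{-1}\tilde e$ and Lemma~\ref{lem3.2.6}) is accurate, and there is no gap.
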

We  return to the task of deriving the evolution equation for the perturbation $\phi_q+v$. Applying the projection operator $\mathcal{Q}_{{\mathcal{H}}}$ to  the equation \eqref{eq3.2.16} yields the equation
\begin{equation}\label{eqv}
\partial_tv=Lv+\mathcal{Q}_{{\mathcal{H}}}\big(G(v,q)+\big(\partial_tq-\Delta_yq\big)\phi'_q+(\nabla_yq)\cdot(\nabla_yq)\phi''_q\big),
\end{equation}
where $G(v,q)$ is defined  in \eqref{eq3.3.18}.
Combining \eqref{eqv} and \eqref{eqq} we have the system
\begin{align}\label{sys1}
&\partial_tv=Lv+\mathcal{Q}_{{\mathcal{H}}}\big(G(v,q)+\big(\partial_tq-\Delta_yq\big)\phi'_q+(\nabla_yq)\cdot(\nabla_yq)\phi''_q\big)\no\\
&\partial_tq=\Delta_yq+K_1(q)(\nabla_yq)\cdot(\nabla_yq)+K_2(q)\pi_{\alpha}G(v,q).
\end{align}
We further denote 
\begin{eqnarray}w(y)&=&\nabla_yq(y),\, y\in\mathbb{R}^{d-1}, \notag\\
\label{eqf1}
F_1(v,q,w)&=&G(v,q)+\big(\partial_tq-\Delta_yq\big)\phi'_q+(w\cdot w)\phi''_q, \\
\label{eqg1}
F_2(v,q,w)&=&K_1(q)(w\cdot w)+K_2(q)\pi_{\alpha}G(v,q).
\end{eqnarray}
Using 
\eqref{eqq} in \eqref{eqg1},  we obtain a relation between $F_1$ and $F_2$, 
\begin{equation}\label{eq3.2.25}
F_1(v,q,w)=G(v,q)+F_2(v,q,w)\phi'_q+(w\cdot w)\phi''_q.
\end{equation}
From \eqref{eq3.2.16}, using $v\in\ran \cQ_{{\mathcal{H}}}$ and $\phi_q'\in\ker P_{{\mathcal{H}}}$, we obtain
\begin{equation*}
\mathcal{P}_{{\mathcal{H}}}\left( G(v,q)+(\partial_t q-\Delta_y q)\phi'_q+(w\cdot w)\phi''_q \right)=0,
\end{equation*}
which implies that $F_1(v,q,w)=\cQ_{{\mathcal{H}}} F_1(v,q,w)
=G(v,q)+(\partial_t q-\Delta_y q)\phi'_q+(w\cdot w)\phi''_q.
$ 
Thus applying $\nabla_y$ to  \eqref{sys1} we finally arrive to the  system for $(v,q,w)\in\ran\mathcal{Q}_{{\mathcal{H}}}\times H^k(\mathbb{R}^{d-1})\times H^k(\mathbb{R}^{d-1})^{d-1}$
 that we shall study
\begin{equation}\label{sys2}
\begin{aligned}
& \partial_tv=Lv+F_1(v,q,w),\\
& \partial_tq=\Delta_y q+F_2(v,q,w),\\
& \partial_tw=\Delta_yw+\nabla_y\cdot F_2(v,q,w).
\end{aligned}
\end{equation}



\section{Estimates  for the nonlinear terms}\label{sec6}

In this section we obtain estimates for  the nonlinear terms in \eqref{sys2}. 
Below  we use  the fact    \cite[Theorem 4.39]{AF}, that,  for $2k>d$,   the Sobolev embedding   yields the inequality
\begin{equation}\label{eq2.3.3}
\|uv\|_{H^k(\bbR^d)}\leq C\|u\|_{H^k(\bbR^d)}\|v\|_{H^k(\bbR^d)}.
\end{equation}

\begin{lemma}\label{soblev}
	For  $k\geq[\frac{d+1}{2}]$,
	 the following assertions hold.
		\begin{itemize}
	\item[(1)] If $u$, $v\in  H^k(\mathbb{R}^d)$, then $uv\in  H^k(\mathbb{R}^d)$. Moreover,   there exists a constant $C>0$ such that $\|uv\|_{H^k(\bbR^d)}\leq C\|u\|_{H^k(\bbR^d)}\|v\|_{H^k(\bbR^d)}$ for all $u$, $v\in  H^k(\mathbb{R}^d)$.
		\item[(2)] If $u$, $v\in \mathcal{H}$, then $uv\in {H^k_{\alpha}(\mathbb{R}^d)}$. Moreover, there exists a constant $C>0$ such that $\|uv\|_{H^k_{\alpha}(\bbR^d)}\leq C\|u\|_{H^k(\bbR^d)}\|v\|_{H^k_{\alpha}(\bbR^d)}$ for all  $u$, $v\in \mathcal{H}$.
		\item[(3)] If $u$, $v\in \mathcal{H}$, then $uv\in \mathcal{H}$. Moreover, there exists a constant $C>0$ such that $\|uv\|_{\mathcal{H}}\leq C\|u\|_{\mathcal{H}}\|v\|_{\mathcal{H}}$ for all   $u$, $v\in \mathcal{H}$.
	\end{itemize}
\end{lemma}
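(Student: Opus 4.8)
The plan is to deduce all three assertions from the Banach-algebra inequality \eqref{eq2.3.3}, which we may invoke since the hypothesis $k\geq[\frac{d+1}{2}]$ is in force, together with the elementary pointwise identity
\[
\gamma_{\alpha}(z)\,(uv)(z,y)=u(z,y)\,\bigl(\gamma_{\alpha}v\bigr)(z,y),\qquad (z,y)\in\bbR^d,
\]
which transfers the weight from the product onto the single factor $v$. When $u$ and $v$ are vector- or matrix-valued, each entry of $uv$ is a finite sum of products of entries of $u$ and $v$; applying the scalar inequality to each term and summing (absorbing the number of terms into the generic constant $C$) reduces everything to the scalar case, so below I take $u$ and $v$ to be scalar.

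Item (1) is then immediate, being exactly \eqref{eq2.3.3}, and it records in particular that $uv\in H^k(\bbR^d)$. For item (2), note that $v\in\mathcal{H}\subset H^k_{\alpha}(\bbR^d)$ gives $\gamma_{\alpha}v\in H^k(\bbR^d)$, while $u\in\mathcal{H}\subset H^k(\bbR^d)$; hence by item (1) the product $u\,(\gamma_{\alpha}v)$ belongs to $H^k(\bbR^d)$ and
\[
\|u\,(\gamma_{\alpha}v)\|_{H^k(\bbR^d)}\leq C\|u\|_{H^k(\bbR^d)}\,\|\gamma_{\alpha}v\|_{H^k(\bbR^d)}
=C\|u\|_{H^k(\bbR^d)}\,\|v\|_{H^k_{\alpha}(\bbR^d)}.
\]
By the pointwise identity above, this says precisely that $\gamma_{\alpha}(uv)\in H^k(\bbR^d)$, i.e.\ $uv\in H^k_{\alpha}(\bbR^d)$, with $\|uv\|_{H^k_{\alpha}(\bbR^d)}=\|\gamma_{\alpha}(uv)\|_{H^k(\bbR^d)}$ estimated as claimed. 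The only point that requires care is that the weight must be attached to the factor $v$ with $u$ kept unweighted; this is legitimate precisely because $u$ lies in the unweighted space $H^k(\bbR^d)$.

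Item (3) then follows by bookkeeping from the definition \eqref{defcH} of $\|\cdot\|_{\mathcal{H}}$: item (1) gives $\|uv\|_{H^k(\bbR^d)}\leq C\|u\|_{H^k(\bbR^d)}\|v\|_{H^k(\bbR^d)}\leq C\|u\|_{\mathcal{H}}\|v\|_{\mathcal{H}}$, while item (2) gives $\|uv\|_{H^k_{\alpha}(\bbR^d)}\leq C\|u\|_{H^k(\bbR^d)}\|v\|_{H^k_{\alpha}(\bbR^d)}\leq C\|u\|_{\mathcal{H}}\|v\|_{\mathcal{H}}$; taking the maximum of the two left-hand sides yields $\|uv\|_{\mathcal{H}}\leq C\|u\|_{\mathcal{H}}\|v\|_{\mathcal{H}}$, and $uv\in\mathcal{H}$ because it lies in both $H^k(\bbR^d)$ and $H^k_{\alpha}(\bbR^d)$. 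There is no substantial obstacle here, since the lemma is a direct corollary of \eqref{eq2.3.3}; the only items demanding attention are the componentwise reduction for vector- and matrix-valued products and the correct placement of the weight in item (2).
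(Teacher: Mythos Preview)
Your proof is correct and follows essentially the same approach as the paper's, which simply refers to \cite[Proposition~7.1]{GLS2} for the one-dimensional analogue: item~(1) is the standard Sobolev algebra inequality \eqref{eq2.3.3}, item~(2) follows from the pointwise identity $\gamma_{\alpha}(uv)=u\,(\gamma_{\alpha}v)$ combined with~(1), and item~(3) is the maximum of the two. There is nothing to add.
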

\begin{proof}
The proof  is  similar to the one-dimensional estimates in \cite[Proposition 7.1]{GLS2}.
\end{proof}

\begin{lemma}\label{estq}
	For  $k\geq[\frac{d+1}{2}]$,  if $q_1$, $q_2\in H^k(\mathbb{R}^{d-1})$ and $\psi \in  H^{k+2}(\mathbb{R})$ is such that $\psi'(z)\to 0 $ exponentially  as $z\rightarrow\pm\infty$, then  the function  $\sigma(z,y)=\psi'(z-q_1(y))q_2(y)$, $(z,y)\in\bbR^d$, satisfies 
	\begin{equation*}
	\|\sigma\|_{H^k(\mathbb{R}^d)}
	\leq C\|q_2\|_{H^k(\mathbb{R}^{d-1})},
	\end{equation*}
	where  $C=C(\|\psi\|_{L^{\infty}(\mathbb{R})}, \|\psi^{\prime} \|_{H^{k+1}(\mathbb{R})}, \|q_1\|_{H^k(\mathbb{R}^{d-1})})$ is bounded in each ball of the form 
	$
	\{ q_1:\|q_1\|_{H^k(\mathbb{R}^{d-1})}\leq K\}.
	$
\end{lemma}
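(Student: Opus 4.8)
The plan is to estimate $\sigma(z,y)=\psi'(z-q_1(y))q_2(y)$ directly using the equivalent tensor-type norm on $H^k(\mathbb{R}^d)$ and the Banach-algebra property from Lemma~\ref{soblev}(1). First I would reduce to controlling $\psi'(\cdot - q_1(\cdot))$ as a multiplier: since $H^k(\mathbb{R}^{d-1})\hookrightarrow L^\infty(\mathbb{R}^{d-1})$ for $k\geq[\frac{d+1}{2}]$, the shift $q_1(y)$ ranges over a bounded set, so the exponential decay of $\psi'$ gives $|\psi'(z-q_1(y))|\leq C e^{-\omega_\pm z}$ with a constant depending only on $\|q_1\|_{L^\infty}$, hence only on $\|q_1\|_{H^k(\mathbb{R}^{d-1})}$ within a fixed ball. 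The key observation is that $y\mapsto \psi'(\cdot - q_1(y))$ should be viewed as an $H^k(\mathbb{R})$-valued map whose $H^k(\mathbb{R})$-norm is uniformly bounded (again because a bounded translation does not change the $H^k(\mathbb{R})$-norm of $\psi'$, which is finite since $\psi'\in H^{k+1}(\mathbb{R})$), and whose first $k$ derivatives in $y$ produce, via the chain rule, finite sums of terms of the form $\psi^{(1+j)}(z-q_1(y))\cdot(\text{polynomial in } \partial_y^{\beta_i}q_1)$ with $1+j\leq k+1$, each factor of which lies in the appropriate $H^k$ space.

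\textbf{Key steps.} I would carry out the argument in the following order. (i) Use the equivalent norm on $H^k(\mathbb{R}^d)$ that splits into the $L^2$ norm plus the $L^2$ norms of pure $y$-derivatives of order $k$, exactly as recalled in the proof of Lemma~\ref{lem3.2.5}; since $\sigma$ also depends on $z$, I instead use that $H^k(\mathbb{R}^d)=H^k(\mathbb{R})\otimes H^k(\mathbb{R}^{d-1})$ and estimate $\sigma$ by bounding, for each multi-index $\alpha=(\alpha_z,\alpha_y)$ with $|\alpha|\leq k$, the $L^2(\mathbb{R}^d)$ norm of $\partial_z^{\alpha_z}\partial_y^{\alpha_y}\sigma$. (ii) Apply the chain/product rule: $\partial_z^{\alpha_z}\partial_y^{\alpha_y}\bigl(\psi'(z-q_1(y))q_2(y)\bigr)$ is a finite linear combination of terms $\psi^{(1+\alpha_z+m)}(z-q_1(y))\,\bigl(\prod_{i} \partial_y^{\beta_i}q_1(y)\bigr)\,\partial_y^{\gamma}q_2(y)$ with $m+\sum|\beta_i|+|\gamma|\le \alpha_y$ and $m$ equal to the number of $\beta_i$'s, so that $1+\alpha_z+m\leq k+1$ and all derivatives $\psi^{(1+\alpha_z+m)}$ are controlled by $\|\psi'\|_{H^{k+1}(\mathbb{R})}$ together with $\|\psi\|_{L^\infty(\mathbb{R})}$. (iii) For each such term, separate the $z$-integration from the $y$-integration: the $z$-integral of $|\psi^{(1+\alpha_z+m)}(z-q_1(y))|^2$ over $\mathbb{R}$ equals $\|\psi^{(1+\alpha_z+m)}\|_{L^2(\mathbb{R})}^2$, a constant independent of $y$; then estimate the remaining $y$-factor $\bigl(\prod_i\partial_y^{\beta_i}q_1\bigr)\partial_y^\gamma q_2$ in $L^2(\mathbb{R}^{d-1})$ using the Banach-algebra inequality \eqref{eq2.3.3} in dimension $d-1$ (valid since $2k>d>d-1$), which yields a bound of the form $C(\|q_1\|_{H^k(\mathbb{R}^{d-1})})\,\|q_2\|_{H^k(\mathbb{R}^{d-1})}$. (iv) Sum over the finitely many terms and multi-indices to conclude $\|\sigma\|_{H^k(\mathbb{R}^d)}\leq C\|q_2\|_{H^k(\mathbb{R}^{d-1})}$ with $C$ of the claimed form, and note that since every appearance of $\|q_1\|_{H^k(\mathbb{R}^{d-1})}$ enters polynomially, $C$ is bounded on each ball $\{\|q_1\|_{H^k(\mathbb{R}^{d-1})}\leq K\}$.

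\textbf{Main obstacle.} The only genuinely delicate point is bookkeeping the chain rule (Faà di Bruno) for $\partial_y^{\alpha_y}$ acting on the composition $\psi'(z-q_1(y))$ and keeping careful track that the highest derivative of $\psi$ that appears is $\psi^{(k+1)}$ (so that the hypothesis $\psi\in H^{k+2}(\mathbb{R})$, i.e.\ $\psi'\in H^{k+1}(\mathbb{R})$, is exactly what is needed) and that the resulting product of $q_1$-derivatives has total order at most $k$, so it lives in $H^k(\mathbb{R}^{d-1})$ as a product of factors in that Banach algebra. Once this combinatorial structure is set up, every individual estimate is routine: it is the separation-of-variables $L^2$ splitting together with \eqref{eq2.3.3}, exactly as in the one-dimensional computations of \cite[Proposition 7.1]{GLS2} and Lemma~\ref{lem3.2.5}. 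I would therefore present the chain-rule expansion schematically, verify the order constraints, and then invoke Lemma~\ref{soblev}(1) and the exponential-decay bound on $\psi'$ to finish.
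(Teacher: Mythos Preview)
Your overall framework---expand $\partial^\alpha\sigma$ via the product and chain (Fa\`a di Bruno) rules, then split the $z$-integration from the $y$-integration using translation invariance of $\|\psi^{(j)}(\cdot-q_1(y))\|_{L^2(\mathbb{R})}$---is correct and matches the paper's approach. The gap is in step~(iii), where you propose to control $\bigl\|\bigl(\prod_i \partial_y^{\beta_i}q_1\bigr)\,\partial_y^{\gamma}q_2\bigr\|_{L^2(\mathbb{R}^{d-1})}$ by the Banach-algebra inequality~\eqref{eq2.3.3}. That inequality requires both factors to lie in $H^k$; here the individual factors $\partial_y^{\beta_i}q_1$ belong only to $H^{k-|\beta_i|}(\mathbb{R}^{d-1})$, and for a Fa\`a di Bruno term with $m\geq 2$ blocks and $\sum_i|\beta_i|$ close to $k$ none of them is in $H^k$. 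Concretely, take $d=3$, $k=2$: the term $\psi'''(z-q_1)(\partial_{x_2}q_1)^2 q_2$ forces you to bound $\|(\partial_{x_2}q_1)^2 q_2\|_{L^2(\mathbb{R}^2)}$, but $\partial_{x_2}q_1\in H^1(\mathbb{R}^2)$ and $H^1(\mathbb{R}^2)$ is not an algebra. The estimate is true, but not for the reason you give, and your ``Main obstacle'' paragraph repeats the same error when it asserts that the product of $q_1$-derivatives ``lives in $H^k(\mathbb{R}^{d-1})$ as a product of factors in that Banach algebra.''

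The remedy---and this is what the paper actually invokes---is a Moser-type product estimate: apply the generalized H\"older inequality (Lemma~\ref{lem2.3.3}) with exponents $p_i$ satisfying $\sum_i 1/p_i=1/2$, together with the Sobolev embeddings $H^k(\mathbb{R}^{d-1})\hookrightarrow W^{n_i,p_i}(\mathbb{R}^{d-1})$ of Lemma~\ref{lem2.3.2}, which hold because $k-(d-1)/2>n_i-(d-1)/p_i$ for a suitable choice of $p_i$. This is precisely the machinery of Proposition~\ref{prop2.3.2} in the Appendix, and the paper's proof of the lemma refers there explicitly, noting how to adjust the exponents from dimension $d$ to $d-1$. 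So your chain-rule bookkeeping and the $z$--$y$ separation are fine; replace the algebra inequality in step~(iii) by the H\"older/embedding argument and the proof goes through.
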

\begin{proof}
The  derivatives of $\sigma$ are given by
	\begin{align}\lb{eq3.55.4}
	&\frac{\partial \sigma}{\partial z}=\psi''(z-q_1(y))q_2(y),\no\\
	&\frac{\partial \sigma}{\partial x_j}=\psi''(z-q(y))q_2(y)\frac{\partial q_1}{ \partial x_j}+\psi'(z-q_1(y))\frac{\partial q_2 }{\partial x_j}, \,\,\,j=2,...,d.
	\end{align}
	Since $\psi'$ is exponentially decaying to $0$, we have
	\begin{align}\label{eq3.55.2}
	\|\sigma\|^2_{L^2(\mathbb{R}^d)}= \int\limits_{\mathbb{R}^{d-1}}\left( \int\limits_{\mathbb{R}}|\psi'(z-q_1(y))|^2\, dz \right)|q_2(y)|^2\, dy
	\leq C\|q_2\|_{H^k(\mathbb{R}^{d-1})}.
	\end{align}
	Similarly,  
	\begin{align}\lb{eq3.55.3}
	\|\psi''(z-q(y))q_2(y)\dfrac{\partial q_1(y)}{\partial x_j}\|_{L^2(\bbR^d)}&
	\leq C\|q_2\|_{L^{\infty}(\bbR^{d-1})}\left\|\frac{\partial q_1}{\partial x_j}\right\|_{L^2(\bbR^{d-1})}\leq C\|q_2\|_{H^k(\bbR^{d-1})}\|q_1\|_{H^k(\bbR^{d-1})}.
	\end{align}
	The statement of the lemma then is  proved by a  calculation similar to the proof of Proposition~A.3  in the Appendix. 
	Indeed,  instead of equation \eqref{eq2.3.18} in the proof of Proposition A.3, we may   use  relations
	\begin{equation*}
	k-\frac{d-1}{2}=k-\frac{d}{2}+\frac{1}{2}>n_i-\frac{d}{p_i}+\frac{1}{p_i}=n_i-\frac{d-1}{p_i},
	\end{equation*}
	which  proves the embedding $H^k(\mathbb{R}^{d-1})\hookrightarrow W^{n_i,p_i}(\bbR^{d-1})$ by Lemma~A.1 in the Appendix.
\end{proof}
Using Lemma \ref{estq} we now prove the following estimates of  the $H^k(\mathbb{R}^{d})$-norm and the weighted norm of the nonlinear term $G(v,q)$ introduced in \eqref{eq3.3.18}.

\begin{proposition}\label{prop3.3.2}
	Assume Hypotheses \ref{hypo3.2.1} and  \ref{hypo3.2.2}. For $k\geq[\frac{d+1}{2}]$, 
	 the following assertions hold:
	\begin{itemize}
		\item[(1)] Formula $(v,q)\mapsto (df(\phi_q)-df(\phi))v$ defines a mapping from $ H^k(\mathbb{R}^d)^{n}\times H^{k}(\mathbb{R}^{d-1})$ to $ H^k(\mathbb{R}^d)^{n}$ that is locally Lipschitz on any set of the form $\{ (v,q):\|v\|_{H^k(\bbR^d)}+\|q\|_{H^k(\mathbb{R}^{d-1})}\leq K \}$. On such a set there is a constant $C_K$ depending on $K$ such that $$\|(df(\phi_q)-df(\phi))v\|_{H^k(\bbR^d)}\leq C_K\|q\|_{H^k(\mathbb{R}^{d-1})}\|v\|_{H^k(\bbR^d)}.$$
		\item[(2)] Formula $(v,q)\mapsto (df(\phi_q)-df(\phi))v$ defines a mapping from $\mathcal{H}^{n}\times H^k(\mathbb{R}^{d-1})$ to $\mathcal{H}^{n}$ that is locally Lipschitz on any set of the form $\{ (v,q):\|v\|_{\mathcal{H}}+\|q\|_{H^k(\mathbb{R}^{d-1})}\leq K \}$. On such a set there is a constant $C_K$ depending on $K$ such that 
		\begin{equation*}
		\|(df(\phi_q)-df(\phi))v\|_{H^k_{\alpha}(\bbR^d)}\leq C_K\|q\|_{H^k(\mathbb{R}^{d-1})}\|v\|_{H^k_{\alpha}(\bbR^d)}, \end{equation*} and, therefore, 
		\begin{equation*}\|(df(\phi_q)-df(\phi))v\|_{\mathcal{H}}\leq C_K\|q\|_{H^k(\mathbb{R}^{d-1})}\|v\|_{\mathcal{H}}.
		\end{equation*}
	\end{itemize}
\end{proposition}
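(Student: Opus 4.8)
The key idea is to reduce the estimate on $(df(\phi_q)-df(\phi))v$ to a controlled perturbation of the endstate linearization plus a term handled by Lemma~\ref{estq}. First I would write
\[
df(\phi_q)-df(\phi)=\big(df(\phi_q)-df(0)\big)-\big(df(\phi)-df(0)\big),
\]
so it suffices to estimate each of the two differences separately in the relevant norm. By Hypothesis~\ref{hypo3.2.1}, $f\in C^{k+3}$, so the map $u\mapsto df(u)$ is $C^{k+2}$, and $df(\phi(z))-df(0)$ is a smooth matrix-valued function of $z$ that decays exponentially as $z\to\pm\infty$ because $\phi(z)\to 0$ exponentially for $z\to-\infty$ (Hypothesis~\ref{hypo3.2.2} with $\phi_-=0$) and $\phi(z)\to\phi_+$ exponentially for $z\to+\infty$; here one uses $df(\phi(z))-df(0)=\int_0^1 d^2f(s\phi(z))\,ds\,\phi(z)$ to transfer the decay of $\phi-0$ and of $\phi-\phi_+$ into decay of the matrix difference (the $z\to+\infty$ tail needs that $df(\phi_+)-df(0)$ is still exponentially approached — this is exactly the content of \cite[Lemma 8.2]{GLS2} cited before \eqref{bddfi}). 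The same argument applies to $df(\phi_q(z))-df(0)=df(\phi(z-q(y)))-df(0)$, with the translation by $q$ absorbed into the constant, since $\|q\|_{H^k(\bbR^{d-1})}\hookrightarrow L^\infty$ is small.

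\textbf{Part (1): the unweighted estimate.} For the $H^k(\bbR^d)$ bound, write $(df(\phi_q)-df(\phi))v$ as a product of the matrix-valued function $M(z,y):=df(\phi(z-q(y)))-df(\phi(z))$ with $v$, and apply the Banach-algebra inequality \eqref{eq2.3.3}/Lemma~\ref{soblev}(1): $\|Mv\|_{H^k(\bbR^d)}\le C\|M\|_{H^k(\bbR^d)}\|v\|_{H^k(\bbR^d)}$. The main point is to bound $\|M\|_{H^k(\bbR^d)}$ by $C_K\|q\|_{H^k(\bbR^{d-1})}$. I would do this by the fundamental theorem of calculus in the translation variable,
\[
df(\phi(z-q(y)))-df(\phi(z))=-q(y)\int_0^1 \big(d^2f(\phi(z-sq(y)))\,\phi'(z-sq(y))\big)\,ds,
\]
so that $M$ has precisely the structure $\psi'(z-q_1(y))\,q_2(y)$ treated in Lemma~\ref{estq}, except that $\psi'$ is replaced by the vector $z\mapsto d^2f(\phi(z))\phi'(z)$, which still decays exponentially as $z\to\pm\infty$ (product of a bounded smooth matrix with the exponentially decaying $\phi'$) and lies in $H^{k+1}(\bbR)$ by the smoothness hypotheses; composition with the smooth $f$ preserves the needed Sobolev regularity by a Moser/Gagliardo–Nirenberg estimate. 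Lemma~\ref{estq} (applied to each matrix entry, with $\psi'$ that exponentially decaying profile, $q_1=q$, $q_2=q$) then gives $\|M\|_{H^k(\bbR^d)}\le C(\|q\|_{H^k})\|q\|_{H^k(\bbR^{d-1})}$ with $C$ bounded on $\|q\|_{H^k}\le K$, which is the claim. Local Lipschitz dependence on $(v,q)$ follows by the same scheme applied to the differences $M(q_1)-M(q_2)$ and $v_1-v_2$, using that $f$ is $C^{k+3}$ and the Lipschitz part of Lemma~\ref{estq}.

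\textbf{Part (2): the weighted estimate.} Here I would use Lemma~\ref{soblev}(2): for $u\in H^k(\bbR^d)$ and $v\in H^k_\alpha(\bbR^d)$, $\|uv\|_{H^k_\alpha}\le C\|u\|_{H^k}\|v\|_{H^k_\alpha}$. Taking $u=M$ as above — which is unweighted, $M\in H^k(\bbR^d)$ with $\|M\|_{H^k}\le C_K\|q\|_{H^k(\bbR^{d-1})}$ exactly as in Part~(1) — and $v\in H^k_\alpha(\bbR^d)$, we immediately get
\[
\|(df(\phi_q)-df(\phi))v\|_{H^k_\alpha(\bbR^d)}\le C_K\|q\|_{H^k(\bbR^{d-1})}\|v\|_{H^k_\alpha(\bbR^d)}.
\]
Combining with the unweighted bound of Part~(1) and the definition $\|u\|_{\mathcal H}=\max\{\|u\|_{H^k},\|u\|_{H^k_\alpha}\}$ yields the $\mathcal H$-estimate. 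The crucial structural observation making Part~(2) almost free once Part~(1) is in hand is that \emph{the coefficient $M$ carries no weight}: the exponential weight $\gamma_\alpha$ never needs to act on $df(\phi_q)-df(\phi)$, only on $v$, so no growth of $\gamma_\alpha\phi$ enters.

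\textbf{Main obstacle.} The technical heart is the bound $\|M\|_{H^k(\bbR^d)}\le C_K\|q\|_{H^k(\bbR^{d-1})}$, i.e.\ showing that the translation-difference of $df(\phi)$ loses no derivatives in $y$ while gaining a factor $\|q\|_{H^k}$. Differentiating $M$ in $y$ brings down factors of $\nabla_y q$ (up to order $k$), and one must control products like $\psi^{(j+1)}(z-q(y))\cdot(\text{derivatives of }q\text{ in }y)$ in $L^2(\bbR^d)$; this is precisely the mixed one-dimensional-exponential-decay / $(d-1)$-dimensional-Sobolev bookkeeping that Lemma~\ref{estq} (and the Sobolev embeddings $H^k(\bbR^{d-1})\hookrightarrow W^{n_i,p_i}$ used in its proof) is designed to handle. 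The only thing to check beyond a literal citation of Lemma~\ref{estq} is that the profile $z\mapsto d^2f(\phi(z))\phi'(z)$ meets its hypotheses — exponential decay of its derivative-free part and $H^{k+1}(\bbR)$ regularity — which is where Hypotheses~\ref{hypo3.2.1}–\ref{hypo3.2.2} and the chain rule for Sobolev functions are used. Everything else is routine product estimates in Banach algebras.
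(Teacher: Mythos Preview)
Your approach is essentially the paper's: both write $df(\phi_q)-df(\phi)=-q\int_0^1 d^2f(\phi(\cdot-sq))\phi'(\cdot-sq)\,ds$, invoke Lemma~\ref{estq} to control the resulting profile in $H^k(\bbR^d)$, and then let the weight $\gamma_\alpha$ fall only on $v$ for the $\mathcal H$-estimate. Your opening decomposition through $df(0)$ is a detour you never actually use and can be dropped; the minor organizational difference---you first bound the matrix $M$ in $H^k$ and then multiply by $v$ via Lemma~\ref{soblev}, whereas the paper packages the product $l(\psi(\cdot-q_1))\psi'(\cdot-q_1)q_2 v$ into a single map $\tilde l$ and estimates it directly---is inessential.
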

\begin{proof}
	We define $p(q,v)\in H^k(\mathbb{R}^d)$ for $q\in H^k(\mathbb{R}^{d-1})$ and $v\in H^k(\mathbb{R}^d)$ by the formula 
	$$p(q,v)(z,y)=\bigg(df\big(\phi(z-q(y))\big)-df\big(\phi(z)\big)\bigg)v(x),$$  so that
	\begin{equation}\label{eq2.6.54}
	p(q,v)=\int_0^1\frac{d}{ds} df\big( \phi(\cdot-sq) \big)vds=-\int_0^1d^2f\big(\phi(\cdot-sq)\big)\big( \phi'(\cdot-sq)q,v \big)\,ds.
	\end{equation}
	Since $x\mapsto d^2f\big( \phi(z-sq(y)) \big)$ is a smooth function with bounded derivatives, using Lemma \ref{estq} we conclude that $p(q,v)\in H^k(\mathbb{R}^d)$ and
	$
	\|p(q,v)\|_{H^k(\mathbb{R}^d)}\leq C_K\|q\|_{H^k(\mathbb{R}^{d-1})}\|v\|_{H^k(\mathbb{R}^d)}.
	$
	We then multiply \eqref{eq2.6.54} by $\gamma_{\alpha}$ and infer
	$
	\|p(q,v)\|_{H^k_{\alpha}(\bbR^d)}\leq C_K\|q\|_{H^k(\mathbb{R}^{d-1})}\|v\|_{H^k_{\alpha}(\bbR^d)}$.

	To show the local Lipschitz estimates for $p(q,v)$ and $\gamma_{\alpha}p(q,v)$, we pass to components in the vector equation \eqref{eq2.6.54}. It is enough to show that  the map $(q_1,q_2,v) \mapsto    \tilde{l}(q_1,q_2,v)$ defined by 
	\begin{equation*}
	\tilde{l}(q_1,q_2,v)(z,y)=l(\psi(z-q_1(y)))\psi'(z-q_1(y))q_2(y)v(x),\,x=(z,y)\in\mathbb{R}^d
	\end{equation*}
	  is a locally Lipschitz  map  from $	H^k(\mathbb{R}^{d-1})\times H^k(\mathbb{R}^{d-1})\times H^k(\mathbb{R}^{d})$ into $ H^k(\mathbb{R}^{d})$. Here $\psi:\mathbb{R}\rightarrow\mathbb{R}$ is a function that is exponentially decaying to some constants $\psi_{\pm}$ as $z\rightarrow\pm\infty$,  the derivatives $\psi^{(m)}(z)\rightarrow0$, $m=1$, $2$, ..., $k+2$,  as $z\rightarrowtail\pm\infty$ exponentially, and $l:\mathbb{R}\rightarrow\mathbb{R}$ is a $C^{k+3}$ function with bounded derivatives.

	 Recall that $k\geq [\frac{d+1}{2}]$ and thus $H^k(\mathbb{R}^d)\hookrightarrow L^{\infty}(\mathbb{R}^d)$. The derivatives of $l$ are bounded, so by  Lemma \ref{estq} we have
	\begin{equation*}
	\|\tilde{l}(q_1,q_2,v)\|_{H^k(\mathbb{R}^d)}\leq C\|l\|_{C^{k+3}}\|q_2\|_{H^k(\mathbb{R}^{d-1})}\|v\|_{H^k(\mathbb{R}^d)},
	\end{equation*}
	and thus the map $\tilde{l}$ is well defined. 
	
	We will now proceed with the local Lipschitz estimates for $\tilde{l}$. To show the estimate for the variation in $q_1$, we  fix $q_2$, $v$ and write:
	\begin{align*}
	\tilde{l}(q_1,q_2,v)-\tilde{l}(\bar{q}_1,q_2,v)=&\bigg( l\big( \psi(\cdot-q_1) \big)-l\big( \psi(\cdot-\bar{q}_1) \big)  \bigg)\psi'(\cdot-q_1)q_2v\\
	&+l\big( \psi(\cdot-\bar{q}_1) \big)\big( \psi'(\cdot-q_1)-\psi'(\cdot-\bar{q}_1) \big)q_2 v,
	\end{align*}
	where
	\begin{align*}\nonumber
	l\big( \psi(z-q_1(y)) \big)-l\big( \psi(z-\bar{q}_1(y)) \big)&=\int_0^1\frac{d}{ds}l\bigg( \psi\big( z-q_1(y)-(s-1)(q_1(y)-\bar{q}_1(y)  ) \big) \bigg)\,ds\\
	&=- \int_0^1 l'(\psi(\cdot))\psi'(\cdot)(q_1(y)-\bar{q}_1(y))\,ds.
	\end{align*}
	Applying Lemma \ref{estq} again  we get
	\begin{equation*}
	\|l\big( \psi(\cdot-q_1) \big)-l\big( \psi(\cdot-\bar{q}_1) \big) \|_{H^k(\mathbb{R}^d)}\leq C_K\|q_1-\bar{q}_1\|_{H^k(\mathbb{R}^{d-1})}.
	\end{equation*}
	On the other hand, 
	\begin{align*}
	\psi'(z-q_1(y))-\psi'(z-\bar{q}_1(y))&=\int_0^1\frac{d}{ds}\psi'\big( z-q_1(y)-(s-1)(q_1(y)-\bar{q}_1(y))\big)\,ds\\
	&=-\int_0^1\psi''(\cdot)\big( q_1(y)-\bar{q}_1(y) \big)\, ds.
	\end{align*}
	Another application of Lemma \ref{estq} yields
	\begin{equation*}
	\|\tilde{l}(q_1,q_2,v)-\tilde{l}(\tilde{q}_1,q_2,v)\|_{H^k(\mathbb{R}^d)}\leq C_K\|q_1-\bar{q}_1\|_{H^k(\mathbb{R}^{d-1})}.
	\end{equation*}
	The estimate for the variation in $q_2$ are similar.
	The estimate for the variation in $v$ follows from Proposition A.3 in the Appendix by fixing $q_1$ and $q_2$.
	
	Multiplying $\tilde{l}$ by $\gamma_{\alpha}$ and working with $l(\psi(\cdot-q_1))\psi'(\cdot-q_1)q_2\gamma_{\alpha}v$ gives the local Lipschitz estimate of $p(q,v)$ in the  weighted norm.
\end{proof}
The next statement  concerns the nonlinearity $N$ defined in  \eqref{eq3.2.12}.
\begin{proposition}\label{prop3.4.3}
	Assume Hypotheses \ref{hypo3.2.1} and  \ref{hypo3.2.2}, and let  $k\geq [\frac{d+1}{2}]$. 
	\begin{itemize}
		\item[(1)] Formula $(v,q)\mapsto N(\phi_q,v)$ defines a mapping from $ H^k(\mathbb{R}^d)^{n}\times H^k(\mathbb{R}^{d-1})$ to $ H^k(\mathbb{R}^d)^{n^2}$ that is locally Lipschitz and $\mathcal{O}(\|v\|_{H^k(\mathbb{R}^d)})$ as $\|v\|_{H^k(\mathbb{R}^d)}\rightarrow 0$ uniformly on any bounded neighborhood of $(0,0)$ in $ H^k(\mathbb{R}^d)^{n}\times H^k(\mathbb{R}^{d-1})$.
		\item[(2)] Formula $(v,q)\mapsto N(\phi_q,v)v$ defines a mapping from $ H^k(\mathbb{R}^d)^{n}\times H^k(\mathbb{R}^{d-1})$ to $ H^k(\mathbb{R}^d)^{n}$ that is locally Lipschitz on any bounded neighborhood of $(0,0)$ in $ H^k(\mathbb{R}^d)^{n}\times H^k(\mathbb{R}^{d-1})$.
	\end{itemize}
\end{proposition}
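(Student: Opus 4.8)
The plan is to reduce both assertions to the algebra property \eqref{eq2.3.3} of $H^k(\bbR^d)$ combined with the chain-rule-type estimates already established in Lemma~\ref{estq} and Proposition~\ref{prop3.3.2}. Recall $N(u,v)=\int_0^1\big(df(u+sv)-df(u)\big)\,ds$, so that $N(\phi_q,v)=\int_0^1\int_0^1 d^2f\big(\phi_q+\tau s v\big)(sv)\,d\tau\,ds$; in particular $N(\phi_q,v)$ is manifestly $O(\|v\|)$ once we control the composition $x\mapsto d^2f\big(\phi(z-q(y))+\tau s v(x)\big)$ in $H^k(\bbR^d)$. For (1), I would first note that since $\phi_q$ is bounded in $C^{k+5}$ uniformly for $q$ in a bounded set of $H^k(\bbR^{d-1})$ (Hypothesis~\ref{hypo3.2.2} and the argument of Lemma~\ref{estq}) and $v\in H^k(\bbR^d)\hookrightarrow L^\infty$, the argument of $d^2f$ stays in a fixed compact set. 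Hypothesis~\ref{hypo3.2.1} ($f\in C^{k+3}$) then lets us apply the standard Moser/composition estimate: for a $C^{k+1}$ function $g$ with bounded derivatives, $\|g(\phi_q+\tau s v)-g(\phi_q)\|_{H^k(\bbR^d)}\le C\|\tau s v\|_{H^k(\bbR^d)}$ with $C$ uniform on bounded sets. Integrating in $\tau,s$ gives the $O(\|v\|_{H^k})$ bound and, by polarizing the composition estimate (writing differences $g(a)-g(b)=\int_0^1 g'(b+t(a-b))(a-b)\,dt$), the local Lipschitz dependence on $(v,q)$: the $q$-variation is handled exactly as in the $\tilde l$ computation in the proof of Proposition~\ref{prop3.3.2}, using Lemma~\ref{estq} to absorb the factor $\phi'(\cdot-sq)(q-\bar q)$, and the $v$-variation by the composition estimate directly.

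For (2), I would simply combine (1) with the algebra estimate \eqref{eq2.3.3}: writing $N(\phi_q,v)v$ entrywise, each entry is a product of an $H^k(\bbR^d)$ function (an entry of $N(\phi_q,v)$, controlled by (1)) with an entry of $v$, hence lies in $H^k(\bbR^d)$ with
\[
\|N(\phi_q,v)v\|_{H^k(\bbR^d)}\le C\|N(\phi_q,v)\|_{H^k(\bbR^d)}\|v\|_{H^k(\bbR^d)}\le C_K\|v\|_{H^k(\bbR^d)}^2 .
\]
Local Lipschitzness then follows from the bilinear identity
\[
N(\phi_{q},v)v-N(\phi_{\bar q},\bar v)\bar v=\big(N(\phi_{q},v)-N(\phi_{\bar q},\bar v)\big)v+N(\phi_{\bar q},\bar v)(v-\bar v),
\]
the first term estimated via (1) and \eqref{eq2.3.3}, the second via the bound from (1) applied at $(\bar q,\bar v)$ together with \eqref{eq2.3.3}.

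I expect the only genuinely nontrivial point to be the composition (Moser-type) estimate $\|g(\phi_q+v)-g(\phi_q)\|_{H^k(\bbR^d)}\le C\|v\|_{H^k(\bbR^d)}$ with the constant uniform for $q$ in a ball — this is where one must be careful that the $H^k$-norm of the inner argument $\phi_q+v$ stays bounded (so the derivatives of $g$ are evaluated on a bounded set) and that the chain rule on $\bbR^d$ produces only terms that are products of bounded factors times at most one top-order derivative of $v$; this is precisely the multidimensional analogue of the one-dimensional estimates in \cite[Proposition 7.1]{GLS2} and, for the $q$-dependence, of the calculation carried out for $\tilde l$ in the proof of Proposition~\ref{prop3.3.2}. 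Everything else is bookkeeping with \eqref{eq2.3.3}, Lemma~\ref{estq}, and the smoothness and decay of $\phi$ from Hypotheses~\ref{hypo3.2.1}--\ref{hypo3.2.2}. Note that, unlike Proposition~\ref{prop3.3.2}, no weighted ($H^k_\alpha$) estimates are claimed here, since the product-triangular structure will be exploited elsewhere; so the present proof stays entirely within $H^k(\bbR^d)$.
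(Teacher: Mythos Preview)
Your proposal is correct and follows essentially the same route as the paper: the same double-integral representation $N(\phi_q,v)=\int_0^1\int_0^1 d^2f(\phi_q+s\tau v)\,sv\,d\tau\,ds$, the $q$-variation handled via Lemma~\ref{estq} (exactly as in Proposition~\ref{prop3.3.2}), the $u$- and $v$-variation via a Moser-type composition estimate, and part~(2) deduced from part~(1) through the algebra inequality~\eqref{eq2.3.3} and the bilinear splitting you wrote. The only cosmetic difference is that the paper packages the ``standard Moser/composition estimate'' you invoke as Proposition~A.3 in the Appendix (the multidimensional analogue of \cite[Proposition~7.2]{GLS2}), and explicitly reduces to a scalar model map $\tilde l(q,u,v)(z,y)=l(\psi(z-q(y)),u(x))v(x)$ before applying it; your identification of this as the only nontrivial ingredient is exactly right.
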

\begin{proof}
	To prove (1), we note first that 
	$$
	N(\phi_q,v)
	=\int_0^1\int_0^1\frac{d}{d\tau}\left( df(\phi_q+s\tau v) \right)d\tau \,ds=\int_0^1\int_0^1d^2f(\phi_q+s\tau v)sv \,d\tau\,ds.
	$$
	
	It is enough to show that the following   map $\tilde{l}:H^k(\mathbb{R}^{d-1})\times H^k(\mathbb{R}^d)\times H^k(\mathbb{R}^d)\rightarrow H^k(\mathbb{R}^d)$  is locally Lipschitz. We  define 
$$	\tilde{l}(q,u,v)(z,y)=l\left(\psi(z-q(y)) ,u(x)\right)v(x), \,\,\,x=(z,y)\in\mathbb{R}^d,$$
 where $\psi:\mathbb{R}\rightarrow\mathbb{R}$ is a function exponentially decaying to some constants $\psi_{\pm}$ as $z\rightarrow\pm\infty$ and $\psi^{(m)}(z)\rightarrow 0$ exponentially, for any $m=1$, $2$, ..., and $l:\mathbb{R}\times \mathbb{R}\rightarrow \mathbb{R}$ is a $C^{k+3}$-smooth bounded function with bounded derivatives.
	
	Again,  $k\geq [\frac{d+1}{2}]$, so that  $H^k(\mathbb{R}^d)\hookrightarrow L^{\infty}(\mathbb{R}^d)$, and then 
	$
	\|\tilde{l}(q,u,v)\|_{L^2(\mathbb{R}^d)}\leq \|l\|_{C^{k+3}}\|v\|_{L^2(\mathbb{R}^d)}$.
If  $l'_j$ denotes  the derivative with respect to the $j$-th variable,  then
	\begin{align*}
	&\frac{\partial \tilde{l}}{\partial z}=l'_1(\cdot)\psi'(z-q(y))v(x)+l_2'(\cdot)\frac{\partial u}{\partial z}v(x)+l(\cdot)\frac{\partial v}{\partial z},\\
	&\frac{\partial \tilde{l}}{\partial x_j}= - l'_1(\cdot)\psi'(z-q(y))v(x)\frac{\partial q}{ \partial x_j}+l_2'(\cdot)\frac{\partial u}{\partial x_j}v(x)+l(\cdot)\frac{\partial v}{\partial x_j},\, j=2,\,...,d.
	\end{align*}
	
	Since $l'_1$, $l'_2$, $\psi'$ and $\frac{\partial q}{\partial x_j}$ are bounded,  $\tilde{l}(q,u,v)\in H^1(\mathbb{R}^d)$. A similar calculation (cf. the proof of Proposition A.3) with $q(z,y)$ in the proof replaced by $\phi'(z-q(y))$) shows that $\tilde{l}(q,u,v)\in H^k(\mathbb{R}^d)$. Thus, the map $\tilde{l}$ is well defined.
	Next we proceed with the proof of the local Lipschitz property. 
	
 Variation in $q$ gives
	\begin{align*}
	\tilde{l}(q,u,v)-\tilde{l}(\bar{q},u,v)
	&=\int_0^1\frac{d}{ds}l\bigg(\psi\big(z-q-(s-1)(q-\bar{q})\big),u\bigg)v\, ds\\&=-\int_0^1 l_1'\big( \psi(\cdot),u \big)\psi'(\cdot)(q-\bar{q})v\,ds.
	\end{align*}
	Since $l_1'$ and its derivatives are bounded, the main part of the estimate
	\begin{equation*}
	\| \tilde{l}(q,u,v)-\tilde{l}(\bar{q},u,v) \|_{H^k(\mathbb{R}^d)}\leq C_K\|q-\bar{q}\|_{H^k(\mathbb{R}^{d-1})}
	\end{equation*}
	on sets of the form $\{ (q,u,v):\|q\|_{H^k(\mathbb{R}^{d-1})}+\|u\|_{H^k(\mathbb{R}^d)}+\|v\|_{H^k(\mathbb{R}^d)}\leq K \}$ is reduced to Lemma \ref{estq}.
	
For variation in $u$, the estimate
	\begin{align*}
	\|\tilde{l}(q,u,v)-\tilde{l}(q,\bar{u},v)\|_{H^k(\mathbb{R}^d)}&=\| \big( l(\psi(z-q),u)-l(\psi(z-q),\bar{u}) \big)v  \|_{H^k(\mathbb{R}^d)}\leq C_K\|  u-\bar{u} \|_{H^k(\mathbb{R}^d)}
	\end{align*}
	follows from Proposition A.3 
	considered for  the mapping  $u\mapsto l(\psi_q,u)$  from $H^k(\mathbb{R}^d)$ into $H^k(\mathbb{R}^d)$.
	
 The estimate  for the variation in $v$ also follows from Proposition A.3  for fixed $q$ and $u$.	This concludes the proof the first assertion in part (1) of Proposition \ref{prop3.4.3}. 
	
	Using the Lipschitz property and  the property  $N(\phi_q,0)=0$ we conclude that
	\begin{equation*}
	\|N(\phi_q,v)\|_{H^k(\mathbb{R}^d)}=\|N(\phi_q,v)-N(\phi_q,0)\|_{H^k(\mathbb{R}^d)}\leq C_K\|v\|_{H^k(\mathbb{R}^d)}
	\end{equation*}
	on any set of the form $\{ (v,q):\|q\|_{H^k(\mathbb{R}^{d-1})}+\|v\|_{H^k(\mathbb{R}^d)}\leq K \}$ as required.
	
	The proof of part (2) follows from part (1) since $H^k(\mathbb{R}^d)$ is an algebra, see \eqref{eq2.3.3}, for instance, the estimate of the variation in $q$ is\
	\begin{align*}
	\|N(\phi_q,v)v-N(\phi_{\bar{q}},v)v\|_{H^k(\mathbb{R}^d)}&=\| \big(\tilde{l}(q,u,v)-\tilde{l}(\bar{q},u,v) \big)v\|_{H^k(\mathbb{R}^d)}\\
	&\leq \|\tilde{l}(q,u,v)-\tilde{l}(\bar{q},u,v)\|_{H^k(\mathbb{R}^d)}\|v\|_{H^k(\mathbb{R}^d)}\\
	&\leq C_K\|q-\bar{q}\|_{H^k(\mathbb{R}^{d-1})}\|v\|_{H^k(\mathbb{R}^d)}.
	\end{align*}
	The estimate for variation in $v$ follows from  Proposition A.3 where we  fix $q$ and consider the map $v\rightarrow l(\psi_q, v)v$.
\end{proof}

\begin{proposition}\label{prop3.4.5}
	Assume   Hypotheses \ref{hypo3.2.1} and  \ref{hypo3.2.2}  and let $k\geq \frac{d+1}{2}$.
	\begin{itemize}
		\item[(1)] If $v\in\mathcal{H}^{n}$, then $N(\phi_q,v)v\in {H^k_{\alpha}(\mathbb{R}^d)}^n$, and for any ball of radius $K$ centered at $(0,0)$ in $\mathcal{H}^{n}\times H^k(\mathbb{R}^{d-1})$ there is a constant $C_K>0$ depending on $K$ such that for any $(v,q)$ in the ball one has
		$$\|N(\phi_q,v)v\|_{H^k_{\alpha}(\bbR^d)}\leq C_K\|v\|_{H^k(\bbR^d)} \|v\|_{H^k_{\alpha}(\bbR^d)}.$$
		\item[(2)] Formula $(v,q)\mapsto N(\phi_q,v)$ defines a mapping from $\mathcal{H}^{n}\times H^k(\mathbb{R}^{d-1})$ to $\mathcal{H}^{n^2}$ that is locally Lipschitz on any bounded neighborhood of $(0,0)$ in $\mathcal{H}^{n}\times H^k(\mathbb{R}^{d-1})$.
		\item[(3)] Formula $(v,q)\mapsto N(\phi_q,v)v$ defines a mapping from $\mathcal{H}^{n}\times H^k(\mathbb{R}^{d-1})$ to $\mathcal{H}^{n}$ that is locally Lipschitz on any bounded neighborhood of $(0,0)$ in $\mathcal{H}^{n}\times H^k(\mathbb{R}^{d-1})$.
	\end{itemize}
\end{proposition}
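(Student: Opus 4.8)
The plan is to reduce all three assertions to Proposition~\ref{prop3.4.3} (the unweighted analogue), the algebra lemma~\ref{soblev}, and Proposition~A.3, by exploiting the structural identity
$$
N(\phi_q,v)=M(q,v)\,v,\qquad M(q,v):=\int_0^1\!\!\int_0^1 d^2 f(\phi_q+s\tau v)\,s\,d\tau\,ds ,
$$
which follows from \eqref{eq3.2.12} because $v$ does not depend on $(s,\tau)$ (cf.\ the computation in the proof of Proposition~\ref{prop3.4.3}). The point is that $M(q,v)$ is a \emph{bounded} smooth multiplier: its argument $\phi_q+s\tau v$ ranges in a bounded set (since $\phi\in C^{k+5}$ by Hypothesis~\ref{hypo3.2.2} and $v,q$ are bounded via $H^k\hookrightarrow L^\infty$), so the estimates underlying Proposition~\ref{prop3.4.3} show that, for $(v,q)$ in a ball of radius $K$, the map $h\mapsto M(q,v)h$ is bounded on $H^k(\bbR^d)$ with a constant $C_K$, and it is locally Lipschitz in $(q,v)$ when composed with any fixed $H^k(\bbR^d)$-function — this is exactly the map $\tilde l$ of that proof, with $\psi=\phi$ a scalar component and $l=d^2 f$. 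Hence $N(\phi_q,v)=M(q,v)v\in H^k(\bbR^d)^{n^2}$ with $\|N(\phi_q,v)\|_{H^k}\le C_K\|v\|_{H^k}$ (recovering Proposition~\ref{prop3.4.3}(1)), and, since $\gamma_\alpha N(\phi_q,v)=M(q,v)(\gamma_\alpha v)$ and $\gamma_\alpha v\in H^k(\bbR^d)$ for $v\in\mathcal H$, also $N(\phi_q,v)\in H^k_{\alpha}(\bbR^d)^{n^2}$ with $\|N(\phi_q,v)\|_{H^k_\alpha}\le C_K\|v\|_{H^k_\alpha}$; together these give $N(\phi_q,v)\in\mathcal H^{n^2}$, which is the mapping statement of part~(2).

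Part~(1) is then immediate: since $N(\phi_q,v)\in\mathcal H^{n^2}$ and $v\in\mathcal H^{n}$, Lemma~\ref{soblev}(2) yields
$$
\|N(\phi_q,v)v\|_{H^k_{\alpha}(\bbR^d)}\le C\,\|N(\phi_q,v)\|_{H^k(\bbR^d)}\,\|v\|_{H^k_{\alpha}(\bbR^d)}\le C_K\,\|v\|_{H^k(\bbR^d)}\,\|v\|_{H^k_{\alpha}(\bbR^d)},
$$
which is the asserted quadratic bound with one weight; in particular $N(\phi_q,v)v\in H^k_{\alpha}(\bbR^d)^n$. (Equivalently one may write $\gamma_\alpha N(\phi_q,v)v=N(\phi_q,v)\cdot(\gamma_\alpha v)$ and use the algebra inequality \eqref{eq2.3.3} directly.)

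For the local Lipschitz estimates in parts~(2) and~(3) I split differences in the usual way. For~(2), Proposition~\ref{prop3.4.3}(1) already supplies the Lipschitz bound in the $H^k(\bbR^d)$-norm, so it remains to estimate $\gamma_\alpha\big(N(\phi_q,v)-N(\phi_{\bar q},\bar v)\big)=M(q,v)\big(\gamma_\alpha(v-\bar v)\big)+\big(M(q,v)-M(\bar q,\bar v)\big)(\gamma_\alpha\bar v)$ in $H^k(\bbR^d)$: the first term is $\le C_K\|v-\bar v\|_{H^k_{\alpha}}$ because $M(q,v)$ is a bounded multiplier, and the second is $\le C_K(\|q-\bar q\|_{H^k}+\|v-\bar v\|_{H^k})$ by applying Proposition~A.3 together with the variation-in-$q$ argument of Proposition~\ref{prop3.4.3} to the map $(q,v)\mapsto M(q,v)(\gamma_\alpha\bar v)$ with the fixed $H^k(\bbR^d)$-function $\gamma_\alpha\bar v$ — this is precisely the ``multiply $\tilde l$ by $\gamma_\alpha$'' remark closing the proof of Proposition~\ref{prop3.4.3}. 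Part~(3) then follows from part~(2) and Lemma~\ref{soblev}(3): $N(\phi_q,v)v\in\mathcal H^{n}$, and from $N(\phi_q,v)v-N(\phi_{\bar q},\bar v)\bar v=\big(N(\phi_q,v)-N(\phi_{\bar q},\bar v)\big)v+N(\phi_{\bar q},\bar v)(v-\bar v)$, using $\|u_1u_2\|_{\mathcal H}\le C\|u_1\|_{\mathcal H}\|u_2\|_{\mathcal H}$ and the uniform bound $\|N(\phi_{\bar q},\bar v)\|_{\mathcal H}\le C_K$ on the ball, one gets the local Lipschitz estimate.

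The only genuinely delicate point is the bookkeeping in the weighted norm. The composition factor $M(q,v)$ does not decay — as $z\to+\infty$ it tends to $\int_0^1\!\int_0^1 d^2 f(\phi_+)\,s\,d\tau\,ds\neq 0$, so $\gamma_\alpha M(q,v)$ need not lie in $H^k$ — and one must check that the weight can always be absorbed onto a single copy of $v\in\mathcal H$ (so that what multiplies $M(q,v)$ is an ordinary $H^k(\bbR^d)$-function) and that the Moser/composition estimates behind Proposition~A.3 still close with that $\gamma_\alpha$-weighted factor frozen. The factored form $N(\phi_q,v)=M(q,v)v$, which isolates exactly one ``loose'' copy of $v$, is what makes this possible; the available smoothness ($f\in C^{k+3}$, $\phi\in C^{k+5}$, cf.\ Lemma~\ref{lem3.2.6}) is what is needed for the composition estimates and for $\phi_q$-dependence to be controlled.
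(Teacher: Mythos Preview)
Your proposal is correct and follows essentially the same route as the paper. The paper's proof is terser but uses exactly your key identities: for (1) it writes $\|N(\phi_q,v)v\|_{H^k_\alpha}=\|N(\phi_q,v)\,\gamma_\alpha v\|_{H^k}$ and applies \eqref{eq2.3.3} together with Proposition~\ref{prop3.4.3}(1), and for (2)--(3) it notes $\gamma_\alpha N(\phi_q,v)=\int_0^1\int_0^1 d^2f(\phi_q+s\tau v)\,s\,\gamma_\alpha v\,d\tau\,ds$ and says ``repeat the arguments from Proposition~\ref{prop3.4.3} for $\tilde l(q,u,\gamma_\alpha v)$'' --- precisely your device of absorbing the weight onto the loose copy of $v$ and rerunning the unweighted composition estimates.
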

\begin{proof}
	(1) Using \eqref{eq2.3.3} and Proposition \ref{prop3.4.3} (1) we infer
	\begin{align*}
	\|N(\phi_q,v)v\|_{H^k_{\alpha}(\bbR^d)}
	&=\|N(\phi_q,v)\gamma_{\alpha}v\|_{{H^k(\bbR^d)}}
	\\
	&\leq C\|N(\phi_q,v)\|_{H^k(\bbR^d)}\|\gamma_{\alpha}v\|_{H^k(\bbR^d)} \leq C_K\|v\|_{H^k(\bbR^d)}\|v\|_{H^k_{\alpha}(\bbR^d)}.
	\end{align*}
	To show the local Lipschitz property in part (2) and (3) of the proposition, we note that 
	\begin{equation*}
	\gamma_{\alpha}N(\phi_q,v)=\int_0^1\int_0^1d^2f(\phi_q+s\tau v)s\gamma_{\alpha}v\,d\tau\, ds.
	\end{equation*}
	The Lipschitz assertion then is proved by repeating arguments from Proposition~\ref{prop3.4.3} (1) and (2) for  $\tilde{l}(q,u,\gamma_{\alpha}v)$.
\end{proof}

\begin{proposition}\label{prop3.4.6}
	Assume Hypotheses \ref{hypo3.2.1} and  \ref{hypo3.2.2}, and let  $k\geq [\frac{d+1}{2}]$. The formula $(q,w)\rightarrow (w\cdot w)\phi''_q$ defines a locally Lipschitz mapping from $H^k(\mathbb{R}^{d-1})\times H^k(\mathbb{R}^{d-1})^{d-1}$ to ${\mathcal{H}}^n$ on any bounded set of the form $\{ (q,w):\|q\|_{H^k(\mathbb{R}^{d-1})}+\|w\|_{H^k(\mathbb{R}^{d-1})}\leq K \}$, and the mapping satisfies
	\begin{equation*}
	\|(w\cdot w)\phi''_q\|_{H^k(\bbR^d)}\leq C_K\|w\|^2_{H^k(\mathbb{R}^{d-1})},\quad \text{and}\quad \|\big((w\cdot w)\phi''_q\big)\|_{H^k_{\alpha}(\bbR^d)}\leq C_K\|w\|^2_{H^k(\mathbb{R}^{d-1})}.
	\end{equation*}
\end{proposition}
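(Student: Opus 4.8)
The plan is to treat $(w\cdot w)\phi''_q$ as a product of the function $w\cdot w$ of the transverse variable $y$ alone and the shifted profile $\phi''_q=\phi''(\,\cdot-q(\cdot)\,)$, and to reduce every estimate to the mechanism already set up in Lemma~\ref{estq}. Since $k\ge[\frac{d+1}{2}]$ forces $2k>d-1$, the space $H^k(\mathbb{R}^{d-1})$ is a Banach algebra that embeds into $L^\infty(\mathbb{R}^{d-1})$; hence $w\cdot w\in H^k(\mathbb{R}^{d-1})$ with $\|w\cdot w\|_{H^k(\mathbb{R}^{d-1})}\le C\|w\|_{H^k(\mathbb{R}^{d-1})}^2$, and on the given ball $\|q\|_{L^\infty(\mathbb{R}^{d-1})}$ is controlled by $K$. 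By Hypothesis~\ref{hypo3.2.2} the profile is $C^{k+5}$ with all derivatives of order $\ge1$ decaying exponentially, so $\phi'\in H^{k+2}(\mathbb{R})$ and $\phi''\to0$ exponentially; applying Lemma~\ref{estq} with $\psi=\phi'$, $q_1=q$ and $q_2=w\cdot w$ at once gives $(w\cdot w)\phi''_q\in H^k(\mathbb{R}^d)$ together with
\[
\|(w\cdot w)\phi''_q\|_{H^k(\mathbb{R}^d)}\le C(\|q\|_{H^k(\mathbb{R}^{d-1})})\,\|w\cdot w\|_{H^k(\mathbb{R}^{d-1})}\le C_K\,\|w\|_{H^k(\mathbb{R}^{d-1})}^2 .
\]

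The substance of the argument is the weighted bound, where the obstacle is that $\gamma_\alpha$ is evaluated at $z$ while the profile carries the shifted argument $z-q(y)$, so $\gamma_\alpha(z)\phi''(z-q(y))$ is not literally of the form handled by Lemma~\ref{estq}. I would resolve this by the elementary pointwise comparison
\[
\sum_{i=0}^{k}|\gamma_\alpha^{(i)}(z)|\le C_K\,\gamma_\alpha\bigl(z-q(y)\bigr),\qquad (z,y)\in\mathbb{R}^d,\quad \|q\|_{H^k(\mathbb{R}^{d-1})}\le K,
\]
which holds because $|q(y)|\le C_K$ and $\gamma_\alpha^{(i)}(z)$ equals $\alpha_\pm^{\,i}e^{\alpha_\pm z}$ for $\pm z$ large (so its ratio to $\gamma_\alpha(z-q(y))$ is a bounded multiple of $e^{\alpha_\pm q(y)}$ there), while on the remaining bounded range of $z$ the numerator is bounded above and $\gamma_\alpha(z-q(y))$ is bounded below by positivity and continuity. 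Since $\gamma_\alpha\phi^{(m)}$ decays exponentially for every $m\ge1$ by Lemma~\ref{lem3.2.6}(2), this comparison transfers the weight onto the shifted argument: differentiating $\gamma_\alpha(z)(w\cdot w)(y)\phi''(z-q(y))$ up to order $k$ by the Leibniz rule produces, as in \eqref{eq3.55.4}, a finite sum of terms each of which, after the comparison, is dominated in $L^2(\mathbb{R}^d)$ by an expression $\|(\gamma_\alpha\phi^{(m)})(\,\cdot-q(\cdot)\,)\,g(\cdot)\|_{L^2(\mathbb{R}^d)}$ with $2\le m\le k+2$ and $g$ a product of $y$-derivatives of $q$ and of $w\cdot w$; the substitution $z\mapsto z-q(y)$ then splits off the finite factor $\|\gamma_\alpha\phi^{(m)}\|_{L^2(\mathbb{R})}$ and leaves a $y$-integral that is estimated exactly as in the proof of Lemma~\ref{estq} (and Proposition~A.3 of the Appendix), distributing the remaining derivatives through the Sobolev embeddings $H^k(\mathbb{R}^{d-1})\hookrightarrow W^{n_i,p_i}(\mathbb{R}^{d-1})$. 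This yields $\|(w\cdot w)\phi''_q\|_{H^k_\alpha(\mathbb{R}^d)}\le C_K\|w\|_{H^k(\mathbb{R}^{d-1})}^2$, and with \eqref{defcH} the combined estimate $\|(w\cdot w)\phi''_q\|_{\mathcal H}\le C_K\|w\|_{H^k(\mathbb{R}^{d-1})}^2$.

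For the local Lipschitz property on $\{\|q\|_{H^k(\mathbb{R}^{d-1})}+\|w\|_{H^k(\mathbb{R}^{d-1})}\le K\}$ I would split
\[
(w\cdot w)\phi''_q-(\bar w\cdot\bar w)\phi''_{\bar q}
=\bigl[(w\cdot w)-(\bar w\cdot\bar w)\bigr]\phi''_q+(\bar w\cdot\bar w)\bigl[\phi''_q-\phi''_{\bar q}\bigr].
\]
The first term is controlled by the estimates just obtained, applied with $q_2=(w\cdot w)-(\bar w\cdot\bar w)$ and using $\|(w\cdot w)-(\bar w\cdot\bar w)\|_{H^k(\mathbb{R}^{d-1})}\le C_K\|w-\bar w\|_{H^k(\mathbb{R}^{d-1})}$ from the algebra property. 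For the second term I would write, with $q_s=\bar q+s(q-\bar q)$,
\[
\phi''(z-q(y))-\phi''(z-\bar q(y))=-\int_0^1\phi'''\bigl(z-q_s(y)\bigr)\,(q-\bar q)(y)\,ds ,
\]
so that $(\bar w\cdot\bar w)[\phi''_q-\phi''_{\bar q}]=-\int_0^1\phi'''(\,\cdot-q_s\,)\,[(\bar w\cdot\bar w)(q-\bar q)]\,ds$; since $\|q_s\|_{H^k(\mathbb{R}^{d-1})}\le K$ and $\|(\bar w\cdot\bar w)(q-\bar q)\|_{H^k(\mathbb{R}^{d-1})}\le C_K\|q-\bar q\|_{H^k(\mathbb{R}^{d-1})}$, the same estimates with $\psi=\phi''$ (again $C^{k+4}$ with exponentially decaying derivatives, and $\gamma_\alpha\phi'''$ exponentially decaying by Lemma~\ref{lem3.2.6}(2)) bound the $\mathcal H$-norm of the integrand uniformly in $s$ by $C_K\|q-\bar q\|_{H^k(\mathbb{R}^{d-1})}$, and integrating in $s$ finishes the estimate.

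I expect the one genuinely new point — and the step requiring care — to be the weighted bound: isolating the comparison $\gamma_\alpha(z)\le C_K\gamma_\alpha(z-q(y))$ and checking that, once it is in place, the rest is a routine reprise of Lemma~\ref{estq}, so that one never needs pointwise control of all $y$-derivatives of $\gamma_\alpha(\cdot-q(\cdot))$ — which would be unavailable, since $q\in H^k(\mathbb{R}^{d-1})$ does not put all of its derivatives in $L^\infty$.
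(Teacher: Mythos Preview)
Your proposal is correct and follows essentially the same route as the paper: apply Lemma~\ref{estq} with $\psi=\phi'$ and $q_2=w\cdot w$ (or $q_2=(w\cdot w)-(\bar w\cdot\bar w)$) for the unweighted bound and the Lipschitz estimate in $w$, use the integral formula for $\phi''_q-\phi''_{\bar q}$ and Lemma~\ref{estq} again (as in the proof of Proposition~\ref{prop3.4.3}) for the Lipschitz estimate in $q$, and invoke the exponential decay of $\gamma_\alpha\phi^{(m)}$ from Lemma~\ref{lem3.2.6}(2) for the weighted bound. The one point you make explicit that the paper leaves implicit here is the comparison $\gamma_\alpha(z)\le C_K\,\gamma_\alpha(z-q(y))$ needed to pass the weight onto the shifted argument; the paper glosses over this in the present proof but carries out exactly this device (via the factor $m(z,y)=\gamma_\alpha(z-q(y))\gamma_\alpha^{-1}(z)$) in the proof of Lemma~\ref{lem3.5.1}(2), so your more careful treatment is entirely in line with the paper's methods.
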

\begin{proof}
	Recall that,  by Hypothesis~\ref{hypo3.2.3}, $\phi''$ and its derivatives are exponentially decaying to $0$ as $z\rightarrow \pm\infty$, and, by Lemma~\ref{lem3.2.6}, $\gamma_{\alpha}\phi^{(m)}$,  for  $m=1$, ..., $k+1$, is exponentially decaying to $0$ as $z\rightarrow\pm\infty$.
	
	For a fixed $q\in H^k(\mathbb{R}^{d-1})$, to show the local Lipschitz estimate in $w$, we use the Sobolev embedding $H^k(\mathbb{R}^{d})\hookrightarrow L^{\infty}(\mathbb{R}^{d})$, and inequality \eqref{eq2.3.3} for $H^k(\mathbb{R}^{d-1})$ with $k\geq[\frac{d+1}{2}]>\frac{(d-1)+1}{2}$ and observe that, using Lemma \ref{estq} with $q_2=w\cdot w-\bar{w}\cdot \bar{w}$, we have
	\begin{align*}
	\|(w\cdot w-\bar{w}\cdot\bar{w})\phi_q''\|_0&\leq C\|(w-\bar{w})\cdot(w+\bar{w})\|_{H^k(\mathbb{R}^{d-1})}\\
	&\leq C\|w+\bar{w}\|_{H^k(\mathbb{R}^{d-1})}\|w-\bar{w}\|_{H^k(\mathbb{R}^{d-1})}\leq C_K\|w-\bar{w}\|_{H^k(\mathbb{R}^{d-1})},\\
	\|(w\cdot w-\bar{w}\cdot\bar{w})\phi_q''\|_{H^k_{\alpha}(\bbR^d)}&\leq C\|(w-\bar{w})\cdot(w+\bar{w})\|_{H^k(\mathbb{R}^{d-1})}\\
	&\leq C\|w+\bar{w}\|_{H^k(\mathbb{R}^{d-1})}\|w-\bar{w}\|_{H^k(\mathbb{R}^{d-1})}\leq C_K\|w-\bar{w}\|_{H^k(\mathbb{R}^{d-1})},
	\end{align*}
	with some $C_K>0$ that depends on $K$. This completes the proof.
	
	The local Lipschitz estimate in $q$ is proved similarly to Proposition \ref{prop3.4.3} using Lemma \ref{estq}.
\end{proof}
\begin{proposition}\label{prop3.6.17}
	Assume Hypotheses \ref{hypo3.2.1} and  \ref{hypo3.2.2}, and let $k\geq[\frac{d+1}{2}]$. 
	Formula \eqref{eq3.3.18} for $G(v,q)$, formula \eqref{eqf1} for $F_1(v,q,w)$, and formula \eqref{eqg1} for $F_2(v,q,w)$ define locally Lipschitz mappings from $\mathcal{H}^n\times H^k(\mathbb{R}^{d-1})\times H^k(\mathbb{R}^{d-1})^{d-1}$ to $\mathcal{H}^n$, $\mathcal{H}^n$, and $H^k(\mathbb{R}^{d-1})$ respectively, on any set of the form $\{ (v,q,w):\|v\|_{\mathcal{H}}+\|q\|_{H^k(\mathbb{R}^{d-1})}+\|w\|_{H^k(\mathbb{R}^{d-1})}<K \}$ with the Lipschitz constant denoted by $C_K$. Moreover, if $\|v\|_{\mathcal{H}}+\|q\|_{H^k(\mathbb{R}^{d-1})}+\|w\|_{H^k(\mathbb{R}^{d-1})}<K$, then for some $C_K>0$ depending on $K$ one has:
	\begin{itemize}
		\item[(a)]$
		\|G(v,q)\|_{H^k_{\alpha}(\bbR^d)}\leq C_K(\|v\|_{H^k(\bbR^d)}+\|q\|_{H^k(\mathbb{R}^{d-1})})\|v\|_{H^k_{\alpha}(\bbR^d)},
		$
		\item[(b)] $
		\|F_1(v,q,w)\|_{H^k_{\alpha}(\bbR^d)}\leq C_K\big(\|v\|_{H^k(\bbR^d)}\|v\|_{H^k_{\alpha}(\bbR^d)}+\|q\|_{H^k(\mathbb{R}^{d-1})}\|v\|_{H^k_{\alpha}(\bbR^d)}+\|w\|_{H^k(\mathbb{R}^{d-1})}^2\big),
		$
		\item[(b')]
		$\|F_1(v,q,w)\|_{H^k(\bbR^d)} \leq C_K ( \|v\|_{H^k(\bbR^d)}\|v\|_{H^k_{\alpha}(\bbR^d)}+\|v\|_{H^k(\bbR^d)}\|v_2\|_{H^k(\bbR^d)} \\ {}\qquad \qquad\qquad\qquad \qquad \qquad  +\|q\|_{H^k(\mathbb{R}^{d-1})}\|v\|_{H^k_{\alpha}(\bbR^d)}+\|w\|_{H^k(\mathbb{R}^{d-1})}^2)$,	
		\item[(c)] 
		$
		\|F_2(v,q,w)\|_{H^k(\mathbb{R}^{d-1})}\leq C_K\big(\|v\|_{H^k(\bbR^d)}\|v\|_{H^k_{\alpha}(\bbR^d)}+\|q\|_{H^k(\mathbb{R}^{d-1})}\|v\|_{H^k_{\alpha}(\bbR^d)}+\|w\|_{H^k(\mathbb{R}^{d-1})}^2\big),
		$
		\item[(d)] $
		\|F_2(v,q,w)\|_{L^1(\mathbb{R}^{d-1})}\leq C_K\big(\|v\|_{H^k(\bbR^d)}\|v\|_{H^k_{\alpha}(\bbR^d)}+\|q\|_{H^k(\mathbb{R}^{d-1})}\|v\|_{H^k_{\alpha}(\bbR^d)}+\|w\|_{H^k(\mathbb{R}^{d-1})}^2\big).
		$
	\end{itemize}
\end{proposition}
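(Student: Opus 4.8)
The plan is to build $G$, $F_2$, and $F_1$ out of the pieces already controlled in Propositions~\ref{prop3.3.2}, \ref{prop3.4.3}, \ref{prop3.4.5}, \ref{prop3.4.6} and in Lemmas~\ref{estq}, \ref{lem3.2.5}, \ref{k1k2}, using the algebra property \eqref{eq2.3.3} of $H^k(\bbR^d)$ and of $H^k(\bbR^{d-1})$, the control of the shifted front from Lemma~\ref{lem3.2.6}, and the observation that for $\|q\|_{L^\infty(\bbR^{d-1})}$ bounded the factor $\gamma_\alpha(z)\gamma_\alpha^{-1}(z-q(y))\sim e^{\pm\alpha_\pm q(y)}$ is bounded, so that any product of a shifted front derivative $\phi^{(m)}(z-q(y))$ (optionally weighted by $\gamma_\alpha^{\pm1}$) with functions of $y$ and with $v$ or $\gamma_\alpha v$ can be estimated exactly as in the proofs of Propositions~\ref{prop3.3.2}--\ref{prop3.4.6}, i.e.\ via Lemma~\ref{estq} and Proposition~A.3 in the Appendix (which is where the restriction $k\geq[\frac{d+1}{2}]$ is used). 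The organising identity is \eqref{eq3.2.25}: once $G$ and $F_2$ are understood, $F_1$ is just $G+F_2\phi'_q+(w\cdot w)\phi''_q$, and only the two products with a shifted derivative of the front remain.

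First I would treat $G(v,q)=(df(\phi_q)-df(\phi))v+N(\phi_q,v)v$. Proposition~\ref{prop3.3.2}(2) gives the $H^k_\alpha(\bbR^d)$-estimate and the local Lipschitz property of the first summand as a map $\mathcal H^n\times H^k(\bbR^{d-1})\to\mathcal H^n$, and Proposition~\ref{prop3.4.5}(1),(3) does the same for $N(\phi_q,v)v$; summing yields $G(v,q)\in\mathcal H^n$, estimate (a), and the local Lipschitz property of $G$ into $\mathcal H^n$. That $G(v,q)\in\mathcal H^n$ is exactly what makes $\pi_\alpha G(v,q)$ meaningful below via Lemma~\ref{lem3.2.5}.

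Next, for $F_2=K_1(q)(w\cdot w)+K_2(q)\pi_\alpha G(v,q)$: the first term lies in $H^k(\bbR^{d-1})$ with norm $\leq C_K\|w\|^2_{H^k(\bbR^{d-1})}$, since by \eqref{eq3.3.18} and Lemma~\ref{lem3.3.6} (which keeps the denominators bounded away from $0$) one has $K_1(q)=H_1\circ q$ for a fixed smooth $H_1$, hence $K_1(q)-K_1(0)\in H^k(\bbR^{d-1})$ with norm $\leq C_K\|q\|_{H^k(\bbR^{d-1})}$ and $K_1(q)(w\cdot w)\in H^k(\bbR^{d-1})$ by \eqref{eq2.3.3}; its $L^1$-norm is $\leq\|K_1(q)\|_{L^\infty}\|w\cdot w\|_{L^1}\leq C_K\|w\|^2_{H^k(\bbR^{d-1})}$ by Lemma~\ref{k1k2}. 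For the second term, Lemma~\ref{lem3.2.5} bounds $\|\pi_\alpha G(v,q)\|_{H^k(\bbR^{d-1})}$ and $\|\pi_\alpha G(v,q)\|_{L^1(\bbR^{d-1})}$ by $C\|G(v,q)\|_{H^k_\alpha(\bbR^d)}$ and $C\|\gamma_\alpha G(v,q)\|_{L^1(\bbR^d)}$; combining with (a), and, for the $L^1$ bound, a Cauchy--Schwarz-in-$z$ split of $\gamma_\alpha G(v,q)$ (using, as in \eqref{eq2.6.54}, that $df(\phi_q)-df(\phi)=q(y)M(z,y)$ with $M$ exponentially decaying in $z$, and $\|N(\phi_q,v)\|_{L^2}\leq C_K\|v\|_{H^k(\bbR^d)}$), then multiplying by $\|K_2(q)\|_{L^\infty}\leq C_K$, yields (c) and (d); the Lipschitz property of $F_2$ into $H^k(\bbR^{d-1})$ follows from those of $K_i$, $w\cdot w$, $\pi_\alpha$ and $G$. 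Finally, in $F_1=G+F_2\phi'_q+(w\cdot w)\phi''_q$, Proposition~\ref{prop3.4.6} controls $(w\cdot w)\phi''_q$ in $H^k(\bbR^d)$ and $H^k_\alpha(\bbR^d)$, while $F_2\phi'_q$ is estimated by Lemma~\ref{estq} (with $\psi=\phi$, $q_2=F_2$) in the unweighted norm and by its weighted analogue — valid because $\gamma_\alpha(z)\phi'(z-q(y))=[\gamma_\alpha(z)\gamma_\alpha^{-1}(z-q(y))]\,[(\gamma_\alpha\phi')(z-q(y))]$ with both brackets bounded by Lemma~\ref{lem3.2.6}(2) — in the weighted norm; with (c) and (a) this produces (b) and the Lipschitz property of $F_1$ into $\mathcal H^n$.

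The genuinely non-routine point is (b'), the \emph{unweighted} $H^k(\bbR^d)$-estimate of $F_1$, because the crude bound $\|G(v,q)\|_{H^k(\bbR^d)}\leq C_K\|v\|^2_{H^k(\bbR^d)}$ from Propositions~\ref{prop3.3.2}(1) and \ref{prop3.4.3}(2) is too lossy. The remedy is the product--triangular structure of Hypothesis~\ref{hypo3.2.5} together with $\phi_-=0$: writing $f$ as in \eqref{eq3.13} and differentiating, one checks that for any $u$ every second derivative of $f$ applied to $(v,v)$ splits as $(\text{bounded smooth})\,u_2+(\text{bounded smooth})\,v_2$; evaluating along $\phi_q+sv$ (so that $u_2$ is $\phi_{q,2}$ plus a multiple of $v_2$) shows that every quadratic-in-$v$ term of $N(\phi_q,v)v$, hence of $G(v,q)$, carries either an explicit factor $v_2$ — contributing $C_K\|v\|_{H^k(\bbR^d)}\|v_2\|_{H^k(\bbR^d)}$ — or the coefficient $\phi_{q,2}$, which is pointwise dominated by $C\gamma_\alpha^{-1}$ (exponentially small as $z\to-\infty$ because $\phi_{2,-}=0$ and $\alpha_-<-\omega_-$, and bounded as $z\to+\infty$ since $0\leq\alpha_+$); pairing $\phi_{q,2}$ with the remaining factor $v$ written as $\gamma_\alpha^{-1}\cdot(\gamma_\alpha v)$ contributes $C_K\|v\|_{H^k(\bbR^d)}\|v\|_{H^k_\alpha(\bbR^d)}$, while the linear-in-$v$ term $(df(\phi_q)-df(\phi))v=q(y)M(z,y)v$ contributes $C_K\|q\|_{H^k(\bbR^{d-1})}\|v\|_{H^k_\alpha(\bbR^d)}$ since $M\gamma_\alpha^{-1}$ is bounded with the Sobolev control needed to apply Lemma~\ref{estq}. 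Adding these to the bounds already obtained for $\|F_2\phi'_q\|_{H^k(\bbR^d)}$ and $\|(w\cdot w)\phi''_q\|_{H^k(\bbR^d)}$ gives (b'). The main obstacle will be this bookkeeping — verifying that the dichotomy ``$v_2$-factor or $\gamma_\alpha^{-1}$-dominated coefficient'' covers every quadratic-in-$v$ term and carrying it through the Lipschitz estimates as well; everything else is a routine assembly of the cited lemmas, with care only in tracking which of $\|v\|_{H^k(\bbR^d)}$, $\|v\|_{H^k_\alpha(\bbR^d)}$, $\|v_2\|_{H^k(\bbR^d)}$, $\|q\|_{H^k(\bbR^{d-1})}$, $\|w\|_{H^k(\bbR^{d-1})}$ multiplies which.
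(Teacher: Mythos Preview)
Your proposal is correct and follows essentially the same route as the paper: parts (a), (b), (c), (d) are assembled from Propositions~\ref{prop3.3.2}, \ref{prop3.4.5}, \ref{prop3.4.6}, Lemmas~\ref{lem3.2.5}, \ref{estq}, \ref{k1k2}, and the algebra inequality \eqref{eq2.3.3}, while for (b') the paper packages the product--triangular decomposition you describe into a separate Lemma~\ref{lem3.5.1}, whose proof splits $N(\phi_q,v)v$ into five integrals $I_1,\dots,I_5$ each carrying either a factor $v_2$ or a factor $\phi_{2,q}$, exactly the dichotomy you identify. One small slip: what is needed (and what you use in the pairing) is that $\gamma_\alpha^{-1}\phi_{q,2}$ is bounded, i.e.\ $\phi_{q,2}$ is dominated by $C\gamma_\alpha$, not by $C\gamma_\alpha^{-1}$ as you wrote; at $z\to+\infty$ with $\alpha_+>0$ the function $\gamma_\alpha^{-1}$ decays while $\phi_{q,2}$ stays near $\phi_{2,+}$, so the domination goes the other way.
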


\begin{proof}
	The local Lipschitz property of $(df(\phi_q)-df(\phi))v$ on $\mathcal{H}^n$ has been proved in Proposition \ref{prop3.3.2}, and the local Lipschitz property of $N(\phi_q,v)v$ on $\mathcal{H}^n\times H^k(\mathbb{R}^{d-1})$ has been proved in Proposition \ref{prop3.4.5}(3). The local Lipschitz properties of these terms imply the locally Lipschitz property of $G(v,q)$ on $\mathcal{H}^n\times H^k(\mathbb{R}^{d-1})$. The proof of (a) follows from \eqref{eq3.3.18} and  Propositions~\ref{prop3.3.2} and \ref{prop3.4.5}.
	
	In formula \eqref{eqg1} for $F_2(v,q,w)$, we first consider the term $K_1(q)(w\cdot w)$. The Lipschitz estimate of the variation in $q$ follows from the triangular inequality, inequality \eqref{eq2.3.3} and Lemma \ref{k1k2} because
	\begin{align*}
	&\|K_1(q)(w\cdot w-K_1(\bar{q})(\bar{w}\cdot\bar{w})\|_{H^k(\mathbb{R}^{d-1})}\\
	&\quad \leq \|K_1(q)-K_1(\bar{q})\|_{H^k(\mathbb{R}^{d-1})}\|w\cdot w\|_{H^k(\mathbb{R}^{d-1})}+\|K_1(\bar{q})\|_{H^k(\mathbb{R}^{d-1})}\|(w\cdot w-\bar{w}\cdot\bar{w})\|_{H^k(\mathbb{R}^{d-1})}\\
	& \quad \leq C_K\left(\|q-\bar{q}\|_{H^k(\mathbb{R}^{d-1})}+\|w-\bar{w}\|_{H^k(\mathbb{R}^{d-1})}\right).
	\end{align*}
	We then consider the term $K_2(q)\pi_{\alpha}G(v,q)$. The Lipschitz estimate of the variation in $q$ follows  from the triangular inequality, \eqref{eq2.3.3}, Lemma \ref{lem3.2.5}, the Lipschitz property of $G(v,q)$ on $\mathcal{H}^n\times H^k(\mathbb{R}^{d-1})$, Lemma \ref{k1k2} and the estimates in part (a) because
	\begin{align*}
	&\|K_2(q)\pi_{\alpha}G(v,q)-K_2(\bar{q})\pi_{\alpha}G(v,\bar{q})\|_{H^k(\mathbb{R}^{d-1})}\\
	&\,\,\,\leq \|K_2(q)\pi_{\alpha}G(v,q)-K_2(q)\pi_{\alpha}G(v,\bar{q})\|_{H^k(\mathbb{R}^{d-1})}+\|K_2(q)\pi_{\alpha}G(v,\bar{q})-K_2(\bar{q})\pi_{\alpha}G(v,\bar{q})\|_{H^k(\mathbb{R}^{d-1})}\\
	&\,\,\, \leq \|K_2(q)\|_{L^{\infty}(\mathbb{R}^{d-1})}\|\pi_{\alpha}\|_{\mathcal{B}(\mathcal{H},H^k(\mathbb{R}^{d-1}))}\|G(v,q)-G(v,\bar{q})\|_{\mathcal{H}}\\
	&\,\,\,\,\,\qquad+\|K_2(q)-K_2(\bar{q})\|_{L^{\infty}(\mathbb{R}^{d-1})}\|\pi_{\alpha}\|_{\mathcal{B}(\mathcal{H},H^k(\mathbb{R}^{d-1}))}\|G(v,\bar{q})\|_{\mathcal{H}}
	\leq  C_K\|q-\bar{q}\|_{H^k(\mathbb{R}^{d-1})}.
	\end{align*}
	The estimates in part (c) follows from \eqref{eq2.3.3}, \eqref{eq3.2.27}, \eqref{eq3.4.30} and \eqref{eq3.2.21}. Indeed, 
	\begin{eqnarray}\label{eq3.3.4}
	&\|K_2(q)\pi_{\alpha}G(v,q)\|_{H^k(\mathbb{R}^{d-1})}&\leq\|K_2(q)\|_{L^{\infty}(\mathbb{R}^{d-1})}\|\pi_{\alpha}G(v,q)\|_{H^k(\mathbb{R}^{d-1})}\\\no
	&\quad \quad \leq C_K\|G(v,q)\|_{H^k_{\alpha}(\bbR^d)}&\leq C_K(\|v\|_{H^k(\bbR^d)}\|v\|_{H^k_{\alpha}(\bbR^d)}+\|v\|_{H^k_{\alpha}(\bbR^d)}\|q\|_{H^k(\mathbb{R}^{d-1})}),\\
	\label{eq3.6.57}
&	\|K_1(q)(w\cdot w)\|_{H^k(\mathbb{R}^{d-1})}&\leq \|K_1(q)\|_{L^{\infty}(\mathbb{R}^{d-1})}\|w\cdot w\|_{H^k(\mathbb{R}^{d-1})}\leq C_K\|w\|_{H^k(\mathbb{R}^{d-1})}^2.
\end{eqnarray}
	Combining estimates in part (a), \eqref{eq3.3.4} and \eqref{eq3.6.57} we have part (c).
	
	For part (b), in formula \eqref{eq3.2.25} of $F_1(v,q, w)$, we already have the Lipschitz property of $G(v,q)$ mapping into $\mathcal{H}^n$ by part (a) and the Lipschitz property of the term $(w\cdot w)\phi''(q)$ mapping into $\mathcal{H}^n$ by Proposition \ref{prop3.4.6}. To prove the Lipschitz estimate for $\phi'_qF_2(v,q,w)$ of the variation in $v$ and $w$, we apply the Lipschitz property of the map $(v,q)\mapsto F_2(v,q,w)$ for a fixed $q$. To prove the Lipschitz property of the variation in $q\mapsto \phi_q'F_2(v,q,w)$, we use the fact that $\phi'$ decays exponentially to $0$ and Lemma \ref{estq} with $q_2=F_2(v,q,w)$. We have the inequality
	\begin{equation*}
	\|F_1(v,q,w)\|_{H^k_{\alpha}(\bbR^d)}
	\leq \|G(v,q)\|_{H^k_{\alpha}(\bbR^d)}+C\|F_2(v,q,w)\|_{H^k(\mathbb{R}^{d-1})}+\|\gamma_{\alpha}(w\cdot w)\phi''_q\|_{H^k(\mathbb{R}^{d})}.
	\end{equation*}
	We can now use the  estimates on of $G(v,q)$ in part (a) to deal with the first term, then use the estimates for $F_2(v,q,w)$ in part (c), while the estimates of the last term given by Proposition~\ref{prop3.4.6}.
	
%
Similarly, for part (b'),  we  refer to  Proposition \ref{prop3.4.6}  and then use Lemma \ref{estq} to estimate  the second term in the formula \eqref{eq3.2.25} for $F_1$,
	$$\|\phi'_qF_2(v,q,w)\|_{H^k(\bbR^d)}\leq C(\|v\|_{H^k(\bbR^d)}\|v\|_{H^k_{\alpha}(\bbR^d)}+\|q\|_{H^k(\mathbb{R}^{d-1})}\|v\|_{H^k_{\alpha}(\bbR^d)}+\|w\|_{H^k(\mathbb{R}^{d-1})}^2).$$ 
	Similarly to the  proof of Proposition~\ref{prop3.6.17}(b), the third term in \eqref{eq3.2.25} is estimated as
	$$\|\phi''_q(w\cdot w)\|_{H^k(\bbR^d)}\leq C\|w\|_{H^k(\mathbb{R}^{d-1})}^2.$$
	 We  obtain  an  estimate for $G(v,q)=(df(\phi_q)-df(\phi))v+N(\phi_q,v)v$ by using From Lemma~\ref{lem3.5.1}~(2-3),
	\begin{align*}
	\|G(v,q)\|_{H^k(\bbR^d)}&\leq \|(df(\phi_q)-df(\phi))v\|_{H^k(\bbR^d)}+\|N(\phi_q,v)v\|_{H^k(\bbR^d)}\\
	& \leq K(\|q\|_{H^k(\mathbb{R}^{d-1})}\|v\|_{H^k_{\alpha}(\bbR^d)}+\|v\|_{H^k(\bbR^d)}\|v\|_{H^k_{\alpha}(\bbR^d)}+\|v\|_{H^k(\bbR^d)}\|v_2\|_{H^k(\bbR^d)}).
	\end{align*}
	Adding the above inequalities for the terms of \eqref{eq3.2.25} finishes the proof of  (b'). 

	To prove part (d), we use Cauchy-Schwarz inequality,    \eqref{eq3.2.32}, 
	 Proposition \ref{prop3.4.3}(1), and Lemma \ref{lem3.2.5},
	\begin{align*}
	\|\pi_{\alpha}G(v,q)\|_{L^1(\mathbb{R}^{d-1})}\leq &C\|\gamma_{\alpha} G(v,q)\|_{L^1(\mathbb{R}^{d})}\\
	\leq& C(\|\gamma_{\alpha}N(\phi_q,v)v\|_{L^1(\mathbb{R}^{d})}+\|\gamma_{\alpha}(df(\phi_q)-df(\phi))v\|_{L^1(\mathbb{R}^{d})})\\
	\leq &C(\|N(\phi_q,v)\|_{L^2(\mathbb{R}^{d})}\|\gamma_{\alpha}v\|_{L^2(\mathbb{R}^{d})}+\|df(\phi_q)-df(\phi)\|_{L^2(\mathbb{R}^{d})}\|\gamma_{\alpha}v\|_{L^2(\mathbb{R}^{d})})\\
	\leq& C_K(\|v\|_{H^k(\bbR^d)}\|v\|_{H^k_{\alpha}(\bbR^d)}+\|q\|_{H^k(\mathbb{R}^{d-1})}\|v\|_{H^k_{\alpha}(\bbR^d)}).
	\end{align*}
	Similarly, using Cauchy-Schwarz inequality, we infer
	\begin{equation*}
	\|w\cdot w\|_{L^1(\mathbb{R}^{d-1})}\leq \|w\|_{L^2(\mathbb{R}^{d-1})}\|w\|_{L^2(\mathbb{R}^{d-1})}\leq \|w\|_{H^k(\mathbb{R}^{d-1})}^2,
	\end{equation*}
	and thus we have
	\begin{align*}
	\|F_2(v,q,w)\|_{L^1(\mathbb{R}^{d-1})}&\leq C(\|\pi_{\alpha}G(v,q)\|_{L^1(\mathbb{R}^{d-1})}+\|w\cdot w\|_{L^1(\mathbb{R}^{d-1})})\\
	&\leq C_K(\|v\|_{H^k(\bbR^d)}\|v\|_{H^k_{\alpha}(\bbR^d)}+\|q\|_{H^k(\mathbb{R}^{d-1})}\|v\|_{H^k_{\alpha}(\bbR^d)}+\|w\|_{H^k(\mathbb{R}^{d-1})}^2).
	\end{align*}
	This finishes the proof of the required inequalities in part (d).
\end{proof}

We next formulate  lemmae, whose proofs resemble the proofs in \cite[Lemma 8.1, Lemma 8.2 and Lemma 8.3]{GLS2}.  We will use them later to prove boundedness of the components of the solutions in $H^k(\bbR^d)$-norm. 
\begin{lemma}\label{lemgls}
	Assume Hypotheses \ref{hypo3.2.1} and  \ref{hypo3.2.2}, and let   $k\geq \frac{[d+1]}{2}$. Then the entries of the matrix-valued function 
	$\big(  df(\phi)-df(0) \big)\gamma_{\alpha}^{-1}$ belong to $H^k(\mathbb{R})$.
\end{lemma}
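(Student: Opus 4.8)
The plan is to reduce the claim to a pointwise exponential-decay estimate on the matrix-valued function $df(\phi(z))-df(0)$ together with the Banach-algebra structure of $H^k(\mathbb{R})$. First I would recall that by Hypothesis~\ref{hypo3.2.2} the front $\phi$ converges to its rest states exponentially: $\|\phi(z)-0\|_{\mathbb{R}^n}\le Ke^{-\omega_-z}$ for $z\le0$ (recall $\phi_-=0$) and $\|\phi(z)-\phi_+\|_{\mathbb{R}^n}\le Ke^{-\omega_+z}$ for $z\ge0$, and that by Hypothesis~\ref{hypo3.2.1} the map $f$ is $C^{k+3}$, so $df$ is $C^{k+2}$ with locally bounded derivatives. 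Since $\phi$ is bounded (it is continuous and has limits at $\pm\infty$), all the values $\phi(z)$ lie in a fixed compact set, on which $df$ and its derivatives up to order $k+1$ are uniformly bounded. Writing $df(\phi(z))-df(0)=\int_0^1 d^2f(s\phi(z))\,ds\cdot\phi(z)$ for $z$ in the region where $|\phi(z)|$ is small (which, by the exponential convergence, covers all sufficiently large $|z|$), one sees that each entry of $df(\phi(z))-df(0)$ and each of its $z$-derivatives up to order $k+1$ decays exponentially as $z\to\pm\infty$; more precisely, using the chain rule, $\frac{d^j}{dz^j}\big(df(\phi(z))-df(0)\big)$ is a finite sum of products of derivatives of $f$ evaluated at $\phi(z)$ with products of $\phi',\dots,\phi^{(j)}$, and since $\phi-\phi_\pm$ and all $\phi^{(m)}$, $m\ge1$, decay exponentially (differentiating the exponential bounds of Hypothesis~\ref{hypo3.2.2}, or invoking Lemma~\ref{lem3.2.6}), the whole expression decays like $e^{-\omega_- z}$ as $z\to-\infty$ and like $\max\{e^{-\omega_+ z}, e^{-\omega_+' z}\}$ as $z\to+\infty$ for appropriate positive rates.

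Next I would combine this with the weight. By definition $\gamma_\alpha^{-1}(z)=e^{-\alpha_- z}$ for $z$ negative and large, and $e^{-\alpha_+ z}$ for $z$ positive and large, and $\gamma_\alpha^{-1}\in C^{k+3}(\mathbb{R})$ is positive and bounded above and below on any compact set. Hence on $z\le -R$ the entries of $\big(df(\phi)-df(0)\big)\gamma_\alpha^{-1}$ are bounded by $Ce^{-\omega_- z}e^{-\alpha_- z}$, which is integrable against any power since $\alpha_->0$ and, crucially, $-\omega_-+(-\alpha_-)<0$ is not needed — as $z\to-\infty$ we have $e^{-\omega_- z}\to\infty$ but multiplied by $e^{-\alpha_- z}$... wait: for $z\to-\infty$, $-\omega_- z\to+\infty$ since $\omega_->0$... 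Let me restate: for $z\le0$, $e^{-\omega_- z}=e^{|\omega_-||z|}$ grows, while $\gamma_\alpha^{-1}(z)=e^{-\alpha_- z}=e^{\alpha_-|z|}$ also grows. This is the wrong direction — the correct reading of Hypothesis~\ref{hypo3.2.2} is that $\phi(z)-\phi_-$ decays as $z\to-\infty$, i.e. $\|\phi(z)\|\le Ke^{\omega_- z}$ with $\omega_-<0$ giving decay; the statement as printed has $e^{-\omega_- z}$ with $\omega_-<0$, which is again $e^{|\omega_-|(-z)}$... I would therefore use Lemma~\ref{lem3.2.6}(1) directly, which states that $\gamma_\alpha^{-1}\phi\in C^{k+5}(\mathbb{R})^n$ and decays exponentially as $z\to\pm\infty$, together with the product/chain rule, to get that $\gamma_\alpha^{-1}\cdot\big(df(\phi)-df(0)\big)$ decays exponentially in both directions along with its derivatives up to order $k+1$.

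Finally, an exponentially decaying $C^{k+1}$ (indeed $C^{k+2}$) function on $\mathbb{R}$ whose first $k$ derivatives are also exponentially decaying is in $H^k(\mathbb{R})$, since $L^2$-norms of exponentially decaying continuous functions are finite. Thus each entry of $\big(df(\phi)-df(0)\big)\gamma_\alpha^{-1}$ lies in $H^k(\mathbb{R})$, which is the claim. I expect the main obstacle to be bookkeeping: carefully expanding $\frac{d^j}{dz^j}\big(df(\phi(z))-df(0)\big)$ via the Faà di Bruno / multivariate chain rule and checking that every resulting term carries at least one factor that decays exponentially (for the $z\to+\infty$ end one needs $df(\phi(z))-df(\phi_+)$ to appear, times $\gamma_\alpha^{-1}$ which blows up like $e^{-\alpha_+ z}$ — here one uses $\alpha_+<\omega_+$ from Hypothesis~\ref{hypo3.2.3}(2) so that the product still decays; this is exactly the mechanism already encapsulated in Lemma~\ref{lem3.2.6}, and the cleanest exposition simply cites Lemma~\ref{lem3.2.6} and \cite[Lemma 8.2]{GLS2} and notes that the argument is identical in the present one-dimensional-in-$z$ setting). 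So the proof is short: it is the one-dimensional statement, and it follows verbatim from the cited one-dimensional lemma together with Hypotheses~\ref{hypo3.2.1}, \ref{hypo3.2.2} and Lemma~\ref{lem3.2.6}.
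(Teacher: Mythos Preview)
Your approach is essentially the same as the paper's: the paper's proof is two lines, writing $df(\phi)-df(0)=\phi\int_0^1 d^2f(s\phi)\,ds$ and then invoking Lemma~\ref{lem3.2.6}(1) to get $\phi\gamma_\alpha^{-1}\in H^k(\mathbb{R})$, together with the $C^{k+3}$ smoothness of $f$. Your detour through explicit exponential bookkeeping (with the sign confusion you yourself flagged) is unnecessary---once you invoke Lemma~\ref{lem3.2.6}(1), the integral representation plus the fact that $\int_0^1 d^2f(s\phi)\,ds$ is a $C^{k+2}$ function with bounded derivatives on $\mathbb{R}$ immediately gives the result; also note that $\gamma_\alpha^{-1}(z)=e^{-\alpha_+ z}$ \emph{decays} as $z\to+\infty$ (since $\alpha_+\ge0$), it does not blow up.
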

\begin{proof}
	This follows from the formula
	$
	  df(\phi)-df(0) =\phi \int_0^1d^2f(s\phi)ds,
$
	where $f(\cdot)$ is a $C^{k+3}$ smooth function by Hypothesis \ref{hypo3.2.1} and from the fact that $\phi \gamma_{\alpha}^{-1}\in H^k(\mathbb{R})$ using Lemma \ref{lem3.2.6}(1).
\end{proof}
We will now use Lemma \ref{estq} to prove an analogue of Proposition \ref{prop3.3.2}(1) with $\|v\|_{H^k(\bbR^d)}$ in the right-hand side replaced by $\|v\|_{H^k_{\alpha}(\bbR^d)}$ and Proposition \ref{prop3.4.5}(1) with $\|\cdot\|_{H^k_{\alpha}(\bbR^d)}$ in the left-hand side replaced by $\|\cdot\|_{H^k(\bbR^d)}$. We recall that $\phi=\phi(z)$ and that the function $(z,y)\mapsto \big( df\big(\phi(z-q(y))\big)-df(0) \big)v(y)$ is in $
H^k(\bbR^d)$.

\begin{lemma}\label{lem3.5.1}
	Assume   Hypotheses \ref{hypo3.2.1} and  \ref{hypo3.2.2}, and let   $k\geq \frac{d+1}{2}$. For each $k>0$, there is a constant $C_K>0$ such that if  $q\in H^k(\bbR^{d-1})$ and
	 $v\in \cH$
	 satisfy $\|v\|_{\cH}+\|q\|_{H^k(\mathbb{R}^{d-1})}\leq K$, then
	\begin{itemize}
		\item[(1)] $\|(df(\phi)-df(0))v\|_{H^k(\bbR^d)} \leq C_K\|v\|_{H^k_{\alpha}(\bbR^d)}$;
		\item[(2)] $\|(df(\phi_q)-df(\phi))v\|_{H^k(\bbR^d)} \leq C_K\|q\|_{H^k(\mathbb{R}^{d-1})}\|v\|_{H^k_{\alpha}(\bbR^d)}$.
		\item[(3)] For $(v,q)$ in a bounded neighborhood of $(0,0)$ in $\mathcal{H}^n\times H^k(\mathbb{R}^{d-1})$, and $v=(v_1,v_2)^T$ with $v_i\in\mathcal{H}^{n_i}$, $i=1,2$, one has for  $N(\cdot,\cdot)$  defined in equation \eqref{eq3.2.12}, 
		 $$\|N(\phi_q,v)v\|_{H^k(\bbR^d)} \leq C_K\|v\|_{H^k(\bbR^d)} (\|v\|_{H^k_{\alpha}(\bbR^d)}+\|v_2\|_{H^k(\bbR^d)}).$$
	\end{itemize}
\end{lemma}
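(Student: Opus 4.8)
The plan is to prove the three bounds one at a time, the mechanism being the same in each case: split one factor so as to extract the weight $\gamma_\alpha$ (or, in (3), a second component $v_2$), and let the exponential decay of $\phi$ or $\phi'$ absorb the resulting $\gamma_\alpha^{-1}$. The tools are the algebra property \eqref{eq2.3.3}/Lemma~\ref{soblev} of $H^k(\bbR^d)$ for $k\ge[\frac{d+1}{2}]$, the product estimate of Lemma~\ref{estq}, the composition estimates of Proposition~A.3, and the decay information of Hypothesis~\ref{hypo3.2.2} and Lemma~\ref{lem3.2.6}. I will also use the elementary ``partial algebra'' fact that if $a\in H^k(\bbR)$ and $w\in H^k(\bbR^d)$ then $\|aw\|_{H^k(\bbR^d)}\le C\|a\|_{H^k(\bbR)}\|w\|_{H^k(\bbR^d)}$: in the Leibniz expansion of $\partial^\beta(aw)$ only $z$-derivatives of $a$ appear, those of order $\le k-1$ lie in $H^{k-j}(\bbR)$ with $k-j\ge1$ hence in $L^\infty(\bbR)$, and the single top-order term $(\partial_z^k a)w$ is controlled by $\|\partial_z^k a\|_{L^2(\bbR)}\sup_z\|w(z,\cdot)\|_{L^2(\bbR^{d-1})}$ using $H^k(\bbR^d)\hookrightarrow C_b(\bbR_z;L^2(\bbR^{d-1}_y))$ for $k\ge1$.

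For (1), following the one-dimensional argument of \cite{GLS2}, write $(df(\phi)-df(0))v=\bigl[(df(\phi)-df(0))\gamma_\alpha^{-1}\bigr]\cdot\bigl[\gamma_\alpha v\bigr]$. By Lemma~\ref{lemgls} the matrix-valued function $a:=(df(\phi)-df(0))\gamma_\alpha^{-1}=\gamma_\alpha^{-1}\phi\int_0^1 d^2f(s\phi)\,ds$ has entries in $H^k(\bbR)$ (indeed it decays exponentially, by Lemma~\ref{lem3.2.6}(1)), while $\|\gamma_\alpha v\|_{H^k(\bbR^d)}=\|v\|_{H^k_\alpha(\bbR^d)}$, so the partial-algebra fact yields (1). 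For (2) I would use the representation already employed in the proof of Proposition~\ref{prop3.3.2},
\[(df(\phi_q)-df(\phi))v=-\int_0^1 d^2f\bigl(\phi(\cdot-sq)\bigr)\bigl(\phi'(\cdot-sq)\,q,\ v\bigr)\,ds,\]
insert $v=\gamma_\alpha^{-1}(\gamma_\alpha v)$ and use bilinearity of $d^2f$ to move the unshifted factor $\gamma_\alpha^{-1}(z)$ onto $\phi'(\cdot-sq)$. Since $\phi'$ decays exponentially and $0<\alpha_-<-\omega_-$, $0\le\alpha_+<\omega_+$ (Hypothesis~\ref{hypo3.2.3}), the profile $z\mapsto\gamma_\alpha^{-1}(z)\phi'(z)$ decays exponentially together with its derivatives; after factoring $\gamma_\alpha^{-1}(z)=\gamma_\alpha^{-1}(z-sq(y))\cdot\bigl(\gamma_\alpha(z-sq(y))/\gamma_\alpha(z)\bigr)$ and discarding the second factor (a bounded $H^k(\bbR^d)$-multiplier, via Proposition~A.3 applied to its $q$-dependence), Lemma~\ref{estq} gives $\|\gamma_\alpha^{-1}\phi'(\cdot-sq)\,q\|_{H^k(\bbR^d)}\le C_K\|q\|_{H^k(\bbR^{d-1})}$; multiplying by the bounded multiplier $d^2f(\phi(\cdot-sq))$ and the $H^k(\bbR^d)$-factor $\gamma_\alpha v$ and integrating in $s$ gives (2).

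Part (3) is where Hypothesis~\ref{hypo3.2.5} enters. Starting from $N(\phi_q,v)=\int_0^1\!\!\int_0^1 d^2f(\phi_q+s\tau v)\,s\,v\,d\tau\,ds$ (as in the proof of Proposition~\ref{prop3.4.3}), $N(\phi_q,v)v$ is a superposition of terms $d^2f(\phi_q+s\tau v)(v,v)$. Expanding $d^2f$ in the block splitting $u=(u_1,u_2)$: every term pairing $d^2f$ against at least one copy of $v_2$ is, by \eqref{eq2.3.3}, Proposition~A.3 and Proposition~\ref{prop3.4.3}(1), bounded in $H^k(\bbR^d)$ by $C_K\|v\|_{H^k(\bbR^d)}\|v_2\|_{H^k(\bbR^d)}$. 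The only remaining term is $\partial^2_{u_1u_1}f$ contracted with $(v_1,v_1)$; but by Hypothesis~\ref{hypo3.2.5} and \eqref{eq3.13}, $\partial^2_{u_1u_1}f(u)=\bigl(\partial^2_{u_1u_1}\tilde f_j(u)\bigr)u_2$ carries as a factor the second component of its argument $\phi_q+s\tau v$, namely $\phi_2(z-q(y))+s\tau v_2(z,y)$. The $s\tau v_2$ part again yields a $v_2$ factor; for the $\phi_2(z-q(y))$ part I would write $v_1=\gamma_\alpha^{-1}(\gamma_\alpha v_1)$ and use that, since $\phi_-=0$ and $0<\alpha_-<-\omega_-$, the function $(z,y)\mapsto\phi_2(z-q(y))\gamma_\alpha^{-1}(z)$ is bounded with bounded derivatives (it decays as $z\to-\infty$ and stays bounded as $z\to+\infty$, uniformly for $\|q\|_{H^k(\bbR^{d-1})}\le K$), hence a bounded $H^k(\bbR^d)$-multiplier by Proposition~A.3. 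Collecting these estimates and integrating in $s,\tau$ gives $\|N(\phi_q,v)v\|_{H^k(\bbR^d)}\le C_K\|v\|_{H^k(\bbR^d)}\bigl(\|v\|_{H^k_\alpha(\bbR^d)}+\|v_2\|_{H^k(\bbR^d)}\bigr)$.

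I expect the main obstacle to be the bookkeeping in (3): one must recognize, using \eqref{eq3.13}, that the product–triangular structure of $f$ forces the single ``un-weightable'' contribution $\partial^2_{u_1u_1}f(v_1,v_1)$ to carry either a $v_2$ or a $\phi_2$, and then verify rigorously — through the composition estimates of Proposition~A.3, which is needed because $\nabla_y q$ lies only in $H^{k-1}(\bbR^{d-1})$, borderline for the $L^\infty$ embedding — that both $d^2f(\phi_q+s\tau v)$ and $\phi_2(\cdot-q)\gamma_\alpha^{-1}$ are genuine $H^k(\bbR^d)$-multipliers with constants that stay bounded on balls in $\mathcal{H}^n\times H^k(\bbR^{d-1})$. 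The estimates in (1) and (2) are otherwise the one-dimensional arguments of \cite{GLS2} combined with the multidimensional product lemmas (Lemmas~\ref{estq}, \ref{soblev}) already established.
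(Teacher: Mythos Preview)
Your proof of (1) and (2) is essentially the paper's: the same splitting $(df(\phi)-df(0))\gamma_\alpha^{-1}\cdot\gamma_\alpha v$ via Lemma~\ref{lemgls}, the same integral representation \eqref{p60.28}, and the same shift factor $m(z,y)=\gamma_\alpha(z-sq(y))/\gamma_\alpha(z)$. One small point: Proposition~A.3 does not literally cover $m$ (because of its explicit $z$-dependence outside the composition), and the paper instead computes the derivatives of $\sigma(z,y)=\gamma_\alpha^{-1}(z)\psi'(z-q(y))q(y)$ by hand, as in Lemma~\ref{estq}; but the underlying estimate is the one you describe.

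For (3) you take a different, and arguably more transparent, route. The paper imports from \cite{GLS2} the decomposition $N(\phi_q,v)v=I_1+\cdots+I_5$ built from the auxiliary $r(u_1,u_2)=\int_0^1\partial_{u_2}f(u_1,su_2)\,ds$: each $I_j$ with $j\ge2$ carries an explicit $v_2$, while $I_1$ carries a $\phi_{2,q}$ and is handled by writing $v_1\phi_{2,q}=(\gamma_\alpha v_1)(\gamma_\alpha^{-1}\phi_{2,q})$. You instead expand $d^2f(\phi_q+s\tau v)(v,v)$ directly in the $(u_1,u_2)$ block structure, observe from \eqref{eq3.13} that $\partial^2_{u_1u_1}f(u)=\bigl(\partial^2_{u_1u_1}\tilde f(u)\bigr)u_2$ is linear in $u_2$, and split $u_2=\phi_{2,q}+s\tau v_2$. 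Both approaches reduce to the same key fact, namely that $(z,y)\mapsto\gamma_\alpha^{-1}(z)\phi_2(z-q(y))$ is an $H^k(\bbR^d)$-multiplier with norm bounded on balls. One caveat: this function does \emph{not} literally have bounded derivatives (its $y$-derivatives carry $\nabla_y q\in H^{k-1}(\bbR^{d-1})$, which at the endpoint $k=\tfrac{d+1}{2}$ need not lie in $L^\infty$), so the multiplier property cannot be read off pointwise; it requires the Leibniz/H\"older bookkeeping of Proposition~A.3 and Lemma~\ref{estq}, exactly as you flag in your final paragraph and as the paper carries out explicitly.
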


\begin{proof}
	Lemma \ref{lemgls} and \eqref{eq2.3.3} yield (1) since
	\begin{align*}
	\|(df(\phi)-df(0))v\|_{H^k(\bbR^d)}\leq \|\big(  df(\phi)-df(0) \big)\gamma_{\alpha}^{-1}\|_{H^k(\bbR)}\|\gamma_{\alpha} v\|_{H^k(\bbR^d)}
	\leq C_K \|v\|_{H^k_{\alpha}(\bbR^d)}.
	\end{align*}
	To prove (2), we write, as in \eqref{eq2.6.54},
	\begin{align}\label{p60.28}
	(df(\phi_q)-df(\phi))v&
	=-\int_{0}^{1}d^2f(\phi(z-sq(y)))(\gamma_{\alpha}^{-1}\phi'(z-sq(y))q,\gamma_{\alpha}v)\ ds.
	\end{align}
	We  next use an argument similar to the one in  Lemma \ref{estq} to prove that
	\begin{equation}\label{claimp60}
	\|(df(\phi_q)-df(\phi))v\|_{H^k(\bbR^d)} \leq C_K\|q\|_{H^k(\mathbb{R}^{d-1})}\|v\|_{H^k_{\alpha}(\bbR^d)}.
	\end{equation}
	Indeed,  the main steps in the proof of \eqref{claimp60} are as follows.
	
	We consider \eqref{p60.28} component-wise.
	The proof of  \eqref{claimp60} is  then reduced to proof of  the inequality
	\begin{equation}\label{ins1.1}
	\|\sigma\|_{H^k(\bbR^d)}\leq C\|q\|_{H^k(\bbR^{d-1})},
	\end{equation}
	where $\sigma(z,y)=\gamma^{-1}_{\alpha}(z)\psi'(z-q(y))q(y)$, $x=(z,y)\in\bbR^d$, and $\psi$  as required in Lemma \ref{estq} has exponentially decaying derivatives. Indeed, as soon as $\eqref{ins1.1}$ is proved, the inequality $$\|\phi'(\cdot-sq(\cdot))q(\cdot)v(\cdot)\|_{H^k(\bbR^d)}\leq \|\sigma\|_{H^k(\bbR^d)}\|v\|_{H^k_{\alpha}(\bbR^d)}$$ yields \eqref{claimp60} from  \eqref{p60.28}.

	
	To prove  \eqref{ins1.1}, we
	denote $m(z,y)=\gamma_{\alpha}(z-q(y))\gamma_{\alpha}^{-1}(z)$ so that 
	$$\sigma(z,y)=m(z,y) (\gamma^{-1}_{\alpha}\psi')(z-q(y))q(y).$$ We note that $\gamma^{-1}_{\alpha}(z)\psi'(z)$ exponentially decays at $z\rightarrow\pm\infty$. Using $q\in H^k(\bbR^{d-1})\hookrightarrow L^{\infty}(\bbR^{d-1})$ and formula \eqref{eq3.1.7} for $\gamma_{\alpha}(z)$, 
	we conclude that 
  $m(z,y)=e^{-\alpha_- q(y)}$ for $z\leq -r$ and $m(z,y)=e^{-\alpha_+q(y)}$ for $z\geq r$ for some large $r>0$ uniformly in $y\in\bbR^{d-1}$; moreover,
	 $\gamma_{\alpha}(-q(\cdot)) \in {L^{\infty}(\bbR^{d-1})}$, with the norm bounded by a constant that depends on $K$. 
	 Similarly to the calculation in \eqref{eq3.55.2}, the $L^2(\bbR^d)$-norm of $\sigma$ can be estimated as 
	\begin{equation}\label{ins1.2}
	\|\sigma\|_{L^2(\bbR^d)}\leq \| m  \|_{L^{\infty}(\bbR^{d-1})}, \quad
	 \|(\gamma_{\alpha}^{-1}\psi')(\cdot-q(\cdot))q(\cdot)\|_{L^2(\bbR^{d-1})}\leq C_K\|q\|_{H^k(\bbR^{d-1})}.
	\end{equation}
	We now show how to estimate the $L^2(\bbR^d)$-norm of the derivatives of $\sigma$. The $z-$derivative, 
	\begin{equation*}
	\frac{\partial\sigma}{\partial z}=(\gamma_{\alpha}^{-1})'(z)\psi'(z-q(y))q(y)+\gamma_{\alpha}^{-1}(z)\psi''(z-q(y))q(y),
	\end{equation*}
	is the sum of two terms that can handled similarly to \eqref{ins1.2}. Taking derivatives with respect to $x_j$, $j=2,...,d$, yields, as in \eqref{eq3.55.4},
	\begin{equation}\label{ins1.3}
	\frac{\partial\sigma}{\partial x_j}=\gamma_{\alpha}^{-1}\psi''(z-q(y))\frac{\partial q}{\partial x_j}q(y)+\gamma_{\alpha}^{-1}(z)\psi'(z-q(y))\frac{\partial q}{\partial x_j}.
	\end{equation}
	The $L^2(\bbR^d)$-norm of the first term can be estimated as in \eqref{eq3.55.3} and \eqref{ins1.2}, that is, by a calculation similar to \eqref{eq3.55.2}, we have
	\begin{align*}
	\|\gamma_{\alpha}^{-1}(\cdot)\psi''(\cdot-q(\cdot))\frac{\partial q}{\partial x_j}q\|_{L^2(\bbR^d)}&\leq \|\m\|_{L^{\infty}(\bbR^{d-1})}  \|q\|_{L^{\infty}(\bbR^{d-1})} \|(\gamma_{\alpha}^{-1}\phi'')(\cdot-q(\cdot))\frac{\partial q}{\partial x_j}\|_{L^2(\bbR^d)}
	\\&\leq 
	\|\gamma_{\alpha}(-q(\cdot))\|_{L^{\infty}(\bbR^{d-1})} \|q\|_{L^{\infty}(\bbR^{d-1})} \|(\gamma_{\alpha}^{-1}\phi'')(\cdot-q(\cdot))\frac{\partial q}{\partial x_j}\|_{L^2(\bbR^d)}	\\&
	\leq C_K\|q\|_{H^k(\bbR^{d-1})},
	\end{align*}
where $C_K$ is a $K$-dependent constant  different from the constant in \eqref{ins1.2}. 
A similar calculation works for the second term in \eqref{ins1.3}. This proves assertion \eqref{ins1.1} for $k=1$. We conclude  the proof of assertion (2) by pointing out that higher order derivatives are handled as described in the proof of Proposition A.3.  
	
	To prove (3), we  recall the following  representation of the nonlinearity $v=(v_1,v_2)^T\mapsto N(\phi_q,v)v$ borrowed from the proof of \cite[Lemma 8.3]{GLS2},
	\begin{equation*}
	N(\phi_q,v)v=I_1(v)+I_2(v)+I_3(v)+I_4(v)+I_5(v),
	\end{equation*}
	where $\phi_q=(\phi_1(z-q),\phi_2(z-q))^T=(\phi_{1,q},\phi_{2,q})^T$, $v=(v_1,v_2)^T$,
	\begin{align*}
	I_1(v)&=\int_0^1\left( \partial_{u_1}r(\phi_q+tv)-\partial_{u_1}r(\phi_q) \right)v_1\phi_{2,q}dt,\\\no
	I_2(v)&=\int_{0}^{1}\left(\partial_{u_1}r(\phi_q+tv)v_1\right)tv_2dt, \qquad 
	I_3(v)=\int_0^1\left( \partial_{u_2}r(\phi_q+tv)-\partial_{u_2}r(\phi_q) \right)v_2\phi_{2,q}dt,\\\no
	I_4(v)&=\int_0^1\left( \partial_{u_2}r(\phi_q+tv) v_2\right)tv_2dt,\qquad
	I_5(v)=\int_0^1\left( r(\phi_q+tv)-r(\phi_q) \right)v_2dt,
	\end{align*}
	and the $n\times n$ matrix-valued $C^{k}$ function $r=r(u_1,u_2)$ is given by 
	\begin{equation*}
	r(u_1,u_2)=\int_0^1\partial_{u_2}f(u_1,su_2)ds.
	\end{equation*}
	The proof of the required estimates for each $I_j$, $j=1$, $2$,..., $5$,  is similar to the proof of assertion (2) above and uses Lemma~\ref{lem3.2.6}. For instance, for $j=1$, passing in the integral to the third derivative of $f$ (which is a $C^k$-bounded function by Hypothesis \ref{hypo3.2.1}), we  reduce the problem to obtaining  an estimate for  
	$\|vv_1\phi_{2,q}\|_{H^k(\bbR^d)}$. If we write $v_1\phi_{2,q}=(\gamma_{\alpha}v_1)(\gamma_{\alpha}^{-1}\phi_{2,q})$ and use that $H^k(\bbR^d)$ is an algebra, then,  in order to prove that 
	\begin{equation}\label{esti1}
	\|I_1(v)\|_{H^k(\bbR^d)}\leq C\|v\|_{H^k(\bbR^d)}\|v_1\|_{H^k_{\alpha}(\bbR^d)},
	\end{equation}
	it suffices  to show that the $H^k(\bbR^d)$-norm of $\sigma(z,y)=\gamma_{\alpha}^{-1}(z)\phi_2(z-q(y))w(x)$ with $w=\gamma_{\alpha}v_1$ is bounded by $C\|w\|_{H^k(\bbR^d)}$. This follows because $q\in H^k(\bbR^{d-1}) \hookrightarrow L^{\infty}(\bbR^{d-1})$ yields the existence of a large $r>0$ such that, uniformly for $y\in\bbR^{d-1}$, we have
	\begin{equation*}
	|\gamma_{\alpha}^{-1}(z)\phi_2(z-q(y))|\leq \begin{cases}
	K e ^{-\alpha_- z}e^{-\omega_-(z-q(y))},\quad z\leq -r,\\
	K e ^{-\alpha_+ z}(|\phi_2|+e^{-\omega_+(z-q(y))}),\quad z\leq -r.
	\end{cases}
	\end{equation*}
	Using $e^q\in L^{\infty}(\bbR^{d-1})$ and Hypothesis \ref{hypo3.2.3} we conclude that $\gamma_{\alpha}^{-1}(\cdot)\phi_2(\cdot-q(\cdot))$ is bounded. A similar argument, as in the proof of (2) above, applies for the derivatives of $\sigma$. This completes the proof of \eqref{esti1}. For $j=2$, ..., $5$, the estimates $\|I_j(v)\|_{H^k(\bbR^d)}\leq C\|v\|_{H^k(\bbR^d)} \|v_2\|_{H^k(\bbR^d)}$ are straightforward since each integral has a factor $v_2$ and both derivatives of $f$ and $\phi_{2,q}$ are $k$-smooth with bounded derivatives. Combining the estimates for $j=1$, ..., $k$, yields assertion (3). \newline
\end{proof}

\section{Nonlinear stability}\lb{subs3.2.4}

\subsection{Local in time existence and bounds.}
In this section we  analyze  the system \eqref{sys2}. 
We denote
\begin{eqnarray}\mathcal{X}= \ran\mathcal{Q}_{\mathcal{H}}\times H^k(\mathbb{R}^{d-1})\times H^k(\mathbb{R}^{d-1})^{d-1}. \label{x}\end{eqnarray}
We also will assume as before that 
$k\geq\Big[\frac{d+1}{2}\Big].$
 Let $S_{\cL_{{\mathcal{H}}}}(t)=e^{t\cL_{{\mathcal{H}}}}$ be the semigroup generated by the operator $\cL_{{\mathcal{H}}}$ (see Definition~\ref{op} (6)). 
Let  $(v^0,q^0,w^0)$ be the initial perturbation to the front.  Since $S_{\Delta_y}(t)\nabla F_2=\nabla_y S_{\Delta_y}(t)F_2$,
 the variation of constants formula implies that the mild solution to \eqref{sys2} on  $\mathcal{X}$
satisfy the equations
\begin{align}\label{eq3.4.4}
&v(t)=S_{\mathcal{L}_{\mathcal{H}}}(t)v^0+\int_{0}^{t}S_{\mathcal{L}_{\mathcal{H}}}(t-s)F_1(v(s),q(s),w(s))ds,\no\\
&q(t)=S_{\Delta_y}(t)q^0+\int_{0}^{t}S_{\Delta_y}(t-s)F_2(v(s),q(s),w(s))ds,\no\\
&w(t)=S_{\Delta_y}(t)w^0+\int_{0}^{t}\nabla_{y}S_{\Delta_y}(t-s)F_2(v(s),q(s),w(s))ds.
\end{align}

Next we formulate   a statement  that shows  the existence and uniqueness of the mild solutions of  \eqref{eq3.4.4}.
\begin{proposition}\label{prop3.8.21}
	For any initial data $(v^0,q^0,w^0)\in\mathcal{X}$ 
	system \eqref{sys2} has a unique mild solution (that is, a solution of \eqref{eq3.4.4}) 
	$(v(t),q(t),w(t))\in \mathcal{X}$ 
	in the maximal interval $0\leq t<t_{\max}$, where $0<t_{\max}\leq\infty$.
\end{proposition}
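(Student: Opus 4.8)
The plan is to prove Proposition~\ref{prop3.8.21} by a standard contraction-mapping (Picard--Banach) argument applied to the integral system \eqref{eq3.4.4}, exploiting the semigroup bounds and the nonlinear estimates established earlier. First I would fix $(v^0,q^0,w^0)\in\mathcal X$ and, for $T>0$ and $R>0$ to be chosen, work in the complete metric space
\[
\mathcal M_{T,R}=\Big\{(v,q,w)\in C\big([0,T];\mathcal X\big):\ \sup_{0\le t\le T}\big(\|v(t)\|_{\mathcal H}+\|q(t)\|_{H^k(\bbR^{d-1})}+\|w(t)\|_{H^k(\bbR^{d-1})}\big)\le R\Big\},
\]
equipped with the sup-in-time norm, and define the map $\Phi=(\Phi_1,\Phi_2,\Phi_3)$ by the right-hand sides of \eqref{eq3.4.4}. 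I would take $R=2(\|v^0\|_{\mathcal H}+\|q^0\|_{H^k(\bbR^{d-1})}+\|w^0\|_{H^k(\bbR^{d-1})})+1$, say, so that Lemma~\ref{lem3.3.6} and Lemma~\ref{k1k2} apply on the relevant ball (shrinking $R$ first if necessary so that $\|q\|_{L^\infty}\le\delta_0$), and then choose $T$ small.

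The core estimates are: for the $v$-component, $\|e^{t\mathcal L_{\mathcal H}}\|_{\mathcal B(\mathcal H)}\le K$ by Lemma~\ref{lbeta}, together with the local Lipschitz bound and the quadratic-type estimate for $F_1$ from Proposition~\ref{prop3.6.17}(b),(b'), giving $\|F_1(v,q,w)\|_{\mathcal H}\le C_R R$ and $\|F_1(v,q,w)-F_1(\bar v,\bar q,\bar w)\|_{\mathcal H}\le C_R(\|v-\bar v\|_{\mathcal H}+\|q-\bar q\|_{H^k}+\|w-\bar w\|_{H^k})$ on $\mathcal M_{T,R}$; for the $q$-component, $\|S_{\Delta_y}(t)\|_{\mathcal B(H^k(\bbR^{d-1}))}\le C$ by Lemma~\ref{lem3.4.4}(a) and the bounds for $F_2$ from Proposition~\ref{prop3.6.17}(c); for the $w$-component the only new feature is the factor $\nabla_y S_{\Delta_y}(t-s)$, for which Lemma~\ref{lem3.4.4}(c) gives $\|\nabla_y S_{\Delta_y}(t-s)\|_{\mathcal B(H^k(\bbR^{d-1}))}\le C(t-s)^{-1/2}$, and $\int_0^t(t-s)^{-1/2}\,ds=2\sqrt t$ is integrable, so the singularity is harmless. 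Combining, one gets for $(v,q,w)\in\mathcal M_{T,R}$
\[
\sup_{0\le t\le T}\|\Phi(v,q,w)(t)\|_{\mathcal X}\le K\big(\|v^0\|_{\mathcal H}+\|q^0\|+\|w^0\|\big)+C_R\,(T+2\sqrt T)\,R\cdot R ,
\]
and a similar bound with an extra factor $C_R(T+2\sqrt T)$ for the Lipschitz difference; choosing $T=T(R)$ small enough makes $\Phi$ map $\mathcal M_{T,R}$ into itself and a strict contraction. Banach's fixed point theorem then yields a unique mild solution on $[0,T]$ with values in $\mathcal X$; note that $\ran\mathcal Q_{\mathcal H}$ is a closed subspace of $\mathcal H^n$ preserved by $e^{t\mathcal L_{\mathcal H}}$ (since $\mathcal Q_{\mathcal H}$ commutes with the semigroup, by Lemma~\ref{lbeta} and the remarks preceding it) and $F_1$ takes values there because $F_1=\mathcal Q_{\mathcal H}F_1$ as shown below \eqref{eq3.2.25}, so the fixed point indeed lies in $\mathcal X$.

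Finally I would pass from local to maximal existence in the usual way: restart the argument at time $T$ with initial data $(v(T),q(T),w(T))\in\mathcal X$, extend, and let $t_{\max}$ be the supremum of existence times; the standard blow-up alternative (either $t_{\max}=\infty$, or $\|(v(t),q(t),w(t))\|_{\mathcal X}\to\infty$ as $t\uparrow t_{\max}$) follows since the local existence time depends only on a bound for the $\mathcal X$-norm of the data. Uniqueness on the whole interval $[0,t_{\max})$ follows from local uniqueness by a connectedness/continuation argument. The main (really the only) technical point to watch is bookkeeping: one must check that all the constants $C_R$ produced by Propositions~\ref{prop3.3.2}, \ref{prop3.4.5}, \ref{prop3.4.6}, \ref{prop3.6.17} and Lemma~\ref{k1k2} are uniform on the ball of radius $R$ (which they are, being stated as such), and that the $(t-s)^{-1/2}$ singularity from $\nabla_y S_{\Delta_y}$ in the $w$-equation does not obstruct the contraction estimate — it does not, because $t^{1/2}\to0$ as $t\to0$. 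Everything else is the routine Picard iteration, so I do not expect a genuine obstacle here.
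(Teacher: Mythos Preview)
Your proposal is correct and follows essentially the same approach as the paper: the paper simply refers to \cite[Lemma 3.4]{Kapitula2} and notes that the standard contraction argument goes through because $\mathcal{L}_{\mathcal{H}}$ generates a strongly continuous semigroup, $F_1$ and $F_2$ are locally Lipschitz (Proposition~\ref{prop3.6.17}), and the $(t-s)^{-1/2}$ singularity from Lemma~\ref{lem3.4.4}(c) is integrable at $t=0$. Your write-up supplies exactly these ingredients in more detail, including the check that the fixed point stays in $\ran\mathcal{Q}_{\mathcal{H}}$; the only minor difference is that you invoke the boundedness of $e^{t\mathcal{L}_{\mathcal{H}}}$ from Lemma~\ref{lbeta}, whereas the paper remarks that mere strong continuity already suffices.
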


The proof can be found in  \cite[Lemma 3.4]{Kapitula2}. We just mention that the proof only uses the fact that $\mathcal{L}_{\mathcal{H}}$ generates a strongly continuous semigroup, even though we know that $\mathcal{L}_{\mathcal{H}}$ generates a bounded strongly continuous semigroup. Indeed, since the operator $\mathcal{L}_{\mathcal{H}}$ generates a strongly continuous semigroup and the nonlinearities $F_1$ and $F_2$ are locally Lipschitz with Lipschitz constant $C_K$ 
on the set  $\{ (v,q,w):\|v\|_{\mathcal{H}}+\|q\|_{H^k(\mathbb{R}^{d-1})}+\|w\|_{H^k(\mathbb{R}^{d-1})}<K \}$, the  estimate from Lemma \ref{lem3.4.4} ({c}), which is integrable at $t=0$, yields the statement of Proposition~\ref{prop3.8.21}.

For  \eqref{sys2} on $\mathcal{X}$  from \eqref{x} 
 we combine Proposition~\ref{prop3.8.21} and \cite[Theorem 64.2]{Sell} to obtain the next lemma.
\begin{lemma}\label{lem3.4.7}
	For each $\delta>0$, if $0<\gamma<\delta$, there exists $T$  ($0<T\le\infty$) such that the following is true: if $(v^0,q^0,w^0)\in \mathcal{X}$ 
	 satisfies 
	\begin{equation}\label{eq3.4.6}
	\|v^0\|_{\mathcal{H}}+\|q^0\|_{H^k(\mathbb{R}^{d-1})}+\|w^0\|_{H^k(\mathbb{R}^{d-1})}\leq \gamma
	\end{equation}
	and $0\leq t< T$, then the solution $(v(t),q(t),w(t))\in  \mathcal{X}$ 
	of \eqref{eq3.4.4} with the initial data $(v^0,q^0,w^0)$ is defined and satisfies
	\begin{equation}\label{eq3.4.7}
	\|v(t)\|_{\mathcal{H}}+\|q(t)\|_{H^k(\mathbb{R}^{d-1})}+\|w(t)\|_{H^k(\mathbb{R}^{d-1})}\leq \delta.
	\end{equation} 
\end{lemma}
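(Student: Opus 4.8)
The plan is to run the standard continuous‑induction (bootstrap) argument on the integral system \eqref{eq3.4.4}, following \cite[Lemma 3.4]{Kapitula2} and \cite[Theorem 64.2]{Sell}. By Proposition~\ref{prop3.8.21} the mild solution $(v(t),q(t),w(t))\in\mathcal{X}$ exists and is unique on a maximal interval $[0,t_{\max})$; set
\[
M(t):=\|v(t)\|_{\mathcal{H}}+\|q(t)\|_{H^k(\mathbb{R}^{d-1})}+\|w(t)\|_{H^k(\mathbb{R}^{d-1})},
\]
which is continuous with $M(0)\le\gamma<\delta$. It therefore suffices to produce a time $T\in(0,t_{\max}]$, depending only on $\gamma$ and $\delta$, on which $M(t)\le\delta$.

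Next I would collect the building blocks. From Lemma~\ref{lbeta}, $\|S_{\mathcal{L}_{\mathcal{H}}}(t)\|_{\mathcal{B}(\mathcal{H})}\le K$; from Lemma~\ref{lem3.4.4}(a),(c), $\|S_{\Delta_y}(t)\|_{\mathcal{B}(H^k(\mathbb{R}^{d-1}))}\le C$ and $\|\nabla_y S_{\Delta_y}(t)u\|_{H^k(\mathbb{R}^{d-1})}\le C t^{-1/2}\|u\|_{H^k(\mathbb{R}^{d-1})}$. Write $C_0:=\max\{K,C\}$; the estimate below needs $C_0\gamma<\delta$, which we assume (this is automatic in every application of the lemma, where $\gamma$ is taken small compared to $\delta$). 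Moreover, Proposition~\ref{prop3.6.17}(b),(b'),(c) shows that on the ball $B_\delta:=\{\|v\|_{\mathcal{H}}+\|q\|_{H^k(\mathbb{R}^{d-1})}+\|w\|_{H^k(\mathbb{R}^{d-1})}\le\delta\}$ every term occurring in $F_1$ and $F_2$ is a product of two of the norms $\|v\|_{H^k(\bbR^d)},\|v\|_{H^k_\alpha(\bbR^d)},\|q\|_{H^k(\mathbb{R}^{d-1})},\|w\|_{H^k(\mathbb{R}^{d-1})}$, so that
\[
\|F_1(v,q,w)\|_{\mathcal{H}}+\|F_2(v,q,w)\|_{H^k(\mathbb{R}^{d-1})}\le C_\delta\,\big(\|v\|_{\mathcal{H}}+\|q\|_{H^k(\mathbb{R}^{d-1})}+\|w\|_{H^k(\mathbb{R}^{d-1})}\big)^2\le C_\delta\,\delta^2
\]
throughout $B_\delta$, with $C_\delta$ the constant of that proposition.

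Now suppose $M(s)\le\delta$ for all $s\in[0,t]$, so the nonlinear bounds above apply along the solution on $[0,t]$. Taking the corresponding norms in the three lines of \eqref{eq3.4.4} and inserting the semigroup estimates, the $v$‑ and $q$‑lines each contribute at most $C_0\gamma+\int_0^t C_0 C_\delta\delta^2\,ds$, while the $w$‑line, which carries the factor $\nabla_y S_{\Delta_y}(t-s)$, contributes at most $C_0\gamma+\int_0^t C(t-s)^{-1/2}C_\delta\delta^2\,ds=C_0\gamma+2C\,C_\delta\delta^2\sqrt{t}$. Adding,
\[
M(t)\le C_0\gamma + C_\delta\,\delta^2\,\big(C_1 t+C_2\sqrt{t}\,\big)
\]
for every $t<t_{\max}$ with $M(\cdot)\le\delta$ on $[0,t]$, where $C_1,C_2$ are absolute constants. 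Choose $T\in(0,t_{\max}]$ so small that $C_0\gamma+C_\delta\delta^2(C_1T+C_2\sqrt{T})<\delta$, which is possible because $C_0\gamma<\delta$ and the bracketed quantity vanishes as $T\to0^+$; note $T$ depends only on $\gamma$ and $\delta$. A routine continuity argument then closes the loop: if $t_\ast:=\sup\{t\in[0,T):M\le\delta\ \text{on}\ [0,t]\}$ were $<T$, continuity of $M$ together with $M(0)<\delta$ would force $M(t_\ast)=\delta$, contradicting the displayed bound, which holds up to $t_\ast$ and is $<\delta$ there. Hence $M(t)\le\delta$ on $[0,T)$, which is \eqref{eq3.4.7}.

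I do not expect a substantive obstacle here — this is the routine local‑in‑time estimate underlying Proposition~\ref{prop3.8.21}. The two points that need a little care are the integrable singularity $t^{-1/2}$ produced by $\nabla_y S_{\Delta_y}$ in the $w$‑equation (harmless since $\int_0^T (t-s)^{-1/2}\,ds<\infty$), and the fact — guaranteed by Proposition~\ref{prop3.6.17} — that the nonlinear bounds hold with one and the same constant $C_\delta$ on the entire $\delta$‑ball; this uniformity is precisely what makes the resulting $T$ independent of the particular initial datum $(v^0,q^0,w^0)$ in the $\gamma$‑ball.
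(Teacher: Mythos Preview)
The paper does not give a proof of this lemma at all: the sentence immediately preceding its statement says only that it follows by combining Proposition~\ref{prop3.8.21} with \cite[Theorem~64.2]{Sell}, and there is no \texttt{proof} environment. Your argument is exactly the standard local-in-time bootstrap that such a citation is meant to abbreviate, so in substance you and the paper agree.

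One remark on the extra hypothesis you flag. Your bound $M(t)\le C_0\gamma + C_\delta\delta^2(C_1 t+C_2\sqrt t)$ indeed only closes when $C_0\gamma<\delta$, and since strong continuity of the semigroup is not uniform on bounded sets, there is no way to remove this restriction by your approach; the lemma as literally stated (for \emph{every} $0<\gamma<\delta$) is slightly optimistic. The paper sidesteps the issue because every later application---Corollary~\ref{tleq1}, Propositions~\ref{pro3.4.13} and \ref{prop3.5.2}, and the proof of Theorem~\ref{main}---takes $\gamma$ a fixed small fraction of $\delta$ (e.g.\ $\eta=C^{-1}\gamma/3$ in Theorem~\ref{main}), which is precisely the regime $C_0\gamma<\delta$ you assume. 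So your proof is correct for the statement the paper actually needs, and your caveat is well placed.
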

\begin{definition}\label{DFNtmax}
	Let $T(\delta,\gamma)$ denote the supremum of all $T$ such that \eqref{eq3.4.7} holds for all $0\leq t<T$ whenever \eqref{eq3.4.6} is satisfied.
\end{definition}

Having established the local in time existence of the solution of \eqref{sys2}, we  show  next  the algebraic decay and boundedness of the solution. 
\begin{corollary}\label{tleq1}
	For any $K>0$, there exists 
	$\delta_0<K$ such that for any $\gamma$ and  $\delta$ satisfying $0<\gamma<\delta<\delta_0$, the mild solution $V(t)=(v(t),q(t),w(t))$ of \eqref{sys2} satisfying $\|V(t)\|_{\cX}\leq \delta$ on the interval $t\in[0,T(\delta,\gamma))$ is continuous with respect to the initial data $V^0=(v^0,q^0,w^0)$ satisfying $\|V^0\|_{\mathcal{X}}\le\gamma$. Moreover, 
	\begin{equation}\label{estVt}
	\|V(t)\|_{\mathcal{X}}\leq C(K) \|V^0\|_{\mathcal{X}}\text{ for all  } t\in[0,\min\{1,T(\delta,\gamma)\}],
	\end{equation}
	where $C(K)$ is a constant that depends on $K$ but is independent of $\delta$ and $\gamma$.
\end{corollary}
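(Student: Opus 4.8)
The plan is to run a standard short-time fixed-point / Gronwall argument on the mild-solution formulas \eqref{eq3.4.4}, using the semigroup bounds from Section~\ref{sec4} together with the nonlinear estimates of Proposition~\ref{prop3.6.17}. First I would fix $K>0$ and work on the interval $[0,\min\{1,T(\delta,\gamma)\}]$, where by Lemma~\ref{lem3.4.7} (and Definition~\ref{DFNtmax}) the solution stays in the ball $\|V(t)\|_{\cX}\le\delta<\delta_0<K$, so that all the Lipschitz constants $C_K$ from Proposition~\ref{prop3.6.17} are uniform along the trajectory. Applying the $\cB(\cH)$-bound $\|e^{t\cL_{\cH}}\|\le K$ from Lemma~\ref{lbeta} to the $v$-equation, the $\cB(H^k(\bbR^{d-1}))$-bound from Lemma~\ref{lem3.4.4}(a) to the $q$-equation, and the bound $\|\nabla_y S_{\Delta_y}(t-s)\|\le C(t-s)^{-1/2}$ from Lemma~\ref{lem3.4.4}(c) to the $w$-equation, I would obtain, for $t\le 1$,
\begin{align*}
\|V(t)\|_{\cX}\le C(K)\|V^0\|_{\cX}+C(K)\int_0^t\big(1+(t-s)^{-1/2}\big)\big(\|F_1(V(s))\|_{\cH}+\|F_2(V(s))\|_{H^k(\bbR^{d-1})}\big)\,ds.
\end{align*}

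Next I would insert the quadratic estimates from Proposition~\ref{prop3.6.17}(b),(c): on the ball of radius $K$ the right-hand side integrand is bounded by $C_K\big(\|v(s)\|_{\cH}^2+\|q(s)\|_{H^k}\,\|v(s)\|_{\cH}+\|w(s)\|_{H^k}^2\big)\le C_K\|V(s)\|_{\cX}^2\le C_K\,\delta\,\|V(s)\|_{\cX}$, where in the last step I used $\|V(s)\|_{\cX}\le\delta$. Since $\int_0^1(1+(t-s)^{-1/2})\,ds\le C$ is finite, this gives
\begin{align*}
\|V(t)\|_{\cX}\le C(K)\|V^0\|_{\cX}+C(K)\,\delta\int_0^t\big(1+(t-s)^{-1/2}\big)\|V(s)\|_{\cX}\,ds,
\end{align*}
and a singular Gronwall inequality (e.g. the generalized Gronwall lemma for kernels $(t-s)^{-1/2}$, as in Henry's book) yields $\|V(t)\|_{\cX}\le C(K)\|V^0\|_{\cX}$ for $t\in[0,\min\{1,T(\delta,\gamma)\}]$, with $C(K)$ independent of $\delta,\gamma$. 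This proves \eqref{estVt}.

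For the continuous dependence on the initial data I would run the same machinery on the difference of two mild solutions $V(t)=(v,q,w)$ and $\widetilde V(t)=(\tilde v,\tilde q,\tilde w)$ with data $V^0,\widetilde V^0$ in the $\gamma$-ball: subtracting the integral equations \eqref{eq3.4.4}, using linearity of the semigroups and the \emph{local Lipschitz} bounds of Proposition~\ref{prop3.6.17} (valid since both trajectories remain in the $K$-ball by Lemma~\ref{lem3.4.7}), I get $\|V(t)-\widetilde V(t)\|_{\cX}\le C(K)\|V^0-\widetilde V^0\|_{\cX}+C(K)\int_0^t(1+(t-s)^{-1/2})\|V(s)-\widetilde V(s)\|_{\cX}\,ds$; the same singular Gronwall lemma then gives $\|V(t)-\widetilde V(t)\|_{\cX}\le C(K)\|V^0-\widetilde V^0\|_{\cX}$ on $[0,\min\{1,T(\delta,\gamma)\}]$, which is exactly the asserted continuity.

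The only genuinely delicate point is handling the weakly singular kernel $(t-s)^{-1/2}$ coming from the $w$-equation: one must use the correct form of the generalized Gronwall inequality (iterating the Volterra operator so that after finitely many steps the kernel becomes bounded, or invoking the Mittag-Leffler–type bound directly) rather than naive Gronwall, and one must check that the constant produced this way is still independent of $\delta$ and $\gamma$ — which it is, because $\delta$ only enters as a multiplicative factor that I have already absorbed into the $C(K)\delta$ coefficient and which only improves the estimate. Everything else is bookkeeping: the semigroup bounds are quoted from Lemmas~\ref{lbeta} and \ref{lem3.4.4}, the quadratic nonlinear bounds from Proposition~\ref{prop3.6.17}, and the a~priori containment in the $K$-ball from Lemma~\ref{lem3.4.7} and Definition~\ref{DFNtmax}.
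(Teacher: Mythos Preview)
Your proof is correct and uses the same ingredients as the paper: the variation-of-constants formulas \eqref{eq3.4.4}, the semigroup bounds of Lemmas~\ref{lbeta} and \ref{lem3.4.4}, the quadratic nonlinear estimates of Proposition~\ref{prop3.6.17}, and containment in the $K$-ball via Lemma~\ref{lem3.4.7}. The only genuine difference is how the integral inequality is closed. The paper does something more elementary than your singular Gronwall: since $t\le 1$ it simply bounds $\int_0^t(t-s)^{-1/2}\,ds\le 2$, takes the supremum of $\|V(s)\|_{\cX}$ over $[0,\min\{1,T(\delta,\gamma)\})$, and then chooses $\delta_0\le\min\{K,1/(4MC_K)\}$ so that the coefficient $2MC_K\delta<\tfrac12$ can be absorbed into the left-hand side. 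Your Gronwall route works equally well (and does not actually need $\delta_0$ small, only $\delta_0<K$); the paper's absorption argument is just shorter. One small bookkeeping point: to bound $\|F_1\|_{\cH}$ you need both parts (b) and (b') of Proposition~\ref{prop3.6.17} (or the Lipschitz statement in its opening paragraph), since (b) alone only controls $\|F_1\|_{H^k_\alpha(\bbR^d)}$.
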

\begin{proof} Since the estimate in Lemma \ref{lem3.4.4}  is integrable at $t=0$, the continuity with respect to initial data is a simple modification of the standard argument, see \cite[Theorem 64.2]{Sell},
	
	 
	 By Lemmas~\ref{lbeta} and  \ref{lem3.4.4} the semigroup  $\{T(t)\}_{t\geq 0} =\{S_{\mathcal{L}_{\mathcal{H}}}(t)\oplus S_{\Delta_y}(t)\oplus S_{\Delta_y}(t)\}_{t\geq 0}$ is bounded. We  define $M=\max\{\sup\{ \|T(t)\|_{\cB(\cX)}: t\ge0 \},C\}$, where $C$ is the constant from Lemma \ref{lem3.4.4}(c). The variation of constant formula \eqref{eq3.4.4} and Proposition \ref{prop3.6.17} and Lemma \ref{lem3.4.7}  together with assumption $0\leq t<\min\{1,T(\delta,\gamma)\}\leq 1$, for all $t\in[0,\min\{1,T(\delta,\gamma)\}  )$,  yield
$$	\|V(t)\|_{\mathcal{X}}\leq M \|V^0\|_{\mathcal{X}}+M C_K\delta\int_0^t(t-s)^{-1/2}\|V(s)\|_{\mathcal{X}}ds
	\leq M \|V^0\|_{\mathcal{X}} + 2MC_K\delta \sup\limits_{  T(\delta,\gamma)\geq 0}\|V(t)\|_{\mathcal{X}}.
$$
	 We then choose $\delta_0\leq \min\{ K, 1/4MC_K\}$ and  conclude that for any $0<\delta<\delta_0$ and $0<\gamma<\delta$, then $\|V(t)\|_{\mathcal{X}}\leq C(K) \|V^0\|_{\mathcal{X}}$ for some $C(K)$ depending on $K$ for all $t\in[0,\min\{1,T(\delta,\gamma)\} )$.
\end{proof}

\subsection{The algebraic decay of solutions in weighted norm}
In this subsection we show that the weighted norm of the solution $v(t)=(v_1,v_2)$ of \eqref{sys2} decays algebraically as $t\rightarrow\infty$, the ${H^k(\mathbb{R}^{d})}$-norm of  $v_2(t)$ also decays algebraically as $t\rightarrow \infty$, while the ${H^k(\mathbb{R}^{d})}$-norm of  $v_1(t)$  is bounded provided the initial value of the solution is sufficiently small.
For the initial data  
 $(v^0,q^0,w^0)\in \ran \cQ_{{\mathcal{H}}}\times H^k(\bbR^{d-1})\times H^k(\bbR^{d-1})$, we denote the size of the initial values  by
\begin{equation}\label{dfnek}
E_k=\|v^0\|_{\mathcal{H}}+\|q^0\|_{H^{k+1}(\mathbb{R}^{d-1})}+\|q^0\|_{W^{1,1}(\mathbb{R}^{d-1})}.
\end{equation}
We assume $q^0\in H^{k+1}(\mathbb{R}^{d-1}) \cap W^{1,1}(\mathbb{R}^{d-1})$ so that  when  \eqref{sys2} has a mild solution, $w(t)$ satisfies $w(t)=\nabla_y q(t)$ and $w(t)\in H^k(\mathbb{R}^{d-1})^{d-1}\cap L^1(\mathbb{R}^{d-1})^{d-1}$, thus \eqref{dfnek} contains the norm of   $\|w^0\|_{L^{1}(\mathbb{R}^{d-1})}$ in the last term. 

The following estimates  are proved by  direct computation  in \cite{Xin}. 
\begin{lemma}\label{lem3.4.12}
	Suppose $a,b,c>0$, then
	\begin{itemize}
		\item[(1)] $\int_0^{t/2} (1+t-s)^{-b}(1+s)^{-c}ds\leq (1+t)^{-a}$, if $a\leq b$, $a\leq b+c-1$, $c\neq 1$; or if $a<b$, $c=1$;
		\item[(2)] $\int_{t/2}^t (1+t-s)^{-b}(1+s)^{-c}ds\leq (1+t)^{-a}$, if $a\leq c$, $a\leq b+c-1$, $b\neq 1$; or if $a<c$, $b=1$;
		\item[(3)] $\int_{0}^{t}e^{-b(t-s)}(1+s)^{-c}ds\leq (1+t)^{-c}$.
	\end{itemize}
\end{lemma}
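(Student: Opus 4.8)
The plan is to prove all three inequalities by the standard device of splitting the $s$-integral at $t/2$ and, on each piece, pulling out the factor that is essentially constant there (by monotonicity) before estimating the remaining one-variable integral $\int_0^T(1+s)^{-\gamma}\,ds$. Throughout, $C$ is a generic positive constant in the sense of Section~\ref{subs3.2.0}, so each ``$\le(1+t)^{-a}$'' is read as ``$\le C(1+t)^{-a}$''.

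For (1), on $s\in[0,t/2]$ we have $t-s\ge t/2$, hence $1+t-s\ge\tfrac12(1+t)$ and $(1+t-s)^{-b}\le 2^b(1+t)^{-b}$, so it remains to estimate $\int_0^{t/2}(1+s)^{-c}\,ds$. If $c>1$ this integral is bounded by $(c-1)^{-1}$, and since then $b+c-1>b\ge a$ the hypothesis $a\le b$ yields the claim. If $c<1$ the integral is $\le C(1+t)^{1-c}$, so the left-hand side of (1) is $\le C(1+t)^{1-b-c}=C(1+t)^{-(b+c-1)}$, and $a\le b+c-1$ finishes this case. If $c=1$ the integral equals $\ln(1+t/2)\le C_\varepsilon(1+t)^\varepsilon$ for every $\varepsilon>0$; since $a<b$ in this alternative one may pick $\varepsilon$ with $a\le b-\varepsilon$, giving the bound.

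Statement (2) is the mirror image under $s\mapsto t-s$: on $s\in[t/2,t]$ one has $1+s\ge\tfrac12(1+t)$, so $(1+s)^{-c}\le 2^c(1+t)^{-c}$, and the substitution $u=t-s$ converts the remaining integral into $\int_0^{t/2}(1+u)^{-b}\,du$, which is handled exactly as above with $b$ in place of $c$; the sub-cases $b>1$, $b<1$, $b=1$ correspond to the hypotheses $a\le c$, $a\le b+c-1$, and ($b=1$, $a<c$). For (3) we again write $\int_0^t=\int_0^{t/2}+\int_{t/2}^t$. On $[0,t/2]$ the factor $e^{-b(t-s)}\le e^{-bt/2}$ decays faster than any power, and since $\int_0^{t/2}(1+s)^{-c}\,ds\le C(1+t)$ in all cases, this piece is $\le Ce^{-bt/2}(1+t)\le C(1+t)^{-c}$ (the function $t\mapsto e^{-bt/2}(1+t)^{1+c}$ being bounded on $[0,\infty)$). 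On $[t/2,t]$ we use $(1+s)^{-c}\le 2^c(1+t)^{-c}$ and $\int_{t/2}^t e^{-b(t-s)}\,ds\le\int_0^\infty e^{-bu}\,du=b^{-1}$, so this piece is also $\le C(1+t)^{-c}$; adding the two completes (3).

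The only slightly delicate point is the borderline case in (1) and (2) in which one of the decay exponents equals $1$: there $\int_0^{t/2}(1+s)^{-1}\,ds$ produces a logarithm rather than a power, and one must use the strict inequality ($a<b$, resp.\ $a<c$) to absorb the harmless factor $(1+t)^\varepsilon$. Everything else reduces to monotonicity and the elementary bound $\int_0^T(1+s)^{-\gamma}\,ds\le C\max\{1,\,(1+T)^{1-\gamma},\,\ln(1+T)\}$, so no real difficulty is expected.
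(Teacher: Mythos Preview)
Your argument is correct and is exactly the ``direct computation'' that the paper points to: the paper does not supply its own proof of Lemma~\ref{lem3.4.12} but simply refers the reader to \cite{Xin}, and your split-at-$t/2$ computation with the three sub-cases $c>1$, $c<1$, $c=1$ (and the mirror argument for (2)) is the standard way these estimates are obtained. Nothing further is needed.
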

We now show that the 
weighted norm of $v(t)$ and the $H^k(\bbR^{d-1})$-norms of $q(t)$ and $w(t)$, in fact, decay to zero algebraically as long as $t$ grows but the ${\mathcal{H}}$-norm of $v(t)$ and the ${H^k(\bbR^{d-1})}$ norms of $q(t)$ and $w(t)$ remain small. 

\begin{proposition}\label{pro3.4.13}
	Assume  Hypotheses \ref{hypo3.2.1}, \ref{hypo3.2.2} and  \ref{hypo3.2.7}, and let $k\geq[\frac{d+1}{2}]$. Choose $\nu>0$ as in Lemma \ref{lem3.4.9}. There exist $\delta_1>0$  and $C_1>0$ such that for every $\delta\in(0,\delta_1)$ and every $\gamma$ with $0<\gamma<\delta$, if $E_k<\gamma$, then the solution $(v(t),q(t),w(t))$ of \eqref{sys2} with the initial data $(v^0,q^0,w^0)$, for $t\in[0,T(\delta,\gamma))$ satisfies the estimates
	\begin{align*}
	&\|v(t)\|_{H^k_{\alpha}(\bbR^d)}\leq C_1(1+t)^{-\frac{d+1}{2}}E_k,\\
	&\|q(t)\|_{H^k(\mathbb{R}^{d-1})}\leq C_1(1+t)^{-\frac{d-1}{4}}E_k,\\
	&\|w(t)\|_{H^k(\mathbb{R}^{d-1})}\leq C_1(1+t)^{-\frac{d+1}{4}}E_k.
	\end{align*}
\end{proposition}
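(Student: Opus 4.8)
The plan is to run a standard continuous-induction (bootstrap) argument on the three decay rates simultaneously, using the mild-solution representation \eqref{eq3.4.4}, the semigroup estimates from Section~\ref{sec4}, and the nonlinear bounds from Proposition~\ref{prop3.6.17}. Introduce, for $t\in[0,T(\delta,\gamma))$, the quantity
\begin{equation*}
M(t)=\sup_{0\le s\le t}\Big\{ (1+s)^{\frac{d+1}{2}}\|v(s)\|_{H^k_{\alpha}(\bbR^d)}+(1+s)^{\frac{d-1}{4}}\|q(s)\|_{H^k(\bbR^{d-1})}+(1+s)^{\frac{d+1}{4}}\|w(s)\|_{H^k(\bbR^{d-1})}\Big\},
\end{equation*}
and the goal is to show $M(t)\le C_1 E_k$ uniformly in $t$, provided $\delta_1$ (hence $\delta$, hence the a~priori smallness of $\|V(s)\|_{\mathcal X}$ from Lemma~\ref{lem3.4.7}) is chosen small. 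Because $M$ is continuous in $t$, it suffices to prove the implication: if $M(t)\le 2C_1E_k$ on $[0,\tau)$ then in fact $M(t)\le C_1E_k$ there, for a suitable absolute constant $C_1$ depending only on $K$ and the structural constants.

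\emph{First}, I would estimate $\|v(t)\|_{H^k_\alpha(\bbR^d)}$. Apply $\mathcal Q_{\mathcal H}$ to the $v$-equation; since $v\in\ran\mathcal Q_{\mathcal H}$ and $\mathcal Q_{\mathcal H}$ commutes with the semigroup, Lemma~\ref{lem3.4.9} gives $\|e^{t\mathcal L_{\mathcal H}}\mathcal Q_{\mathcal H}\|_{\mathcal B(H^k_\alpha(\bbR^d))}\le Ke^{-\nu t}$. Feeding in $\|F_1(v,q,w)\|_{H^k_\alpha(\bbR^d)}$ bounded by Proposition~\ref{prop3.6.17}(b), a term of the form $\|v\|_{H^k(\bbR^d)}\|v\|_{H^k_\alpha(\bbR^d)}+\|q\|_{H^k(\bbR^{d-1})}\|v\|_{H^k_\alpha(\bbR^d)}+\|w\|_{H^k(\bbR^{d-1})}^2$; under the induction hypothesis the first two factors are $O(\delta)$ and $\|v\|_{H^k_\alpha}$ decays like $(1+s)^{-(d+1)/2}$, while $\|w\|^2$ decays like $(1+s)^{-(d+1)/2}$. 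Then $\int_0^t e^{-\nu(t-s)}(1+s)^{-(d+1)/2}\,ds\le C(1+t)^{-(d+1)/2}$ by Lemma~\ref{lem3.4.12}(3), so $\|v(t)\|_{H^k_\alpha}\le K e^{-\nu t}E_k + C\delta\,M(t)^2(1+t)^{-(d+1)/2}+\cdots$, which closes provided $\delta$ is small.

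\emph{Second}, the $q$- and $w$-equations are governed by the heat semigroup $S_{\Delta_y}(t)$ on $\bbR^{d-1}$, so I use Lemma~\ref{lem3.4.4}. For $q$: $\|S_{\Delta_y}(t)q^0\|_{H^k}\le C(1+t)^{-(d-1)/4}\|q^0\|_{L^1}+Ce^{-\beta t}\|q^0\|_{H^k}\le C(1+t)^{-(d-1)/4}E_k$ by (b), and the Duhamel term is controlled by $\|F_2\|_{L^1(\bbR^{d-1})}$ and $\|F_2\|_{H^k(\bbR^{d-1})}$ from Proposition~\ref{prop3.6.17}(c),(d) — both quadratic, hence $O(\delta M(t))$ times a factor decaying like $(1+s)^{-(d+1)/2}$ (since each product contains $\|v\|_{H^k_\alpha}$ or $\|w\|^2$, which decay at rate $(d+1)/2$, a rate $\ge (d-1)/4+1$ for the relevant $d$). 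Splitting $\int_0^t=\int_0^{t/2}+\int_{t/2}^t$ and applying Lemma~\ref{lem3.4.12}(1)–(2) with $a=(d-1)/4$, $b=(d-1)/4$ (or $0$ on the $L^\infty\to L^\infty$ piece via (a)), $c=(d+1)/2$ yields the $(1+t)^{-(d-1)/4}$ bound. For $w$ one uses parts (c),(d) of Lemma~\ref{lem3.4.4} instead, with the extra $\nabla_y$ producing the gain to rate $(d+1)/4$; here the singular factor $(t-s)^{-1/2}$ near $s=t$ is integrable and, combined with $(1+t-s)^{-(d+1)/4}$ against the $L^1$-norm of $F_2$, gives the stated decay, again splitting the time integral and invoking Lemma~\ref{lem3.4.12}.

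\emph{Main obstacle.} The delicate point is the bookkeeping of decay rates so that every Duhamel convolution actually returns the \emph{same} rate it was assumed to have — in particular checking that the nonlinear source terms always carry at least one factor decaying faster than the target rate (this is why the structural split $v=(v_1,v_2)$ and the $H^k_\alpha$-control in Proposition~\ref{prop3.6.17} matter: the quadratic terms never involve $\|v_1\|_{H^k(\bbR^d)}$ alone, only $\|v\|_{H^k_\alpha}$, $\|v_2\|_{H^k}$, or $\|w\|^2$), and that the exponents fed to Lemma~\ref{lem3.4.12} satisfy its hypotheses for all admissible $d\ge2$ and $k\ge[\tfrac{d+1}{2}]$. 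Once the three estimates are shown to be self-consistent, choosing $\delta_1$ small enough that the $O(\delta)$ prefactors are $\le\tfrac12$ closes the bootstrap and produces $C_1$ and $\delta_1$ as claimed.
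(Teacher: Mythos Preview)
Your proposal is correct and follows essentially the same route as the paper: define the weighted sup-quantity $M(t)$, feed the mild formulation \eqref{eq3.4.4} through the semigroup bounds of Lemmas~\ref{lem3.4.9} and~\ref{lem3.4.4}, use Proposition~\ref{prop3.6.17}(b),(c),(d) for the nonlinearities, reduce the Duhamel convolutions with Lemma~\ref{lem3.4.12}, and close with a quadratic inequality $M(t)\le CE_k+C\delta M(t)+CM(t)^2$ once $\delta$ is small. One small remark: your aside about the $(v_1,v_2)$ split and $\|v_2\|_{H^k}$ is not needed here --- that structure enters only in the next step (Proposition~\ref{prop3.5.2}); for the present proposition the unweighted factor $\|v\|_{H^k(\bbR^d)}$ is simply bounded by $\delta$ via the a~priori control from Definition~\ref{DFNtmax}, which is exactly how the paper proceeds.
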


\begin{proof}
	In Corollary~\ref{tleq1} we have discussed  the solution of \eqref{sys2} in a  time period  $[0,\min\{1,T(\delta,\gamma)\})$, therefore, without loss of generality, we may assume that $T(\delta,\gamma)>1$.
	
	We recall that $\ran\mathcal{Q}_{\mathcal{H}}=\ran\mathcal{L}_{\alpha}\cap \mathcal{H}^n$, thus for $v\in\ran\mathcal{Q}_{\mathcal{H}}$ we can replace $\mathcal{L}_{\mathcal{H}}$ by $\mathcal{L}_{\alpha}$ in  \eqref{eq3.4.4}.
	Applying the semigroup estimates  from Lemmae~\ref{lem3.4.4} and  \ref{lem3.4.9} to equations \eqref{eq3.4.4} yields
	\begin{align}\label{decay1}
	&\|v(t)\|_{H^k_{\alpha}(\bbR^d)}\leq C\left(e^{-\nu t}\|v^0\|_{H^k_{\alpha}(\bbR^d)}+\int_0^t e^{-\nu (t-s)}\|F_1(s)
\|_{H^k_{\alpha}(\bbR^d)}ds\right),\\\no
	&\|q(t)\|_{H^k(\mathbb{R}^{d-1})}\leq C\left((1+t)^{-\frac{d-1}{4}}\|q^0\|_{L^1(\mathbb{R}^{d-1})}+e^{-\beta t}\|q^0\|_{H^k(\mathbb{R}^{d-1})}\right. \\\no
	&\qquad \left.+\int_0^te^{-\beta(t-s)}\|F_2(s)
	\|_{H^k(\mathbb{R}^{d-1})}ds
	+\int_0^t(1+t-s)^{-\frac{d-1}{4}}\|F_2(s)
	\|_{L^1(\mathbb{R}^{d-1})}ds\right),\\\no
	&\|w(t)\|_{H^k(\mathbb{R}^{d-1})}\leq C\left((1+t)^{-\frac{d+1}{4}}\|q^0\|_{L^1(\mathbb{R}^{d-1})}+t^{-1/2}e^{-\beta t}\|q^0\|_{H^k(\mathbb{R}^{d-1})}\right.
	\\\no
	&\qquad 
	+\int_0^t(t-s)^{-\frac{1}{2}}e^{-\beta(t-s)}\|F_2
	(s)
	\|_{H^k(\mathbb{R}^{d-1})}ds
	\left.
	+\int_0^t(1+t-s)^{-\frac{d+1}{4}}\|F_2
	(s)
	\|_{L^1(\mathbb{R}^{d-1})}ds\right).
	\end{align}
	where $F_{1,2}(s)=F_{1,2}(v(s),q(s),w(s))$.
	For $t>1$, there exist  $C$ such that $e^{-\nu t}\|v^0\|_{H^k_{\alpha}(\bbR^d)}\leq Ce^{-\nu t}E_k$ and
	\begin{eqnarray*}
	&&(1+t)^{-\frac{d-1}{4}}\|q^0\|_{L^1(\mathbb{R}^{d-1})}+e^{-\beta t}\|q^0\|_{H^k(\mathbb{R}^{d-1})}\leq  C(1+t)^{-\frac{d-1}{4}}E_k,\\
	&&(1+t)^{-\frac{d+1}{4}}\|q^0\|_{L^1(\mathbb{R}^{d-1})}+t^{-1/2}e^{-\beta t}\|q^0\|_{H^k(\mathbb{R}^{d-1})} \leq C(1+t)^{-\frac{d+1}{4}}E_k.
	\end{eqnarray*}
	For  any $\delta'$ and $\gamma$ such that $0< \gamma<\delta'$, if $E_k<\gamma$ then, by Lemma \ref{lem3.4.7},
	\begin{equation*}
	\|v(s)\|_{\mathcal{H}}+\|q(s)\|_{H^k(\mathbb{R}^{d-1})}+\|w(s)\|_{H^k(\mathbb{R}^{d-1})}<\delta'\, \text{ for all }\, s\in [0, T(\delta',\gamma) ).
	\end{equation*}
	Within this bounded set, Proposition~\ref{prop3.6.17} (b) and (c) states that  
	\begin{align*}
	&\|F_1(v(s),q(s),w(s))\|_{H^k_{\alpha}(\bbR^d)}
	\leq   C_{\delta'}(\|v(s)\|_{H^k(\mathbb{R}^d)}\|v(s)\|_{H^k_{\alpha}(\bbR^d)}\\&{\qquad\qquad\qquad\qquad\qquad\qquad\qquad\qquad\qquad}+\|q(s)\|_{H^k(\mathbb{R}^{d-1})}\|v(s)\|_{H^k_{\alpha}(\bbR^d)}+\|w(s)\|_{H^k(\mathbb{R}^{d-1})}^2),\\
	&\|F_2(v(s),q(s),w(s))\|_{H^k(\mathbb{R}^{d-1})}
	\leq  C_{\delta'}(\|v(s)\|_{H^k(\mathbb{R}^d)}\|v(s)\|_{H^k_{\alpha}(\bbR^d)}\\&\qquad \qquad\qquad \qquad\qquad\qquad\qquad\qquad\qquad+\|q(s)\|_{H^k(\mathbb{R}^{d-1})}\|v(s)\|_{H^k_{\alpha}(\bbR^d)}+\|w(s)\|_{H^k(\mathbb{R}^{d-1})}^2).
	\end{align*}
	Using Proposition~\ref{prop3.6.17} (d) the inequalities \eqref{decay1} can be rewritten as follows:
	\begin{align*}
	&\|v(t)\|_{H^k_{\alpha}(\bbR^d)}\leq Ce^{-\nu t}E_k\\&
	+CC_{\delta'}\int_0^t e^{-\nu (t-s)}
	(\|v(s)\|_{H^k(\mathbb{R}^d)}\|v(s)\|_{H^k_{\alpha}(\bbR^d)}+\|q(s)\|_{H^k(\mathbb{R}^{d-1})}\|v(s)\|_{H^k_{\alpha}(\bbR^d)}+\|w(s)\|_{H^k(\mathbb{R}^{d-1})}^2)
	ds,\\
	&\|q(t)\|_{H^k(\mathbb{R}^{d-1})}\leq C(1+t)^{-\frac{d-1}{4}}E_k
	\\&
	+CC_{\delta'}\int_0^te^{-\beta(t-s)}
	(\|v(s)\|_{H^k(\mathbb{R}^d)}\|v(s)\|_{H^k_{\alpha}(\bbR^d)}+\|q(s)\|_{H^k(\mathbb{R}^{d-1})}\|v(s)\|_{H^k_{\alpha}(\bbR^d)}+\|w(s)\|_{H^k(\mathbb{R}^{d-1})}^2)
ds
	\\&
	 +CC_{\delta'}\int_0^t(1+t-s)^{-\frac{d-1}{4}}
	 ((\|v(s)\|_{H^k(\mathbb{R}^d)}+\|q(s)\|_{H^k(\mathbb{R}^{d-1})}) \|v(s)\|_{H^k_{\alpha}(\bbR^d)}+\|w(s)\|_{H^k(\mathbb{R}^{d-1})}^2)
	 ds
	 	\end{align*}
	 and 
	 \begin{align*}
	&\|w(t)\|_{H^k(\mathbb{R}^{d-1})}\leq C(1+t)^{-\frac{d+1}{4}}E_k\\&
	 +CC_{\delta'}\int_0^t(t-s)^{-\frac{1}{2}}e^{-\beta(t-s)}
	((\|v(s)\|_{H^k(\mathbb{R}^d)}+\|q(s)\|_{H^k(\mathbb{R}^{d-1})})\|v(s)\|_{H^k_{\alpha}(\bbR^d)}+\|w(s)\|_{H^k(\mathbb{R}^{d-1})}^2)
ds\\&	 +CC_{\delta'}\int_0^t(1+t-s)^{-\frac{d+1}{4}}
	((\|v(s)\|_{H^k(\mathbb{R}^d)}+\|q(s)\|_{H^k(\mathbb{R}^{d-1})})\|v(s)\|_{H^k_{\alpha}(\bbR^d)}+\|w(s)\|_{H^k(\mathbb{R}^{d-1})}^2)
ds.
	\end{align*}
	We denote 
	\begin{align*}
	&M_v(t)=\sup_{0<s\leq t}(1+s)^{\frac{d+1}{2}}\|v(s)\|_{H^k_{\alpha}(\bbR^d)},\\
	&M_q(t)=\sup_{0<s\leq t}(1+s)^{\frac{d-1}{4}}\|q(s)\|_{H^k(\mathbb{R}^{d-1})},\\
	&M_w(t)=\sup_{0<s\leq t}(1+s)^{\frac{d+1}{4}}\|w(s)\|_{H^k(\mathbb{R}^{d-1})},
	\end{align*}
and note that for each $\delta<\delta'$, and $0<\gamma<\delta$, if $E_k<\gamma$, by Lemma \ref{lem3.4.7}, for all  $ s\in(0, T(\delta,\gamma))$, we have
$\|v(s)\|_{H^k(\mathbb{R}^d)}\leq \|v(s)\|_{\mathcal{H}}<\delta$,
	therefore
	\begin{align*}
	&\|v(t)\|_{H^k_{\alpha}(\bbR^d)}\leq Ce^{-\nu t}E_k+CC_{\delta'}\left( \delta M_v(t)\int_0^t e^{-\nu (t-s)}(1+s)^{-\frac{d+1}{2}}ds\right.
	\\& \quad
	+\left.M_v(t)M_q(t)\int_0^t e^{-\nu (t-s)}(1+s)^{-\frac{3d+1}{4}}ds
	+M_w^2(t)\int_0^t e^{-\nu (t-s)}(1+s)^{-\frac{d+1}{2}}ds\right),\\
	&\|q(t)\|_{H^k(\mathbb{R}^{d-1})}\leq C(1+t)^{-\frac{d-1}{4}}E_k+CC_{\delta'}\left( \delta M_v(t)\int_0^t e^{-\beta(t-s)}(1+s)^{-\frac{d+1}{2}}ds\right.\\
	&  \quad + M_v(t)M_q(t)\int_0^t e^{-\beta (t-s)}(1+s)^{-\frac{3d+1}{4}}ds+ M_w^2(t)\int_0^t e^{-\beta (t-s)}(1+s)^{-\frac{d+1}{2}}ds
	\\	&   \quad
	 +\delta M_v(t)\int_0^t (1+t-s)^{-\frac{d-1}{4}}(1+s)^{-\frac{d+1}{2}}ds
	+M_v(t)M_q(t)\int_0^t (1+t-s)^{-\frac{d-1}{4}}(1+s)^{-\frac{3d+1}{4}}ds\\
	&  \quad  \left.+ M_w^2(t)\int_0^t (1+t-s)^{-\frac{d-1}{4}}(1+s)^{-\frac{d+1}{2}}ds \right),\\
	&\|w(t)\|_{H^k(\mathbb{R}^{d-1})}\leq  C(1+t)^{-\frac{d+1}{4}}E_k
+CC_{\delta'}\left(\delta M_v(t)\int_0^t (t-s)^{-\frac{1}{2}}e^{-\beta(t-s)}(1+s)^{-\frac{d+1}{2}}ds\right.\\
	& \quad +M_v(t) M_q(t)\int_0^t (t-s)^{-\frac{1}{2}}e^{-\beta (t-s)}(1+s)^{-\frac{3d+1}{4}}ds
	 + M_w^2(t)\int_0^t (t-s)^{-\frac{1}{2}}e^{-\beta (t-s)}(1+s)^{-\frac{d+1}{2}}ds\\
	& \quad +\delta M_v(t)\int_0^t (1+t-s)^{-\frac{d+1}{4}}(1+s)^{-\frac{d+1}{2}}ds
	+M_v(t)M_q(t)\int_0^t (1+t-s)^{-\frac{d+1}{4}}(1+s)^{-\frac{3d+1}{4}}ds\\
	& \quad\left. +M_w^2(t)\int_0^t (1+t-s)^{-\frac{d+1}{4}}(1+s)^{-\frac{d+1}{2}}ds\right).
	\end{align*}
	By Lemma \ref{lem3.4.12} then
	\begin{align*}
	\|v(t)&\|_{H^k_{\alpha}(\bbR^d)}\leq  Ce^{-\nu t}E_k\\& \qquad\qquad +CC_{\delta'}\left(\delta M_v(t)(1+t)^{-\frac{d+1}{2}}
	+M_v(t) M_q(t)(1+t)^{-\frac{3d+1}{4}}+M_w^2(t)(1+t)^{-\frac{d+1}{2}}\right),\\
	\|q(t)&\|_{H^k(\mathbb{R}^{d-1})}\leq  C(1+t)^{-\frac{d-1}{4}}E_k+CC_{\delta'} \left( \delta M_v(t)
	+M_v(t) M_q(t)+M_w^2(t)\right)(1+t)^{-\frac{d-1}{4}},\\
	\|w(t)&\|_{H^k(\mathbb{R}^{d-1})} \leq  C(1+t)^{-\frac{d+1}{4}}E_k+CC_{\delta'}\left(
	\delta M_v(t)
	+
	M_v(t) M_q(t)+
	M_w^2(t)\right)(1+t)^{-\frac{d+1}{4}}.
	\end{align*}
	One then has for some $C>0$,
	\begin{align*}
	(1+t)^{\frac{d+1}{2}}	\|v(t)\|_{H^k_{\alpha}(\bbR^d)}&\leq  C\left((1+t)^{\frac{d+1}{2}}e^{-\nu t}E_k+\delta M_v(t)+M_v(t) M_q(t)(1+t)^{-\frac{d-1}{4}}+M_w^2(t)\right),\\
	(1+t)^{\frac{d-1}{4}}\|q(t)\|_{H^k(\mathbb{R}^{d-1})}&\leq  C\left(E_k+\delta M_v(t)+M_v(t) M_q(t)+M_w^2(t)\right),\\
	(1+t)^{\frac{d+1}{4}}\|w(t)\|_{H^k(\mathbb{R}^{d-1})} &\leq  C\left(E_k+ \delta M_v(t)+ M_v(t) M_q(t)+ M_w^2(t)\right).
	\end{align*}
	Since $M_v(t)$, $M_q(t)$, $M_w(t)$ are increasing functions, it can be concluded that for $t\in[1,T(\gamma,\delta))$,
	\begin{align}
	&M_v(t)\leq CE_k+C(\delta M_v(t)+M_v(t)M_q(t)+M_w^2(t)),\nonumber\\
	&M_q(t)\leq CE_k+C(\delta M_v(t)+M_v(t)M_q(t)+M_w^2(t)),\label{eq3.4.17}\\
	&M_w(t)\leq CE_k+C(\delta M_v(t)+M_v(t)M_q(t)+M_w^2(t)).\nonumber
	\end{align}
	If we set  $M(t)=M_v(t)+M_q(t)+M_w(t)$, then by \eqref{eq3.4.17},  for all $t\in[1,T(\gamma,\delta))$ and for some $C>0$ that depends on $\delta'$, 
	\begin{equation*}
	M(t)\leq CE_k+C\delta M(t)+CM^2(t),
	\end{equation*}
	Note that by Corollary \ref{tleq1} $M(t)\leq C(\delta')E_k$ for $0\leq t\leq 1$ and some constant $C(\delta')>0$.  
	Choose $\delta_1\leq \min\{ 1/2C,\delta' \}$ and $0<\gamma<\delta<\delta_1$, then absorbing the term $\frac{1}{2}M(t)$ into the left-hand side, we have 
	$
	M(t)\leq 2CE_k+2CM^2(t)$  for all $ t\in [0, T(\delta,\gamma))$.
	Since this inequality holds for all $t\in[0,T(\delta,\gamma))$, by continuity of $M(\cdot)$, the expression $M(t)$ can not \textquotedblleft jump'' over the first root of the respective quadratic equation. This root, in turn, can be controlled by $K_1E_k$ as long as $E_k$ is  sufficiently small. Indeed, let $M_1= {1-\sqrt{1-16C^2E_k}/{4C}}$  be the first root of the equation $2CM^2-M+2CE_k=0$. If $E_k < 1/16C^2$ then 
	\begin{align*}
	M_1=\dfrac{16C^2E_k}{4C(1+\sqrt{1-16C^2E_k})} <4CE_k.
	\end{align*}
	Since $M(t)$ is continuous in $t$ and $M(0)=E_k$, it follows that if $\delta_1\leq \min\{ \delta', 1/2C, 1/16C^2 \},$ then for all $\delta\in(0,\delta_1)$ and $0<E_k\leq \gamma<\delta$ (see Lemma \ref{lem3.4.7}) we have $M(t)\leq M_1\leq K_1 E_k$ for some  $C_1>0$ and all $t\in[0, T(\delta,\gamma))$.
\end{proof}

\subsection{The boundedness of solutions in ${H^k(\mathbb{R}^{d})}$-norm}
In this subsection, we show that  the ${H^k(\mathbb{R}^{d})}$-norm of the solution $v(t)$ remains bounded for  all $t$. Together with the decay of the weighted norm for large $t$ this implies smallness of ${\mathcal{H}}$-norm of the solution when the initial conditions are small, which is  the key step in the  bootstrap argument used in  Theorem \ref{main} proved below.


We point out that the algebraic  decay of $\|v(t,\cdot,\cdot)\|_{H^k_{\alpha}(\bbR^d)}$ implies  convergence  of $v$  to $0$  for $z$ close to $\infty$, but it  does not provide any information about the  properties of the solution  at $z=-\infty$. Indeed,    since   the weight function $\gamma_{\alpha}(z)$  with $\alpha _+ \geq 0$ is either $1$ or  grows exponentially   as $z \to \infty$, the algebraic decay of the solution in the weighted norm may be achieved only if the solution decays to 0 for large positive $z$ faster then the weight  grows. On the other hand, 
since  $\gamma_{\alpha}(z)$ exponentially converges to $0$ as $z\rightarrow-\infty$,  it  is  possible that $v$ grows at $-\infty$  but that growth is compensated by the decay of the weight. 
  It is  the \textquotedblleft product-triangular'' structure of the nonlinearity that allows us to show the boundedness of the perturbations in the norm without a weight.

The following is  the analogue of Proposition \ref{pro3.4.13} for $H^k(\bbR^d)$-norm.
\begin{proposition}\label{prop3.5.2}
	Assume   Hypotheses \ref{hypo3.2.1},~\ref{hypo3.2.2},~and~\ref{hypo3.2.7} and let  $k\geq[\frac{d+1}{2}]$. Choose  $\rho>0$ to satisfy 
	$$\sup\{ \Re \lambda:\lambda\in \sgma(\cL^{(2)}_1)\}<-\rho,$$ and $\delta_1$ as indicated in Proposition \ref{pro3.4.13}. There exist $\delta_2\in(0,\delta_1)$ and $C_2>0$ such that for every $\delta\in(0,\delta_2)$ and every $\gamma$ with $0<\gamma<\delta$, the following is true: if $E_k\leq \gamma$, then the solution to \eqref{sys2} for $t\in[0,T(\delta,\gamma))$ satisfies the estimates
	\begin{align}\label{eq3.5.1}
	&	\|v_1(t)\|_{H^k(\bbR^d)}\leq C_2E_k;\\
	\label{eq3.5.2}
	&	\|v_2(t)\|_{H^k(\bbR^d)} \leq C_2(1+t)^{-\frac{d+1}{2}}E_k.
	\end{align}
\end{proposition}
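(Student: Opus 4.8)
The plan is to mimic the structure of the weighted decay estimate (Proposition~\ref{pro3.4.13}) but now work in the unweighted norm, exploiting the product--triangular structure recorded in \eqref{eq3.13}--\eqref{lnegl12}. First I would write the solution $v=(v_1,v_2)$ of the first equation in \eqref{sys2} using the Duhamel formula with the semigroup $S_{\cL_{{\mathcal{H}}}}(t)$, but then replace the estimate of $e^{t\cL}$ on $H^k(\bbR^d)$ by the triangular representation \eqref{lneg} of $e^{t\cL^-}$ together with the perturbation formula \eqref{Ldecomp}. Concretely, since by \eqref{bddfi} the operator $(df(\phi)-df(0))\otimes I$ maps $H^k_\alpha(\bbR^d)$ into $H^k(\bbR^d)$, and by Proposition~\ref{pro3.4.13} the weighted norm $\|v(s)\|_{H^k_\alpha(\bbR^d)}$ already decays like $(1+s)^{-(d+1)/2}$, I would treat the term $(df(\phi)-df(0))v$ as a known, algebraically small forcing term rather than as part of the linear flow. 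This reduces the problem to estimating, component-wise, the mild solution driven by $e^{t\cL^{(i)}}$, $i=1,2$, as in Lemma~\ref{lem3.7.19}, where $e^{t\cL^{(1)}}$ is merely bounded while $e^{t\cL^{(2)}}$ decays exponentially with rate $\rho$.

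The second step is the component bookkeeping. For $v_2$: the equation it satisfies has the form $\partial_t v_2 = \cL^{(2)} v_2 + (\text{lower-triangular coupling and nonlinear terms})$, and every nonlinear term contributing to the $v_2$ equation carries a factor of $v_2$ or a factor that is already weighted (by the structure of $f_2$ in \eqref{eq3.13}, since $f_2=\tilde f_2 u_2$). Using Lemma~\ref{lem3.5.1}(3) — which gives $\|N(\phi_q,v)v\|_{H^k(\bbR^d)}\le C_K\|v\|_{H^k(\bbR^d)}(\|v\|_{H^k_\alpha(\bbR^d)}+\|v_2\|_{H^k(\bbR^d)})$ — together with Lemma~\ref{lem3.5.1}(1)--(2) and the decomposition of $F_1$ in \eqref{eq3.2.25}, plus the exponential decay $\|e^{t\cL^{(2)}}\|\le Ke^{-\rho t}$, I would set up a Gronwall-type inequality for $M_{v_2}(t)=\sup_{0<s\le t}(1+s)^{(d+1)/2}\|v_2(s)\|_{H^k(\bbR^d)}$. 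The forcing on the right-hand side is a combination of $E_k e^{-\rho t}$-type initial contributions, terms $\delta M_{v_2}(t)$, and products $M_v(t)M_{v_2}(t)$, $M_w^2(t)$, etc., all controlled by Proposition~\ref{pro3.4.13} and Lemma~\ref{lem3.4.12}(3); absorbing the small linear term into the left yields \eqref{eq3.5.2}. For $v_1$: its equation is $\partial_t v_1 = \cL^{(1)} v_1 + d_{u_2}f_1(0)\,[\text{something in }v_2] + (\text{nonlinearities})$, where the coupling term $d_{u_2}f_1(0)$ acts on a quantity of order $v_2$, and the nonlinear contribution to the $v_1$ equation — after using \eqref{eq3.13} — always contains either a $v_2$ factor or a weighted factor. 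Since $\|e^{t\cL^{(1)}}\|\le K$ is only bounded, the Duhamel integral $\int_0^t K\cdot(\text{integrand})\,ds$ must have an integrable, in fact summable, integrand: here is precisely where the already-established algebraic decay of $\|v_2(s)\|_{H^k(\bbR^d)}$, $\|v(s)\|_{H^k_\alpha(\bbR^d)}$, $\|q(s)\|$, $\|w(s)\|$ enters. One checks using Lemma~\ref{lem3.4.12} and the exponents $(d+1)/2>1$, $(d-1)/4$, $(d+1)/4$ (valid for $d\ge2$, so $(d-1)/2\ge 1/2$ and the relevant products exceed $1$) that $\int_0^\infty(\text{integrand})\,ds<\infty$, giving $\|v_1(t)\|_{H^k(\bbR^d)}\le C_2 E_k$ uniformly in $t$.

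The main obstacle I anticipate is making the integrability bookkeeping for $v_1$ work cleanly: because $e^{t\cL^{(1)}}$ provides \emph{no} decay, the entire boundedness of $v_1$ rests on the forcing terms decaying fast enough in $s$ to be integrable on $[0,\infty)$, and several of those terms are \emph{bilinear} (e.g. $\|v\|_{H^k(\bbR^d)}\|v_2\|_{H^k(\bbR^d)}$, $\|v\|_{H^k(\bbR^d)}\|v\|_{H^k_\alpha(\bbR^d)}$, $\|q\|\|v\|_{H^k_\alpha}$, $\|w\|^2$) rather than being individually summable. One factor in each product decays algebraically with a good exponent from Proposition~\ref{pro3.4.13}, and the other factor is merely bounded by $\delta$ or by $M(t)$; so the argument must be phrased as a \emph{closed system of inequalities} for $M_{v_1}(t)$ together with $M_v,M_q,M_w,M_{v_2}$, choosing $\delta_2$ small enough that the small-coefficient terms can be absorbed and the quadratic terms handled by the same first-root-of-a-quadratic continuation argument used at the end of Proposition~\ref{pro3.4.13}. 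The delicate points are (i) confirming that \emph{every} nonlinear term feeding the $v_1$ equation genuinely carries either a $v_2$ factor or a $\gamma_\alpha$-weighted factor — this is exactly what Hypothesis~\ref{hypo3.2.5} and the representation \eqref{eq3.13} guarantee, and what Lemma~\ref{lem3.5.1} is designed to exploit — and (ii) verifying the exponent arithmetic in Lemma~\ref{lem3.4.12} so that all the resulting time integrals are finite for every $d\ge 2$. Once those are in place, choosing $\delta_2\in(0,\delta_1)$ small and invoking the continuity/continuation argument closes the estimates and yields \eqref{eq3.5.1}--\eqref{eq3.5.2}.
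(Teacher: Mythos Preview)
Your proposal is correct and follows essentially the same route as the paper: split $v=(v_1,v_2)$, rewrite the first equation of \eqref{sys2} using $L^{(1)},L^{(2)}$ and treat $(df(\phi)-df(0))v$ as a forcing term controlled via Lemma~\ref{lem3.5.1} and the weighted decay from Proposition~\ref{pro3.4.13}, then close the $v_2$ estimate by absorbing $\delta M_{v_2}$ and finally bound $v_1$ by direct integration. The only simplification relative to what you anticipate is that no closed system or quadratic continuation argument is needed: the proof is strictly sequential, since once Proposition~\ref{pro3.4.13} is in hand the $v_2$ estimate requires only the linear absorption $M_{v_2}\leq KE_k+K\delta M_{v_2}$, and then the $v_1$ bound follows by straight integration because $\|v_1\|_{H^k(\bbR^d)}$ never appears on the right-hand side of its own Duhamel inequality.
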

\begin{proof}
	Using \eqref{eq3.2.13}, we write the first equation in \eqref{sys2} as follows,
	\begin{align}
	&\partial_tv_1=L^{(1)}v_1+d_{u_2}f(0,0)v_2+H_1(q,w,v_1,v_2),\label{eq3.5.4}\\
	&\partial_tv_2=L^{(2)}v_2+H_2(q,w,v_1,v_2),\label{eq3.5.5}
	\end{align}
	where
	 $$\begin{pmatrix}H_1(v_1,v_2,q,w)\\H_2(v_1,v_2,q,w)\end{pmatrix}=F_1(v,q,w)+(df(\phi)-df(0))v.$$ 
	Since $(v_1,v_2,q,w)(t)$ is a fixed solution of \eqref{sys2} in $\mathcal{H}^n\times H^k(\mathbb{R}^{d-1})\times H^k(\mathbb{R}^{d-1})$, we may regard \eqref{eq3.5.4}-\eqref{eq3.5.5} as a nonautonomous linear system on $ H^k(\mathbb{R}^d)^n$. The mild solutions of \eqref{eq3.5.4} and \eqref{eq3.5.5} satisfy the  system of integral equations
	\begin{align}
	&v_1(t)=e^{t\cL^{(1)}}v_1^0+\int _0^te^{(t-s)\cL^{(1)}}\big(d_{u_2}f(0,0)v_2(s)+H_1(v(s),q(s),w(s))\big)\ ds,\label{intv1}\\
	&v_2(t)=e^{t\cL^{(2)}}v_2^0+\int _0^te^{(t-s)\cL^{(2)}}H_2(v(s),q(s),w(s))\ ds.\label{intv2}
	\end{align}
	As in the proof of Proposition~\ref{pro3.4.13}, we may assume that $t\in[1,T(\delta,\gamma))$.
	
From  Lemma~\ref{lem3.7.19} we know that $\|e^{t\mathcal{L}^{(2)}}\|_{\mathcal{B}( H^k(\mathbb{R}^d))}\leq Ke^{-\rho t}$. By the definition of $T(\delta,\gamma)$, for $0<\delta<\delta_1$, if $0<\gamma<\delta$ and $E_k<\gamma$, then for all $s\in[1,T(\gamma,\delta))$
	$$
	\|v(s)\|_{\mathcal{H}}+\|q(s)\|_{H^k(\mathbb{R}^{d-1})}+\|w(s)\|_{H^k(\mathbb{R}^{d-1})}<\delta<\delta_1.$$
	It follows from Lemmas~\ref{lem3.5.1} and part (b') of  Proposition~\ref{prop3.6.17} that there exists a constant $C_{\delta_1}>0$ such that
	\begin{align}\label{h1h2}
	&\| H_i(v_1(s),v_2(s),q(s),w(s)) \|_{H^k(\bbR^d)}\leq C_{\delta_1}(\|v(s)\|_{H^k_{\alpha}(\bbR^d)}+\|v(s)\|_{H^k(\bbR^d)}\|v_2(s)\|_{H^k(\bbR^d)}\\\no
	&\qquad\qquad\qquad +\|v(s)\|_{H^k(\bbR^d)}\|v(s)\|_{H^k_{\alpha}(\bbR^d)}+\|q(s)\|_{H^k(\mathbb{R}^{d-1})}\|v(s)\|_{H^k_{\alpha}(\bbR^d)}+\|w(s)\|_{H^k(\mathbb{R}^{d-1})}^2)
	\end{align}
	for $i=1,2$.
	Thus by Proposition~\ref{pro3.4.13}, Lemma~\ref{lem3.4.12} and \eqref{h1h2}, and also because $\|v(t)\|_{H^k(\bbR^d)}<\delta$, formula \eqref{intv2} yields,  for some $K>0$ and all $t\in[1,T(\gamma,\delta))$,
	\begin{align*}
	\|v_2(t)\|_{H^k(\bbR^d)}&
	\leq Ke^{-\rho t}E_k+K\int_{0}^t e^{-\rho (t-s)}C_{\delta_1}(\|v(s)\|_{H^k_{\alpha}(\bbR^d)}+\|v(s)\|_{H^k(\bbR^d)}\|v_2(s)\|_{H^k(\bbR^d)}\notag \\&\qquad +\|v(s)\|_{H^k(\bbR^d)} \|v(s)\|_{H^k_{\alpha}(\bbR^d)}+\|q(s)\|_{H^k(\mathbb{R}^{d-1})}\|v(s)\|_{H^k_{\alpha}(\bbR^d)}+\|w(s)\|_{H^k(\mathbb{R}^{d-1})}^2)ds
	\end{align*}
	 We define
$
	M_{v_2}(t)=\sup_{0<s\leq t}(1+s)^{\frac{d+1}{2}}\|v_2(s)\|_{H^k(\bbR^d)} $ and use
 Lemma~\ref{lem3.4.12} to obtain
	\begin{align*}
	\|v_2(t)\|_{H^k(\bbR^d)}&\leq  K(1+t)^{-\frac{d+1}{2}}E_k+K\delta M_{v_2}(t)\int_0^t e^{-\rho (t-s)}(1+s)^{-\frac{d+1}{2}}ds
	\\ &
	\leq  K(
	E_k+\delta
	M_{v_2}(t))(1+t)^{-\frac{d+1}{2}},
	\end{align*}
	and thus,
	$
	(1+t)^{\frac{d+1}{2}}\|v_2(t)\|_{H^k(\bbR^d)}\leq K E_k+K\delta  M_{v_2}(t).
	$
	Because $M_{v_2}(t)$ is increasing, we  conclude that
	\begin{equation*}
	M_{v_2}(t)\leq KE_k+K\delta M_{v_2}(t).
	\end{equation*}
	Choosing $\delta_2<\min\{ \delta_1, 1/{2K} \}$, we obtain that if $0<\delta<\delta_2$ and $0<\gamma< \delta$ then
	\begin{equation*}
	\| v_2(t) \|_{H^k(\bbR^d)}\leq \tilde C_2(1+t)^{-\frac{d+1}{2}}E_k,
	\end{equation*}
	for some $\tilde C_2>0$ on the time interval $t\in[1,T(\gamma,\delta))$, thus finishes the proof of \eqref{eq3.5.2}.
	
	To prove \eqref{eq3.5.1}, we first use Lemma \ref{lem3.7.19} in \eqref{intv1} to infer $\|e^{t\mathcal{L}^{(1)} }\|_{\mathcal{B}( H^k(\mathbb{R}^d))}\leq K.$  We then  find an estimate for the solution to \eqref{intv1}  based on \eqref{h1h2},
	\begin{align*}
	\|v_1(t)\|_{H^k(\bbR^d)} \leq &KE_k+KC_{\delta_1}\int_0^t \big[C \|v_2(s)\|_{H^k(\bbR^d)}+\|v(s)\|_{H^k_{\alpha}(\bbR^d)}+\|v(s)\|_{H^k(\bbR^d)}\|v_2(s)\|_{H^k(\bbR^d)}\no\\
	&+\|v(s)\|_{H^k(\bbR^d)}\|v(s)\|_{H^k_{\alpha}(\bbR^d)}+\|q(s)\|_{H^k(\mathbb{R}^{d-1})}\|v(s)\|_{H^k_{\alpha}(\bbR^d)}+\|w(s)\|_{H^k(\mathbb{R}^{d-1})}^2 \big]ds,
	\end{align*}
	for some $C>0$.
	Since $0<E_k<\gamma<\delta<\delta_2$, for $t\in[1,T(\gamma,\delta))$ we have $\|v(t)\|_{H^k(\bbR^d)}<\delta$ by Lemma \ref{lem3.4.7} and the definition \ref{DFNtmax} of $T(\delta,\gamma)$. Therefore,
	\begin{align*}
	\|v_1(t)\|_{H^k(\bbR^d)}&\leq KE_k+KC_{\delta_1}\int_0^t\big( C\|v_2(s)\|_{H^k(\bbR^d)}+\|v(s)\|_{H^k_{\alpha}(\bbR^d)}+\delta\|v_2(s)\|_{H^k(\bbR^d)}
	\\\no
	&
	\qquad\quad \,\,\,+\delta\|v(s)\|_{H^k_{\alpha}(\bbR^d)}+\|q\|_{H^k(\mathbb{R}^{d-1})}\|v(s)\|_{H^k_{\alpha}(\bbR^d)}+\|w(s)\|^2_{H^k(\mathbb{R}^{d-1})} \big)ds\\\no
	&\leq KE_k+K(C+\delta)\int_0^t\|v_2(s)\|_{H^k(\bbR^d)} ds+K(1+\delta)\int_0^t \|v(s)\|_{H^k_{\alpha}(\bbR^d)}ds\\\no
	&\qquad\quad \,\,\, +K\int_0^t\|q\|_{H^k(\mathbb{R}^{d-1})}\|v(s)\|_{H^k_{\alpha}(\bbR^d)}ds+K\int_0^t\|w(s)\|^2_{H^k(\mathbb{R}^{d-1})}ds.
	\end{align*}
	Finally, we apply Proposition~\ref{pro3.4.13}, equation \eqref{eq3.5.2} and Lemma~\ref{lem3.4.12} to obtain that
	\begin{align*}
	\|v_1(t)\|_{H^k(\bbR^d)}\leq& K\left(E_k+C_{\delta_1}(C+\delta)E_k\int_0^t (1+s)^{-\frac{d+1}{2}}ds
	+C_1(1+\delta)E_k\int_0^t (1+s)^{-\frac{d+1}{2}}ds\right.\\\no
	&\left.+{C_1}^2E_k\int_0^t(1+s)^{-\frac{3d+1}{4}}ds
	+{C_1}^2E_k\int_0^t(1+s)^{-\frac{d+1}{2}}ds\right)
	\leq \tilde{\tilde C}_2E_k.
	\end{align*}
	The proposition then holds with $K_2=\max\{\tilde C_2, \tilde{\tilde C}_2\}$.
\end{proof}

\subsection{Global existence and bounds}

We  now  present the main result of this paper. It relies on a bootstrap argument based on Propositions~\ref{pro3.4.13} and \ref{prop3.5.2}. The constant $\delta_0$ in the next theorem can be taken to be $\delta_0=\delta_2$, where $\delta_2$ is chosen as in Proposition~\ref{prop3.5.2}.
\begin{theorem} \label{main}
	Assume   Hypotheses \ref{hypo3.2.1},  \ref{hypo3.2.2}, \ref{hypo3.2.3}, and \ref{hypo3.2.7} and 
	let   $k\geq [\frac{d+1}{2}]$. There exist positive constants  $\delta_0$ and $C$ such that for each $0<\delta<\delta_0$ there exists $0< \eta<\delta$ such that the following is true. Let $(v^0,q^0, w^0)\in \mathcal{H}^n \times H^k(\bbR^{d-1})\times H^k(\bbR^{d-1})^{d-1}$ be the initial condition satisfying $$E_k=\|v^0\|_{\mathcal{H}}+\|q^0\|_{H^{k}(\mathbb{R}^{d-1})}+\|q^0\|_{W^{1,1}(\mathbb{R}^{d-1})}\leq \eta$$ and let $(v(t),q(t),w(t))\in\mathcal{H}^n\times H^k(\bbR^{d-1})\times H^k(\bbR^{d-1})^{d-1}$ be the solution of the evolution equation \eqref{sys2} with the initial condition $(v^0,q^0,w^0)$. Then for all $t> 0$,
	\begin{itemize}
		\item[(1)] $(v(t),q(t),w(t))$ is defined in $\mathcal{H}^n \times H^k(\bbR^{d-1})\times H^k(\bbR^{d-1})^{d-1}$;
		\item[(2)] $\|v(t)\|_{\mathcal{H}}+\|q(t)\|_{H^k(\mathbb{R}^{d-1})}+\|w(t)\|_{H^k(\mathbb{R}^{d-1})}\leq \delta$;
		\item[(3)] $\|v(t)\|_{H^k_{\alpha}(\bbR^d)}\leq C(1+t)^{-\frac{d+1}{2}}E_k$;
		\item[(4)] $\|q(t)\|_{H^k(\mathbb{R}^{d-1})}\leq C(1+t)^{-\frac{d-1}{4}}E_k$;
		\item[(5)] $\|w(t)\|_{H^k(\mathbb{R}^{d-1})}\leq C(1+t)^{-\frac{d+1}{4}}E_k$;
		\item[(6)] $\|v_1(t)\|_{H^k(\mathbb{R}^{d})}\leq CE_k$;
		\item[(7)] $\|v_2(t)\|_{H^k(\mathbb{R}^{d})} \leq C(1+t)^{-\frac{d+1}{2}}E_k.$
	\end{itemize}
\end{theorem}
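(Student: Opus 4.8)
The plan is to combine the local-in-time existence from Proposition~\ref{prop3.8.21}, the \emph{a priori} decay estimates in the weighted norm (Proposition~\ref{pro3.4.13}) and the boundedness estimates in the unweighted norm (Proposition~\ref{prop3.5.2}) through a continuity/bootstrap argument over the maximal existence interval $[0,t_{\max})$. The point is that each of these propositions holds only on the subinterval $[0,T(\delta,\gamma))$ where the solution stays inside the ball of radius $\delta$ (Lemma~\ref{lem3.4.7}, Definition~\ref{DFNtmax}); so I would prove that if the initial data $E_k$ is chosen small enough relative to $\delta$, then the bound \eqref{eq3.4.7} never becomes an equality, which forces $T(\delta,\gamma)=t_{\max}$, and then a standard blow-up alternative gives $t_{\max}=\infty$.

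Concretely, first I would fix $\delta_0=\delta_2$ as in Proposition~\ref{prop3.5.2} and, given $0<\delta<\delta_0$, set $K=\delta$ and collect the constants $C_1$ (from Proposition~\ref{pro3.4.13}) and $C_2$ (from Proposition~\ref{prop3.5.2}). On $[0,T(\delta,\gamma))$ these propositions give
\begin{equation}\label{mainbootstrap}
\|v(t)\|_{H^k_{\alpha}(\bbR^d)}\le C_1(1+t)^{-\frac{d+1}{2}}E_k,\quad
\|q(t)\|_{H^k(\bbR^{d-1})}\le C_1(1+t)^{-\frac{d-1}{4}}E_k,\quad
\|w(t)\|_{H^k(\bbR^{d-1})}\le C_1(1+t)^{-\frac{d+1}{4}}E_k,
\end{equation}
together with $\|v_1(t)\|_{H^k(\bbR^d)}\le C_2E_k$ and $\|v_2(t)\|_{H^k(\bbR^d)}\le C_2(1+t)^{-\frac{d+1}{2}}E_k$. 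Since $\|v(t)\|_{\mathcal H}=\max\{\|v(t)\|_{H^k(\bbR^d)},\|v(t)\|_{H^k_\alpha(\bbR^d)}\}$ and $\|v\|_{H^k(\bbR^d)}\le\|v_1\|_{H^k(\bbR^d)}+\|v_2\|_{H^k(\bbR^d)}$, adding these bounds yields a single inequality of the form $\|v(t)\|_{\mathcal H}+\|q(t)\|_{H^k(\bbR^{d-1})}+\|w(t)\|_{H^k(\bbR^{d-1})}\le C_\ast E_k$ for all $t\in[0,T(\delta,\gamma))$, with $C_\ast$ depending only on $\delta_0$ (hence uniformly in $\delta,\gamma$). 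Now choose $\eta=\min\{\gamma_0,\delta/(2C_\ast)\}$ where $\gamma_0$ is small enough that the hypotheses $E_k<\gamma<\delta$ of all the cited propositions apply; then $E_k\le\eta$ forces the left-hand side to stay $\le\delta/2<\delta$, so by continuity $T(\delta,\gamma)$ cannot be finite — otherwise at $t=T(\delta,\gamma)$ we would have equality with $\delta$ in \eqref{eq3.4.7}, a contradiction. Hence $T(\delta,\gamma)=t_{\max}$, and since the solution stays bounded in $\mathcal X$ on $[0,t_{\max})$, the blow-up alternative in Proposition~\ref{prop3.8.21} gives $t_{\max}=\infty$. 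Items (1)--(2) follow immediately, and items (3)--(7) are then just \eqref{mainbootstrap} and the $v_1,v_2$ bounds now read on all of $[0,\infty)$, with $C=\max\{C_1,C_2\}$.

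The main obstacle — and really the only subtle point at this stage — is making the bootstrap self-consistent: Propositions~\ref{pro3.4.13} and \ref{prop3.5.2} already \emph{assume} the solution lies in the $\delta$-ball on $[0,T(\delta,\gamma))$, and their output constants $C_1,C_2$ depend on the ambient radius $\delta'$ through $C_{\delta'}$, $C_{\delta_1}$. One must verify that these constants can be taken \emph{uniform} as $\delta\downarrow0$ (which they are, since they may be taken as the constants for the fixed outer ball of radius $\delta_0$), so that the smallness condition $E_k\le\delta/(2C_\ast)$ genuinely closes the loop rather than creating a circular dependence of $\eta$ on $T(\delta,\gamma)$. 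Once that uniformity is noted, the continuity argument is the routine ``the solution cannot escape the ball'' reasoning. Everything else — local existence, the semigroup decay rates, the nonlinear estimates — has been assembled in the earlier sections, so the proof of Theorem~\ref{main} is essentially bookkeeping: verify the constants combine correctly, pick $\eta$, and invoke continuity plus the blow-up alternative.
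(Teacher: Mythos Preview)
Your proposal is correct and follows essentially the same bootstrap strategy as the paper: set $\delta_0=\delta_2$, feed the conclusions of Propositions~\ref{pro3.4.13} and~\ref{prop3.5.2} back into the $\mathcal H$-norm to show the solution stays strictly inside the $\delta$-ball on $[0,T(\delta,\gamma))$, and then use continuity to conclude $T(\delta,\gamma)=\infty$. The paper phrases the last step as a ``restart the flow at time $T$'' argument (the solution at any $T<T(\delta,\eta)$ lands back in the $\gamma$-ball, so Lemma~\ref{lem3.4.7} gives an additional $T(\delta,\gamma)$ of time), whereas you phrase it as ``solution trapped in the $\delta/2$-ball plus blow-up alternative''; these are equivalent, and your observation that $C_1,C_2$ may be taken uniform in $\delta<\delta_0$ (as constants for the fixed outer radius) is exactly the point that makes the loop close.
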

\begin{proof}
	We choose $\d_0=\d_2$, with $\d_2$ from Proposition~\ref{prop3.5.2}, and then we  fix $C>\max\{ 1,C_1,C_2 \}$ with $C_1$ and $C_2$ from Propositions~\ref{pro3.4.13} and \ref{prop3.5.2} respectively. We take $\gamma$ such that  $0<\g <\d<\d_0$ and set $\eta=C^{-1}\g/3$. Let $(v^0,q^0,w^0)\in\ran \cQ_{\mathcal{H}}\times H^k(\bbR^{d-1})\times H^k(\bbR^{d-1})^{d-1}$ be the initial value of the solution $(v(t),q(t),w(t))\in \ran \cQ_{\mathcal{H}}\times H^k(\bbR^{d-1})\times H^k(\bbR^{d-1})^{d-1}$ of equation $\eqref{sysu}$ such that $E_k\leq \eta$. Since $\eta<\g<\d$, we can apply Propositions~\ref{pro3.4.13} and \ref{prop3.5.2} with $\g$ replaced by $\eta$ and conclude that for all $t\in[0,T(\g,\eta))$ assertions (1)-(7) of the theorem hold.
	
	We claim that $T(\d,\eta)=\infty$; thus the theorem holds as soon as the claim is proved. To prove the claim,   we fix any $T\in(0,T(\d,\eta))$. At that $T$,   we note that by assertions (3),(6), and (7) and the definition of the $\cH$ norm, $\|v(T,v^0,q^0)\|_{\mathcal{H}} \leq \sqrt{2} CE_k$  and therefore
	\begin{align*}
	&\|v(T,v^0,q^0,w^0)\|_{\mathcal{H}}+\|q(T,v^0,q^0,w^0)\|_{H^k(\mathbb{R}^{d-1})}+\|w(T,v^0,q^0,w^0)\|_{H^k(\mathbb{R}^{d-1})}\\
	& \qquad\qquad\qquad\qquad\qquad\qquad\qquad\qquad\qquad\qquad\qquad\qquad\qquad
	 \leq (2+\sqrt{2}) CE_k\leq (2+\sqrt{2})C\eta=\gamma.
	\end{align*}
	We now apply Lemma \ref{lem3.4.7} to the solution with the initial data $(v(T),q(T),w(T))$. This lemma says that for all $t\in[0,T(\d,\g))$ we have the inequality
	\begin{equation*}
	\|v(t+T)\|_{\mathcal{H}}+\|q(t+T)\|_{H^k(\mathbb{R}^{d-1})}+\|w(t+T)\|_{H^k(\mathbb{R}^{d-1})}\leq \d,
	\end{equation*}
	so, if $E_k\leq \eta$ then $\|v(t)\|_{\mathcal{H}}+\|q(t)\|_{H^k(\mathbb{R}^{d-1})}+\|w(t)\|_{H^k(\mathbb{R}^{d-1})}\leq \d$ for all $t\in[0,T+T(\d,\g))$. By Definition \ref{DFNtmax} that means that $T(\d,\eta)\geq T+T(\d,\g)$ for each $T\in(0,T(\d,\g))$ 
	and therefore $T(\d,\g)=\infty$ which completes the proof. 
\end{proof}

\appendix
\section{Lipschitz Properties of the Nemytskij Operator}\lb{app}
In this appendix we prove the Lipschitz properties of the Nemytskij operator \eqref{eq3.13A} induced by the nonlinear term in  system \eqref{sysu} that we  consider. In order to do so, we need the following lemma from \cite{RunstSickel} and a generalized H\"{o}lder's inequality (see, e.g., \cite{WZ}).
\begin{lemma}\label{lem2.3.2} 
	For Sobolev spaces $W^{k,p}(\mathbb{R}^d)$ and $W^{k_0,p_0}(\mathbb{R}^d)$, if $k>k_0$ and $k-\frac{d}{p} > k_0-\frac{d}{p_0} $, then the Sobolev embedding $W^{k,p}(\mathbb{R}^d)\hookrightarrow W^{k_0,p_0}(\mathbb{R}^d)$ holds.
\end{lemma}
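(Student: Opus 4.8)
The plan is to reduce the statement to the classical first-order Sobolev/Gagliardo--Nirenberg inequality on $\mathbb{R}^d$. First I would peel off the $k_0$ derivatives: since $\|u\|_{W^{k_0,p_0}(\mathbb{R}^d)}$ is equivalent to $\sum_{|\alpha|\le k_0}\|\partial^\alpha u\|_{L^{p_0}(\mathbb{R}^d)}$, and for every multi-index with $|\alpha|\le k_0$ the function $\partial^\alpha u$ lies in $W^{k-k_0,p}(\mathbb{R}^d)$ with $\|\partial^\alpha u\|_{W^{k-k_0,p}}\le\|u\|_{W^{k,p}}$, it is enough to prove the base embedding
\[
W^{m,p}(\mathbb{R}^d)\hookrightarrow L^{p_0}(\mathbb{R}^d),\qquad m:=k-k_0\ge 1,
\]
under the hypothesis $m-\tfrac{d}{p}>-\tfrac{d}{p_0}$ (and, as is implicit for embeddings on the unbounded domain $\mathbb{R}^d$, under $p\le p_0$). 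Applying this with $\partial^\alpha u$ in place of $u$ and summing over $|\alpha|\le k_0$ then yields the lemma.

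For the base embedding I would use the norm equivalence $\|u\|_{W^{m,p}(\mathbb{R}^d)}\sim\|u\|_{L^p(\mathbb{R}^d)}+\|D^m u\|_{L^p(\mathbb{R}^d)}$ together with the Gagliardo--Nirenberg interpolation inequality
\[
\|u\|_{L^{p_0}(\mathbb{R}^d)}\le C\,\|D^m u\|_{L^p(\mathbb{R}^d)}^{\,a}\,\|u\|_{L^p(\mathbb{R}^d)}^{\,1-a},\qquad a=\frac{1}{m}\Big(\frac{d}{p}-\frac{d}{p_0}\Big),
\]
which is valid for every $a\in[0,1)$; here $a\ge 0$ is the condition $p\le p_0$, and $a<1$ is exactly the hypothesis $m-\tfrac{d}{p}>-\tfrac{d}{p_0}$. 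Combining with Young's inequality ($x^a y^{1-a}\le x+y$ for $x,y\ge0$) gives $\|u\|_{L^{p_0}}\le C\|u\|_{W^{m,p}}$, which is the required continuous embedding. When $a=0$, i.e.\ $p=p_0$, the inequality is trivial and the embedding is just $W^{k,p}\subset W^{k_0,p}$ since $k>k_0$.

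The one genuine subtlety, and the step I would write out most carefully, is the borderline behaviour of the Gagliardo--Nirenberg inequality: it fails at the endpoint $a=1$ precisely when $m-\tfrac{d}{p}$ is a nonnegative integer (the classical failure of $\dot W^{m,p}\hookrightarrow L^\infty$ at the critical exponent). The lemma avoids this by demanding the \emph{strict} inequality $k-\tfrac{d}{p}>k_0-\tfrac{d}{p_0}$, which forces $a<1$; this is the point to emphasize. If one prefers a proof not quoting Gagliardo--Nirenberg, an alternative is to iterate the first-order estimate $W^{j,p}\hookrightarrow W^{j-1,p^{\sharp}}$ with $\tfrac{1}{p^{\sharp}}=\tfrac1p-\tfrac1d$ (switching to Morrey's embedding into a Hölder space, and then into $L^\infty$, once the Sobolev exponent exceeds $d$) until one reaches some $L^r$ with $r\ge p_0$, and then interpolate between $L^p$ and $L^r$ via $\|u\|_{L^{p_0}}\le\|u\|_{L^p}^{\lambda}\|u\|_{L^r}^{1-\lambda}$; the only work is the bookkeeping of the cases $mp<d$, $mp=d$, $mp>d$ (and the same trichotomy at each intermediate step) to check that $p_0$ always lands in the admissible interpolation window, which is routine but mildly tedious.

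Finally I would remark that the assumption $p\le p_0$ is redundant on a bounded domain but indispensable on $\mathbb{R}^d$: for instance $W^{2,2}(\mathbb{R})\not\hookrightarrow L^1(\mathbb{R})$, although $2-\tfrac12>0-1$ and $2>0$. It should therefore be understood as part of the standing hypotheses of the lemma, and in every application of the lemma in this paper one has $p=2\le p_i=p_0$, so no generality is lost.
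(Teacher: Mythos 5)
Your proof is correct, but it is a genuinely different route from the paper's: the paper gives no argument at all for this lemma, citing it as a known embedding theorem from the Runst--Sickel monograph (where it is stated for general Besov/Triebel--Lizorkin scales and carries the hypothesis $p\le p_0$). Your reduction to the case $k_0=0$ by applying the base embedding to each $\partial^\alpha u$ with $|\alpha|\le k_0$, followed by Gagliardo--Nirenberg with $a=\frac1m\bigl(\frac dp-\frac d{p_0}\bigr)$, $m=k-k_0$, and Young's inequality, is sound: the constraint $a<1$ is exactly the strict inequality $k-\frac dp>k_0-\frac d{p_0}$, which also sidesteps the exceptional case of Gagliardo--Nirenberg when $m-\frac dp$ is a nonnegative integer, and $a\ge0$ is exactly $p\le p_0$. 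Your observation that $p\le p_0$ must be added to the hypotheses is a genuine catch rather than pedantry: as stated the lemma is false on $\mathbb{R}^d$ (your example $W^{2,2}(\mathbb{R})\not\hookrightarrow L^1(\mathbb{R})$, e.g.\ $u(x)=(1+x^2)^{-1/2}$, satisfies both stated hypotheses), and the cited source does include this condition; it is harmless here because in every application the source space is $H^k=W^{k,2}$ and the target exponents $p_i$ satisfy $\sum_i 1/p_i=1/2$ with each $1/p_i\ge0$, hence $p_i\ge2$. What each approach buys: the citation covers fractional smoothness and more general scales with no work, while your argument is elementary, self-contained, and makes visible both where the strict inequality is used and the integrability hypothesis the paper's statement omits.
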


\begin{lemma}\label{lem2.3.3}
	Assume that $r\in(0, \infty)$ and $p_1$, ..., $p_n \in (0, \infty]$ are such that
	$\sum_{k=1}^n \frac1{p_k}=\frac1r.$ Then for all $\mu$-measurable real or complex-valued functions $f_1$, ..., $f_n$,
	$$\Big\|\prod_{k=1}^n f_k\Big\|_{L^r(\mu)}\le \prod_{k=1}^n\|f_k\|_{L^{p_k}(\mu)}.$$
	In particular,
	$f_k\in L^{p_k}(\mu)$ for all $k\in\{1,\ldots,n\}$ implies that $\prod_{k=1}^n f_k \in L^r(\mu)$.
\end{lemma}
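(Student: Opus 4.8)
The plan is to reduce the statement, by induction on $n$, to the classical two-exponent H\"older inequality. The base case $n=1$ is immediate: then $p_1=r$ and both sides of the claimed inequality equal $\|f_1\|_{L^r(\mu)}$. Before the induction I would dispose of the degenerate situations. If $\|f_k\|_{L^{p_k}(\mu)}=+\infty$ for some $k$ the inequality holds vacuously; if $\|f_k\|_{L^{p_k}(\mu)}=0$ for some $k$ then $f_k=0$ $\mu$-a.e., so $\prod_k f_k=0$ $\mu$-a.e.\ and both sides vanish. Hence it suffices to treat the case in which every factor has finite and strictly positive $L^{p_k}$-norm; measurability of the product is automatic from measurability of the $f_k$.

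The key auxiliary step is the \emph{two-function} version: if $s\in(0,\infty)$, $p,q\in(0,\infty]$, and $\tfrac1p+\tfrac1q=\tfrac1s$, then $\|fg\|_{L^s(\mu)}\le\|f\|_{L^p(\mu)}\,\|g\|_{L^q(\mu)}$. If $p=\infty$, then necessarily $q=s$ and the bound follows from $|fg|\le\|f\|_{L^\infty(\mu)}\,|g|$ $\mu$-a.e.; the case $q=\infty$ is symmetric. If $p,q<\infty$, then $\tfrac1p<\tfrac1s$ and $\tfrac1q<\tfrac1s$ force $p>s$ and $q>s$, so $\big(\tfrac ps,\tfrac qs\big)$ is a pair of conjugate exponents lying in $(1,\infty)$ with $\tfrac sp+\tfrac sq=1$. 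Applying the classical H\"older inequality to $|f|^s\in L^{p/s}(\mu)$ and $|g|^s\in L^{q/s}(\mu)$ gives
$$\int |fg|^s\,d\mu\;\le\;\Big(\int |f|^p\,d\mu\Big)^{s/p}\Big(\int |g|^q\,d\mu\Big)^{s/q},$$
and raising to the power $1/s$ yields the two-function estimate.

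For the inductive step, given $p_1,\dots,p_n$ with $\sum_{k=1}^n\tfrac1{p_k}=\tfrac1r$, set $\sigma=\sum_{k=1}^{n-1}\tfrac1{p_k}$. If $\sigma=0$, then $p_1=\dots=p_{n-1}=\infty$ and $p_n=r$, and the estimate follows from $\big|\prod_{k=1}^n f_k\big|\le\big(\prod_{k=1}^{n-1}\|f_k\|_{L^\infty(\mu)}\big)\,|f_n|$ $\mu$-a.e. Otherwise $s:=1/\sigma\in(0,\infty)$ satisfies $\tfrac1s+\tfrac1{p_n}=\tfrac1r$, so the two-function step gives $\big\|\prod_{k=1}^n f_k\big\|_{L^r(\mu)}\le\big\|\prod_{k=1}^{n-1}f_k\big\|_{L^s(\mu)}\,\|f_n\|_{L^{p_n}(\mu)}$; since $\sum_{k=1}^{n-1}\tfrac1{p_k}=\tfrac1s$, the inductive hypothesis bounds the first factor by $\prod_{k=1}^{n-1}\|f_k\|_{L^{p_k}(\mu)}$, completing the induction. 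The concluding ``in particular'' clause is then automatic, the right-hand side being finite under the stated hypotheses. There is no genuine obstacle here; the only points that require care are the bookkeeping of the exponent inequalities $p>s$, $q>s$ needed to invoke classical H\"older, and the separate handling of infinite exponents and of the degenerate partial sum $\sigma=0$.
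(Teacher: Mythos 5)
Your proof is correct: the reduction by induction on $n$ to the two-exponent H\"older inequality (applied to $|f|^s$ and $|g|^s$ with the conjugate pair $p/s$, $q/s$), together with the separate treatment of infinite exponents, vanishing or infinite norms, and the degenerate partial sum $\sigma=0$, is the standard complete argument. The paper itself gives no proof of this lemma — it is quoted from the reference \cite{WZ} — so there is nothing to compare against; your write-up supplies exactly the textbook argument that the citation points to.
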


Next we formulate an analogue of \cite[Proposition 7.2]{GLS2}. 
\begin{proposition}\label{prop2.3.2}
	Assume  that  $m:(q,u) \mapsto m(q,u)\in\mathbb{R}$ is a function from $C^{k+1}(\mathbb{R}^2)$ with $k\geq[\frac{d+1}{2}]$. Consider the formula
	\begin{equation}\label{eq3.13A}
	(q(x),u(x),v(x))\mapsto m(q(x),u(x))v(x),
	\end{equation}
	where $q(\cdot)$, $u(\cdot)$, $v(\cdot) :\mathbb{R}^{d}\mapsto \mathbb{R}$, and the variable $x=(x_1,...,x_d)\in\mathbb{R}^{d}$.
	\begin{itemize}
		\item[(1)] Formula \eqref{eq3.13A} defines a mapping from $H^k(\mathbb{R}^d)\times H^k(\mathbb{R}^d)^2$ to $ H^k(\mathbb{R}^d)$ that is locally Lipschitz on any set of the form $\{ (q,u,v):\|q\|_{H^k(\mathbb{R}^d)}+\|u\|_{H^k(\mathbb{R}^d)}+\|v\|_{H^k(\mathbb{R}^d)}\leq K \}$.
		\item[(2)] Formula \eqref{eq3.13A} defines a mapping from $H^k(\mathbb{R}^d)\times\mathcal{H}^2$ to $\mathcal{H}$ that is locally Lipschitz on any set of the form $\{ (q,u,v):\|q\|_{H^k(\mathbb{R}^d)}+\|u\|_{\mathcal{H}}+\|v\|_{\mathcal{H}}\leq K \}$.
	\end{itemize}
\end{proposition}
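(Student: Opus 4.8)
The plan is to reduce the statement to two building blocks: (i) composition with the fixed $C^{k+1}$ function $m$ sends $H^k(\mathbb R^d)\times H^k(\mathbb R^d)$ into $m(0,0)+H^k(\mathbb R^d)$, with $\|m(q,u)-m(0,0)\|_{H^k(\mathbb R^d)}$ bounded by a polynomial in $\|q\|_{H^k(\mathbb R^d)},\|u\|_{H^k(\mathbb R^d)}$ that vanishes at the origin, and locally Lipschitzly so; and (ii) the multiplication inequalities $\|uv\|_{H^k(\mathbb R^d)}\le C\|u\|_{H^k(\mathbb R^d)}\|v\|_{H^k(\mathbb R^d)}$ of \eqref{eq2.3.3}, together with $\|uv\|_{H^k_\alpha(\mathbb R^d)}\le C\|u\|_{H^k(\mathbb R^d)}\|v\|_{H^k_\alpha(\mathbb R^d)}$ and the consequence $H^k(\mathbb R^d)\cdot\mathcal H\subset\mathcal H$, all contained in Lemma~\ref{soblev}. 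Granting (i), assertion (1) is immediate from the splitting $m(q,u)v=m(0,0)v+\big(m(q,u)-m(0,0)\big)v$ and (ii); for assertion (2) the same splitting works once one notes that $m(0,0)v\in\mathcal H$ for $v\in\mathcal H$ and that $\big(m(q,u)-m(0,0)\big)v$ is the product of an $H^k$-function with an $\mathcal H$-function, hence in $\mathcal H$. When only $u\in\mathcal H$ is available (rather than $u\in H^k$) one first writes $m(q,u)-m(0,0)=\big(m(q,0)-m(0,0)\big)+u\int_0^1(\partial_2 m)(q,su)\,ds$ and treats the two summands separately, the first by (i) applied to $q\mapsto m(q,0)-m(0,0)$ and the second as $u\in\mathcal H$ times a bounded function composed with $(q,u)$.

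To prove (i) I would invoke the Fa\`a di Bruno formula: for a multi-index $\alpha$ with $|\alpha|\le k$, $\partial^\alpha\big(m(q,u)\big)$ is a finite sum of terms $\big(\partial^\beta m\big)(q,u)\prod_{j}\partial^{\gamma_j}g_j$, where each $g_j\in\{q,u\}$, $1\le|\gamma_j|$, $\sum_j|\gamma_j|=|\alpha|$ and $|\beta|\le|\alpha|$. Since $k\ge[\tfrac{d+1}{2}]$ gives $H^k(\mathbb R^d)\hookrightarrow L^\infty(\mathbb R^d)$, the range of $(q,u)$ is bounded in terms of the $H^k$-norms, so $\big(\partial^\beta m\big)(q,u)$ is bounded (with bound depending only on those norms, $m\in C^{k+1}$). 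For the product $\prod_j\partial^{\gamma_j}g_j$ I would apply the generalized H\"older inequality of Lemma~\ref{lem2.3.3} with exponents $p_j\in[2,\infty]$ satisfying $\sum_j 1/p_j=1/2$, and bound each factor by $\|\partial^{\gamma_j}g_j\|_{L^{p_j}(\mathbb R^d)}\le C\|g_j\|_{H^k(\mathbb R^d)}$ using the Sobolev embedding $H^k(\mathbb R^d)\hookrightarrow W^{|\gamma_j|,p_j}(\mathbb R^d)$ from Lemma~\ref{lem2.3.2}. Writing $n_i=|\gamma_i|$, the embedding requires
\begin{equation}\label{eq2.3.18}
k-\frac{d}{2}>n_i-\frac{d}{p_i},\qquad i=1,2,\dots;
\end{equation}
combining \eqref{eq2.3.18} with the constraints $\sum_j n_j=|\alpha|\le k$ and $\sum_j 1/p_j=1/2$ shows that a feasible choice of the $p_j$ exists precisely because $2k>d$, the degenerate configuration (all derivatives on one factor, $p_j=2$, the remaining factors estimated in $L^\infty$) being handled by $H^k(\mathbb R^d)\hookrightarrow L^\infty(\mathbb R^d)$. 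Adding the elementary $L^2$-bound on $m(q,u)-m(0,0)$ itself gives (i).

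For the local Lipschitz property I would argue coordinate by coordinate on a set $\{\|q\|_{H^k}+\|u\|_{H^k}+\|v\|_{H^k}\le K\}$ (and its $\mathcal H$-analogue). Variation in $v$: by \eqref{eq2.3.3} and (i), $\|m(q,u)(v-\bar v)\|_{H^k}\le C\big(|m(0,0)|+\|m(q,u)-m(0,0)\|_{H^k}\big)\|v-\bar v\|_{H^k}\le C_K\|v-\bar v\|_{H^k}$, and multiplying by $\gamma_\alpha$ gives the weighted version. Variation in $u$: write $m(q,u)-m(q,\bar u)=(u-\bar u)\int_0^1(\partial_2 m)\big(q,\bar u+s(u-\bar u)\big)\,ds$; since $\partial_2 m\in C^k$, part (i) applied to $\partial_2 m$ bounds the integral term in $H^k$ with a $K$-dependent constant, and then \eqref{eq2.3.3} yields $\|\big(m(q,u)-m(q,\bar u)\big)v\|_{H^k}\le C_K\|u-\bar u\|_{H^k}$; the weighted and $\mathcal H$ bounds follow by conjugating with $\gamma_\alpha$ and using $v\in\mathcal H$. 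Variation in $q$ is identical with $\partial_1 m$ in place of $\partial_2 m$. Summing the three contributions proves the Lipschitz estimates in both (1) and (2). \textbf{The main obstacle} is exactly the index bookkeeping in \eqref{eq2.3.18}: one must verify that the exponents $p_j$ can be chosen to meet the H\"older constraint and all the embedding conditions simultaneously, and correctly isolate the endpoint cases; once that is settled, the remainder is routine manipulation with the Banach-algebra and weighted-multiplication inequalities already available.
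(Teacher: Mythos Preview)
Your proposal is correct and rests on the same core machinery as the paper: the Fa\`a di Bruno expansion (the paper's ``Higher Chain Formula''), the generalized H\"older inequality of Lemma~\ref{lem2.3.3}, and the Sobolev embeddings of Lemma~\ref{lem2.3.2} with the explicit exponent choice $\tfrac{1}{p_i}=(\tfrac12-\tfrac{k}{d})\tfrac{1}{l}+\tfrac{n_i}{d}$, which is exactly what makes \eqref{eq2.3.18} hold. The organizational route, however, is different. The paper treats each variation by applying the Leibniz rule directly to the full triple product $m_1(q,\bar q,u)\,\bar q\,v$ (and its analogues for variations in $u$ and $v$), which forces an extensive case split according to how the $k$ derivatives distribute among the three factors (Cases 1.1--1.5, 2.1--2.5, 3.1--3.3). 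You instead isolate building block (i)---that the Nemytskij composition $m(q,u)-m(0,0)$ lands in $H^k$ with polynomial bounds---and then dispatch the multiplication by $v$ in one stroke via the algebra inequality \eqref{eq2.3.3} and its weighted counterpart in Lemma~\ref{soblev}. This collapses most of the paper's case analysis into a single application of the algebra property, at the cost of invoking (i) for the $C^k$ functions $\partial_j m$ and $\int_0^1(\partial_j m)(\cdots)\,ds$ rather than for $m$ itself (which is fine, since the Fa\`a di Bruno estimate only needs $C^k$ regularity of the outer function). One small slip: your parenthetical ``when only $u\in\mathcal H$ is available (rather than $u\in H^k$)'' is backwards, since $\mathcal H\subset H^k$; the additional splitting you propose there is unnecessary, because $(m(q,u)-m(0,0))\in H^k$ and $v\in H^k_\alpha$ already yield $(m(q,u)-m(0,0))v\in H^k_\alpha$ directly from the weighted multiplication inequality.
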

\begin{proof}
	In the following proof, we will abbreviate the norm of $q,u,v$, for instance, $H^k(\mathbb{R}^d)$ to $H^k$ since $q,u,v$ are all $\mathbb{R}^d\rightarrow \mathbb{R}$.  We shall  use the equivalent Sobolev norm (see, e.g., \cite{NS}
	):
	\begin{equation*}
	\|f\|_{H^k}\sim\|f\|_{L^2}+\sum_{a_1+\cdots+a_d=k}\Big\|\frac{\partial^k}{\partial x_1^{a_1}\cdots \partial x_d^{a_d}}f\Big\|_{L^2},
	\end{equation*}
	where the sum is over all $d$-tuples $(a_1,...,a_d)$ of non-negative integers such that $\sum_{l=1}^d a_l=k$, and $\frac{\partial^{a_l}}{\partial x_l^{a_l}}$ is the $a_l$-th differentiation of functions with respect to $x_l$, $l=1$, ..., $d$. 
	
For variation in $q$, we write 
	$
	m(q+\bar{q},u)-m(q,u)=\bar{q} \big(\int_{0}^{1}m_q(q+t\bar{q},u)dt\Big),
	$
	and,  using  embedding of $H^k(\mathbb{R}^d)$  in $L^{\infty}(\mathbb{R}^d)$, obtain
	$$
	\|m(q+\bar{q},u)v-m(q,u)v\|_{L^2}\leq\|m\|_{C^1(\mathbb{R}^2)}\|\bar{q}\|_{L^{\infty}}\|v\|_{L^2}\leq\|m\|_{C^1(\mathbb{R}^2)}\|\bar{q}\|_{H^k}\|v\|_{L^2}.
	$$
	The estimate of $m(q+\bar{q},u)v-m(q,u)v$ in $L^2_{\alpha}(\mathbb{R})\otimes L^2(\mathbb{R}^{d-1})$ follows from
	$$
	\|\gamma_{\alpha}\big(m(q+\bar{q},u)v-m(q,u)v\big)\|_{L^2}\leq\|m\|_{C^1(\mathbb{R}^2)}\|\bar{q}\|_{L^{\infty}}\|\gamma_{\alpha}v\|_{L^2}\leq\|m\|_{C^1(\mathbb{R}^2)}\|\bar{q}\|_{H^k}\|\gamma_{\alpha}v\|_{L^2}.
	$$
	We denote $m_1(q,\bar{q},u)=\int_{0}^{1}m_q(q+t\bar{q},u)dt$. To estimate the  derivatives of $m_1(q,\bar{q},u)\bar{q}v$   
	in $L^2(\mathbb{R}^d)$, we need the general Leibniz Rule  
	\cite{Olver}: if $f_1$, ..., $f_m$ are all $n$-times differentiable functions, then their product $f_1\cdots f_m$ is also $n$-times differentiable and its $n$th derivative is given by
	$$\left(f_1 f_2 \cdots f_m\right)^{(n)}=\sum_{k_1+k_2+\cdots+k_m=n} {n \choose k_1, k_2, \ldots, k_m} \prod_{1\leq t\leq m}f_{l}^{(k_{l})},$$
	where the sum extends over all m-tuples $(k_1,...,k_m)$ of non-negative integers such that  $\sum_{l=1}^m k_l=n$ and $\displaystyle{{n \choose k_1, k_2, \ldots, k_m}
	= \frac{n!}{k_1!\, k_2! \cdots k_m!}}$
	are the multinomial coefficients.
	We then have
	\begin{align*}
	&\frac{\partial^{k}}{\partial x_1^{a_1}\cdots \partial x_d^{a_d}}\big(m_1(q,\bar{q},u)
	\bar{q}v\big)
	=\frac{\partial^{a_1}}{\partial x_1^{a_1}}\cdots\frac{\partial^{a_{d-1}}}{\partial x_{d-1}^{a_{d-1}}}\sum_{b_{d}+c_{d}+e_{d}=a_d}{a_d \choose b_{d},c_{d},e_{d}}\frac{\partial^{b_{d}}
m_1}{\partial x_d^{b_{d}}}
	\cdot \frac{\partial^{c_{d}}\bar{q}}{\partial x_d^{c_{d}} }
	\cdot\frac{\partial^{e_d}v}{\partial x_d^{e_d} }
	\\\no
	=&\qquad \frac{\partial^{a_1+\cdots+a_{d-2}}}{\partial x_1^{a_1}\cdots\partial x_{d-2}^{a_{d-2}}}\sum_{b_{d}+c_{d}+e_{d}=a_d}{a_d \choose b_{d},c_{d},e_{d}}\\\no
	&
	\sum_{b_{d-1}+c_{d-1}+e_{d-1}=a_{d-1}}{a_{d-1} \choose b_{d-1}, c_{d-1},e_{d-1}}\frac{\partial^{b_{d-1}+b_{d}}m_1}{\partial x_{d-1}^{b_{d-1}}\partial x_d^{b_d}}
	\cdot\frac{\partial^{c_{d-1}+c_{d}}\bar{q}}{\partial x_{d-1}^{c_{d-1}}\partial x_d^{c_{d}}}
	\cdot\frac{\partial^{e_{d-1}+e_{d}}v}{\partial x_{d-1}^{e_{d-1}}\partial x_d^{e_{d}}}
	=\cdots
	\\&=\sum_{b_{d}+c_{d}+e_{d}=a_d}{a_d \choose b_{d}, c_{d},e_{d}}\cdots
	 \sum_{b_{1}+c_{1}+e_{1}=a_1}{a_1 \choose b_{1}, c_{1},e_{1}}\frac{\partial^{b_{1}+\cdots+b_{d}}m_1}{\partial x_1^{b_{1}}\cdots\partial x_{d}^{b_{d}}}
	\cdot\frac{\partial^{c_{1}+\cdots+c_{d} }\bar{q}}{\partial x_1^{c_{1}}\cdots\partial x_{d}^{c_{d}}}
	\cdot\frac{\partial^{e_{1}+\cdots+e_{d}}v}{\partial x_1^{e_{1}}\cdots\partial x_{d}^{e_{d}}}
	,
	\end{align*}
	where $a_1+\cdots+a_d=k$.

	We now refer to the Higher Chain Formula (see, e.g.,  \cite[Lemma 1]{TWM}). We consider a mapping $$M : x \in X \subset \mathbb{R}^{d}\underset{g}{\rightarrow}(q(x),\bar{q}(x),u(x)) \in G \subset \mathbb{R}^{3}\underset{h}{\rightarrow}m_1\in\mathbb{R},$$ where $X$, $G$
	are open subsets of $\mathbb{R}^{d}$ and $\mathbb{R}^{3}$ respectively, and $g$, $h$ are sufficiently smooth functions. We denote $(g_1(x), g_2(x),g_3(x))=(q(x),\bar{q}(x),u(x))$. For each $i$ in the set $J_s$ of integers $1$,  $2$, ..., $s$, where $s=b_1+\cdots+b_d$, let $t_i$ denote one of the independent variables $x_1$,  ..., $x_d$. A partition of $J_s$ is a family of pairwise disjoint nonempty subsets of $J_s$ whose union is $J_s$. Sets in a partition are called blocks. A block's function is to assign a label to each block of a partition. The set of all block functions from a partition $P$ of $J_s$ into $J_3$ is
	denoted by $P_3$.  The set of all partitions of $J_s$ is denoted by $P_s$. We  then have
	\begin{equation} \label{A4}
	\frac{\partial^s m_1(g(x))}{\partial {t_1}\cdots\partial {t_s}}
	=\sum_{P\in P_s}\sum_{\lambda\in P_{3}}\Big\{ \Big( \prod_{B\in P}\frac{\partial}{\partial{g_{\lambda(B)}}}\Big)m_1   \Big\}
	\Big\{   \prod_{B\in P} \Big[\Big( \prod_{b\in B} \frac{\partial}{\partial{t_b}}\Big) g_{\lambda(B)} \Big]  \Big\}.
	\end{equation}
	 $B\in P$  means that $B$ runs through the list of all of the blocks of the partition $P$. The number of blocks in the partition $P$ is denoted  by $|P|$.  The partition $P$ then can be written as $P=\{B_1,...,B_{|P|}\}$. Let $|B_i|$ be the size of the block $B_i$, $i=1$, $2$, ..., $|P|$. For fixed multinomial coefficients ${a_i\choose b_i,c_i,e_i}$ ($i=1$, ..., $d$), fixed $P\in P_s$ and $\lambda\in P_3$ we need to estimate the following term in both $L^2(\mathbb{R}^d)$ and $L^2_{\alpha}(\mathbb{R})\otimes L^2(\mathbb{R})^{d-1}$:
	$$\Big\{\Big( \prod_{B\in P}\frac{\partial}{\partial{g_{\lambda(B)}}}\Big) m_1\Big\} 
	\Big\{   \prod_{B\in P} \Big[ \Big( \prod_{b\in B} \frac{\partial}{\partial{t_b}}\Big) g_{\lambda(B)} \Big]  \Big\}\frac{\partial^{c_1+\cdots+c_d}\bar{q}}{\partial{x_1^{c_1}}\cdots\partial{x_d^{c_d}}}\frac{\partial^{e_1+\cdots+e_d}v}{\partial{x_1^{e_1}}\cdots\partial{x_d^{e_d}}},
	$$ 
	where $g_{\lambda(B_i)}$ is one of $(g_1,g_2,g_3)=(q,\bar{q},u)$. To obtain the estimates we distinguish several cases.

	\textbf{Case 1.1:}
		If $b_1+\cdots+b_d\neq0$, $c_{1}+\cdots+c_{d}\neq0$ and $e_{1}+\cdots+e_{d}\neq0$, we use Lemma \ref{lem2.3.3} with 		$
		\frac{1}{2}=\sum_{i=1}^{\left|P\right|+2}\frac{1}{p_i},
		$
		where $p_i$ will be chosen below. If we denote $P=\{B_1,B_2,...,B_{\left|P\right|}\}$ and $l=|P|+2$ (note that $3\leq l\leq k$), and  introduce the $l$-tuple $$(n_1,n_2,...,n_l)=(|B_1|,|B_2|,...,|B_{l-2}|,c_{1}+\cdots+c_{d},e_{1}+\cdots+e_{d}),$$ then
		\begin{align}\label{eq2.3.6} \no
		&\Big\|\Big\{\Big(
		 \prod_{B\in P}\frac{\partial}{\partial{g_{\lambda(B)}}}
		 \Big)m_1\Big\} 
		 \Big\{   \prod_{B\in P} \Big[ \Big(
		 \prod_{b\in B} \frac{\partial}{\partial {t_b}}\Big) g_{\lambda(B)} \Big]  \Big\}\frac{\partial^{c_1+\cdots+c_d}\bar{q}}{\partial{x_1^{c_1}}\cdots\partial{x_d^{c_d}}}\frac{\partial^{e_1+\cdots+e_d}v}{\partial{x_1^{e_1}}\cdots\partial{x_d^{e_d}}}\Big\|_{L^2}\\\no
		&\leq\left\|m_1^{(\left|P\right|)}\right\|_{L_{\infty}(\mathbb{R}^3)} \Big\|\frac{\partial^{c_1+\cdots+c_d}\bar{q}}{\partial{x_1^{c_1}}\cdots\partial{x_d^{c_d}}}\Big\|_{L^{p_{l-1}}} \Big\|\frac{\partial^{e_1+\cdots+e_d}v}{\partial{x_1^{e_1}}\cdots\partial{x_d^{e_d}}}\Big\|_{L^{p_{l}}}
	 \prod_{i=1}^{l-2}	 \Big\|\Big(\prod_{b\in B_i} \frac{\partial}{\partial{t_b}}\Big) g_{\lambda(B_i)}\Big\|_{L^{p_1}}
	\\
		& \leq\|m_1^{(|P|)}\|_{L_{\infty}(\mathbb{R}^3)}\|g_{\lambda(B_1)}\|_{W^{n_1,p_1}}\cdots\|g_{\lambda(B_{l-2})}\|_{W^{n_{l-2},p_{l-2}}}\|\bar{q}\|_{W^{n_{l-1},p_{l-1}}}\|v\|_{W^{n_{l},p_{l}}},
		\end{align}
		where $W^{k,p}$ are the Sobolev spaces of $k$ times differentiable functions from $L^p$. 
		
		In equation \eqref{eq2.3.6}, $\sum_{i=1}^{|P|}|B_i|=\sum_{j=1}^{d}b_{j}$  because  $P$ is a partition of the $b_{1}+\cdots+b_{d}$ indices of $$(\underbrace{x_1,...,x_1}_{b_{1}\, times},
		...,\underbrace{x_d,...,x_d}_{b_{d}\, times})$$ and $\{ B_1,...,B_{|P|} \}$ are all blocks in the partition $P$.
			Since all $b_1+\cdots+b_d$, $c_1+\cdots+c_d$, and $e_1+\cdots+e_d$ are nonzero, it is obvious that $k>n_i$. By Lemma~\ref{lem2.3.2} then, in order to prove that $W^{k,2}(\mathbb{R}^d)=H^k(\mathbb{R}^d)\hookrightarrow W^{n_i,p_i}(\mathbb{R}^d)$,  we must to show that $k-\frac{d}{2}>n_i-\frac{d}{p_i}$. If we choose $\frac{1}{p_i}=(\frac{1}{2}-\frac{k}{d})\frac{1}{l}+\frac{n_i}{d}$,  then 
		$\sum_{i=1}^{l}\frac{1}{p_i}
		=\frac{1}{2},$
		and
		\begin{align*}
		n_i-\frac{d}{p_i}&=n_i-d\Big( \Big(\frac{1}{2}-\frac{k}{d}\Big)\frac{1}{l}+\frac{n_i}{d} \Big)
		=\frac{k}{l}-\frac{d}{2l}. 
		\end{align*}
		Since $k\geq[\frac{d+1}{2}]$ and $l> 2$, we can conclude that 
		\begin{equation*}
		k-\frac{d}{2}>\frac{1}{l}(k-\frac{d}{2})=n_i-\frac{d}{p_i},\, i=1,...,l.
		\end{equation*}
		Therefore $H^k(\mathbb{R}^d)$ can be embedded into $W^{n_i,p_i}(\mathbb{R}^d)$, $i=1,...,l$. Following \eqref{eq2.3.6}, we then  have
		\begin{align}\label{eq2.3.9}
		&\Big\|m_1^{(\left|P\right|)}\cdot\prod_{B\in P} \Big[ \Big(\prod_{b\in B} \frac{\partial}{\partial{t_b}}\Big) g_{\lambda(B)} \Big]\cdot\frac{\partial^{c_1+\cdots+c_d}\bar{q}}{\partial{x_1^{c_1}}\cdots\partial{x_d^{c_d}}}\cdot\frac{\partial^{e_1+\cdots+e_d}v}{\partial{x_1^{e_1}}\cdots\partial{x_d^{e_d}}}\Big\|_{L^2}\\\no
		&\qquad \leq\|m_1^{(|P|)}\|_{L_{\infty}}\|\bar{q}\|_{W^{n_{l-1},p_{l-1}}}\|v\|_{W^{n_{l},p_{l}}}
		\prod_{i=1}^{|P|}\|g_{\lambda(B_i)}\|_{W^{n_i,p_i}}
		\\\no
		&\qquad \leq C\|m_1^{(|P|)}\|_{L_{\infty}}   \|\bar{q}\|_{H^k}\|v\|_{H^k} \prod_{i=1}^{|P|} \|g_{\lambda(B_i)}\|_{H^k}
		.
		\end{align}
		
	
	\textbf{Case 1.2:}
	If $c_{1}+\cdots+c_{d}=e_{1}+\cdots+e_{d}=0$ and $|P|\neq 0$, we use the Sobolev embedding $H^k(\mathbb{R}^d)\hookrightarrow L^{\infty}(\mathbb{R}^d)$ and Lemma \ref{soblev}(1), so that, similarly to Case 1.1, 
	\begin{align*}
	&\Big\|m_1^{(|P|)}\cdot\prod_{B\in P} \Big[\Big(\prod_{b\in B} \frac{\partial}{\partial{t_b}}\Big) g_{\lambda(B)} \Big]\cdot\frac{\partial^{c_1+\cdots+c_d} \bar{q}}{\partial{x_1^{c_1}}\cdots\partial{x_d^{c_d}}}\cdot\frac{\partial^{e_1+\cdots+e_d}v}{\partial{x_1^{e_1}}\cdots\partial{x_d^{e_d}}}\Big\|_{L^2}\\
	&
	\qquad\qquad\leq\|m_1^{(|P|)}\|_{L^{\infty}(\mathbb{R}^3)}\Big\|\prod_{B\in P} \Big[\Big( \prod_{b\in B}\frac{\partial}{\partial{t_b}}\Big) g_{\lambda(B)} \Big]\Big\|_{L^2}\|\bar{q}v\|_{L^{\infty}}\\
	&\qquad\qquad\leq C\|m_1^{(|P|)}\|_{L^{\infty}(\mathbb{R}^3)}\Big\|\prod_{B\in P} \Big[ \Big(\prod_{b\in B} \frac{\partial}{\partial{t_b}}\Big) g_{\lambda(B)} \Big]\Big\|_{L^2}\|\bar{q}\|_{H^k}\|v\|_{H^k},
	\end{align*}
	We denote  $l=|P|$. When $|P|=1$, we use the inequality $\Big\|
	\dfrac{\partial^{k} } {\partial{x_1^{a_1}}\cdots\partial{x_d^{a_d}}}g_i\Big\|_{L^2}\leq \|u\|_{H^k} $, where $g_i$ is one of $(g_1,g_2,g_3)=(q,\bar{q},u)$, and thus obtain
	\begin{align*}
	\Big\|m_1^{(|P|)}\cdot
	\frac{\partial^{k} g_i }{\partial{x_1^{a_1}}\cdots\partial{x_d^{a_d}}}\cdot \bar{q}v\Big\|_{L^2}&\leq \|m_1^{(1)}\|_{L^{\infty}(\mathbb R^3)}\|g_i\|_{H^k}\|\bar{q}v\|_{H^k}\\ &\leq C\|m\|_{C^2(\mathbb{R}^2)}\|g_i\|_{H^k}\|\bar{q}\|_{H^k}\|v\|_{H^k}.
\end{align*}
   When $l\geq 2$, let $P=\{B_1,B_2,...,B_{l}\}$ and the $l$-tuple $(n_1,...,n_l)=(|B_1|,...,|B_l|)$. 
	We then use Lemmas~\ref{lem2.3.2} and \ref{lem2.3.3} with $\frac{1}{p_i}=(\frac{1}{2}-\frac{k}{d})\frac{1}{l}+\frac{n_i}{d}$, $i=1$ ,..., $l$, to obtain
	\begin{align*}
	\Big\|\prod_{B\in P} \Big[ \Big(\prod_{b\in B} \frac{\partial}{\partial{t_b}}\Big) g_{\lambda(B)} \Big]\Big\|_{L^2}&\leq
	\prod_{i\in 1}^{l} 
	\Big\|
	\Big(\prod_{b\in B_i} \frac{\partial}{\partial{t_b}}\Big) g_{\lambda(B_i)}\Big\|_{L^{p_i}}
	\\&
	\leq \prod_{i\in 1}^{l} \|g_{\lambda(B_i)}\|_{W^{n_i,p_i}}\leq \prod_{i\in 1}^{l} \|g_{\lambda(B_i)}\|_{H^k},
	\end{align*}
	from which we conclude that
	\begin{align*}
	&\Big\|m_1^{(l)}(q,\bar{q},u)
	\prod_{B\in P} \Big[ \Big(\prod_{b\in B}\frac{\partial}{\partial{t_b}}\Big)
	 g_{\lambda(B)} \Big]\bar{q}v\Big\|_{L^2}\\
	& \qquad\qquad\qquad \qquad\qquad\qquad \leq \|m_1^{(l)}\|_{L^{\infty}(\mathbb R^{3})}
	\|\bar{q}\|_{H^k}\|v\|_{H^k}
	\prod_{i\in 1}^{l} \|g_{\lambda(B_i)}\|_{H^k}.
	\end{align*}
	
	\textbf{Case 1.3:}
	If $b_{1}+\cdots+b_{d}=c_{1}+\cdots+c_{d}=0$ and $e_{1}+\cdots+e_{d}\neq 0$, we are evaluating the term
	$m_1(q,\bar{q},u)\bar{q}
	\dfrac{\partial^k v}{ \partial{x_1}^{a_1}\cdots\partial{x_d}^{a_d}}$ on $L^2(\mathbb{R}^d)$, based on  the 
	embedding $H^k\hookrightarrow L^{\infty}$:
	\begin{align*}
	\left\|m_1(q,\bar{q},u)\bar{q}
	\dfrac{\partial^kv}{\partial{x_1}^{a_1}\cdots\partial{x_d}^{a_d}}\right\|_{L^2
	}&\leq \|m_1\|_{L^{\infty}(\mathbb{R}^3)}\|\bar{q}\|_{L^{\infty}}\left\|\frac{\partial^k v}{ \partial{x_1}^{a_1}\cdots\partial{x_d}^{a_d}}\right\|_{L^2}\\& 
	\leq C\|m\|_{C^1(\mathbb{R}^2)}\|\bar{q}\|_{H^k}\|v\|_{H^k}.
	\end{align*}
	Similarly, if $b_{1}+\cdots+b_{d}=e_{1}+\cdots+e_{d}=0$ and $c_{1}+\cdots+c_{d}\neq 0$, we have
	\begin{align*}
	\left\|m_1(q,\bar{q},u)v
	\frac{\partial^k\bar{q}}{ \partial{x_1}^{a_1}\cdots\partial{x_d}^{a_d}}\right\|_{L^2}&
	\leq C\|m\|_{C^1(\mathbb{R}^2)}\|v\|_{H^k}\|\bar{q}\|_{H^k}.
	\end{align*}
	
	\textbf{Case 1.4:}
	If $b_{1}+\cdots+b_{d}=0$, but $c_{1}+\cdots+c_{d}\neq0$, and $e_{1}+\cdots+e_{d}\neq0$, then we set $$(n_1,n_2)=(b_{1}+\cdots+b_{d},e_{1}+\cdots+e_{d})$$ and apply Lemmas \ref{lem2.3.2} and \ref{lem2.3.3}  with  $\frac{1}{p_i}=(\frac{1}{2}-\frac{k}{d})\frac{1}{l}+\frac{n_i}{d}$, $i=1$, $2$,
	\begin{align*} &\left\|m_1(q,\bar{q},u)\cdot\frac{\partial^{c_{1}+\cdots+c_{d}}\bar{q} }{\partial{x_1^{c_{1}}}\cdots \partial{x_{d}^{c_{d}}}}\cdot\frac{\partial^{e_{1}+\cdots+e_{d}}v}{\partial{x_1^{e_{1}}}\cdots\partial{x_{d}^{e_{d}}}}
	\right\|_{L^2}
	\no\\
	&\qquad\qquad \qquad 
	\leq\|m_1\|_{L^{\infty}(\mathbb{R}^3)}\left\|\frac{\partial^{c_{1}+\cdots+c_{d}}\bar{q}}{\partial{x_1^{c_{1}}}\cdots\partial{x_{d}^{c_{d}}}}\right\|_{L^{p_1}}\left\|\frac{\partial^{e_{1}+\cdots+e_{d}}v}{\partial{x_1^{e_{1}}}\cdots\partial{x_{d}^{e_{d}}}}\right\|_{L^{p_2}}\no\\
	&\qquad\qquad \qquad \leq  C\|m\|_{C^1(\mathbb{R}^2)}\|\bar{q}\|_{W^{n_1,p_1}}\|v\|_{W^{n_2,p_2}}\leq  C\|m\|_{C^1(\mathbb{R}^2)}\|\bar{q}\|_{H^k}\|v\|_{H^k}.
	\end{align*}

	\textbf{Case 1.5:} 
	If $c_{1}+\cdots+c_{d}=0$, $|P|\neq 0$ and $e_{1}+\cdots+e_{d}\neq0$,
	we, similarly,  using Lemmas~\ref{lem2.3.2} and \ref{lem2.3.3} with  $\frac{1}{p_i}=(\frac{1}{2}-\frac{k}{d})\frac{1}{|P|+1}+\frac{n_i}{d}$, $i=1$, ..., $|P|+1$, obtain
	\begin{align*}
&	\|m_1^{(|P|)}(q,\bar{q},u)\cdot\prod_{B\in P} \Big[ \Big(\prod_{b\in B}\frac{\partial}{\partial{t_b}}\Big) g_{\lambda(B)} \Big]\cdot\bar{q}\cdot\frac{\partial^{e_{1}+\cdots+e_{d}}}{\partial{x_1^{e_{1}}}\cdots\partial{x_d^{e_{d}}}} v\|_{L^2}\\\no
	& \qquad
	\leq  C\|m\|_{C^{|P|+1}(\mathbb{R}^2)}\|\bar{q}\|_{H^k}\|v\|_{H^k}  \prod_{i=1}^{|P|}\|g_{\lambda(B_i)}\|_{H^k},
	\end{align*}
and, 	if $e_{1}+\cdots+e_{d}=0$, $|P|\neq 0$ and $c_{1}+\cdots+c_{d}\neq0$, 
we obtain
	\begin{align}\label{eq2.3.15}
	&\|m_1^{(|P|)}(q,\bar{q},u)\cdot\prod_{B\in P}\Big[ \Big(\prod_{b\in B}\frac{\partial}{\partial{t_b}}\Big) g_{\lambda(B)} \Big]\cdot\frac{\partial^{c_{1}+\cdots+c_{d}}}{\partial{x_1^{c_{1}}}\cdots\partial{x_d^{c_{d}}}}\bar{q}\cdot v\|_{L^2}\\\no
	&\leq C\|m\|_{C^{|P|+1}(\mathbb{R}^2)}\|v\|_{H^k}
	\|\bar{q}\|_{H^k}. 
	 \prod_{i=1}^{|P|}\|g_{\lambda(B_i)}\|_{H^k}.
	\end{align}
	Since $|P|\leq k$, the inequalities \eqref{eq2.3.9}-\eqref{eq2.3.15}  imply
	\begin{align*}
	&\Big\|m_1^{(|P|)}(q,\bar{q},u)\cdot\prod_{B\in P} \Big[\Big(\prod_{b\in B} \frac{\partial}{\partial{t_b}}\Big) g_{\lambda(B)} \Big]\cdot\frac{\partial^{c_{1}+\cdots+c_{d}}\bar{q}}{\partial{x_1^{c_{1}}}\cdots\partial{x_d^{c_{d}}}}\cdot\frac{\partial^{e_{1}+\cdots+e_{d}}v}{\partial{x_1^{e_{1}}}\cdots\partial{x_d^{e_{d}}}} \Big\|_{L^2}\\\no
&	\qquad \leq \|m\|_{C^{k+1}(\mathbb{R}^2)} \|\bar{q}\|_{H^k}\|v\|_{H^k}
\prod_{i=1}^{|P|}\|g_{\lambda(B_i)}\|_{H^k}.
	\end{align*}
	Similarly, 
	\begin{align*}
	&\Big\|\gamma_{\alpha}\Big(m_1^{(|P|)}(q,\bar{q},u)\cdot\prod_{B\in P} \Big[ \Big(\prod_{b\in B}\frac{\partial}{\partial{t_b}}\Big) g_{\lambda(B)} \Big]\cdot\frac{\partial^{c_{1}+\cdots+c_{d}}\bar{q}}{\partial{x_1^{c_{1}}}\cdots\partial{x_d^{c_{d}}}} \frac{\partial^{e_{1}+\cdots+e_{d}}v}{\partial{x_1^{e_{1}}}\cdots\partial{x_d^{e_{d}}}}\Big)\Big\|_{L^2}\\\no
	&\qquad\qquad \qquad\qquad\leq\|m_1^{(|P|)}\|_{L^{\infty}(\mathbb R^{3})}\prod_{i=1}^{|P|}\Big\|\Big(\prod_{b\in B_i}\frac{\partial}{\partial{t_b}}\Big) g_{\lambda(B_i)}\Big\|_{L^{p_i}}
	\\\no&\qquad\qquad\qquad\qquad\qquad\qquad\qquad \times
	\Big\|\frac{\partial^{c_{1}+\cdots+c_{d}}\bar{q}}{\partial{x_1^{c_{1}}}\cdots\partial{x_d^{c_{d}}}}\Big\|_{L^{p_{|P|+1}}}\Big\|\gamma_{\alpha}\frac{\partial^{e_{1}+\cdots+e_{d}}v}{\partial{x_1^{e_{1}}}\cdots\partial{x_d^{e_{d}}}}\Big\|_{L^{p_{|P|+2}}}\\\no
	&\qquad\qquad\qquad\qquad\leq \|m_1\|_{C^{|P|}(\mathbb{R}^3)} \|\gamma_{\alpha}v\|_{W^{n_{|P|+2},p_{|P|+2}}} \prod_{i=1}^{|P|}  \|g_{\lambda(B_i)}\|_{W^{n_i,p_i}}
	\\\no
	&\qquad\qquad\qquad\qquad \leq\|m\|_{C^{|P|+1}(\mathbb{R}^2)}   \|\bar{q}\|_{H^k}\|\gamma_{\alpha}v\|_{H^k} \prod_{i=1}^{|P|}  \|g_{\lambda(B_i)}\|_{H^k}
	\end{align*}
	
	The case  $|P|=0$, $c_{1}+\cdots+c_{d}=0$ or $e_{1}+\cdots+e_{d}=0$ can be treated analogously.
	
	For variations in $u$,   the representation
	$m( q, u + \bar{u})-m(q, u) =\bar{u} \int_{0}^{1}m_u(q, u + t\bar{u}) dt$  yields
	$$\|m(q,u+\bar{u})v-m(q,u)v\|_{L^2}\leq\|m\|_{C^1(\mathbb{R}^2)}\|\bar{u}\|_{L^{\infty}}\|v\|_{L^2},$$
	which, by the Sobolev embedding $H^k(\mathbb{R}^d)\hookrightarrow L^{\infty}(\mathbb{R}^d)$, implies
	$$\|m(q,u+\bar{u})v-m(q,u)v\|_{L^2}\leq\|m\|_{C^1(\mathbb{R}^2)}\|\bar{u}\|_{H^k}\|v\|_{L^2},$$
	and 
	$$\|\gamma_{\alpha}\left(m(q,u+\bar{u})v-m(q,u)v\right)\|_{L^2}\leq\|m\|_{C^1(\mathbb{R}^2)}\|\bar{u}\|_{H^k}\|\gamma_{\alpha}v\|_{L^2}.$$
	Let $m_2(q,u,\bar{u})=\int_{0}^{1}m_u(q,u+t\bar{u}) dt$, $g_1(x)=q(x)$, $g_2(x)=u(x)$, and $g_3(x)=\bar{u}(x)$.  For $\sum\limits_{t=1}^{d}a_t=k$, we then have
	\begin{align*}
	&\frac{\partial^k}{\partial{x_1^{a_1}}\cdots\partial{x_d^{a_d}}}m_2(g_1,g_2,g_3)\bar{u}v
	=\frac{\partial^{a_1+\cdots+a_{d-1}}}{\partial{x_1^{a_1}}\cdots\partial{x_{d-1}^{a_{d-1}}}}\sum_{b_d+c_d+e_d=a_d}{a_d\choose b_d,c_d,e_d}\frac{\partial^{b_d} m_2}{\partial{x_d^{b_d}}}
	\frac{\partial^{c_d}\bar{u}}{\partial{x_d^{c_d}}}\frac{\partial^{e_d}v}{\partial{x_d^{e_d}}}\\\no
	&=\cdots\\\no
	&=\sum_{b_d+c_d+e_d=a_d}{a_d\choose b_d,c_d,e_d}\cdots\sum_{b_1+c_1+e_1=a_1}{a_1\choose b_1,c_1,e_1}
	\frac{\partial^{b_1+\cdots+b_d}m_2}{\partial{x_1^{b_1}}\cdots\partial{x_d^{b_d}}}
	\frac{\partial^{c_1+\cdots+c_d}\bar{u}}{\partial{x_1^{c_1}}\cdots\partial{x_d^{c_d}}}\frac{\partial^{e_1+\cdots+e_d}v}{\partial{x_1^{e_1}}\cdots\partial{x_d^{e_d}}}.
	\end{align*}
	The same  argument as the one  that lead to  \eqref{A4} implies 
	\begin{equation*}
	\frac{\partial^s m_2(g(x))}{\partial{t_1}\cdots\partial{t_s}}
	=\sum_{P\in P_s}\sum_{\lambda\in P_{3}}\Big\{ \Big( \prod_{B\in P}\frac{\partial}{\partial{g_{\lambda(B)}}}\Big)m_2   \Big\}
	\Big\{   \prod_{B\in P} \Big[\Big( \prod_{b\in B} \frac{\partial}{\partial{t_b}}\Big) g_{\lambda(B)} \Big] \Big\}.
	\end{equation*}
	
	For fixed multinomial coefficients ${a_i\choose b_i,c_i,e_i}$ ($i=1$, ...,$d$), $P\in P_s$, and $\lambda\in P_3$, we shall find a bound on estimate the term 
	$$\Big\{\Big( \prod_{B\in P}\frac{\partial}{\partial{g_{\lambda(B)}}}\Big)m_2\Big\} \Big\{   \prod_{B\in P} \Big[\Big(\prod_{b\in B} \frac{\partial}{\partial{t_b}}Big) g_{\lambda(B)} \Big]  \Big\}\frac{\partial^{c_1+\cdots+c_d}\bar{u}
	}{\partial{x_1^{c_1}}\cdots\partial{x_d^{c_d}}}\frac{\partial^{e_1+\cdots+e_d}v}{\partial{x_1^{e_1}}\cdots\partial{x_d^{e_d}}}$$ 
	in both $L^2(\mathbb{R}^d)$ and $L^2_{\alpha}(\mathbb{R})\otimes L^2(\mathbb{R}^{d-1})$. In order to do that we consider the following cases.
	
	\textbf{Case 2.1:}
	If $|P|$, $c_1+\cdots+c_d$, $e_1+\cdots+e_d>0$,  we denote  $l=|P|+2$ ($2<l\leq k$) and $P=\{B_1,...,B_{l-2}\}$,  and use Lemmas~\ref{lem2.3.2} and  \ref{lem2.3.3}  with  $$(n_1,...,n_l)=\{|B_1|,...,|B_{l-2}|, c_1+\cdots+c_d, e_1+\cdots+e_d\}$$ and $\frac{1}{p_i}=(\frac{1}{2}-\frac{k}{d})\frac{1}{l}+\frac{n_i}{d}$ for $i=1$, $2$,...,$l$, to obtain the inequality:
	\begin{align}\label{eq2.3.11}
	&\Big\|\Big\{
\Big( \prod_{B\in P}\frac{\partial}{\partial{g_{\lambda(B)}}}\Big)m_2\Big\}\Big\{   \prod_{B\in P} \Big[ \Big(\prod_{b\in B} \frac{\partial}{\partial{t_b}}\Big) y_{\lambda(B)} \Big]  \Big\}\frac{\partial^{c_1+\cdots+c_d}\bar{u}}{\partial{x_1^{c_1}}\cdots\partial{x_d^{c_d}}}\frac{\partial^{e_1+\cdots+e_d}v}{\partial{x_1^{e_1}}\cdots\partial{x_d^{e_d}}}\Big\|_{L^2}
	\\\no&
	\leq \|m_2\|_{C^{l-2}(\mathbb{R}^3)}\Big\|\frac{\partial^{c_1+\cdots+c_d}\bar{u}}{\partial{x_1^{c_1}}\cdots\partial{x_d^{c_d}}}\Big\|_{L^{p_{l-1}}}\Big\|\frac{\partial^{e_1+\cdots+e_d}v}{\partial{x_1^{e_1}}\cdots\partial{x_d^{e_d}}}\Big\|_{L^{p_l}}
	\prod_{i=1}^{ l-2}\Big\|\Big(\prod_{b\in B_i} \frac{\partial}{\partial{t_b}}\Big) g_{\lambda(B_i)}\Big\|_{L^{p_i}}
	\\\no
	&\leq\|m\|_{C^{l-1}(\mathbb{R}^2)}\|g_{\lambda(B_1)}\|_{W^{n_1,p_1}}\cdots\|g_{\lambda(B_{l-2})}\|_{W^{n_{l-2},p_{l-2}}}\|\bar{u}\|_{W^{n_{l-1},p_{l-1}}}\|v\|_{W^{n_l,p_l}}\\\no
	&\leq C\|m\|_{C^{l-1}(\mathbb{R}^2)}\|g_{\lambda(B_1)}\|_{H^k}\cdots\|g_{\lambda(B_{l-2})}\|_{H^k}\|\bar{u}\|_{H^k}\|v\|_{H^k}.
	\end{align}
	When not all $|P|$, $c_1+\cdots+c_d$, $e_1+\cdots+e_d$ are positive, we have the following cases:
	
	\textbf{Case 2.2:}
	If $|P|=0$ and $c_1+\cdots+c_d,e_1+\cdots+e_d>0$, we apply Lemmas~\ref{lem2.3.2} and \ref{lem2.3.3} with  $l=2$, $(n_1,n_2)=(c_1+\cdots+c_d,e_1+\cdots+e_d)$ and $\frac{1}{p_i}=(\frac{1}{2}-\frac{k}{d})\frac{1}{l}+\frac{n_i}{d}$ so that $H^k(\mathbb{R}^d)\hookrightarrow W^{n_i,p_i}(\mathbb{R}^d)$ for $i=1$, $2$, and obtain
	\begin{align*}\no
	&\left\|m_2
	\frac{\partial^{c_1+\cdots+c_d}\bar{u}}{\partial{x_1^{c_1}}\cdots\partial{x_d^{c_d}}}\frac{\partial^{e_1+\cdots+e_d}v}{\partial{x_1^{e_1}}\cdots\partial{x_d^{e_d}}}\right\|_{L^2}
	\\\no&\qquad\qquad \qquad 
	\leq\|m\|_{C^1(\mathbb{R}^2)}\left\|\frac{\partial^{c_1+\cdots+c_d}\bar{u}}{\partial{x_1^{c_1}}\cdots\partial{x_d^{c_d}}}\right\|_{L^{p_1}}\left\|\frac{\partial^{e_1+\cdots+e_d}v}{\partial{x_1^{e_1}}\cdots\partial{x_d^{e_d}}}\right\|_{L^{p_2}}\no
	\\&\qquad\qquad \qquad 
	\leq\|m\|_{C^1(\mathbb{R}^2)}\|\bar{u}\|_{W^{n_1,p_1}}\|v\|_{W^{n_2,p_2}}\leq\|m\|_{C^1(\mathbb{R}^2)}\|\bar{u}\|_{H^k}\|v\|_{H^k}.
	\end{align*}
	
	\textbf{Case 2.3:}
	If $e_1+\cdots+e_d=0$, and $|P|$, $c_1+\cdots+c_d>0$, let $l=|P|+1$ ($2\leq l\leq k$) and $P=\{ B_1,...,B_{l-1} \}$, then from Lemmas~\ref{lem2.3.2} and \ref{lem2.3.3} with  $(n_1,...,n_l)=(|B_1|,...,|B_{l-1}|,c_1+\cdots+c_d)$ and $\frac{1}{p_i}=(\frac{1}{2}-\frac{k}{d})\frac{1}{l}+\frac{n_i}{d}$, $i=1$, ..., $l$,  and the Sobolev embedding $H^k(\mathbb{R}^d)\hookrightarrow L^{\infty}(\mathbb{R}^d)$, we obtain
	\begin{align*}
	&\Big\|\Big\{\Big( \prod_{B\in P}\frac{\partial}{\partial{g_{\lambda(B)}}}\Big)m_2 \Big\}\Big\{   \prod_{B\in P} \Big[ \Big(\prod_{b\in B} \frac{\partial}{\partial{t_b}}\Big) g_{\lambda(B)} \Big] \Big\}
	\Big(\frac{\partial^{c_1+\cdots+c_d}}{\partial{x_1^{c_1}}\cdots\partial{x_d^{c_d}}}\bar{u}\Big)v\Big\|_{L^2}\\\no
	&\qquad \qquad \leq\|m_2\|_{C^{l-1}(\mathbb{R}^3)}\|v\|_{L^{\infty}}
	\Big\|\Big\{   \prod_{B\in P} \Big[\Big(\prod_{b\in B}\frac{\partial}{\partial{t_b}}\Big) g_{\lambda(B)} \Big] \Big\}
\frac{\partial^{c_1+\cdots+c_d}\bar{u}}{\partial{x_1^{c_1}}\cdots\partial{x_d^{c_d}}}\Big\|_{L^2}
	\\\no&\qquad \qquad 
	\leq C\|m_2\|_{C^{l-1}(\mathbb{R}^3)}\|v\|_{H^k}
	\prod_{i=1}^{|P|}
	\Big
	\|\Big(\prod_{b\in B_{i}} \frac{\partial}{\partial{t_b}}\Big) g_{\lambda(B_{i})}  \Big\|_{L^{p_{i}}}
	\Big\|\frac{\partial^{c_1+\cdots+c_d}\bar{u}}{\partial{x_1^{c_1}}\cdots\partial{x_d^{c_d}}}\Big\|_{L^{p_{l}}}
	\\
	\no
	&\qquad \qquad \leq C\|m_2\|_{C^{l-1}(\mathbb{R}^3)}  \|\bar{u}\|_{W^{n_{l},p_{l}}}\|v\|_{H^k} \prod_{i=1}^{|P|} \|g_{\lambda(B_i)}\|_{W^{n_i,p_i}}
	\\\no
	&\qquad \qquad 
	\leq C\|m\|_{C^{l}(\mathbb{R}^2)}\|v\|_{H^k}\prod_{i=1}^{|P|}\|g_{\lambda(B_i)}\|_{H^k}
	\|\bar{u}\|_{H^k}.
	\end{align*}

	Similarly, if $c_1+\cdots+c_d=0$ and $|P|$, $e_1+\cdots+e_d>0$, we have
	\begin{align*}
	&\Big\|\Big\{\Big( \prod_{B\in P}\frac{\partial}{\partial{g_{\lambda(B)}}}\Big)m_2 \Big\} \Big\{  \prod_{B\in P}\Big[ \Big(\prod_{b\in B} \frac{\partial}{\partial{t_b}}\Big) g_{\lambda(B)} \Big]  \Big\}\Big(\frac{\partial^{e_1+\cdots+e_d}}{\partial{x_1^{e_1}}\cdots \partial{x_d^{e_d}}}v\Big) \bar{u}\Big\|_{L^2}
	\\\no
	& \qquad \qquad  \leq\|m_2\|_{C^{l-1}(\mathbb{R}^3)}\|\bar{u}\|_{L^{\infty}}\Big\| \Big\{   \prod_{B\in P} \Big[\Big( \prod_{b\in B} \frac{\partial}{\partial{t_b}}\Big) g_{\lambda(B)} \Big] \Big\}\frac{\partial^{e_1+\cdots+e_d}}{\partial{x_1^{e_1}}\cdots\partial{x_d^{e_d}}}v\Big\|_{L^2}
	\\\no
	&\qquad \qquad  \leq  C\|m_2\|_{C^{l-1}(\mathbb{R}^3)} \|\bar{u}\|_{H^k} \Big\|\frac{\partial^{e_1+\cdots+e_d}v}{\partial{x_1^{e_1}}\cdots\partial{x_d^{e_d}}}\Big\|_{L^{p_{l}}}
	\prod_{i=1}^{l-1} \Big\|\Big(\prod_{b\in B_{i}} \frac{\partial}{\partial{t_b}}\Big) g_{\lambda(B_{i})}\Big\|_{L^{p_{i}}}
	\\\no
	&\qquad \qquad  \leq  C\|m_2\|_{C^{l-1}(\mathbb{R}^3)}\|\bar{u}\|_{H^k} \|v\|_{W^{n_{l},p_{l}}} \prod_{i=1}^{l-1}\|g_{\lambda(B_i)}\|_{W^{n_i,p_i}}
	\\\no
	& \qquad \qquad \leq C\|m\|_{C^{l}(\mathbb{R}^2)}\|\bar{u}\|_{H^k}\|v\|_{H^k}\prod_{i=1}^{l-1} \|g_{\lambda(B_i)}\|_{H^k}.
	\end{align*}

	\textbf{Case 2.4:}
	If $|P|=e_1+\cdots+e_d=0$ and $c_1+\cdots+c_d\neq0$, we use the Sobolev embedding $H^k(\mathbb{R}^d)\hookrightarrow L^{\infty}(\mathbb{R}^d)$, we obtain
	\begin{align*}
	\Big\|m_2(g_1,g_2,g_3)\Big(\frac{\partial^{a_1+\cdots+a_d}\bar{u}}{\partial{x_1^{a_1}}\cdots\partial{x_d^{a_d}}}\Big) v\Big\|_{L^2}&\leq  \|m_2\|_{L^{\infty}(\mathbb{R}^3)}\Big\|\frac{\partial^{a_1+\cdots+a_d}\bar{u}}{\partial{x_1^{a_1}}\cdots\partial{x_d^{a_d}}}\Big\|_{L^2}\|v\|_{L^{\infty}}
	\\\no&
	\leq C\|m\|_{C^1(\mathbb{R}^2)}\|\bar{u}\|_{H^k}\|v\|_{H^k},
	\end{align*}
	or,  for $|P|=c_1+\cdots+c_d=0$ and $e_1+\cdots+e_d\neq0$, 
	\begin{align*}
	\Big\|m_2(g_1,g_2,g_3)\bar{u}\frac{\partial^{a_1+\cdots+a_d}v}{\partial{x_1^{a_1}}\cdots\partial{x_d^{a_d}}}\Big\|_{L^2}&\leq \|m_2\|_{L^{\infty}(\mathbb{R}^3)}\|\bar{u}\|_{L^{\infty}}\Big\|\frac{\partial^{a_1+\cdots+a_d}v}{\partial{x_1^{a_1}}\cdots\partial{x_d^{a_d}}}\Big\|_{L^2}
	\\\no&
	\leq C\|m\|_{C^{1}(\mathbb{R}^2)}\|\bar{u}\|_{H^k}\|v\|_{H^k}.
	\end{align*}
	
	\textbf{Case 2.5:}
	If $c_1+\cdots+c_d=e_1+\cdots+e_d=0$ and $|P|\neq 0$, we  denote $l=|P|$ ($1\leq l\leq k$) and $P=\{ B_1,...,B_l \}$: when $l=1$, $g_{\lambda(B)}=g_i$, $i=1$, $2$, $3$. We then use the inequality 
	$$\Big\|\frac{\partial^{k}g_i}{\partial{x_1^{a_1}}\cdots\partial{x_d^{a_d}} }\Big\|_{L^2}\leq \|g_i\|_{H^k},$$
	  Sobolev embedding $H^k(\mathbb{R}^d)\hookrightarrow L^{\infty}(\mathbb{R}^d)$ and Lemma \ref{soblev}(1) to obtain for $i=1$, $2$, $3$,
	\begin{align*}
	&\left\|m_2^{(1)}\left(\frac{\partial^{k} }{\partial{x_1^{a_1}}\cdots\partial{x_d^{a_d}} }g_i\right) \bar{u} v\right\|_{L^2}\\&\qquad\qquad  \leq \|m\|_{C^1(\mathbb{R}^2)}\|g_i\|_{H^k}\|\bar{u} v\|_{L^{\infty}}\leq\|m\|_{C^1(\mathbb{R}^2)}\|g_i\|_{H^k}\|\bar{u} v\|_{H^k}\no\\
	&\qquad\qquad \leq\|m\|_{C^1(\mathbb{R}^2)}\|g_i\|_{H^k}\|\bar{u}\|_{H^k}\|v\|_{H^k}.
	\end{align*}
	When $l\geq2$, we use the Sobolev embedding $H^k(\mathbb{R})\hookrightarrow L^{\infty}(\mathbb{R})$, Lemma \ref{lem2.3.2} and Lemma \ref{lem2.3.3} with  $(n_1,...,n_l)=(|B_1|,...,|B_{l}|)$ and $\frac{1}{p_i}=(\frac{1}{2}-\frac{k}{d})\frac{1}{l}+\frac{n_i}{d}$ for $i=1,...,l$ and Lemma \ref{soblev}(1) to obtain
	\begin{align}\label{eq2.3.18}
	&\Big\|\Big\{\Big( \prod_{B\in P}\frac{\partial}{\partial{g_{\lambda(B)}}}\Big)m_2 \Big\}\Big\{   \prod_{B\in P} \Big[\Big( \prod_{b\in B} \frac{\partial}{\partial{t_b}}\Big) g_{\lambda(B)} \Big]  \Big\} \bar{u}v\Big\|_{L^2}
	\\\no&\qquad
	\leq\|m_2\|_{C^l(\mathbb{R}^3)}\Big\| \Big\{   \prod_{B\in P} \Big[ \Big(\prod_{b\in B} \frac{\partial}{\partial{t_b}}\Big) g_{\lambda(B)} \Big]  \Big\}\Big\|_{L^2}\|\bar{u}  v\|_{L^{\infty}}
	\\\no&\qquad
	\leq C\|m_2\|_{C^l(\mathbb{R}^3)} 	\|\bar{u}v\|_{H^k}
	\prod_{i=1}^{l} \Big\|\Big(\prod_{b\in B_i} \frac{\partial}{\partial{t_b}}\Big) g_{\lambda(B_i)}\Big\|_{L^{p_i}}
\\\no&\qquad
	\leq C\|m_2\|_{C^l(\mathbb{R}^3)} \|\bar{u}\|_{H^k}\|v\|_{H^k}
	\prod_{i=1}^{l}\|g_{\lambda(B_i)}\|_{W^{n_i,p_i}}
	\\\no&\qquad
	\leq C\|m\|_{C^{l+1}(\mathbb{R}^2)}
	|\bar{u}\|_{H^k}\|v\|_{H^k}
\prod_{i=1}^{l}\|g_{\lambda(B_1)}\|_{H^k}.
	\end{align}

	Combining above inequalities \eqref{eq2.3.11}-\eqref{eq2.3.18}, we can conclude that:
	\begin{align*}
	&\Big\|\Big\{\Big( \prod_{B\in P}\frac{\partial}{\partial{g_{\lambda(B)}}}\Big)m_2\Big\} \Big\{   \prod_{B\in P} \Big[ \Big(\prod_{b\in B} \frac{\partial}{\partial{t_b}}\Big) g_{\lambda(B)} \Big]  \Big\}\frac{\partial^{c_1+\cdots+c_d}\bar{u}}{\partial{x_1^{c_1}}\cdots\partial{x_d^{c_d}}}\frac{\partial^{e_1+\cdots+e_d}v}{\partial{x_1^{e_1}}\cdots\partial{x_d^{e_d}}}\Big\|_{L^2}
	\\\no
	&\qquad
	\leq\|m\|_{C^{k+1}(\mathbb{R}^2)}\|\bar{u}\|_{H^k}\|v\|_{H^k}
	\prod_{i=1}^{|P|}
	\|g_{\lambda(B_1)}\|_{H^k}
	.
	\end{align*}
	Similarly, we let $l=|P|+2$ ($2<l\leq k$), $P=\{B_1,...,B_{l-2}\}$ and $\gamma_{\alpha}=e^{\alpha x_1}$, and use Lemmas~\ref{lem2.3.2} and  \ref{lem2.3.3} with $(n_1,...,n_l)=\{|B_1|,...,|B_{l-2}|,c_1+\cdots+c_d,e_1+\cdots+e_d\}$ and $\frac{1}{p_i}=(\frac{1}{2}-\frac{k}{d})\frac{1}{l}+\frac{n_i}{d}$ for $i=1$, $2$,..., $l$, to obtain 	
	\begin{align*}
	&\Big\|\gamma_{\alpha}m_2^{(|P|)}\Big\{   \prod_{B\in P} \Big[ \Big(\prod_{b\in B} \frac{\partial}{\partial{t_b}}\Big) g_{\lambda(B)} \Big]  \Big\}\frac{\partial^{c_1+\cdots+c_d}\bar{u}}{\partial{x_1^{c_1}}\cdots\partial{x_d^{c_d}}}\frac{\partial^{e_1+\cdots+e_d}v}{\partial{x_1^{e_1}}\cdots\partial{x_d^{e_d}}}\Big\|_{L^2}\\\no
	&\qquad\leq C\|m\|_{C^{k+1}(\mathbb{R}^2)}\|g_{\lambda(B_1)}\|_{H^k}\cdots\|y_{\lambda(B_{l-2})}\|_{H^k}\|\bar{u}\|_{H^k}\|\gamma_{\alpha}v\|_{H^k}.
	\end{align*}
	The cases when $|P|$, $c_1+\cdots+c_d=0$ or $e_1+\cdots+e_d=0$ can be treated analogously.

	Finally, for variations in $v$, we write
	$m( q, u)(v + \bar{v})- m( q, u)v = m( q, u)\bar{v}$, and, therefore, 
\begin{eqnarray*}
	\| m( q, u))\bar{v}\|_{L^2}&\leq &\|m\|_{L^{\infty}}\|\bar{v}\|_{L^2},\\
	\|\gamma_{\alpha} m( q, u)\bar{v}\|_{L^2} &\leq&\|m\|_{L^{\infty}}\|\gamma_{\alpha}\bar{v}\|_{L^2}.
	\end{eqnarray*}
	By the general Leibniz rule,
		\begin{align*}
	&\frac{\partial^{k}}{\partial{x_1^{a_1}}\cdots \partial{x_d^{a_d}}}\big(m(q,u)\bar{v}\big)=
	\sum_{b_d=0}^{a_d}\begin{pmatrix}
	a_d \\ b_d
	\end{pmatrix}\cdots\sum_{b_{1}=0}^{a_{1}}\begin{pmatrix}
	a_{1} \\ b_{1}
	\end{pmatrix}\frac{\partial^{b_1+\cdots+b_d}m}{\partial{x_1^{b_1}}\cdots\partial{x_d^{b_d}}} \frac{\partial^{a_1-b_1+\cdots+a_d-b_d} \bar{v}}{\partial{x_1^{a_1-b_1}}\cdots\partial{x_d^{a_d-b_d}}},
	\end{align*}
	where ${a_j \choose b_j}=\dfrac{a_j!}{b_j!(a_j-b_j)!}$.  Let $s=b_1+\cdots+b_d$, $(g_1(x),g_2(x))=(q(x),u(x))$. We use the Higher Chain Formula, for each $i$ in the set $J_s$ of integers $1$, $2$, ..., $s$. Let again $t_i$ denote one of the independent variables $x_1$, ..., $x_d$. We consider a partition of  $J_s$. 
	The set of all block functions from a partition $P$ of $J_s$ into $J_2$ is
	 $P_2$ and  $P_s$ is the set of all partitions of $J_s$.  Then
	\begin{equation*}
	\frac{\partial^s m(q,u)}{\partial{t_1}\cdots\partial{t_s}}
	=\sum_{P\in P_s}\sum_{\lambda\in P_{2}}\Big\{ \Big( \prod_{B\in P}\frac{\partial}{\partial{g_{\lambda(B)}}}\Big)m   \Big\}
	\Big\{   \prod_{B\in P} \Big[ \Big(\prod_{b\in B} \frac{\partial}{\partial{t_b}}\Big) g_{\lambda(B)} \Big]  \Big\}.
	\end{equation*}
	Thus, for fixed binomial coefficients ${a_j\choose b_j}$, $j=1$, ...,$d$, for fixed partition $P\in P_s$ and  block function $\lambda\in P_2$ we need to estimate 
	$$\Big\{ \Big( \prod_{B\in P}\frac{\partial}{\partial{g_{\lambda(B)}}}\Big)m   \Big\}
	\Big\{   \prod_{B\in P} \Big[ \Big( \prod_{b\in B}\frac{\partial}{\partial{t_b}}\Big) g_{\lambda(B)} \Big]  \Big\}\frac{\partial^{a_1-b_1+\cdots+a_d-b_d}}{\partial{x_1^{a_1-b_1}}\cdots\partial{x_d^{a_d-b_d}}} \bar{v}$$
	in  $L^2(\mathbb{R}^d)$ and $L_{\alpha}^2(\mathbb{R})\otimes L^2(\mathbb{R}^{d-1})$- norms. To do so we consider the following cases.
	
	\textbf{Case 3.1:}
	If $b_1+\cdots+b_d=0$, then 
	\begin{align*}
	&\Big\|  \Big\{ \Big( \prod_{B\in P}\frac{\partial}{\partial{g_{\lambda(B)}}}\Big)m   \Big\}
	\Big\{   \prod_{B\in P} \Big[ \Big(\prod_{b\in B} \frac{\partial}{\partial{t_b}}\Big) g_{\lambda(B)} \Big]  \Big\}
	\frac{\partial^{a_1-b_1+\cdots+a_d-b_d}}{\partial{x_1^{a_1-b_1}}\cdots\partial{x_d^{a_d-b_d}}} \bar{v}
	\Big\|_{L^2}
	\\\no
	&\qquad =\Big\|m(q,u)\frac{\partial^{a_1+\cdots+a_d}}{\partial{x_1^{a_1}}\cdots\partial{x_d^{a_d}}} \bar{v}\Big\|_{L^2}\leq \|m\|_{L^{\infty}(\mathbb{R}^2)}\Big\| \frac{\partial^{a_1+\cdots+a_d}}{\partial{x_1^{a_1}}\cdots\partial{x_d^{a_d}}} \bar{v} \Big\|_{L^2}\leq \|m\|_{L^{\infty}(\mathbb{R}^2)}\|\bar{v}\|_{H^k}.
	\end{align*}
	
	\textbf{Case 3.2:}
	If $0<b_1+\cdots+b_d<a_1+\cdots+a_d$, let $P=\{B_1,B_2,...,B_{l-1}\}$,  and  $$(n_1,...,n_l)=(|B_1|,...,|B_{l-1}|,a_1-b_1+\cdots+a_d-b_d),$$ 
	then we use Lemmas~\ref{lem2.3.2} and  \ref{lem2.3.3}  with  $\frac{1}{p_i}=(\frac{1}{2}-\frac{k}{d})\frac{1}{l}+\frac{n_i}{d}$, $i=1$, ..., $l$, and obtain 
	\begin{align*}
	&\Big\|\Big\{ \Big( \prod_{B\in P}\frac{\partial}{\partial{g_{\lambda(B)}}}\Big)m   \Big\}
	\Big\{   \prod_{B\in P} \Big[ \Big(\prod_{b\in B} \frac{\partial}{\partial{t_b}}\Big) g_{\lambda(B)} \Big]  \Big\}\frac{\partial^{a_1-b_1+\cdots+a_d-b_d}}{\partial{x_1^{a_1-b_1}}\cdots\partial{x_d^{a_d-b_d}}} \bar{v}\Big\|_{L^2}\\\no
	&\qquad\leq \|m^{(l-1)}\|_{L^{\infty}(\mathbb{R}^2)}\Big\|\Big\{   \prod_{B\in P} \Big[ \Big(\prod_{b\in B} \frac{\partial}{\partial{t_b}}\Big) g_{\lambda(B)} \Big]  \Big\} \frac{\partial^{a_1-b_1+\cdots+a_d-b_d}}{\partial{x_1^{a_1-b_1}}\cdots\partial{x_d^{a_d-b_d}}} \bar{v} \Big\|_{L^2}\\\no
	&\qquad\leq \|m\|_{C^{l-1}(\mathbb{R}^2)}
	\Big\|\frac{\partial^{a_1-b_1+\cdots+a_d-b_d} \bar{v}}{\partial{x_1^{a_1-b_1}}\cdots\partial{x_d^{a_d-b_d}}} \Big\|_{L^{p_l}}
	\prod_{i=1}^{l-1}\Big\|\Big(\prod_{b\in B_i}\frac{\partial}{\partial{t_b}}\Big) g_{\lambda(B_i)}\Big\|_{L^{p_i}}
	\\\no
	&\qquad\leq \|m\|_{C^{l-1}(\mathbb{R}^2)}\|\bar{v}\|_{W^{n_l,p_l}}
	\prod_{i=1}^{l-1} \|g_{\lambda(B_i)}\|_{W^{n_i,p_i}}
	\\\no
	&\qquad\leq \|m\|_{C^{l-1}(\mathbb{R}^2)}
	\prod_{i=1}^{l-1}
	\|g_{\lambda(B_i)}\|_{H^k}
	\|\bar{v}\|_{H^k}.
	\end{align*}
	
	\textbf{Case 3.3:}
	If $b_1+\cdots+b_d=a_1+\cdots+a_d$, when $|P|=1$, $g_{\lambda(B)}=g_i$, $i=1$, $2$, or $3$, we use Sobolev embedding $H^k(\mathbb{R}^d)\hookrightarrow L^{\infty}(\mathbb{R}^d)$  and obtain
	\begin{align*}
	&\Big\|\Big\{ \Big( \prod_{B\in P}\frac{\partial}{\partial{g_{\lambda(B)}}}\Big)m   \Big\}
	\Big\{   \prod_{B\in P} \Big[ \Big(\prod_{b\in B} \frac{\partial}{\partial{t_b}}\Big) g_{\lambda(B)} \Big]  \Big\}\frac{\partial^{a_1-b_1+\cdots+a_d-b_d}\bar{v}}{\partial{x_1^{a_1-b_1}}\cdots\partial{x_d^{a_d-b_d}}} \Big\|_{L^2}\\\no
	&\qquad\leq \|m^{(1)}\|_{L^{\infty}(\mathbb{R}^2)}\|\bar{v}\|_{L^{\infty}}\Big\|\frac{\partial^{a_1+\cdots+a_d}g_i}{\partial{x_1^{a_1}}\cdots\partial{x_d^{a_d}}} \Big\|_{L^2}\leq \|m\|_{C^{1}(\mathbb{R}^2)}\|\bar{v}\|_{H^k}\|g_i\|_{H^k}.
	\end{align*}
	When $|P|=l>1$, let $P=\{B_1,B_2,...,B_{l}\}$ and  $(n_1,...,n_l)=(|B_1|,...,|B_{l}|)$. 
	Lemmas~\ref{lem2.3.2} and \ref{lem2.3.3} with $\frac{1}{p_i}=(\frac{1}{2}-\frac{k}{d})\frac{1}{l}+\frac{n_i}{d}$, $i=1$, ...,$l$, and the Sobolev embedding $H^k(\mathbb{R}^d)\hookrightarrow L^{\infty}(\mathbb{R}^d)$ imply
	\begin{align*}
	&\Big\|\Big\{ \Big( \prod_{B\in P}\frac{\partial}{\partial{g_{\lambda(B)}}}\Big)m   \Big\}
	\Big\{   \prod_{B\in P} \Big[\Big( \prod_{b\in B} \frac{\partial}{\partial{t_b}}\Big) g_{\lambda(B)} \Big]  \Big\}\frac{\partial^{a_1-b_1+\cdots+a_d-b_d}}{\partial{x_1^{a_1-b_1}}\cdots\partial{x_d^{a_d-b_d}}} \bar{v}\Big\|_{L^2}
	\\\no
	&\qquad\leq \|m^{(l)}\|_{L^{\infty}(\mathbb{R}^2)}\|\bar{v}\|_{L^{\infty}}\Big\|\Big\{   \prod_{B\in P} \Big[ \Big(\prod_{b\in B} \frac{\partial}{\partial{t_b}}\Big) g_{\lambda(B)} \Big]  \Big\}  \Big\|_{L^2}\\\no
	&\qquad\leq \|m\|_{C^{l}(\mathbb{R}^2)}\|\bar{v}\|_{H^k}
	\prod_{i=1}^{l}
	\Big\|\Big(\prod_{b\in B_i} \frac{\partial}{\partial{t_b}}\Big) g_{\lambda(B_i)}\Big\|_{L^{p_i}}
	\leq \|m\|_{C^{l}(\mathbb{R}^2)}\|\bar{v}\|_{H^k}\prod_{i=1}^{l}\|g_{\lambda(B_i)}\|_{W^{n_i,p_i}}
	\\\no&
	\qquad \leq \|m\|_{C^{l}(\mathbb{R}^2)}\|\bar{v}\|_{H^k}
	\prod_{i=1}^{l}\ \|g_{\lambda(B_i)}\|_{H^k}
	.
	\end{align*}
	
	Similarly, let $l=|P|+1$, $P=\{B_1,...,B_{l-1}\}$,  $(n_1,...,n_l)=(|B_1|,...,
	|B_{l-1}|,a_1-b_{1}+\cdots+a_{d}-b_d)$ and  $\gamma_{\alpha}=e^{\alpha  x_1}$, then
	\begin{align*}
	&\Big\|\gamma_{\alpha}\Big\{ \Big( \prod_{B\in P}\frac{\partial}{\partial{g_{\lambda(B)}}}\Big)m   \Big\}
	\Big\{   \prod_{B\in P} \Big[\Big( \prod_{b\in B} \frac{\partial}{\partial{t_b}}\Big) g_{\lambda(B)} \Big]  \Big\}
	\frac{\partial^{a_1-b_1+\cdots+a_d-b_d}\bar{v}}{\partial{x_1^{a_1-b_1}}\cdots\partial{x_d^{a_d-b_d}}} \Big\|_{L^2}
	\\
	\no
	&\leq\|m^{(|P|)}\|_{L^{\infty}(\mathbb{R}^2)}
	\Big\|\gamma_{\alpha}\frac{\partial^{a_1-b_1+\cdots+a_d-b_d} \bar{v}}{\partial{x_1^{a_1-b_1}}\cdots\partial{x_d^{a_d-b_d}}}\Big\|_{L^{p_l}}
	\prod_{i=1}^{l-1}\Big\|\Big(\prod_{b\in B_i} \frac{\partial}{\partial{t_b}}\Big) g_{\lambda(B_i)}\Big\|_{L^{p_i}}
	\\\no
	&\leq \|m\|_{C^{l-1}(\mathbb{R}^2)}\|\gamma_{\alpha}\bar{v}\|_{W^{n_l,p_l}}
	\prod_{i=1}^{l-1} \|g_{\lambda(B_1)}\|_{W^{n_1,p_1}}
	\leq\|m\|_{C^{l-1}(\mathbb{R}^2)}\|\gamma_{\alpha}\bar{v}\|_{H^k}
	\prod_{i=1}^{l-1} \|g_{\lambda(B_i)}\|_{H^k}.
	\end{align*}
	The case of weighted norm when $b_1+\cdots+b_d=0$, or $b_1+\cdots+b_d=a_1+\cdots+a_d$  can be considered similarly.
	
	Using  Lipschitz estimates for variations in $q$, $u$, and $v$, one can easily
	show that the mappings are locally Lipschitz on the given sets in $H^k(\mathbb R^d)$ and  in $H_{\alpha}^k(\mathbb R^d)$, therefore 
	 the mappings are also locally Lipschitz on the given sets $\mathcal{H}=H^k(\mathbb R^d)\bigcap H_{\alpha}^k(\mathbb R^d)$.
\end{proof}

\end{document}